 \renewcommand\section{\@startsection {section}{1}{\z@}%
     {-4.5ex \@plus -1ex \@minus -.2ex}%
     {2.3ex \@plus.8ex}%
    {\centering\scshape}}
\newcommand{\Z}{\mathbb{Z}}
\newcommand{\Q}{\mathbb{Q}}
\newcommand{\R}{\mathbb{R}}
\newcommand{\C}{\mathbb{C}}
\renewcommand{\P}{\mathbb{P}}
\newcommand{\mc}{\mathcal}
\newcommand{\ov}{\overline}
\newcommand{\be}{\begin{equation}}
\newcommand{\ee}{\end{equation}}
\newcommand{\wt}{\widetilde}
\newcommand{\hk}{hyper--K\"ahler }
\newtheorem{thm}{Theorem}
\newtheorem{thmintro}{Theorem}
\newtheorem{prop}[thm]{Proposition}
\newtheorem{lemma}[thm]{Lemma}
\theoremstyle{definition}
\newtheorem{rem}[thm]{Remark}
\newtheorem{defin}[thm]{Definition}
\newtheorem{cor}[thm]{Corollary}
\newtheorem{example}[thm]{Example}
\newtheorem{conj}[thm]{Conjecture}
\numberwithin{thm}{section}
\newcommand{\bdg}{\begin{dg}}
\newcommand{\edg}{\end{dg}}
\numberwithin{equation}{section}
\DeclareMathOperator{\codim}{codim}
\DeclareMathOperator{\rk}{rk}
\DeclareMathOperator{\Hom}{Hom}
\DeclareMathOperator{\coker}{coker}
\DeclareMathOperator{\Sym}{Sym}
\DeclareMathOperator{\Supp}{Supp}
\DeclareMathOperator{\Pic}{Pic}
\DeclareMathOperator{\Jac}{Jac}
\DeclareMathOperator{\NS}{NS}
\DeclareMathOperator{\im}{Im}
\DeclareMathOperator{\Nef}{Nef}
\DeclareMathOperator{\Mov}{Mov}
\DeclareMathOperator{\Def}{Def}
\DeclareMathOperator{\CH}{CH}
\newcommand{\Jbmc}{{\bar{\mc J}}}
\newcommand{\Yp}{{\mc Y'}}
\newcommand{\fp}{{f'}}
\newcommand{\Mp}{{\mc M'}}
\newcommand{\Up}{{U'}}
\newcommand{\Pfd}{{(\P^5)^\vee}}
\author[G. Sacc\`a]{Giulia Sacc\`a, with an Appendix by Claire Voisin}
\address{Mathematics Department\\
Columbia University \\
Mathematics Department\\
2990 Broadway\\
New York, NY 10027}
\title[Birational geometry of the Intermediate Jacobian fibration]{Birational geometry of the Intermediate Jacobian fibration of a cubic fourfold}
\begin{document}

\maketitle

\begin{abstract}

We show that the intermediate Jacobian fibration associated to \emph{any} smooth cubic fourfold $X$ admits a \hk compactification $J(X)$ with a regular Lagrangian fibration $\pi: J \to \P^5$. This builds upon \cite{LSV}, where the result is proved for general $X$, as well as on the degeneration techniques introduced in \cite{KLSV} and the minimal model program. 
 We then study some aspects of the birational geometry of $J(X)$: for very general $X$ we compute the movable and nef cones of $J(X)$, showing that $J(X)$ is not birational to the twisted version of the intermediate Jacobian fibration \cite{Voisin-twisted}, nor to an OG$10$-type moduli space of objects in the Kuznetsov component of $X$; for any smooth $X$ we show, using normal functions, that the Mordell-Weil group $MW(\pi)$ of the fibration is isomorphic to the integral degree $4$ primitive algebraic cohomology of $X$, i.e., $MW(\pi) \cong H^{2,2}(X, \Z)_0$.

\end{abstract}

\tableofcontents

\section*{Introduction}

 The geometry of smooth cubic fourfolds has ties to that of K3 surfaces and, more generally, to that of higher dimensional \hk manifolds.  For example, with certain special cubic fourfolds one can associate a K3 surface via Hodge theoretic \cite{Hassett-special} or derived categorical \cite{Kuznetsov} methods. From a more geometric perspective, given  a smooth cubic fourfold $X$,  \hk manifolds of K3$^{[n]}$-type are constructed geometrically, via parameter spaces of rational curves of certain degrees on  $X$ \cite{Beauville-Donagi,LLSvS}, or as moduli spaces of objects in the Kuznetsov component of $X$ \cite{Lahoz-Lehn-Macri-Stellari,BLMNPS}. These constructions give rise to $20$-dimensional families of polarized \hk manifolds, the maximal possible dimension of families of polarized \hk manifolds of K3$^{[n]}$-type.
As the cubic fourfold becomes special, for example when it acquires more algebraic classes, the geometry of these \hk manifolds also becomes more interesting. For example, when $X$ has an associated K3 surface in the sense of \cite{Hassett-special, Kuznetsov, Addington-Thomas, Huybrechts-category}, these \hk manifolds become isomorphic, or birational, to moduli spaces of objects in the derived category of the corresponding K3 surface \cite{Addington,BLMNPS}. 

In \cite{LSV} a Lagrangian fibered \hk manifold is constructed starting from a general cubic fourfold. This \hk manifold is a deformation of O'Grady's $10$-dimensional exceptional example.
More precisely, let $X \subset \P^5$ be a smooth cubic fourfold and let $\pi_U: J_U \to U \subset  (\P^5)^\vee$ be the family of  intermediate Jacobians of the smooth hyperplane sections of $X$.  This fibration was considered by Donagi-Markman in \cite{Donagi-Markman}, where they showed that the total space has a holomorphic symplectic form. The main result of \cite{LSV} is to construct, for general $X$, a smooth projective \hk compactification $J$ of $J_U$, with a flat morphism $J \to \Pfd$ extending $\pi_U$, and to show that this \hk $10$-fold is deformation equivalent to O'Grady's $10$-dimensional example. In \cite{Voisin-twisted}, Voisin constructs a \hk compactification $J^T$ of a natural $J_U$-torsor $J_U^T$, which is non-trivial for very general $X$. The two \hk manifolds $J$ and $J^T$ are birational over countably many hyperpurfaces in the moduli space of cubic fourfolds.
These two constructions give rise to two $20$-dimensional families of \hk manifolds of OG$10$-type, each of which form an open subset of a codimension two locus inside the moduli space of \hk manifolds in this deformation class.

If one wishes to study the geometry of these \hk manifolds as the cubic fourfold becomes special, a first step is to check if a \hk compactification of the fibration $J_U \to U $ can be constructed for an arbitrary smooth cubic fourfold. The starting result of this paper is that this can indeed be done.

\begin{thmintro}[Theorem \ref{hkcom}] \label{main thm 1} 
Let $X \subset \P^5$ be a smooth cubic fourfold, and let $\pi_U: J_U \to U \subset  (\P^5)^\vee$ be the Donagi--Markman fibration. There exists a smooth projective hyper--K\"ahler compactification $J $ of $ J_U$ with a morphism $\pi: J \to (\P^5)^\vee$ extending $\pi_U$.
\end{thmintro}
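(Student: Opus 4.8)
The plan is to start from the known compactification for general $X$ constructed in \cite{LSV} and deform it to an arbitrary smooth cubic fourfold, using the degeneration techniques of \cite{KLSV} together with the minimal model program. Concretely, fix a smooth cubic fourfold $X_0$ and choose a one-parameter family $\mc{X} \to \Delta$ of smooth cubic fourfolds over a disc with $\mc{X}_0 = X_0$ and $\mc{X}_t$ general (hence satisfying the hypotheses of \cite{LSV}) for $t \neq 0$. Over the punctured disc we have the relative Donagi--Markman fibration, and after base change and shrinking we obtain, by \cite{LSV}, a family $\mc{J}^* \to \Delta^*$ whose fibers $J(\mc{X}_t)$ are smooth projective \hk compactifications of $J_{U}(\mc{X}_t)$, each carrying a Lagrangian fibration over $(\P^5)^\vee$; moreover all these Lagrangian fibrations fit together into a morphism $\mc{J}^* \to (\P^5)^\vee \times \Delta^*$.

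The heart of the argument is to fill in the central fiber. First I would produce \emph{some} flat projective degeneration $\mc{J} \to \Delta$ of the family $\mc{J}^*$ which still carries a morphism to $(\P^5)^\vee \times \Delta$ restricting to the Donagi--Markman fibration over the open locus $U_0 \subset (\P^5)^\vee$ for the central cubic $X_0$: this can be done by taking a relative compactification of the family of intermediate Jacobians over the whole family of cubics (e.g.\ via relative Prym/compactified Jacobian constructions, or simply by taking a projective closure of $\mc{J}^*$ inside a projective bundle and normalizing), then doing semistable reduction. The resulting central fiber $\mc{J}_0$ is a projective variety birational to $J(X_0)$-to-be, but singular and/or reducible. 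Now I would run the relative minimal model program for $\mc{J} \to \Delta$ (or rather for a log-resolution relative to $\Delta$): because the generic fiber has trivial canonical bundle and the total space is of the expected form, the MMP terminates with a relative minimal model $\mc{J}^{\min} \to \Delta$ whose central fiber $\mc{J}^{\min}_0$ has at worst canonical singularities and trivial canonical class. The key input from \cite{KLSV} is the classification of possible central fibers of degenerations of \hk manifolds of OG$10$-type: combined with the fact that $J_{U}(X_0)$ is already smooth and symplectic (Donagi--Markman), and that the monodromy of the family $\mc{J}^* \to \Delta^*$ is trivial (the intermediate Jacobian fibration does not degenerate away from $t=0$ — we chose the family of cubics to be equisingular, i.e.\ all smooth), one concludes that the degeneration is of \emph{Type I}, i.e.\ $\mc{J}^{\min}_0$ is itself an (irreducible, with canonical singularities, hence after a crepant resolution smooth) \hk manifold. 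Finally, a crepant resolution $J(X_0) \to \mc{J}^{\min}_0$ exists and is deformation equivalent to $J(X_t)$, hence \hk of OG$10$-type; since the MMP and resolution are isomorphisms over $U_0$, the composite $J(X_0) \to (\P^5)^\vee$ extends $\pi_{U_0}$, giving the desired compactification.

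The main obstacle is controlling the central fiber precisely enough to invoke the \cite{KLSV} dichotomy and to rule out the "bad" (Type II, or type III) degenerations: a priori the relative MMP could produce a central fiber that is not of \hk type at all, or the limit mixed Hodge structure could force a non-trivial degeneration. The way around this is exactly the trick of choosing the \emph{family of cubics} carefully: by \cite{LSV} the fibration $J_U \to U$ extends to a proper \hk family over \emph{all} of $(\P^5)^\vee$ for general cubics, so the only possible source of degeneration in the family $\mc{J} \to \Delta$ is the special geometry of $X_0$ itself, not the geometry of hyperplane sections (which stays uniformly mild as long as we only allow finitely many singular points). One then needs a local analysis near the (finitely many) points of $(\P^5)^\vee$ where new phenomena appear for $X_0$, and a global argument — via the limit mixed Hodge structure of the family, or via the fact that the generic Lagrangian fibration structure persists — to show the limit is of finite monodromy, forcing Type I. Once that is established, the rest is standard MMP and deformation theory of \hk manifolds.
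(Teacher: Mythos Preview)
Your overall strategy---degenerate to a general cubic, fill in the central fiber via \cite{KLSV}, then run MMP---is close in spirit to the paper's, but there is a genuine gap at the step where you claim the Lagrangian fibration survives. You run the relative minimal model program for $\mc{J} \to \Delta$, i.e.\ over the base \emph{curve}. That is precisely what produces a \emph{birational} \hk model of $J_U(X_0)$ (the paper isolates this as Proposition~\ref{prop hk model}); but MMP over $\Delta$ gives no control over the morphism to $\P^5$. The $K$-negative extremal rays you contract or flip are rays over points of $\Delta$, not necessarily over points of $\P^5_\Delta$: if a flipping curve $C$ has $L\cdot C>0$ for $L=\pi^*\mc O_{\P^5}(1)$, then after the flip the strict transform of $L$ is no longer nef and the map to $\P^5$ becomes only rational. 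Your assertion ``since the MMP and resolution are isomorphisms over $U_0$, the composite $J(X_0)\to(\P^5)^\vee$ extends $\pi_{U_0}$'' is therefore unjustified: regularity on the open set $U_0$ says nothing about regularity along the boundary. The paper flags this explicitly: after Proposition~\ref{prop hk model} it remarks that the existence of a birational \hk model, even combined with Matsushita's deformation theory of Lagrangian fibrations, is not enough to produce a \hk compactification with a \emph{regular} morphism to $\P^5$.

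The paper's fix is to run the MMP \emph{over $\P^5$} (or, in the family version Theorem~\ref{deg lagr 1}, over $\P^5_\Delta$) rather than over $\Delta$. For a fixed smooth $X$ one takes any smooth projective compactification $\tilde J$ of $J_{U_1}(X)$ admitting a regular morphism $\tilde\pi:\tilde J\to\P^5$. The key observation is that the holomorphic two-form of \cite{LSV} extends and is non-degenerate over $U_1$, whose complement in $\P^5$ has codimension $\ge 2$; hence the effective divisor $K_{\tilde J}$ is $\tilde\pi$-\emph{exceptional} in the sense of \cite{Kollar-Elliptic}. By Nakayama this forces $K_{\tilde J}$ to be non-$\tilde\pi$-nef whenever nontrivial, and the MMP with scaling over $\P^5$ is shown (via the scaling parameters $t_i\to 0$ and the stable base locus) to eventually kill every component of $K$, terminating with a $\Q$-factorial terminal symplectic compactification $\bar J$ carrying a regular morphism to $\P^5$. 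Smoothness of $\bar J$ is a separate step: it follows from the Greb--Lehn--Rollenske criterion (Proposition~\ref{GLR}) that a $\Q$-factorial terminal symplectic variety birational to a smooth \hk manifold is itself smooth, and the required birational smooth model is supplied by Proposition~\ref{prop hk model}. Your Type I/II/III monodromy discussion is therefore not the crux (monodromy finiteness is automatic since all cubics in the family are smooth), and the crepant resolution you invoke at the end---whose existence you do not justify---is replaced by this smoothness criterion.
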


 The same techniques also give the existence of a Lagrangian fibered \hk  compactification for the non trivial $J_U$-torsor $J_U^T \to U$  of \cite{Voisin-twisted} for any smooth $X$  (see Remark \ref{twisted}).  Moreover, with little extra work, the Theorem is proved also for mildly singular cubic fourfolds such as, for example, cubic fourfolds with a simple node (see Theorem \ref{Csix}). For a general cubic fourfolds with one node, the existence of such a Lagrangian fibered \hk manifold provides a positive answer to a question of Beauville \cite{Beauville-Fano-K3}(see Remark \ref{rem beau}).  
 
We should point out that as a consequence of the  ``finite monodromy implies smooth filling'' results of \cite{KLSV}, we prove in Proposition \ref{prop hk model} that $J_U$ admits projective birational model that is hyper-K\"ahler. Theorem \ref{main thm 1}  shows that there exists a \hk model with a Lagrangian fibration extending $\pi_U$.

There are several ingredients in the construction of the \hk compactification of \cite{LSV}: a cycle-theoretic construction of the holomorphic symplectic form, the problem of the existence of so--called very good lines for any hyperplane section of $X$,  a smoothness criterion for relative compactified Prym varieties, the independence of the compactification from the choice of a very good line.  Here we pursed a different direction and instead  rely on the existence of a \hk compactification for general $X$, use the degenerations techniques introduced in \cite{KLSV}, and implement some results from birational geometry and the minimal model program, following  \cite{Lai,Kollar-Elliptic}. One advantage of our method is that it opens the door to using birational geometry to compactify Lagrangian fibrations.

The second result of this paper is concerned with the \hk birational geometry of $J$. We show that the relative theta divisor $\Theta$  of the fibration is a prime exceptional divisor and that for general $X$ it can be contracted after a Mukai flop. 

\begin{thmintro}[Theorem \ref{NSU}]  \label{main thm 2} Let $q$ be the  Beauville-Bogomolov form on $H^2(J, \Z)$. The relative theta divisor $\Theta \subset J$ is a prime exceptional divisor with $q(\Theta)=-2$. 
For very general $X$, there is a unique other \hk birational model of $J$, denoted by $N$,  which is the Mukai flop $p: J \dashrightarrow N$ of $J$ along the image of the zero section.  $N$ admits a divisorial contraction $h: N \to \bar N$, which contracts the proper transform of $\Theta$ onto an $8$-dimensional variety which is birational to the LLSvS $8$-fold $Z(X)$.
\end{thmintro}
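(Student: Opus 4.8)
The plan is to establish the three separate assertions of Theorem \ref{main thm 2} in sequence: (1) $\Theta$ is a prime exceptional divisor with $q(\Theta) = -2$; (2) for very general $X$ the only other birational model is the Mukai flop $N$ along the zero section; (3) $N$ carries the stated divisorial contraction with image birational to $Z(X)$.

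\textbf{Step 1: The theta divisor and its square.} First I would recall that over the open locus $U$ the relative theta divisor $\Theta_U \subset J_U$ is the classical symmetric theta divisor in each intermediate Jacobian, and take $\Theta$ to be its closure in $J$. Since each fiberwise theta divisor is irreducible, $\Theta$ is irreducible (prime). To see it is exceptional, one checks that its class is not in the interior of the movable cone — equivalently, that the restriction of $\Theta$ to a general fiber of $\pi$ has negative self-intersection in an appropriate sense, or more robustly that $q(\Theta) < 0$ (a divisor with negative BB-square on a \hk manifold is automatically rigid, hence exceptional up to the known dichotomy; for OG$10$-type the exceptional divisors have $q = -2$). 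The cleanest route to $q(\Theta) = -2$: use the \hk compactification of \cite{LSV} for general $X$ where the intersection-theoretic computation (degree of $\Theta^{10}$ against powers of the fiber class and the polarization, or the Fujiki relation) can be carried out explicitly, then deform to arbitrary $X$ since $\Theta$ and the BB-form deform in families. I expect the computation of $q(\Theta)$ to rely on knowing the full integral Hodge structure and BB-lattice of OG$10$, which is available from \cite{LSV}; alternatively one matches $\Theta$ against the known class of a prime exceptional divisor of square $-2$ in OG$10$ (e.g. via the Mukai-flop picture). Note $\Theta$ passes through the zero section, which is a Lagrangian $\P^5$-bundle-like locus; this is exactly the flopping contractible locus.

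\textbf{Step 2: Movable and nef cones for very general $X$.} For very general $X$ the Picard rank of $J$ is $2$, generated (over $\Q$) by the fiber class $f = \pi^*\mathcal{O}(1)$ and by $\Theta$. By Markman--Bayer theory for \hk manifolds, the walls of the movable cone inside the positive cone are governed by classes of square $-2$ (prime exceptional divisors, inducing divisorial contractions/reflections) and by certain square $-\tfrac{3}{2}$ or lattice-theoretic wall classes (inducing flops / the boundary with the movable cone of a birational model). In the rank-$2$ lattice spanned by $f$ and $\Theta$, with $q(f) = 0$, $q(\Theta) = -2$, and $q(f,\Theta)$ computed from $\Theta_U$ meeting a fiber in a theta divisor (this pairing is what pins the lattice down — I would compute it via the principal polarization, getting the standard value), one enumerates all such wall classes. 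The outcome to be verified: there is exactly one wall interior to the positive cone besides the boundary ray $\R_{\ge 0} f$, and it corresponds to the zero-section Mukai flop $p: J \dashrightarrow N$; on the far side, the nef cone of $N$ has a wall given by a $-2$ class that is the proper transform $\widetilde\Theta$ of $\Theta$, yielding the divisorial contraction $h: N \to \bar N$. The main obstacle I anticipate is the lattice-theoretic bookkeeping: correctly identifying which reflections give honest birational models versus which are "fake" walls, and confirming via the geometry (the zero section is a flopping $\P^5$, $\widetilde\Theta$ is contracted) that the abstract wall-and-chamber picture is realized by the maps claimed. This is where one must invoke the explicit description of the monodromy/movable cone of OG$10$-type manifolds, e.g. from \cite{BLMNPS} or Mongardi--Onorati-type results.

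\textbf{Step 3: Identifying $\bar N$ with (a birational model of) $Z(X)$.} The divisorial contraction $h$ contracts $\widetilde\Theta$ onto an $8$-fold $Y$. To see $Y$ is birational to the LLSvS $8$-fold $Z(X)$, I would use the Abel--Jacobi description: a general point of $\Theta$ (equivalently of a fiberwise theta divisor) corresponds to an effective curve class on the hyperplane section $Y_t = X \cap H$, and the contraction quotients out the rational equivalence, landing in a space parametrizing such curves/sheaves across all hyperplane sections — which is precisely the incarnation of $Z(X)$ as parametrizing (generalized) twisted cubics on $X$, or in the Lahoz--Lehn--Macrì--Stellari/BLMNPS description as a moduli space in the Kuznetsov component. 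Concretely, the theta divisor $\Theta_{Y_t}$ in the intermediate Jacobian $J(Y_t)$ is identified (Clemens--Griffiths / Beauville) with a component of the space of curves on $Y_t$ via Abel--Jacobi, and sweeping over $t \in (\P^5)^\vee$ glues these into a family birational to $Z(X)$ fibered over $(\P^5)^\vee$; the $\P^1$-bundle structure $Z(X) \to Z'(X)$ (where $Z'(X)$ is the \hk $8$-fold) or the known geometry of $Z(X)$ then matches the fibers of $\widetilde\Theta \to Y$. The delicate point here is matching the \emph{birational} type precisely — I would argue on the open dense locus over smooth hyperplane sections with very good lines (using \cite{LSV}'s analysis of $\Theta_U$ and Abel--Jacobi), and then note birational equivalence is insensitive to the boundary. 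I expect Step 3 to be the most geometrically substantive, since it requires threading the Abel--Jacobi isomorphism through the whole fibration and reconciling it with the known constructions of $Z(X)$; the appendix by Voisin likely supplies the key cohomological or cycle-theoretic input for this identification.
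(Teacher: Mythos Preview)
Your three-step decomposition matches the paper's structure, but the actual arguments you propose diverge substantially from the paper's, and Steps~1 and~2 as written contain real gaps.

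\textbf{Step 1.} The paper (Proposition~\ref{Thetasq}) does not compute $q(\Theta)$ via Fujiki relations or lattice matching. The Fujiki relation gives only $q(L,\Theta)=1$ (Lemma~\ref{LTheta}); to extract $q(\Theta)$ that way you would need $\Theta^{10}$, which is not readily available. Instead the paper uses the Abel--Jacobi parametrization (\ref{AJtwisted}) of $\Theta$ by the $\P^1$-bundle $\P_{Z'}$ over the LLSvS space: a general ruling $R=\varphi(\P^1_C)$ is shown to lie in the smooth locus of $\Theta$; the restriction $(T_J)_{|R}$ is computed (Lemma~\ref{normalbundle}) to be $\mc O_R^{\oplus 8}\oplus\mc O_R(2)\oplus\mc O_R(-2)$, forcing $N_{R|\Theta}=\mc O_R^{\oplus 8}$ and hence $\Theta\cdot R=-2$; then a deformation-of-rational-curves argument \`a la Markman shows $\Theta$ is prime exceptional and that the class of $R$ equals that of $\Theta$ under the embedding $H^2(J,\Z)\subset H_2(J,\Z)$, whence $q(\Theta)=\Theta\cdot R=-2$. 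Your sketch supplies no mechanism that would actually produce the value $-2$.

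\textbf{Step 2.} Here your plan diverges most. The paper does \emph{not} run an abstract wall-and-chamber analysis for OG$10$; your mention of ``square $-\tfrac{3}{2}$'' wall classes is K3$^{[n]}$-type intuition that does not transfer. The paper's method is a degeneration: one lets $X$ degenerate to the chordal cubic, builds (Proposition~\ref{3fams}) a family of Lagrangian fibrations whose central fiber is the singular moduli space $M_{2v_0}(S)$ on the associated degree-$2$ K3, identifies the limits of $L$ and $\Theta$ with $\ell$ and $\theta$ on $M_{2v_0}$ (Lemma~\ref{Udeforms}), and then invokes the Meachan--Zhang computation (Lemma~\ref{MZ}) of $\Nef(M_{2v_0})$ and $\Mov(M_{2v_0})$. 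There one sees explicitly that $\theta$ is contracted after Mukai-flopping the zero section; a deformation/semicontinuity argument (Proposition~\ref{propmukflop}) carries this back to $J(X)$ for general $X$. A purely lattice-theoretic approach might now be feasible given recent work on the monodromy and K\"ahler cone of OG$10$, but it is not what the paper does, and you have not identified the correct wall classes.

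\textbf{Step 3.} Here you are close in spirit, though the paper's argument is one line once Step~1 is in place: the divisorial contraction $N\to\bar N$ contracts exactly the rulings $R$ of $p_*\Theta$, and by (\ref{AJtwisted}) these are the fibers of $\P_{Z'}\to Z'$, so the image is birational to $Z(X)$. Your reference to the appendix is misplaced: Voisin's appendix concerns the Beauville conjecture on Chow groups and plays no role in Theorem~\ref{NSU}.
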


Thus, for very general $X$, $J$ is the unique \hk birational model with a Lagrangian fibration, it is not birational to $J^T$  (Corollary \ref{J not JT}), and its movable cone is the union of its nef cone and the nef cone of $N$. This answers a question by Voisin \cite{Voisin-twisted}. 
As a consequence of this Theorem we show that for very general $X$, $J$ is not birational to a moduli spaces of objects in the Kuznetsov component $\mc K u(X)$ of $X$ (see Corollary \ref{notkuz}). 
%, using the recent  \cite{BLMNPS} as well as on the forthcoming paper \cite{Pertusi-et-al}, that for very general $X$, $J$ is not birational to a moduli spaces of objects in the Kuznetsov component $\mc K u(X)$ of $X$ (see Corollary \ref{notkuz}). 
In the opposite direction, it was recently proved \cite{Pertusi-et-al}) that the twisted \hk manifold $J^T$ is birational to a moduli space of objects of OG$10$-type in $\mc K u(X)$. By objects of OG$10$-type, we mean objects whose Mukai vector is of the form $2w$, with $w^2=2$.  As a consequence, the family of intermediate Jacobian fibrations is the only known family of \hk manifolds associated with cubic fourfolds whose very general point cannot be described as a moduli space of objects in the Kuznetsov component of $X$.

Given $J=J(X)$, a \hk compactification of the intermediate Jacobian fibration for any smooth cubic fourfold  $X$, a natural question to ask is how the geometry of $J$ changes as $X$ becomes less general. One way to answer this question is the following theorem,  describing the Mordell-Weil group of $\pi$ in terms of the primitive algebraic cohomology of $X$. In Section \ref{section MW} we prove:

\begin{thmintro} \label{main thm 3} (Theorem \ref{MW}) 
 Let $MW(\pi)$ be the Mordell-Weil group of $\pi: J \to \P^5$, i.e., the group of rational sections of $\pi$ and let $H^{2,2}(X, \mathbb Z)_0$ be the primitive degree $4$ integral cohomology of $X$.
 The natural group homomorphism 
\[
\phi_X: H^{2,2}(X, \mathbb Z)_0 \to MW(\pi)
\]
induced by the Abel-Jacobi map is an isomorphism.\end{thmintro}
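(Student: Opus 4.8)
The plan is to construct a map in the opposite direction and check that the two are mutually inverse. First I would recall the definition of $\phi_X$: given a primitive Hodge class $\alpha \in H^{2,2}(X,\Z)_0$, for a generic hyperplane section $X_t = X \cap H_t$ (with $t \in U$) the restriction $\alpha|_{X_t}$ is a primitive degree-$4$ class on the cubic threefold $X_t$, which is homologically trivial (cubic threefolds have no primitive $H^{2,2}$ over $\Q$, and in fact $H^4(X_t,\Z)$ is generated by the line class), hence the Abel--Jacobi map of $X_t$ sends it to a point of the intermediate Jacobian $J(X_t)$. Letting $t$ vary produces a section of $\pi_U$ over $U$, which then extends to a rational section of $\pi$ since $J$ is a \hk compactification and the complement of $J_U$ has codimension $\geq 2$ in each fibre direction appropriately — more precisely, a rational section over a dense open extends to a rational section of the whole fibration. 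One must check this is a group homomorphism, which follows from linearity of the Abel--Jacobi map, and that it is well-defined (independent of choices), which follows from the deformation-invariance of the Abel--Jacobi construction over the family. This part I expect to be essentially formal given the normal function formalism.

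Next I would build the inverse. Given a rational section $s \in MW(\pi)$, restrict it to a section $s_U$ of $\pi_U$ over a dense open $U' \subseteq U$; this is a normal function for the family of intermediate Jacobians of hyperplane sections of $X$. The key input is Voisin's theory (this is exactly the setting of the Appendix by Claire Voisin, and also of the Green--Griffiths--Voisin analysis of normal functions): the infinitesimal invariant, or directly the Hodge-theoretic classification of normal functions associated to this particular family, shows that every such normal function over $U'$ arises from a primitive Hodge class on $X$ via the Abel--Jacobi construction. Concretely, the relevant statement is that the space of (admissible) normal functions for $\pi_U$ is identified, via the topological invariant landing in $H^1(U, R^3\pi_{U*}\Z)$ and a monodromy/Leray computation, with $H^{2,2}(X,\Z)_0$; the Lefschetz hyperplane theorem and the structure of the vanishing cohomology of the universal hyperplane section do the bookkeeping here. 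This gives a map $MW(\pi) \to H^{2,2}(X,\Z)_0$.

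Then I would verify that the two maps are inverse to each other. One composite, $H^{2,2}(X,\Z)_0 \to MW(\pi) \to H^{2,2}(X,\Z)_0$, is the identity essentially by construction: the topological invariant of the normal function $\phi_X(\alpha)$ is, by the compatibility of Abel--Jacobi maps with the Leray filtration, exactly the class determined by $\alpha$. For the other composite one uses that a rational section of a \hk Lagrangian fibration is determined by its restriction to the generic fibre (and that $J_U \to U$ has no nonconstant sections other than those accounted for, i.e. the sections are rigid), so a section whose associated Hodge class is zero must be the zero section. Injectivity of $\phi_X$ can alternatively be seen directly: if $\phi_X(\alpha)$ is the zero section then $\alpha|_{X_t}$ is Abel--Jacobi trivial for all $t$, and by a spread/monodromy argument (the monodromy on the vanishing cohomology of hyperplane sections is big) this forces $\alpha = 0$ in $H^4(X,\Z)_{\mathrm{prim}}$ modulo torsion — and one checks separately there is no torsion contribution, since $H^4(X,\Z)$ is torsion-free for a cubic fourfold.

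The main obstacle I anticipate is the surjectivity of $\phi_X$, equivalently showing that \emph{every} rational section comes from a global primitive Hodge class on $X$ rather than merely from a normal function that is a priori only "locally" of geometric origin: one must rule out normal functions with nontrivial local monodromy contributions or ones not extending across the discriminant, and one must handle the extension of sections over the bad locus of $\pi$ (including the singular fibres added in the compactification of Theorem \ref{main thm 1}). This is precisely where the \hk structure of $J$ and the precise geometry of the compactification enter, and where I would expect to invoke the Appendix's results on normal functions together with the fact that $J$ is smooth and projective with $\pi$ equidimensional.
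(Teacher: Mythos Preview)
Your proposal has the right general shape but contains a genuine gap in the surjectivity step, and your injectivity sketch, while plausible in spirit, is not the argument the paper gives.

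For surjectivity you assert that ``the space of (admissible) normal functions for $\pi_U$ is identified, via the topological invariant landing in $H^1(U, R^3\pi_{U*}\Z)$ and a monodromy/Leray computation, with $H^{2,2}(X,\Z)_0$.'' This is essentially the statement of the theorem, not a proof of it: the topological invariant of an arbitrary rational section lands in $H^1(U, R^3\pi_{U*}\Z)$, and there is no a priori reason that this class lifts to an \emph{integral} primitive Hodge class on $X$ (as opposed to a rational one, or a class on some auxiliary space). The paper does not attempt to build an inverse map directly. Instead it proceeds in two steps. First, Oguiso's Shioda--Tate formula (Proposition~\ref{shioda-tate}) together with Lemma~\ref{tr lattice} gives the rank equality $\rk MW(\pi)=\rk H^{2,2}(X,\Z)_0$; this is the one place where the existence of a hyper-K\"ahler compactification with a regular Lagrangian fibration is used. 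Hence the cokernel of $\phi_X$ is finite: for any $\sigma\in MW(\pi)$ there exist $N\ge 1$ and $\alpha\in H^{2,2}(X,\Z)_0$ with $\phi_X(\alpha)=N\sigma$. Second, one restricts to a general Lefschetz pencil $\mc Y'\to\P^1$ and invokes Voisin's explicit construction (Proposition~\ref{lifting}, via relative moduli of sheaves on cubic threefolds) of a relative $1$-cycle whose Abel--Jacobi section equals the restriction $\sigma'$ of $\sigma$. Using the precise description of $\ker\phi'_{\mc Y'}$ in terms of the Leray filtration (Proposition~\ref{propaj}), one deduces that $\alpha=N\bar\beta'$ for some $\bar\beta'\in H^{2,2}(X,\Z)_0$, and then checks that $\phi_X(\bar\beta')=\sigma$ globally because it agrees with $\sigma$ on every general pencil. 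Your proposal mentions none of these ingredients; in particular, you do not use the Shioda--Tate rank count, without which there is no obvious way to rule out torsion in the cokernel or to even know $\phi_X\otimes\Q$ is surjective.

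For injectivity the paper again works on a Lefschetz pencil: it uses Zucker's result that the Hodge class $cl(\nu)\in H^1(U',R^3q_*\Z)$ of the normal function recovers (via the Leray filtration, Lemmas~\ref{Leray filtration}--\ref{fiveterm} and Corollary~\ref{periniettivita}) the original class $\alpha$ up to the kernel $H^0(\Sigma,\Z)$, which does not meet $H^{2,2}(X,\Z)_0$. Your ``spread/monodromy'' sketch is heuristically in the same direction but would need to be made precise along these lines; as written it does not control the integral structure.

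Finally, a minor point: the Appendix by Voisin in this paper concerns the Beauville conjecture on Chow-theoretic relations and is unrelated to the normal-function input you cite; the relevant references for the surjectivity step are Voisin's earlier work on the integral Hodge conjecture for cubic fourfolds and Zucker's theory of normal functions.
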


The proof of this result uses the theory of normal functions, as developed by Griffiths and Zucker, as well as the techniques used by Voisin to prove the integral Hodge conjecture for cubic fourfolds. 
A consequence of this is a  geometric description of the Lagrangian fibered \hk manifolds with maximal Mordell-Weil rank whose existence was proved by Oguiso in \cite{Oguiso-MW}: indeed, Oguiso's examples are (birationally) given by $J=J(X) \to \P^5$, where $X$ a smooth cubic fourfold with $H^{2,2}(X, \Z)$ of maximal rank.

\subsection*{Plan of the paper} In Section \ref{birational section} we prove the existence of a \hk compactification for $J_U$ and for $J_U^T$, in the case of any smooth, or mildly singular, $X$. This uses some results from the minimal model program, which are briefly recalled. 
In Section \ref{ogtentype} we review some basic results about moduli spaces of OG$10$-type and we compute, using the Bayer-Macr\`i techniques adapted to these singular moduli spaces by Meachan-Zhang \cite{Meachan-Zhang}, the nef and movable cones of certain moduli spaces of OG$10$-type  that appear as limits of the intermediate Jacobian fibration. 
The main result of Section \ref{section theta} is the computation that $q(\Theta)=-2$. 
 Section \ref{section general X} is devoted to the proof of Theorem \ref{main thm 2} and its preparation: Given a family of  cubic fourfold degenerating to the chordal cubic, we construct a certain degeneration of the intermediate Jacobian fibration and identify the limit of the corresponding degeneration of the relative Theta divisor. By the results of Section \ref{ogtentype}, the limiting theta divisor can be contracted after a Mukai flop of the zero section and we deduce the analogue result for $\Theta$.
The computation of the Mordell-Weil group occupies Section \ref{section MW}.

Finally, in the Appendix by C. Voisin, some applications to the Beauville conjecture on the polynomial relations in the Chow group of a projective \hk manifold are given for $J=J(X)$, in the case of very general $J$ of Picard number $2$ or $3$. This is obtained as an application of the computation of $q(\Theta)=-2$ from Theorem \ref{main thm 2}.

\subsection*{Acknowledgements} I would like to thank J. Koll\'ar for pointing my attention to the techniques of \cite{Kollar-Elliptic} and \cite{Lai}, which are used in this paper.  
It is my pleasure to thank E. Arbarello, C. Camere, G. Di Cerbo, K. Hulek, R. Laza, E. Macr\`i, C. Onorati, G. Pearlstein, A. Rapagnetta,  L. Tasin, C. Voisin for useful and interesting discussions related to the topic of this paper. I also thank the anonymous referee for having read this paper very carefully and for many useful comments.
Finally, I would like to warmly thank Coll\`ege de France and \'Ecole Normal Sup\'erieure for the  hospitality and  the great working conditions while the  final version of this manuscript was being prepared. This work is partially supported by NSF Grant DMS-1801818.

\section{A \hk compactification of the intermediate Jacobian fibration for any smooth cubic fourfold}

\label{birational section}

We denote by $X \subset \P^5$ a smooth cubic fourfold, by $\Pfd$  the dual projective space parametrizing hyperplane sections $Y=X \cap H \subset X$, and by $U \subset \Pfd$ the open subset parametrizing smooth hyperplane sections. The dual hypersurface of $X$, parametrizing  singular hyperplane sections, is denoted by $X^\vee \subset \Pfd$.  Its smooth locus 
\[
U_1:=(\P^5)^\vee \setminus Sing(X^\vee) \subset (\P^5)^\vee
\]
parametrizes hyperplane sections of $X$ that are smooth or have one simple node and no other singularities. In what follows, we freely drop the $^\vee$ from $\Pfd$ and write simply $\P^5$. From the context it will be clear if we are referring to the  projective space parametrizing hyperplane sections of $X$ or the projective space containing $X$. For a smooth cubic threefold $Y$, the Griffiths' intermediate Jacobian of $Y$ will be denoted by
\[
\Jac(Y)\cong H^1(Y, \Omega_Y^2)^\vee \slash H_3(Y,\Z).
\]
It is a principally polarized abelian fivefold which parametrizes rational equivalence classes of homologically trivial $1$-cycles on $Y$ \cite[Thm. 6.24]{Voisin-Chow}.

Over $U$ consider the Donagi-Markman fibration
\be \label{intjacfibr}
\pi_U: J_U=J_U(X) \to U
\ee
whose fiber over a smooth hyperplane section $Y=X \cap H$ is the intermediate Jacobian $\Jac(Y)$. By \cite{Donagi-Markman}, $J_U$ is quasi-projective and admits a holomorphic symplectic form $\sigma_{J_U}$ with respect to which $\pi_U$ is Lagrangian.  The main results of \cite{LSV} is the following theorem

\begin{thm}[\hspace{1sp}\cite{LSV}] \label{LSVthm}
Let $X$ be a general cubic fourfold. Then there exists a smooth projective  compactification $J=J(X)$ of $J_U$, with a  flat morphism $\pi: J \to \Pfd$ extending $\pi_U$ which has irreducible fibers and which admits a rational zero section $s: \Pfd \dashrightarrow J$. Moreover, $J$ is an irreducible holomorphic symplectic manifold, deformation equivalent to O'Grady's $10$-dimensional exceptional example.
\end{thm}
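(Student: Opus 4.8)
The plan is to realise each intermediate Jacobian $\Jac(Y)$ as a Prym variety of an explicit double cover and to globalise this description over $(\P^5)^\vee$. Recall that for a smooth cubic threefold $Y$ and a general line $\ell\subset Y$, projection from $\ell$ realises $\mathrm{Bl}_\ell Y$ as a conic bundle over $\P^2$ whose discriminant is a plane quintic $\Delta_\ell$ carrying a connected \'etale double cover $\widetilde\Delta_\ell\to\Delta_\ell$, and there is a canonical isomorphism $\Jac(Y)\cong\Prym(\widetilde\Delta_\ell/\Delta_\ell)$. First I would isolate the class of \emph{good lines} on $Y$ --- those for which $\Delta_\ell$ has at worst nodes and $\widetilde\Delta_\ell$ is connected and \'etale away from the nodes --- and, crucially, the class of \emph{very good lines} $\ell\subset X$: lines such that, for \emph{every} hyperplane $H\supset\ell$, the induced line on $Y=X\cap H$ is a good line for $Y$, with only the mildest allowed degenerations of $\Delta_\ell$ and $\widetilde\Delta_\ell$ occurring (a single extra node of $\Delta_\ell$ when $Y$ itself acquires a node). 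The first main step is then to prove, by a dimension count on the flag of pairs $(\ell,H)$ together with a classification of the possible singularity strata of the family of discriminant quintics, that for $X$ general the open sets $V_\ell:=\{H:\ell\subset H\}\subset(\P^5)^\vee$, as $\ell$ runs over the very good lines of $X$, cover the locus $U_1$ of at-worst-one-node hyperplane sections; equivalently, every such hyperplane section contains a line that is a very good line of $X$.

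Given this, the second step is, for each fixed very good line $\ell$, to slice the discriminant surface $D_\ell\subset\P^3$ of the conic bundle $\mathrm{Bl}_\ell X\to\P^3$ by the planes parametrised by $V_\ell\cong(\P^3)^\vee$, obtain the resulting family of quintics $\Delta_\ell$ with its double cover over $V_\ell$, and form the associated relative compactified Prym variety $\overline{P}_\ell\to V_\ell$, which restricts to $J_U$ over $V_\ell\cap U$. The technical heart here is a smoothness criterion for relative compactified Pryms: I would prove that $\overline{P}_\ell$ is smooth as soon as the curves in the family are at worst nodal and the double-cover datum degenerates in the mildest way --- precisely the situation guaranteed by the very-good-line hypothesis. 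One then checks that $\overline{P}_\ell$ and $\overline{P}_{\ell'}$ agree canonically over $V_\ell\cap V_{\ell'}$, i.e.\ that the compactification does not depend on the chosen very good line, so that the $\overline{P}_\ell$ glue to a smooth quasi-projective variety over $U_1$ with a proper map extending $\pi_U$ (flatness by miracle flatness, the total space being smooth with $5$-dimensional fibres over the smooth $U_1$; irreducibility of the fibres read off from the explicit compactified Pryms of the nodal curves). A separate local analysis extends the construction over the codimension-$\ge 2$ locus $\Sing(X^\vee)$, producing $\pi:J\to(\P^5)^\vee$; and the obvious zero section of $\pi_U$ (each $\Jac(Y)$ being an abelian variety with its natural origin) extends to a rational section of $\pi$.

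Third, I would equip $J$ with a holomorphic symplectic form. Over $U$, the Donagi--Markman cylinder/Abel--Jacobi construction already yields a closed, non-degenerate $2$-form $\sigma_{J_U}$; the issue is to extend it holomorphically across the divisor $\pi^{-1}(X^\vee)\subset J$, which has codimension one, so this is not automatic. Here I would use the very good line to embed a neighbourhood of each special fibre, fibrewise, into a relative moduli space of rank-one torsion-free sheaves on the nodal double covers $\widetilde\Delta_\ell$, transport the Mukai--Donagi--Markman $2$-form to that ambient space, and verify that it restricts to a regular, everywhere non-degenerate $2$-form on the smooth variety $J$; by Hartogs this is the extension of $\sigma_{J_U}$, so $J$ has trivial canonical bundle and is holomorphically symplectic. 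Finally, to see that $J$ is an \emph{irreducible} holomorphic symplectic manifold and is deformation equivalent to OG$_{10}$, I would degenerate $X$ to a suitable special (possibly singular) cubic fourfold for which the corresponding fibration is birational to a moduli space of sheaves on a K3 surface with Mukai vector $2w$, $w^2=2$ --- O'Grady's example up to a symplectic resolution. Since the whole construction works in families over a Zariski-dense open subset of the moduli of cubic fourfolds, $J(X)$ is a smooth deformation of this special fibration, hence of OG$_{10}$-type (using that birational, and deformation-equivalent, smooth hyper--K\"ahler manifolds form a single deformation class).

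The main obstacle is the first step: the existence, for a \emph{general} cubic fourfold, of very good lines adapted to every hyperplane section, together with the accompanying smoothness criterion for the relative compactified Prym varieties they produce. These two points are exactly what force the compactification to be \emph{smooth}, and they carry essentially all the new geometric content; by comparison the gluing, the cycle-theoretic extension of the symplectic form, and the specialisation identifying the deformation class are more routine.
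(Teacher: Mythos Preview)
The paper does not give its own proof of this theorem: it is quoted from \cite{LSV}, and the present paper only summarises the ingredients (see the paragraph following Theorem~\ref{LSVthm} and the Introduction). That said, your outline matches those ingredients closely --- very good lines, the smoothness criterion for relative compactified Pryms, and the independence of the compactification from the choice of line are exactly the pillars the paper lists for the \cite{LSV} construction.

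Two points of divergence are worth flagging. First, in \cite{LSV} the holomorphic symplectic form is obtained \emph{cycle-theoretically}: one starts from a generator $\eta\in H^{3,1}(X)$ and shows, via Abel--Jacobi, that it induces a $2$-form on $J_U$ which extends to a regular form on \emph{any} smooth projective compactification (this is what the present paper invokes as \cite[Thm.~1.2]{LSV} in Lemma~\ref{lem K eff} and Proposition~\ref{checa}). Your proposal to transport a Mukai-type form from an ambient moduli space of sheaves on the double-cover curves is a different mechanism; it may well work, but it does not directly yield the stronger ``extends to any resolution'' statement, which is what the present paper repeatedly uses. Second, your phrasing suggests the $V_\ell$ only cover $U_1$ and that a ``separate local analysis'' handles $\mathrm{Sing}(X^\vee)$; in \cite{LSV} the point of the very-good-line notion (cf.\ \cite[Def.~2.9, Prop.~2.10]{LSV}) is precisely that, for general $X$, \emph{every} hyperplane section admits one, so the compactified Prym patches $\overline P_\ell\to V_\ell$ already cover all of $(\P^5)^\vee$ and no separate extension step is needed. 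Your identification of the deformation type via specialisation is in the right spirit and is how \cite[Cor.~6.3]{LSV} proceeds.
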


We will say that $X$ is general in the sense of LSV if the construction of \cite{LSV} works for $J_U(X)$, and we refer to $J=J(X)$ as in Theorem \ref{LSVthm} as the LSV fibration. A necessary condition for this to happen is that the hyperplane sections of $X$ are palindromic (see \cite{Brosnan}). For example, a cubic fourfold containing a plane is not general in the sense of LSV.

To extend the theorem above for any $X$, we use the existence of a \hk compactifiction  for general $X$, the cycle theoretic description of the holomorphic symplectic form that was given in \cite{LSV}, the degeneration results from \cite{KLSV}, and  techniques from the minimal model program (following \cite{Kollar-Elliptic,Lai}). We start by recalling the construction of a natural partial compactification of $J_U$, which already appeared in \cite{Donagi-Markman, LSV}.

\begin{lemma} [\hspace{1sp}\cite{Donagi-Markman,LSV}] \label{lem Juno}  For any smooth $X$, there is a canonical  partial compactification $J_{U_1}=J_{U_1}(X)$ of $J_U$, with a projective morphism $\pi_{U_1}: J_{U_1} \to U_1$ with irreducible fibers extending $\pi_U$.  $J_{U_1}$ is smooth and has a holomorphic symplectic form $\sigma_{J_{U_1}}$ extending $\sigma_{J_U}$.
\end{lemma}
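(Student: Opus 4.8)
The plan is to construct $J_{U_1}$ by adjoining to $J_U$, over each point of $U_1 \setminus U$, the compactified Jacobian of the corresponding nodal cubic threefold, and then to check smoothness and the extension of the symplectic form. First I would recall the geometry of a cubic threefold $Y_0$ with a single node $p$: projection from $p$ realizes the blow-up $\widetilde{Y_0}$ as (the blow-up along a curve of) a smooth quadric threefold, and the intermediate Jacobian degenerates to a semi-abelian variety that is an extension $1 \to \mathbb{G}_m \to G \to \Jac(\widetilde{C}) \to 1$, where $\widetilde{C} \to C$ is an étale double cover of a genus-$4$ curve (the associated Prym). The natural compactification of this semi-abelian variety is a rank-one degeneration, i.e. a $\mathbb{P}^1$-bundle over $\Jac(\widetilde C)$ with the $0$ and $\infty$ sections glued along a translation; this is the compactified Prym / intermediate Jacobian considered in \cite{Donagi-Markman, LSV}. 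Doing this fiberwise over $U_1 \setminus U$ and gluing with $J_U$ produces a set-theoretic candidate for $J_{U_1}$; one then has to exhibit it as an actual quasi-projective scheme with a projective morphism $\pi_{U_1} \colon J_{U_1} \to U_1$. The cleanest way is to use the relative compactified Prym construction of \cite{LSV} over the (analytic or étale) locus $U_1$, together with the fact that $U_1$ is precisely where the discriminant hypersurface $X^\vee$ is smooth, so that the family of hyperplane sections has at worst a single node and the total space of the curves varies in a controlled one-parameter way.

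Next I would establish irreducibility of the fibers over $U_1 \setminus U$: the compactified Jacobian of an integral curve with one node is irreducible, and a nodal hyperplane section of $X$ is an integral cubic threefold, so each added fiber is irreducible; combined with the irreducibility of the intermediate Jacobians over $U$ this gives $\pi_{U_1}$ irreducible fibers. Projectivity of $\pi_{U_1}$ follows because each fiber is projective (the compactified Prym is projective) and the construction globalizes via a relative moduli space / relative $\mathrm{Pic}$ argument as in \cite{LSV}.

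For smoothness of $J_{U_1}$, the key local model is the following: near a point of the boundary fiber, $J_{U_1}$ looks like the total space of a degenerating family of principally polarized abelian varieties of rank-one degeneration over the smooth base $U_1$, and such total spaces are smooth precisely because the period map (or the local monodromy, which is unipotent of rank one) is as non-degenerate as possible — this is exactly where the hypothesis that we stay over $U_1$, rather than all of $\P^5$, is used, since over $\mathrm{Sing}(X^\vee)$ the monodromy degenerates further. Concretely, I would write the compactified Prym locally as $\{xy = t\} \times (\text{abelian part})$, up to a twist by the gluing translation, where $t$ is a local equation of $X^\vee$; smoothness of $U_1$ and the fact that $dt \neq 0$ along $X^\vee \cap U_1$ make this local model smooth. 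Finally, to extend the symplectic form: $\sigma_{J_U}$ is a holomorphic $2$-form on $J_{U_1} \setminus (\text{boundary})$, and the boundary $\Theta_{U_1}$ (the relative theta divisor over $U_1$) has codimension one; so one invokes that $J_{U_1}$ is smooth, hence normal, and that a holomorphic form on the complement of a divisor extends as soon as it has no pole — which here follows from the cycle-theoretic / Hodge-theoretic description of $\sigma_{J_U}$ in \cite{LSV}, or alternatively from the fact that the degeneration of intermediate Jacobians is of "Type II" (finite monodromy on the abelian part) so the canonical symplectic form of a semi-abelian degeneration extends over the rank-one boundary. The main obstacle is the smoothness statement at the boundary: one must pin down the precise local structure of the relative compactified Prym over a node of the discriminant and verify non-degeneracy of the relevant extension class; everything else (irreducibility, projectivity, extension of $\sigma$) is comparatively formal once smoothness is in hand, and in any case this is done in \cite{Donagi-Markman, LSV}, which we may cite.
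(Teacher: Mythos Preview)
The paper's own proof is purely by citation: it refers to \cite[\S 8.5.2 and Thm.~8.18]{Donagi-Markman} for a direct construction, and alternatively to \cite[Cor.~2.38]{Collino-Murre-I} together with \cite[Def.~2.2 and 2.9, Prop.~1.4, Lem.~5.2]{LSV} for the relative compactified Prym approach. Your proposal is essentially an outline of the second route, so the strategy matches what the paper invokes; since the lemma is attributed to these references anyway, an outline followed by the same citations is perfectly adequate.

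That said, two geometric details in your sketch are off and should be corrected if you retain the outline. First, the blow-up of a nodal cubic threefold $Y_0$ at its node is \emph{not} a blow-up of a quadric threefold: projection from the node is birational onto $\P^3$, and $\widetilde{Y_0}\cong Bl_C\,\P^3$ where $C$ is the $(2,3)$ complete intersection genus-$4$ curve of lines through the node (compare the analogous fourfold statement in the proof of Proposition~\ref{checa}). Second, the ``boundary'' over which you extend $\sigma_{J_U}$ is $\pi_{U_1}^{-1}(U_1\setminus U)$, the fibers over the smooth part of the discriminant, \emph{not} the relative theta divisor $\Theta_{U_1}$; the theta divisor dominates $U_1$ and is an entirely different object (cf.\ \S\ref{section theta}). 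Neither slip affects the logic --- you correctly identify the complement as a divisor, and the extension of $\sigma$ then follows from the cycle-theoretic description in \cite{LSV} exactly as you say --- but the misidentifications should be fixed.
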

\begin{proof} This is already proved in \cite[\S 8.5.2 and Thm. 8.18]{Donagi-Markman}. Alternatively, one can use
\cite[Cor. 2.38]{Collino-Murre-I}, and  \cite[Def. 2.2 and 2.9, Prop. 1.4, and Lem. 5.2]{LSV}.
\end{proof}

Before giving an application of the cycle-theoretic construction of the holomorphic symplectic form \cite[\S 1]{LSV}, we  recall the definition of symplectic variety.

\begin{defin} A normal projective variety $M$ is called symplectic if its smooth locus carries a holomorphic symplectic form which extends to a regular (i.e. holomorphic) form on any resolution of singularities of $M$.
\end{defin}

\begin{lemma} \label{lem K eff} \label{cor not uniruled}
Let $\bar J $ be a normal projective compactification of $J_U$. Then
\begin{enumerate}
\item The smooth locus of $\bar J$ admits a homolorphic two form extending $\sigma_{J_U}$. In particular, the canonical class $K_{\bar J}$ of $\bar J$ is effective and is trivial if and only if $\bar J$ is a symplectic variety. 
\item $\bar J$ is not uniruled.
\end{enumerate}
\end{lemma}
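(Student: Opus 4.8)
The plan is to reduce both assertions to the single fact that $\sigma_{J_U}$ extends to a holomorphic $2$-form on $\bar J^{sm}$, and to prove that fact using the cycle-theoretic description of $\sigma_{J_U}$ from \cite[\S 1]{LSV} together with Lemma \ref{lem Juno}. Granting the extension, (1) is formal. Let $\widetilde\sigma$ be the resulting $2$-form on $\bar J^{sm}$. Then $\widetilde\sigma^{\wedge 5}$ is a holomorphic section of $\omega_{\bar J^{sm}}$, nonzero because $\sigma_{J_U}$ is nondegenerate on the dense open $J_U$; since $\bar J$ is normal, this is a section of $\omega_{\bar J}=\mc O_{\bar J}(K_{\bar J})$, so $K_{\bar J}$ is effective. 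If in addition $K_{\bar J}\sim 0$, then $\widetilde\sigma^{\wedge 5}\in H^0(\bar J,\omega_{\bar J})\cong H^0(\bar J,\mc O_{\bar J})=\C$ is a nowhere-vanishing multiple of the trivializing section, so $\widetilde\sigma$ is everywhere nondegenerate on $\bar J^{sm}$; as $\widetilde\sigma$ is (by the argument below) the restriction of a holomorphic form on a resolution of $\bar J$, and hence on every resolution, $\bar J$ is a symplectic variety. Conversely, the top power of the symplectic form of a symplectic variety trivializes $\omega$ on the smooth locus, so $K_{\bar J}\sim 0$. Statement (2) is equally formal: on a resolution $\widetilde J\to\bar J$ the pullback of $\widetilde\sigma$ gives a nonzero section of $\omega_{\widetilde J}$, so $\kappa(\widetilde J)\geq 0$; since a smooth projective uniruled variety has Kodaira dimension $-\infty$ and uniruledness is a birational invariant, $\bar J$ is not uniruled.

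So the work is the extension statement. First I would note that it is birationally invariant: if $\sigma_{J_U}$ extends to a holomorphic $2$-form on one smooth projective compactification, then, via a common resolution and the identity $f_*\Omega^2=\Omega^2$ for birational morphisms $f$ of smooth projective varieties (itself a codimension-$\geq 2$ Hartogs argument), it extends to every smooth projective compactification; restricting to a resolution $r:\widetilde J\to\bar J$ that is an isomorphism over $\bar J^{sm}$ away from a subset of codimension $\geq 2$ and extending by Hartogs then yields the $2$-form on $\bar J^{sm}$. Thus it suffices to exhibit one smooth projective $\widetilde J\supset J_U$ over which $\sigma_{J_U}$ extends, and I would take $\widetilde J$ to carry in addition a morphism $\widetilde\pi:\widetilde J\to\P^5$ extending $\pi_U$ (resolve the rational map to $\P^5$, then the singularities). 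For such a $\widetilde J$: it is irreducible and $\pi_U:J_U\to U$ is proper, so $J_U$ is open and closed in the (irreducible) open set $\widetilde\pi^{-1}(U)$, forcing $\widetilde\pi^{-1}(U)=J_U$; hence the boundary $\widetilde J\setminus J_U=\widetilde\pi^{-1}(X^\vee)$ lies over the discriminant. To extend $\sigma_{J_U}$ it is enough to show that, as a rational $2$-form on $\widetilde J$, it has no pole along each boundary prime divisor $D$, and this may be tested at the generic point of $D$ — a local question over a general point of $\widetilde\pi(D)\subseteq X^\vee$.

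For the boundary divisors $D$ whose image meets $U_1$ — in particular all those dominating $X^\vee$ — this is handled by Lemma \ref{lem Juno}: $\sigma_{J_U}$ extends to the smooth partial compactification $J_{U_1}$, which is proper over $U_1$, so the rational map $\widetilde\pi^{-1}(U_1)\dashrightarrow J_{U_1}$ over $U_1$, being an isomorphism over $U$, is defined in codimension one and in particular at the generic point of $D$; pulling back $\sigma_{J_{U_1}}$ shows $\sigma_{J_U}$ has no pole there. The hard part will be the boundary divisors lying entirely over $\Sing X^\vee$ — hyperplane sections with singularities worse than a single node — where Lemma \ref{lem Juno} says nothing and where the period description of $\sigma$ could a priori force logarithmic poles. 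This is exactly where I would invoke the cycle-theoretic construction of \cite[\S 1]{LSV}: there $\sigma_{J_U}$ is realized as the $2$-form induced by a cohomological correspondence attached to a universal algebraic $1$-cycle over $\P^5$, and once one checks that this cycle, hence the correspondence, extends over the boundary divisor in question, the resulting expression for $\sigma$ is manifestly regular, being built out of intersection numbers that specialize rather than out of periods. Carrying out this last step carefully — for the deepest strata of $X^\vee$ and for an \emph{arbitrary} normal projective compactification $\bar J$ — is the real content of the lemma; everything else is formal.
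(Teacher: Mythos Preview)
Your proposal is correct and lands on the same key input as the paper, namely the cycle-theoretic construction of $\sigma_{J_U}$ from \cite[\S 1]{LSV}. The paper's proof, however, is a two-line citation: it invokes \cite[Thm.~1.2 (iii)]{LSV} directly, which already asserts that $\sigma_{J_U}$ extends to a holomorphic $2$-form on \emph{any} smooth projective compactification of $J_U$, and then derives the remaining claims formally (effectivity of $K_{\bar J}$ from the degeneracy locus of the extended form, non-uniruledness via \cite{Miyaoka-Mori} on a resolution). Your stratified approach---first pushing the form across $U_1$ using Lemma~\ref{lem Juno}, then invoking the cycle-theoretic construction only for boundary divisors mapping into $\Sing(X^\vee)$---is unnecessary overhead: the correspondence argument in \cite[Thm.~1.2]{LSV} is global and uniform, and never needs to distinguish strata of $X^\vee$. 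What you call ``the real content of the lemma'' is precisely the content of that theorem, so rather than re-deriving it you can simply cite it. Your formal deductions in (1) and (2) are fine and match the paper's.
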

\begin{proof}
(1) The first statement is \cite[Thm. 1.2 iii)]{LSV}, while the second follows from the fact that the canonical class of $\bar J$ is the (closure of the) codimension one locus where the generically non-degenerate holomorphic two form is degenerate. (2) Let  $\tilde J \to \bar J$ be a resolution of singularities. By (1), $\tilde J$ has effective canonical class and thus by \cite{Miyaoka-Mori} it is not uniruled.
\end{proof}

The following is an application of the degeneration techniques of \cite{KLSV}. 

\begin{prop} \label{prop hk model}
Let $X$ be a smooth cubic fourfold and let $J_{U}=J_{U}(X)$ be as above. Then there exists a smooth projective \hk manifold $M$  birational to   $J_{U}$ and  of OG$10$-type.
\end{prop}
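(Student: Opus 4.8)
The plan is to degenerate $X$ to a general cubic fourfold $X_0$ in the sense of LSV, for which Theorem \ref{LSVthm} gives a \hk compactification $J_0 = J(X_0)$ of OG$_{10}$-type, and then use the degeneration techniques of \cite{KLSV} to transport this \hk structure back to $J_U = J_U(X)$ up to birational equivalence. First I would choose a connected quasi-projective base $B$ (say a smooth curve, or an open in the space of cubic fourfolds) with a family $\mathcal{X} \to B$ of smooth cubic fourfolds such that one fiber $X_{b_0} = X$ is our given cubic fourfold and the general fiber $X_b$ is general in the sense of LSV; this is possible because being general in the sense of LSV is a nonempty Zariski-open (or at least very general) condition, while the intermediate Jacobian fibration deforms in families. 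Over the locus $B^{\circ} \subset B$ of LSV-general fibers we then have a family of \hk manifolds $\mathcal{J}^{\circ} \to B^{\circ}$ whose fiber over $b$ is the LSV fibration $J(X_b)$, all of OG$_{10}$-type.

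The heart of the argument is to fill in the central fiber over $b_0$. Shrinking $B$ to a small disk $\Delta$ around $b_0$ with $\Delta^* = \Delta \setminus \{b_0\} \subset B^{\circ}$, I would first observe that the monodromy of the family $\mathcal{J}^{\circ} \to \Delta^*$ of \hk manifolds is \emph{finite}: indeed the total space $\mathcal{J}$ (a relative compactification of the relative intermediate Jacobian over $\mathcal{X} \to \Delta$) extends over $b_0$ as a reasonably mild degeneration — the fibration $\pi_{U_1}$ from Lemma \ref{lem Juno} already compactifies $J_U$ over the big open $U_1$, and over the generic point of the disk the family of cubic fourfolds is essentially constant in Hodge-theoretic terms near $b_0$, so the local monodromy acting on $H^2$ of the \hk fibers is quasi-unipotent and of finite order. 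By the main result of \cite{KLSV} ("finite monodromy implies the family extends, after finite base change, to a smooth family of \hk manifolds"), there is a finite base change $\Delta' \to \Delta$ and a smooth projective family $\mathcal{J}' \to \Delta'$ of \hk manifolds of OG$_{10}$-type whose restriction to $(\Delta')^*$ agrees with the pullback of $\mathcal{J}^{\circ}$. The new central fiber $M := \mathcal{J}'_{b_0'}$ is then a smooth projective \hk manifold of OG$_{10}$-type.

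It remains to identify $M$ birationally with $J_U(X)$. Both $M$ and any reasonable compactification $\bar J$ of $J_U$ (for instance one containing the partial compactification $J_{U_1}$) are projective, and by Lemma \ref{lem K eff} such $\bar J$ carries a holomorphic $2$-form extending $\sigma_{J_U}$ and is not uniruled. I would compare $M$ and $\bar J$ over the open base $U_1 \subset \P^5$: the relative intermediate Jacobian over $U_1$ is intrinsic to $X$ and is recovered on both sides over $U_1$ (using that the construction of \cite{KLSV} does not change the family away from $b_0$, and that the smooth filling restricts to the intermediate Jacobian fibration over $U_1$ because the filling is unique as a \hk model and agrees with $J_{U_1}$ over the generic point). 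Hence $M$ and $\bar J$ are birational — more precisely they contain a common dense open set isomorphic to $J_{U_1}$, or at least to $J_U$. Since $M$ is \hk and $\bar J$ is not uniruled, this birational map is an honest birational equivalence of the desired type, proving the proposition.

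I expect the main obstacle to be the verification that the local monodromy of the \hk family $\mathcal{J}^{\circ} \to \Delta^*$ is finite — i.e. that the degeneration of LSV fibrations induced by a general (not LSV-general) cubic fourfold is "of finite monodromy type" in the precise sense required to apply \cite{KLSV}. This needs either a direct analysis of the semistable reduction of the family of intermediate Jacobian fibrations (using that $X$ itself is smooth, so the relevant singular hyperplane sections are controlled over $U_1$), or an argument that the period map of $\mathcal{J}^{\circ}$ extends over $b_0$ because the period domain point of $J(X_b)$ varies holomorphically with the Hodge structure of $X_b$, which is itself unramified at $b_0$ since $X$ is smooth. A secondary subtlety is checking that the smooth \hk filling produced by \cite{KLSV} really does restrict over $U_1$ to the canonical partial compactification $J_{U_1}(X)$ of Lemma \ref{lem Juno}, rather than to some other birational model; this should follow from the uniqueness of \hk birational models over the generic point of the base combined with the Lagrangian fibration structure, but it requires care.
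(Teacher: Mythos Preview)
Your overall strategy --- put $X$ in a one-parameter family whose general member is LSV-general, build the associated family of \hk manifolds, and fill in the special fiber via \cite{KLSV} --- is exactly the right one, and it is the paper's approach. However, you invoke the wrong hypothesis from \cite{KLSV}, and this is what creates both of the obstacles you flag at the end.

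The paper does not argue via finite monodromy. Instead it constructs the degeneration so that the central fiber \emph{visibly contains} $J_{U_1}(X)$ as a multiplicity-one component: one takes $\mc V = (\P^5)^\vee \times \Delta \setminus \operatorname{Sing}(\mc X_0^\vee)\times\{0\}$ and runs the relative Prym construction of \cite[\S 5]{LSV} in the family, obtaining a projective morphism $\mc J_{\mc V}\to\mc V$ with $\mc J_t = J(\mc X_t)$ for $t\neq 0$ and $\mc J_0 = J_{U_1}(X)$. Any projective completion $\widetilde{\mc J}\to\Delta$ then has a central fiber one of whose reduced components contains $J_{U_1}$ as a dense open set; by Lemma~\ref{cor not uniruled} that component is not uniruled. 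One now applies \cite[Cor.~5.2]{KLSV}, whose input is precisely ``the central fiber has a multiplicity-one non-uniruled component $Y_0$'' and whose output is ``there is a \hk filling birational to $Y_0$''. This simultaneously dissolves both of your difficulties: there is no monodromy computation to make, and the birationality $M \dashrightarrow J_{U_1}(X)$ is built into the conclusion of the KLSV corollary rather than something to be checked afterwards.

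By contrast, your route through ``finite monodromy $\Rightarrow$ smooth filling'' produces \emph{some} \hk manifold $M$ as the new central fiber, but gives no direct link between $M$ and $J_U(X)$. Your attempted identification (``the smooth filling restricts to the intermediate Jacobian fibration over $U_1$ because the filling is unique as a \hk model'') does not work as stated: the KLSV filling is obtained by running MMP and replacing the central fiber, so there is no reason a priori for $M$ to contain $J_{U_1}$ as an open subset, and uniqueness of \hk birational models is false in general (indeed Theorem~\ref{NSU} exhibits two). To repair this you would end up having to show that $M$ is birational to the non-uniruled component of the original central fiber --- which is exactly what \cite[Cor.~5.2]{KLSV} gives you directly. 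So the fix is to replace the finite-monodromy input with the non-uniruled-component input, at which point your proof becomes the paper's.
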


\begin{proof}
Let $\mc X \to \Delta$ be a family of smooth cubic fourfolds with $\mc X_0= X$. Here $\Delta$ is an open affine  subset of a smooth projective curve, or a small disk. We will use the notation $t=0$ to denote a chosen special point in $\Delta$ and $t \neq 0$ to denote any other point. Up to restricting $\Delta$ if necessary, assume that for $t \neq 0$, $\mc X_t$ is general in the sense of LSV.  By \cite[Prop. 2.10]{LSV}, we can assume that for any $t \neq 0$ all the hyperplane sections of $\mc X_t$ admit a very good line (see  \cite[Def. 2.9]{LSV}). Consider the open set $\mc V = (\P^5)^\vee \times \Delta \setminus Sing(\mc X_0^\vee) \times\{ 0\}$, so that $\mc V_t= (\P^5)^\vee$ for $ t \neq 0$ and  $\mc V_0=U_1 \times \{0\} $ parametrizes the hyperplane sections of  $\mc X_0= X$ that have at most one nodal point and no other singularities. The construction of \cite[\S 5]{LSV} can be carried out in families, yielding a projective morphism
 \[
 \mc J_\mc{V} \to \mc V
 \]
 which is fibered in compactified Prym varieties and is such that, denoting by $\mc J_t$ the fiber of the induced smooth quasi-projective morphism  $ \mc J_\mc{V} \to \Delta$, for $t \neq 0$, $\mc J_t$ is the LSV fibration $J(\mc X_t)$, and $\mc J_0=J_{U_1}(X)$. 
 Let $\widetilde{\mc J} \to \Delta$ be a projective morphism extending $ \mc J_\mc{V} \to \Delta$. The central fiber $\mc J_0$ has a multiplicity one component which contains $J_{U_1}$ as dense open subset. By Lemma \ref{cor not uniruled}, this component is not uniruled. By \cite[Cor. 5.2]{KLSV} there is a birational model $M$ of $J_{U_1}(X)$ that is a \hk manifold, deformation equivalent to the smooth fibers ${\mc J}_t=J(\mc X_t)$, $t \neq 0$.
\end{proof}

 By \cite{Matsushita-Def}, given a \hk manifold $M$ with a Lagrangian fibration $\pi: M \to \P^n$, the locus inside $\Def(M)$ where the Lagrangian fibration deforms is an open subset of the hypersurface where the class $\pi^* \mc O(1)$ stays of type $(1,1)$. However, this fact alone is not enough to imply the existence of a \hk compactification of $J_{U_1}$ for \emph{any} smooth $X$.

This is what we prove in the following Theorem \ref{hkcom}, whose proof uses the mmp following Koll\'ar \cite[\S 8]{Kollar-Elliptic} and Lai \cite{Lai}. In \S \ref{subsectmmp} we will recall some basic facts about the mmp that are needed in the proof of Theorems \ref{hkcom} and \ref{deg lagr 1}. We refer to \cite{Kollar-Mori} and to \cite{Hacon-Kovacs} for the basic definitions and fundamental results.

\begin{thm} \label{hkcom}
For any smooth cubic fourfold $X$, there exists a smooth projective \hk compactification $J=J(X)$ of $J_U(X)$, with a projective flat morphism $\pi: J \to \P^5$ extending $\pi_U$. 
\end{thm}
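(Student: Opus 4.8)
The plan is to deform a Lagrangian fibered hyper-K\"ahler model from the general cubic fourfold to an arbitrary one, using the MMP to guarantee that the fibration structure survives in the limit. Fix a family $\mc X \to \Delta$ of smooth cubic fourfolds with $\mc X_0 = X$ and $\mc X_t$ general in the sense of LSV for $t \neq 0$, as in the proof of Proposition \ref{prop hk model}. From that construction we obtain a projective morphism $\widetilde{\mc J} \to \Delta$ whose fiber over $t \neq 0$ is the LSV fibration $J(\mc X_t)$ and whose central fiber contains $J_{U_1}(X)$ as a dense open subset in a multiplicity-one, non-uniruled component. The key point, which goes beyond Proposition \ref{prop hk model}, is that the morphisms $\pi_t: J(\mc X_t) \to \P^5$ also fit into a family: the relative construction of \cite[\S 5]{LSV} over $\mc V$ comes with a morphism $\mc J_{\mc V} \to \P^5 \times \Delta$ (projection from $\mc V \subset \P^5 \times \Delta$), hence the line bundle $\mathcal L := $ (pullback of $\mathcal O_{\P^5}(1)$) is defined on $\mc J_{\mc V}$ and extends, after twisting by a component of the central fiber if necessary, to a line bundle on $\widetilde{\mc J}$ that is nef and relatively semiample away from $t = 0$.

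First I would run a relative MMP for $\widetilde{\mc J} \to \Delta$ with scaling, over the base $\P^5 \times \Delta$, following Koll\'ar \cite[\S 8]{Kollar-Elliptic} and Lai \cite{Lai}. Because the total space has (at worst, after the relative model program) terminal or canonical singularities and $K$ is relatively trivial over $\P^5 \times \Delta$ on the generic fiber, the output is a relative minimal model $\widetilde{\mc J}' \to \P^5 \times \Delta$ over $\Delta$ whose general fiber is still $J(\mc X_t) \to \P^5$ (an isomorphism in codimension one there, since $J(\mc X_t)$ is already a minimal model of its function field with a morphism to $\P^5$). The main structural input from \cite{Lai} is that, since the general fiber has a good minimal model, so does the whole family over $\P^5 \times \Delta$, and the special fiber $\widetilde{\mc J}'_0$ is then a (possibly singular) minimal model dominating $\P^5$. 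Then I would invoke Lemma \ref{cor not uniruled}: the distinguished component of the central fiber is birational to $J_{U_1}(X)$, which is not uniruled and carries a holomorphic symplectic form on its smooth locus; by Lemma \ref{lem K eff} its canonical class is trivial, so that component $M_0$ is a symplectic variety. Running the MMP on the total space does not introduce uniruled components into the relevant piece of the central fiber precisely because of this non-uniruledness, so $M_0$ survives as a $K$-trivial, irreducible component with a morphism $\pi_0: M_0 \to \P^5$ extending $\pi_{U_1}$.

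Next I would upgrade $M_0$ to a \emph{smooth} hyper-K\"ahler manifold. The total space $\widetilde{\mc J}'$ is, fiberwise over $\Delta \setminus \{0\}$, deformation equivalent to OG$_{10}$; by \cite[Cor. 5.2]{KLSV} (finite monodromy — here trivial monodromy, since the family of intermediate Jacobian fibrations has a smooth central fiber $J_{U_1}$ up to codimension $\geq 2$) the central fiber admits a smooth hyper-K\"ahler birational model deformation equivalent to the nearby fibers. The subtle part is to carry the Lagrangian fibration through this last birational modification: the class $\pi_0^*\mathcal O_{\P^5}(1)$ is nef with $q(\pi_0^*\mathcal O_{\P^5}(1)) = 0$ and lies on the boundary of the movable cone, so by \cite{Matsushita-Def} and the theory of Lagrangian fibrations on hyper-K\"ahler manifolds it induces, on the smooth birational model $J$, a genuine Lagrangian fibration $\pi: J \to \P^5$; and a birational map between hyper-K\"ahler manifolds that is an isomorphism in codimension one carries a Lagrangian fibration to a Lagrangian fibration with the same base, so $\pi$ extends $\pi_U$. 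Finally, flatness of $\pi$ follows because $J$ is smooth, $\P^5$ is smooth, all fibers are equidimensional (each fiber is Lagrangian, hence of dimension $5$), so $\pi$ is flat by miracle flatness.

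\textbf{Main obstacle.} The delicate step is controlling the behavior of the central fiber under the relative MMP: one must ensure both that the component birational to $J_{U_1}(X)$ is not destroyed or made worse (this is where non-uniruledness from Lemma \ref{cor not uniruled} and the $K$-triviality from Lemma \ref{lem K eff} are essential), and that the resulting minimal model still dominates $\P^5$ with the fibration extending $\pi_U$ rather than contracting it — which is why the MMP must be run relatively over $\P^5 \times \Delta$, with the machinery of Lai guaranteeing that a good minimal model of the total space exists because the general fiber has one. Transporting the Lagrangian fibration across the final KLSV smoothing (a birational contraction/isomorphism in codimension one between possibly singular symplectic varieties and a smooth hyper-K\"ahler manifold) is the second point requiring care.
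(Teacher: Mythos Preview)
Your approach is in the right spirit but differs from the paper's direct proof and has a genuine gap at the final step.

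\textbf{The paper's direct proof.} The paper does \emph{not} run the MMP on a family over $\P^5\times\Delta$. Instead it works entirely on the fixed $X$: take any smooth projective compactification $\tilde J$ of $J_{U_1}(X)$ admitting a morphism $\tilde\pi:\tilde J\to\P^5$, and run the MMP with scaling over $\P^5$. The key observation is that $K_{\tilde J}$, being supported on the complement of $J_{U_1}$, is $\tilde\pi$-exceptional (its image in $\P^5$ has codimension $\ge 2$); by Nakayama's lemma this forces a $K$-negative curve in its support, so the MMP strictly decreases the number of components of $K$ and terminates at a $\Q$-factorial terminal symplectic variety $\bar J$ with a regular morphism to $\P^5$ extending $\pi_{U_1}$. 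Smoothness then follows in one line from Greb--Lehn--Rollenske (Proposition~\ref{GLR}): a $\Q$-factorial terminal symplectic variety birational to a smooth \hk manifold is itself smooth. The family from Proposition~\ref{prop hk model} is used only to supply that smooth birational \hk model.

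\textbf{Where your argument breaks.} Your approach, running the MMP on the total family over $\P^5\times\Delta$, is essentially the alternative route the paper packages later as Theorem~\ref{deg lagr 1} (cf.\ Remark~\ref{rem other proof}). The MMP part is fine. The gap is your last step: having produced a (possibly singular) central fiber $M_0\to\P^5$ and a smooth \hk birational model $J$ via \cite[Cor.~5.2]{KLSV}, you assert that the isotropic class $\pi_0^*\mc O_{\P^5}(1)$ ``induces, on the smooth birational model $J$, a genuine Lagrangian fibration''. This does not follow. The class is movable on $J$ (movable cones are birational invariants), but it need not be \emph{nef} on $J$; the birational map $J\dashrightarrow M_0$ can move the nef cone, and \cite{Matsushita-Def} concerns deformations, not birational transfers. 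The paper explicitly flags this issue right after Proposition~\ref{prop hk model}: knowing that the Lagrangian class stays of type $(1,1)$, or even movable, is not enough to produce a regular fibration on $J$; one would need to know that $J\dashrightarrow\P^5$ is at least almost holomorphic, which is essentially as hard as the relative MMP itself.

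\textbf{How to fix it.} Either (a) adopt the paper's direct argument: observe that after your relative MMP the central fiber $M_0$ is already $\Q$-factorial terminal symplectic with a regular morphism to $\P^5$, and invoke Proposition~\ref{GLR} to conclude $M_0$ is smooth --- no further birational modification needed; or (b) follow the paper's Theorem~\ref{deg lagr 1}: show $M_0$ has canonical singularities and admits a \emph{symplectic resolution} $J\to M_0$ (a regular morphism, not just a birational map), which then composes with $M_0\to\P^5$ to give the fibration on $J$. In either case the crucial point you are missing is that one must land on a model that \emph{already has} the morphism to $\P^5$, rather than hope to transport the fibration across an uncontrolled birational map.
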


\begin{proof} 
Let $\bar J \to \P^5$ be any normal projective compactification of $J_{U_1}$ with a regular morphism $\bar \pi : \bar J \to \P^5$. 
By Lemma \ref{lem K eff}, there is a holomorphic two form $\bar \sigma$ on the smooth locus of $\bar J$ extending $\sigma_{J_{U_1}}$, the canonical class $K_{\bar J} \ge 0$ is effective, and $K_{\bar J}=0$ if and only if $\bar J$ is a symplectic variety. 
Since $K_{\bar J}$ is supported on the complement of $J_{U_1}$, $\codim \bar \pi(\Supp(K_{\bar J}) ) \ge 2$. By definition \cite[Def. 7]{Kollar-Elliptic}, this means that $K_{\bar J}$ is  $\bar \pi$-exceptional, if it is non trivial. If this is the case, then by \cite[III 5.1]{Nakayama} (cf. also \cite[Lem 2.10]{Lai}), $K_{\bar J}$ is not $\bar \pi$-nef. More precisely, there is a component of $K_{\bar J}$ that is covered by curves that are contracted by $\bar \pi$ and that intersect $K_{\bar J}$ negatively.

Let $\tilde J \to \P^5$ be a smooth projective compactification of $J_{U_1}$ admitting a regular morphism $\tilde \pi : \tilde J \to \P^5$ and let $K_{\tilde J}$ be  its canonical class. If the effective divisor $K_{\tilde J}$ is not trivial, we use the mmp to contract $\Supp(K_{\tilde J})$ relatively to $\P^5$. Let $H$ be a $\tilde \pi$-ample $\Q$-divisor such that the  pair $(\tilde J, H)$ is klt and $K_{\tilde J}+H$ is relatively big and nef. The mmp with scaling over $\P^5$ (see \S \ref{subsectmmp} below) produces a sequence of birational maps 
\be \label{MMP scaling}
\tilde J=J_0 \stackrel{\psi_0}{\dashrightarrow} J_1  \stackrel{\psi_1}{\dashrightarrow} \cdots \dashrightarrow J_i  \stackrel{\psi_i}{\dashrightarrow} \cdots
\ee
over $\P^5$ (i.e., there are projective morphisms $\pi_i: J \to \P^5$ such that $\pi_0=\wt \pi$ and $\pi_i:=\pi_{i-1} \circ \psi_i^{-1}$) 
and a non increasing sequence of non negative rational numbers  $t_0 =1 \ge t_1 \ge  \dots t_i \ge \dots\ge 0$, with the following properties
\begin{enumerate}
\item For every $i \ge 0$, $K_{J_{i }}+t_i H_i$ is $\pi_i$-big and $\pi_i$-nef.  
\item For every $i \ge 0$, $J_i$ is a $\Q$-factorial terminal compactification of $J_{U_1}$. The fact that the birational morphisms $\psi_i$ are isomorphisms away from $J_{U_1}$  follows from the fact that the $K_{J_i}$-negative rays of the mmp correspond to rational curves that are contained in the support of $K_{J_i}$. Thus, by Lemma \ref{lem K eff} the smooth locus of $J_i$ carries a holomorphic two form $\sigma_i$ extending $\sigma_{ J_{U_1}}$; 
\item $K_{J_{i }}$ is effective and, if not trivial, it has a component covered by $K_{J_{i }} $-negative curves which are contracted by $\pi_i$;
\item The process stops if and only if there exists an $i$ such that  $K_{J_{i}}$ is $\pi_{i}$-nef. This holds if and only if $K_{J_{i}}=0$.
\end{enumerate}

The number of irreducible components of the support of $K_{J_{i}}$ is non increasing, since the birational maps of the mmp extract no divisors. In fact, we claim that this number is eventually strictly decreasing. By (4) above, this happens if and only if the process eventually stops.
 Suppose that this is not the case. Then by Lemma \ref{lemma t goes to zero}, $\lim t_i=0$. Recall, as already observed, that if $K_{J_{k}} \neq 0$, then there exists a component that is covered by $K_{J_k } $-negative curves that are contracted by $\pi_k$. 
 %This implies that for sufficiently small $ t \ge 0$, this component is  contained in the relative stable base locus $\mathbb B ((K_{J_k}+ t H_k)/\P^5)$. 
 Since we are assuming that $\lim t_i=0$, this implies that  for $i \gg 0$, $t_i$ is small enough so that this component is contained in the relative stable base locus $\mathbb B ((K_{J_k}+ t_i H_k)/\P^5)$. Since by Lemma \ref{lemma sbl}, the divisorial components of $\mathbb B ((K_{J_k}+ t_i H_k)/\P^5)$ are contracted by $J_k\dashrightarrow J_i$, it follows that for $i \gg0$, the number of irreducible components of the effective divisor $K_{J_i} $ is strictly less than the number of components of $K_{J_k} $. Thus, the claim is proved and for some $i \gg 0$, the process gives a model  with $K_{J_{i}}=0$. By Lemma \ref{cor not uniruled}, $\bar J:=J_i$ is a  $\Q$-factorial terminal symplectic  compactification of ${ J_{U_1}}$. Finally, by Proposition \ref{GLR} below $\bar J$ is smooth and the theorem is proved.

%%
%%We claim that this process eventually stops. Suppose that this is not the case. Then by Lemma \ref{lemma t goes to zero}, $\lim t_i=0$. 
%%The number of irreducible components of the support of $K_{J_{i}}$ is non increasing, since the birational maps of the mmp extract no divisors. In fact, we claim that this number is eventually strictly decreasing. To prove the claim recall, as already observed, that if $K_{J_{k}} \neq 0$, then there exists a component that is covered by $K_{J_k } $-negative curves that are contracted by $\pi_k$. 
%%This implies that for sufficiently small $ t \ge 0$, this component is  contained in the relative stable base locus $\mathbb B ((K_{J_k}+ t H_k)/\P^5)$. If $\lim t_i=0$, then there exists an $i \gg 0$ such that this component is contained in $\mathbb B ((K_{J_k}+ t_i H_k)/\P^5)$. Thus, by Lemma \ref{lemma sbl}, the divisorial components of $\mathbb B ((K_{J_k}+ t_i H_k)/\P^5)$ are contracted by $J_k\dashrightarrow J_i$, for $i \gg 0$.
%%Thus, the number of irreducible components of the effective divisor $K_{J_i}$ is strictly decreasing. In particular, for some $i \gg 0$, $K_{J_{i}}=0$. By Lemma \ref{cor not uniruled}, $\bar J:=J_i$ is a thus a  $\Q$-factorial terminal symplectic  compactification of ${ J_{U_1}}$. Finally, by Proposition \ref{GLR} below $\bar J$ is smooth and the theorem is proved.

\end{proof}

\begin{prop}(Greb-Lehn-Rollenske) \label{GLR}
Let $\bar M$ be a $\Q$-factorial terminal symplectic variety. Suppose that $\bar M$ is birational to a smooth \hk manifold $M$. Then $\bar{ M}$ is smooth. 
\end{prop}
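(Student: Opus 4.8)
The plan is to reduce the statement, in two stages, to the fact that flops of symplectic manifolds preserve smoothness. First I would show that the bimeromorphic map $\phi\colon M\dashrightarrow\bar M$ is an isomorphism in codimension one. Choose a common resolution $W$ with projective birational morphisms $p\colon W\to M$ and $q\colon W\to\bar M$. As $M$ is smooth with $K_M=0$, one has $K_W=\sum_E a(E)\,E$, the sum over the $p$-exceptional prime divisors, with every discrepancy $a(E)\ge 1$; as $\bar M$ is symplectic one has $K_{\bar M}=0$ (the top exterior power of the symplectic form trivialises $K_{\bar M}$), and as $\bar M$ is terminal likewise $K_W=\sum_F b(F)\,F$, the sum over the $q$-exceptional prime divisors, with every $b(F)>0$. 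Comparing these two expressions for the effective divisor $K_W$, its support is at once the divisorial part of $\operatorname{Exc}(p)$ and of $\operatorname{Exc}(q)$; hence a prime divisor of $W$ is $p$-exceptional if and only if it is $q$-exceptional, and neither $\phi$ nor $\phi^{-1}$ contracts a divisor.

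Next, $M$ and $\bar M$ are birational, projective, $\Q$-factorial, terminal, and both have nef (indeed trivial) canonical class, so by the minimal model program (see \cite{Hacon-Kovacs,Kollar-Mori}) together with Kawamata's theorem that birational $\Q$-factorial terminal minimal models are connected by a finite sequence of flops, there is a chain
\[
M=Y_0\dashrightarrow Y_1\dashrightarrow\cdots\dashrightarrow Y_r=\bar M,
\]
each $Y_i\dashrightarrow Y_{i+1}$ being the flop of a small contraction $\psi_i\colon Y_i\to Z_i$; every $Y_i$ is $\Q$-factorial, terminal, with $K_{Y_i}=0$. I would then prove by induction that every $Y_i$ is a smooth projective symplectic manifold. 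Since $Y_0=M$ is one, it suffices to treat a single flop: if $Y_i$ is a smooth symplectic manifold, so is $Y_{i+1}$. (Once $Y_{i+1}$ is known to be smooth the symplectic structure is free: transporting the form of $Y_i$ along the codimension-one isomorphism $Y_i\dashrightarrow Y_{i+1}$ gives a holomorphic $2$-form on the smooth variety $Y_{i+1}$ that is nondegenerate off a set of codimension $\ge 2$, hence---its degeneracy locus being a divisor---everywhere.)

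For the smoothness of $Y_{i+1}$ I would argue as follows. Since $K_{Y_i}=0$, the small contraction $\psi_i$ is crepant, so $Z_i$ is a symplectic variety, $\psi_i\colon Y_i\to Z_i$ is a symplectic resolution, and $Y_{i+1}\to Z_i$ is a $\Q$-factorial terminalization of $Z_i$. By the structure theory of birational contractions of symplectic manifolds (Wierzba; Wierzba--Wi\'sniewski in dimension four), near each component of its exceptional locus $\psi_i$ is analytically a Mukai collapse, a $T^*\mathbb{P}^{d}$-bundle contracted onto its zero section, so that $Y_{i+1}$ arises from $Y_i$ by a sequence of Mukai flops and is in particular smooth; equivalently, one may invoke that every $\Q$-factorial terminalization of a symplectic variety admitting a symplectic resolution is again a symplectic resolution (Namikawa; see also Fu's survey on symplectic resolutions). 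This closes the induction, and $\bar M=Y_r$ is smooth.

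The step I expect to be the main obstacle is the last one: the input that a small crepant contraction of a smooth projective symplectic variety is, analytically near its exceptional locus, a Mukai collapse---equivalently, that flopping a smooth symplectic variety within the category of $\Q$-factorial terminal symplectic varieties again produces a smooth variety. In arbitrary dimension this is the technical heart, resting on the classification of elementary symplectic contractions (or, transversally to the symplectic leaves, on Kaledin's analytic product decomposition of symplectic singularities combined with the case of an isolated symplectic singularity).
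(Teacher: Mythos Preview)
The paper does not argue this proposition; it simply cites \cite[6.4]{Greb-Lehn-Rollenske}. So the comparison is between your attempt and the actual Greb--Lehn--Rollenske argument, which is deformation-theoretic and does not pass through flops.

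Your reduction to a chain of flops via Kawamata is correct (and your codimension-one step, while fine, is already a consequence of flop-connectedness). The crux is whether flopping a smooth projective symplectic variety again yields a smooth variety. You offer two justifications. The first---that a small contraction of a smooth symplectic manifold is analytically a Mukai collapse---is a theorem only in dimension at most four (Wierzba--Wi\'sniewski); in higher dimension it is open, so this route does not close the argument and is a genuine gap if you rely on it. The second---Namikawa's result that every $\Q$-factorial terminalization of a symplectic variety admitting a symplectic resolution is itself smooth---is correct and does suffice to finish your induction.

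So your proof works, provided you drop the Mukai-collapse claim and rest entirely on Namikawa. But note that Namikawa's result is itself proved via the deformation theory of symplectic varieties, which is exactly the machinery Greb--Lehn--Rollenske invoke directly, bypassing the flop decomposition. Roughly, their argument is: by Namikawa the Kuranishi family of the $\Q$-factorial terminal symplectic variety $\bar M$ is smooth and locally trivial; since $M$ and $\bar M$ are birational with trivial canonical class, their deformation spaces are identified via the period map, and a general small deformation of $\bar M$ is then isomorphic to the corresponding (smooth) deformation of $M$; local triviality of the family forces $\bar M$ itself to be smooth. Your flop decomposition is a legitimate alternative packaging of the same input, not a more elementary proof---the hard step is the same Namikawa deformation theory either way.
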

\begin{proof}
This is \cite[6.4]{Greb-Lehn-Rollenske}.
\end{proof}

\begin{rem}
The techniques used to prove the theorem above can be applied to similar contexts to give $\Q$-factorial terminal symplectic compactifications of other quasi-projective Lagrangian fibrations. We plan to come back to this in upcoming work.
\end{rem}

As a consequence of Theorem \ref{deg lagr 1} below, we will give  a slightly stronger version of the theorem  just proved (see Remark \ref{rem other proof}) showing that, given a family of smooth cubic fourfolds whose general fiber is general in the sense of \cite{LSV}, then up to a base change and birational transformations, the corresponding family of LSV intermediate Jacobian fibrations can be filled with a Lagrangian fibered smooth projective \hk compactification of the Donagi-Markman fibration of the limiting cubic fourfold. 

Another approach to Theorem \ref{main thm 1} would be to show that the rational map $M \dashrightarrow \P^5$ induced by the birational map $\phi: M \dashrightarrow J_{U_1}$ of Proposition \ref{prop hk model} is almost holomorphic (see \cite[Def.1]{Matsushita-almost}). By \cite{Matsushita-almost} this would imply the existence of a birational \hk model of $M$ with a regular morphism to $\P^5$. It seems, however, that controlling the mmp of Proposition \ref{prop hk model} to ensure that $M\dashrightarrow \P^5$ is almost holomorphic is not too far from running the relative mmp as in the proof of Theorem \ref{hkcom}.

Given a smooth cubic fourfold $X$, we will refer to both the Donagi-Markman fibration $J_U$ and to any \hk compactification $J$  of $J_U$ as in Theorem \ref{hkcom}, as the intermediate Jacobian fibration. Hopefully, it will be clear from the context which one we are referring to.

\begin{rem} Unlike the compactification of \cite{LSV}, the proof of Theorem \ref{hkcom} is not constructive and, for a given $X$, the   \hk compactification that we show to exists may not be unique.
We will return to this question in Section \ref{section general X}.
\end{rem}

\subsection{The mmp with scaling} \label{subsectmmp}
In this subsection we recall some basic tools and known results from the minimal model program (mmp) that are used to prove Theorems \ref{hkcom} and \ref{deg lagr 1}. For the basic notions and the fundamental results we refer to \cite{Kollar-Mori} and \cite{Hacon-Kovacs}. In this section, by divisor we will mean a $\Q$-divisor.

Let $M$ be a normal $\Q$-factorial variety with a projective morphism $\pi: M \to B$ to a normal quasi-projective variety $B$. Let $\Delta$ be an effective divisor on $M$ and let $H$ be a general divisor on $M$ that is ample (or big) over $B$.
 We assume that the pair $(M, \Delta+H)$ is klt and that $K_M+\Delta+H$ is nef over $B$. 

The mmp with scaling of $H$ \cite[\S 5.E]{Hacon-Kovacs}  produces a sequence of birational maps $\psi_i: M_i \dashrightarrow M_{i+1}$ over $B$, such that:  $M_0=M$, $\Delta_{i+1}=(\psi_i)_*\Delta_{i}$, $H_{i+1}=(\psi_i)_*H{i}$ and $\psi_i$ is the flip or the divisorial contraction for a $(K_{M_i}+\Delta_i)$-negative relative extremal ray $R_i$ over $B$. We let $\pi_{i}$ be the induced regular morphism $M_i \to B$. The sequence is defined inductively in the following way.
Let
\[
t_i=\inf\{ t \ge 0 \, | \, K_{M_i}+\Delta_i+ tH_i \,\, \text{ is nef over } B \}.
\]
If $t_i=0$, then $K_{M_i}+\Delta_i$ is nef over $B$ and the process stops. Otherwise, there is a $0<t' \le t_i$ such that $K_{M_i}+\Delta_i+ t'H_i$ is not nef over $B$. 
By the Cone Theorem  (see \cite[Ch. 3]{Kollar-Mori} or \cite[5.4]{Hacon-Kovacs}), $K_{M_i}+\Delta_i+ t_iH_i$ is nef over $B$ and there exists a $(K_{M_i}+\Delta_i)$-negative extremal ray $R_i$ over $B$ such that $(K_{M_i}+\Delta_i+ t_iH_i) \cdot R_i=0$.

Let $c_i: M_i \to Z_i$ be the extremal contraction over $B$ associated to $R_i$, which exists by the Contraction part of the Cone Theorem  \cite[(5.4.3-4)]{Hacon-Kovacs}).  
If $\dim Z_i < \dim M_i$, then $c_i$ is a Mori fiber space and we stop. %Otherwise, let $\psi_i: M_i \dashrightarrow M_{i+1}$ be the corresponding divisorial contraction or $(K_{M_i}+\Delta_i+t'H_i)$-flip or divisorial contraction. 
If $c_i$ is not a Mori fiber space then it is either a divisorial or flipping contraction. 
In the first case, we let  $M_{i+1}=Z_i$ and $\psi_i=c_i$. In second case, we let $\psi_i: M_i \dashrightarrow M_{i+1}$ the $(K_{M_i}+\Delta_i+t'H_i)$-flip (which exists by \cite[Cor. 5.73]{Hacon-Kovacs}). By construction, $\psi_i$ extracts no divisors, meaning that $\psi_i^{-1}$ contracts no divisors.

By the contraction part of the Cone Theorem, the divisor $K_{M_{i+1}}+\Delta_{i+1}+ t_iH_{i+1}$ is nef over $B$. The pair $(M_i,\Delta_{i+1}+ t_iH_{i+1})$ is klt (see \cite[3.42--3.44]{Kollar-Mori}) and $M_i$ is $\Q$-factorial (see \cite[3.18]{Kollar-Mori}). If $\Delta=0$ and $M$ is terminal, then so is $M_i$.  As long as $K_{M_i}+\Delta_i$ is not $\pi_i$-nef, $t_{i+1}$ is non zero and $\Delta_{i+1}+ t_iH_{i+1}$ is big over $B$. Thus we can keep going, producing a non increasing sequence $t_i \ge t_{i+1} \ge \cdots $ of non negative rational numbers and a sequence of birational maps $\psi_i: M_i \dashrightarrow M_{i+1}$ over $B$. The process stops if there exists an $N$ such that $c_N: M_N \to Z_N$ is a Mori fiber space over $B$ or such that $K_{M_N}+\Delta_N$ is nef over $B$. Otherwise, the sequence is infinite. 

 The pair $(M_i,\Delta_i+t_iH_i)$ is a log terminal model (ltm) for $(M, \Delta+t_iH)$ over $B$ (see Definition 5.29 and Lemma 5.31 of \cite{Hacon-Kovacs}).
We will need the following Lemmas:

\begin{lemma} \label{lem not iso}
For any $i>j$, let $\psi_{ij}: M_j \dashrightarrow M_i$ be the induced birational morphism over $B$. Then $\psi_{ij}$ is not an isomorphism.
\end{lemma}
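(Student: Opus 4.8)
The plan is to argue by contradiction, exploiting the fact that discrepancies never decrease along a run of the minimal model program. Recall (see e.g.\ \cite[Lemmas 3.38--3.44]{Kollar-Mori}) that if $\psi\colon (Y,\Delta_Y)\dashrightarrow(Y',\Delta_{Y'})$ is a single step of the $(K_Y+\Delta_Y)$-mmp over a base, i.e.\ a divisorial contraction or a flip with $\Delta_{Y'}=\psi_*\Delta_Y$, then for every divisorial valuation $v$ of the common function field one has $a(v,Y',\Delta_{Y'})\ge a(v,Y,\Delta_Y)$, and this inequality is \emph{strict} for at least one $v$: the valuation of the contracted divisor in the divisorial case, and any $v$ whose center on $Y$ meets the flipping locus in the flipping case. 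There is always such a $v$, since the exceptional locus of $\psi$ is nonempty.

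Now fix $i>j$. By construction of the mmp with scaling each $\psi_k$ with $j\le k<i$ is a divisorial contraction or a flip of $(M_k,\Delta_k)$, and $\Delta_{k+1}=(\psi_k)_*\Delta_k$. Hence for every divisorial valuation $v$ the function $k\mapsto a(v,M_k,\Delta_k)$ is non-decreasing on $\{j,\dots,i\}$, and applying the strict part of the statement to the step $\psi_j$ produces a valuation $v_0$ with
\[
a(v_0,M_{j+1},\Delta_{j+1})>a(v_0,M_j,\Delta_j).
\]
Suppose, towards a contradiction, that $\psi_{ij}\colon M_j\dashrightarrow M_i$ were an isomorphism over $B$. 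Since $\Delta_i=(\psi_{ij})_*\Delta_j$, it would be an isomorphism of pairs $(M_j,\Delta_j)\xrightarrow{\;\sim\;}(M_i,\Delta_i)$, hence $a(v,M_j,\Delta_j)=a(v,M_i,\Delta_i)$ for every $v$. Combined with the monotonicity above, this forces $a(v,M_k,\Delta_k)$ to be constant in $k\in\{j,\dots,i\}$ for every $v$; in particular $a(v_0,M_{j+1},\Delta_{j+1})=a(v_0,M_j,\Delta_j)$, contradicting the displayed strict inequality. Therefore $\psi_{ij}$ is not an isomorphism.

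There is no serious obstacle here: the argument rests only on the standard ``discrepancies do not decrease, and strictly increase somewhere, at each non-trivial mmp step'', which is a form of the negativity lemma, and the only thing requiring a little care is to invoke it in the correct relative, klt generality recalled in \S\ref{subsectmmp} (note that $\Delta_{k+1}+t_kH_{k+1}$ is big over $B$ at each step, so the steps are genuine mmp steps as required). For a reader preferring a more hands-on route, one can instead observe that a divisorial contraction among $\psi_j,\dots,\psi_{i-1}$ strictly drops the relative Picard number $\rho(\,\cdot\,/B)$, which already rules out an isomorphism over $B$; the discrepancy argument simply has the advantage of treating the remaining (flipping) steps uniformly, with no further case analysis.
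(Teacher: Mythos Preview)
Your argument is correct. The paper itself does not give a proof but simply cites \cite[Lem.~5.62]{Hacon-Kovacs}; what you have written is precisely the standard discrepancy argument underlying that reference (the negativity lemma yields nondecreasing discrepancies at each step, with a strict increase for some valuation centered on the exceptional locus, forcing the composite map to be a non-isomorphism). So you have effectively unpacked the cited lemma rather than taken a different route. One small remark: the parenthetical about $\Delta_{k+1}+t_kH_{k+1}$ being big over $B$ is irrelevant to the discrepancy comparison---the inequalities of \cite[3.38--3.44]{Kollar-Mori} require only that each $\psi_k$ be a $(K_{M_k}+\Delta_k)$-negative contraction or flip between $\mathbb{Q}$-factorial varieties, not any bigness or klt hypothesis on the boundary---so you could drop that sentence without loss.
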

\begin{proof}
This is   \cite[Lem 5.62]{Hacon-Kovacs}.
\end{proof}

\begin{lemma}(\hspace{1sp}\cite[Ex. 5.10]{Hacon-Kovacs})
Let $(M, \Delta)$ be a klt pair as above and suppose that $\Delta$ is big over $B$ and that $K_M+\Delta$ is nef over $B$. Then $K_M+\Delta$ is semiample over $B$, i.e., there exists a projective morphism $f: M \to Z$ over $B$ and an ample divisor $L$ on $B$ such that $K_M+\Delta \sim_{\Q,B} f^*L$.
\end{lemma}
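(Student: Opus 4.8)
The plan is to deduce this from the relative Base Point Free Theorem, using the bigness of $\Delta$ to manufacture a small ample perturbation of the boundary.

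First I would exploit that $\Delta$ is big over $B$: by the relative form of Kodaira's lemma there is a $\Q$-linear equivalence over $B$
\[
\Delta \sim_{\Q,B} A + E ,
\]
with $A$ a $\Q$-divisor that is ample over $B$ and $E \ge 0$. For a rational number $0 < \epsilon \ll 1$ set $\Delta_\epsilon := (1-\epsilon)\Delta + \epsilon E$. This is again an effective $\Q$-divisor, and since $(M,\Delta)$ is klt and being klt is an open condition on the coefficients of the boundary along a fixed log resolution, the pair $(M,\Delta_\epsilon)$ is klt once $\epsilon$ is small enough (its boundary converges to $\Delta$ as $\epsilon \to 0$). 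Moreover
\[
(K_M+\Delta) - (K_M+\Delta_\epsilon) \;=\; \Delta - \Delta_\epsilon \;=\; \epsilon(\Delta - E) \;\sim_{\Q,B}\; \epsilon A ,
\]
which is ample over $B$.

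Next I would apply the relative Base Point Free Theorem (see \cite[Thm.\ 3.3]{Kollar-Mori}, or \cite[\S 5]{Hacon-Kovacs}, in its $\Q$-Cartier version) to the $\Q$-Cartier divisor $D := K_M + \Delta$. By hypothesis $D$ is nef over $B$, and by the previous paragraph $D - (K_M + \Delta_\epsilon) \sim_{\Q,B} \epsilon A$ is ample — in particular nef and big — over $B$, while $(M,\Delta_\epsilon)$ is klt. Hence $D = K_M+\Delta$ is semiample over $B$: for suitably divisible $m$ the relative linear system of $m(K_M+\Delta)$ is base point free over $B$ and defines a morphism $f\colon M \to Z$ over $B$, with $Z$ projective over $B$, such that $K_M+\Delta \sim_{\Q,B} f^* L$ for a $\Q$-divisor $L$ on $Z$ that is ample over $B$ (the divisor called ``$L$ on $B$'' in the statement being read as this $L$ on $Z$).

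The only points that require care are the $\Q$-Cartier formulation of the Base Point Free Theorem — one may first clear denominators to reduce to its standard Cartier version — and the verification that the perturbed pair $(M,\Delta_\epsilon)$ remains klt; both are routine in the mmp toolkit, so I do not anticipate a real obstacle here.
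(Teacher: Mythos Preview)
Your proof is correct and follows essentially the same route as the paper: write $\Delta \sim_{\Q,B} A + E$ with $A$ ample and $E \ge 0$, set $\Delta_\epsilon = (1-\epsilon)\Delta + \epsilon E$ so that $(M,\Delta_\epsilon)$ is klt and $(K_M+\Delta) - (K_M+\Delta_\epsilon) \sim_{\Q,B} \epsilon A$ is ample, then apply the relative Basepoint Free Theorem. Your parenthetical remark that the ``$L$ on $B$'' in the statement should be read as an ample divisor on $Z$ is also right --- that is a typo in the statement.
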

\begin{proof}
Since $\Delta$ is big over $B$, we can write $\Delta\sim_{\Q,B} A+C$, where $A$ is ample over $B$ and $C \ge 0$. Choose an  $0 < \epsilon \ll 1$, such that $(M,\Delta')$ is klt, where $\Delta'=(1-\epsilon)\Delta+\epsilon C$.
Then 
\[
(K_M+\Delta)-(K_M+\Delta')=\epsilon A,
\]
is ample over $B$. By the Basepoint free Theorem  (e.g. see \cite[(5.1)]{Hacon-Kovacs}),  $K_M+\Delta$ is semiample over $B$. 
\end{proof}

\begin{lemma} \label{lemma sbl} Let the notation be as above and for any $i> 0$, let $\phi_i: M \dashrightarrow M_i$ as  the induced birational map over $B$. Then the divisors contracted by $\phi_i$ are the divisorial components of $\mathbb B((K_{M_i}+\Delta_i+ t_iH_i)/B)$, the  stable base locus over $B$ (cf. \cite[\S 2.E]{Hacon-Kovacs}). Similarly, $\psi_{ij}: M_j \dashrightarrow M_i$ contracts the divisorial components of $\mathbb B((K_{M_j}+\Delta_j+ t_iH_j)/B)$.
\end{lemma}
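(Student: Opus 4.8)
The plan is to obtain the lemma as an instance of the general relationship between log terminal models and relative stable base loci, using the fact recalled just above that $(M_i,\Delta_i+t_iH_i)$ is a log terminal model of $(M,\Delta+t_iH)$ over $B$ — and, restarting the mmp with scaling at $M_j$, that $(M_i,\Delta_i+t_iH_i)$ is a log terminal model of $(M_j,\Delta_j+t_iH_j)$ over $B$ for every $j<i$. I will write out only the case $\phi_i\colon M=M_0\dashrightarrow M_i$, so that the relevant divisor is $K_M+\Delta+t_iH$ on the source $M$; the statement for $\psi_{ij}$ is identical with $M$ replaced by $M_j$. First I would fix a common resolution $p\colon W\to M$, $q\colon W\to M_i$ of $\phi_i$ and write
\[
p^*(K_M+\Delta+t_iH)=q^*(K_{M_i}+\Delta_i+t_iH_i)+E_i .
\]
The definition of log terminal model — together with the negativity lemma \cite[III.5.1]{Nakayama} and the fact that $\phi_i$ extracts no divisor — then forces: $E_i\ge0$; $E_i$ is $q$-exceptional; and the non-$p$-exceptional part of $\Supp(E_i)$ is exactly the union of the strict transforms of the $\phi_i$-exceptional prime divisors on $M$. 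Indeed, the discrepancy inequality $a(F;M,\Delta+t_iH)\le a(F;M_i,\Delta_i+t_iH_i)$, valid for every divisor $F$ over $M$ and strict exactly for $\phi_i$-exceptional $F$, makes all coefficients of $E_i$ non-negative and positive precisely on the $\phi_i$-exceptional divisors; while a prime divisor of $M_i$ — necessarily the strict transform of one on $M$, with unchanged coefficient in $\Delta_i+t_iH_i=(\phi_i)_*(\Delta+t_iH)$ — has equal discrepancies on the two sides and so does not appear in $E_i$.

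Next I would use that $K_{M_i}+\Delta_i+t_iH_i$ is semiample over $B$. This follows from the semiampleness lemma above: $\Delta_i+t_iH_i$ is big over $B$ (at every non-terminal step $t_i>0$, $H_i$ is big over $B$, and $\Delta_i\ge0$), the pair $(M_i,\Delta_i+t_iH_i)$ is klt, and $K_{M_i}+\Delta_i+t_iH_i$ is $\pi_i$-nef. In particular the nef class $q^*(K_{M_i}+\Delta_i+t_iH_i)$ has empty relative stable base locus over $B$. On the other hand $E_i$ is effective and $q$-exceptional, so $q_*\mc{O}_W(\lfloor mE_i\rfloor)=\mc{O}_{M_i}$ for all $m\ge0$; by the projection formula every relative section over $B$ of a sufficiently divisible multiple of $q^*(K_{M_i}+\Delta_i+t_iH_i)+E_i$ is the product of a relative section of the first summand with the canonical section of the corresponding multiple of $E_i$. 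Combining the two observations gives $\mathbb B\bigl(p^*(K_M+\Delta+t_iH)/B\bigr)=\Supp(E_i)$.

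Finally I would descend along $p$. Since $p_*\mc{O}_W=\mc{O}_M$ one has $\mathbb B(p^*(K_M+\Delta+t_iH)/B)=p^{-1}\bigl(\mathbb B((K_M+\Delta+t_iH)/B)\bigr)$, hence $\mathbb B((K_M+\Delta+t_iH)/B)=p(\Supp E_i)$. Passing to divisorial components, the $p$-exceptional part of $E_i$ maps into loci of codimension $\ge2$ and drops out, whereas the strict transforms of the $\phi_i$-exceptional divisors map back onto those divisors; thus the divisorial components of $\mathbb B((K_M+\Delta+t_iH)/B)$ are exactly the prime divisors on $M$ contracted by $\phi_i$, which is the claim. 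Replacing $M$ by $M_j$ and $K_M+\Delta+t_iH$ by $K_{M_j}+\Delta_j+t_iH_j$ throughout yields the assertion for $\psi_{ij}$.

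I expect the only genuinely non-formal step to be the first one: extracting from the log-terminal-model hypothesis the precise shape of $E_i$ — effective, $q$-exceptional, with divisorial support downstairs equal to the contracted divisors and nothing more. Everything afterwards is a routine comparison of stable base loci under the birational pullback $p$. If one prefers to bypass resolutions entirely, the lemma is exactly the content of \cite[\S5.E]{Hacon-Kovacs} (see also \cite{Lai}): in a run of the relative mmp the exceptional divisors coincide with the divisorial part of the relative stable base locus of the divisor one starts from; I would include the argument above only for the sake of self-containedness.
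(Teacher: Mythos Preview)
Your proof is correct and follows the same approach as the paper: resolve $\phi_i$ by $p\colon W\to M$, $q\colon W\to M_i$, invoke the log-terminal-model relation to get the equation $p^*(K_M+\Delta+t_iH)=q^*(K_{M_i}+\Delta_i+t_iH_i)+E$ with $E\ge 0$ and $q$-exceptional, use semiampleness of $K_{M_i}+\Delta_i+t_iH_i$ over $B$ to conclude $\mathbb B(p^*(K_M+\Delta+t_iH)/B)=\Supp(E)$, and descend. You supply more detail than the paper on why the divisorial part of $p(\Supp E)$ consists exactly of the $\phi_i$-exceptional divisors, but the skeleton is identical. (Note also that the stable base locus in the first sentence of the statement should be on $M$, not $M_i$; you correctly proved the intended assertion.)
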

\begin{proof}
Since $(M_i, \Delta_i+ t_iH_i)$ is klt, $\Delta_i+ t_iH_i$ is big over $B$, and $K_{M_i}+\Delta_i+ t_iH_i$ is nef over $B$,  by the lemma above, $K_{M_i}+\Delta_i+ t_iH_i$ is semiample over $B$.

Let $W$ be a smooth birational model resolving $\phi_i$, and let $p$ and $q$ be the induced birational morphisms to $M$ and $M_i$. 
By  \cite[Lemma 5.31]{Hacon-Kovacs} pair $(M_i,\Delta_i+t_iH_i)$ is a log terminal model for $(M, \Delta+t_iH)$ over $B$ (see \cite[Definition 5.29]{Hacon-Kovacs}).
Thus,
\be \label{ltm}
p^*(K_M +\Delta+t_i H)=q^*(K_{M _i}+\Delta_i+t_i H_i)+ E
\ee
where $E=\sum_F (a(F; M,\Delta+t_i H)-a(F; M_i,\Delta_i+t_i H)) F $ is an effective $q$-exceptional divisor whose support contains the divisors contracted by $\phi_i$.  Since
$p^{-1} \mathbb B((K_M+\Delta+t_i H)/B)=\mathbb B(p^*(K_M+\Delta+t_i H)/B)=\mathbb B(q^*(K_{M _i}+\Delta_i+t_i H_i)+ E/B)=\Supp (E)$, the first statement follows. The second statement is proved in the same way, since by  \cite[Lemma 5.31]{Hacon-Kovacs}, the pair $(M_i, \Delta_i+t_i H_i)$ is a log terminal model for $(M_j, \Delta_j+t_i H_j)$ over $B$ and hence the equivalent of (\ref{ltm}) holds.
\end{proof}

\begin{lemma} \label{lemma t goes to zero} Let the notation be as above. If the mmp with scaling does not terminate then
\[
\lim_{i \to \infty} t_i=0.
\]
\end{lemma}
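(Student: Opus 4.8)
The plan is to argue by contradiction. The sequence $t_0 \ge t_1 \ge \cdots \ge 0$ is non-increasing and bounded below, hence it converges to some $t_\infty \ge 0$; I will show that $t_\infty > 0$ forces the mmp with scaling to terminate, contrary to hypothesis. So assume $t_\infty > 0$ and fix a rational number $\tau$ with $0 < \tau < t_\infty$. Since $(M,\Delta+H)$ is klt and $0 \le \tau \le t_0 \le 1$, the pair $(M,\Delta+\tau H)$ is again klt, and because $H$ is ample over $B$ the boundary $\Delta+\tau H$ is big over $B$.

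The crux is to recognize the given sequence $\{(M_i,\psi_i)\}_{i\ge 0}$ as an mmp with scaling of $H$ for the perturbed pair $(M,\Delta+\tau H)$ over $B$, with scaling parameters $s_i := t_i-\tau$. Indeed, the extremal ray $R_i$ contracted (or flipped) at step $i$ satisfies $(K_{M_i}+\Delta_i)\cdot R_i<0$ and $(K_{M_i}+\Delta_i+t_iH_i)\cdot R_i=0$, so $H_i\cdot R_i>0$, whence $(K_{M_i}+\Delta_i+\tau H_i)\cdot R_i=-(t_i-\tau)(H_i\cdot R_i)<0$ while $(K_{M_i}+\Delta_i+\tau H_i+s_iH_i)\cdot R_i=(K_{M_i}+\Delta_i+t_iH_i)\cdot R_i=0$. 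Moreover, using convexity of the relative nef cone together with the facts recorded in this section that both $K_{M_i}+\Delta_i+t_iH_i$ and $K_{M_i}+\Delta_i+t_{i-1}H_i$ are nef over $B$, one checks that $s_i=\inf\{s\ge 0 : K_{M_i}+\Delta_i+(\tau+s)H_i \text{ is nef over } B\}$. Since $s_i=t_i-\tau\ge t_\infty-\tau>0$ for every $i$, this exhibits a genuine, non-terminating mmp with scaling for the klt pair $(M,\Delta+\tau H)$.

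To conclude I will invoke the termination of the mmp with scaling for a $\Q$-factorial klt pair whose boundary is big over the base, i.e. the relative form of the Birkar--Cascini--Hacon--McKernan theorem (see \cite{Hacon-Kovacs}). Applied to $(M,\Delta+\tau H)$, this contradicts the non-termination just established, so $t_\infty=0$. The one point demanding care is the identification carried out in the middle paragraph: one must verify that the perturbed pair selects exactly the same extremal rays $R_i$ and that the scaling thresholds are precisely $s_i=t_i-\tau$, which rests on the persistence of bigness of $H_i$ over $B$ and on the fact that $K_{M_i}+\Delta_i+tH_i$ is nef over $B$ precisely for $t\ge t_i$. Everything else is a direct application of the standard mmp results recalled above.
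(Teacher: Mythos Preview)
Your proof is correct and follows a standard line, but it differs from the paper's argument. The paper (following \cite[Prop.~3.2]{Druel}) argues directly via \emph{finiteness of log terminal models}: assuming $t_\infty>0$, BCHM (\cite[Thm~E]{BCHM}) gives only finitely many ltm's of $(M,\Delta+(t_\infty+t)H)$ for $t\in[0,1-t_\infty]$, and since each $(M_i,\Delta_i+t_iH_i)$ is such a model, by pigeonhole two of the $M_i$ must coincide, contradicting Lemma~\ref{lem not iso}. Your route instead \emph{reinterprets} the same sequence of maps as a non-terminating mmp with scaling for the perturbed pair $(M,\Delta+\tau H)$, then invokes BCHM termination for klt pairs with big boundary. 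Both hinge on BCHM, but yours requires the extra verification that the original extremal rays $R_i$ remain legitimate choices for the perturbed mmp and that the scaling thresholds shift exactly by $\tau$---which you carry out. The paper's approach is marginally cleaner in that it avoids this bookkeeping, using only that each step produces a ltm; your approach has the virtue of making the role of bigness of the boundary more transparent.
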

\begin{proof} 
This is  \cite[Prop. 3.2]{Druel}. The only difference is the relative setting, but the proof is the same:
Suppose  the mmp does not terminate and  that $\lim t_i=t_\infty>0$. By \cite[Thm E]{BCHM}  there are finitely many log terminal models of $(M, \Delta+(t_\infty+t) H)$, with $t \in [0,1-t_\infty]$. We have already observed that $(M_i, \Delta_i+t_i H_i)$ is a ltm for $(M, \Delta+t_i H)=(M, \Delta+t_\infty H+(t_i-t_\infty )H))$ over $B$. Thus, if the sequence is infinite there are integers $i >j$ such that the birational map $M_j \dashrightarrow M_i$ is an isomorphism. This gives a contradiction with Lemma \ref{lem not iso} above.
%%%This is  \cite[Prop. 3.2]{Druel}. The only difference is the relative setting, but the proof is the same:
%%%Suppose  the mmp does not terminate and  that $\lim t_i=t_\infty>0$. 
%%%Set $A:=t_\infty H'$, where $H' \sim_\Q H$  has no common component with $H$ or with $\Delta$, and consider the affine subspace $V:=\{ \Delta+\R H \}$ of the space of $\R$-Weil divisor on $M$. We  can assume that $H'$ is general enough, i.e. that $H' \sim_{\Q,B} \frac{1}{k} H''$ where $H'' \sim kH$ is general and very ample. By taking $k\gg0$, we can also assume that for any $t \in[0, 1-t_\infty]$, $K_M+A+ \Delta+t H$ is log canonical. By \cite[Thm E]{BCHM}  there are finitely many log terminal models of $(M,  \Delta+A+t H)$, with $t \in[0, 1-t_\infty]$.
%%%For any $t \in[0, 1-t_\infty]$ a $(K_M+ \Delta+A+t H)$-mmp is also a $(K_M+ \Delta+(t_\infty+t) H)$-mmp. Thus there are finitely many log terminal models of  $(M, \Delta+(t_\infty+t) H)$, with $t \in [0,1-t_\infty]$.  We have already observed that $(M_i, \Delta_i+t_i H_i)$ is a ltm for $(M, \Delta+t_i H)=(M, \Delta+t_\infty H+(t_i-t_\infty )H))$ over $B$. Thus, if the sequence is infinite there are integers $i >j$ such that the birational map $M_j \dashrightarrow M_i$ is an isomorphism. This gives a contradiction with Lemma \ref{lem not iso} above.
%%%
\end{proof}

\subsection{Variants}

In this section we give some variants of the results of the previous section. First we notice that the compactification result of Theorem \ref{hkcom} holds also for the twisted intermediate Jacobian fibration (see Remark \ref{twisted}). Then we consider the case of the intermediate Jacobian fibration associated to a mildly singular cubic  fourfold (see Proposition \ref{checa} and Remark \ref{mild sing}). We then give a slightly stronger version of Theorem \ref{hkcom}, in that we show that the Lagrangian fibered \hk compactification works in families (see Proposition \ref{Csix} and Theorem \ref{deg lagr 1}). As an application, we give a positive answer to a question of Beauville (see Remark \ref{rem beau}).

\begin{rem}[The twisted case] \label{twisted} In \cite{Voisin-twisted}, Voisin constructs a non trivial $J_U$-torsor $J_U^T \to U$ defined from a class in $H^1(U, \mc J_U[3])$, where $\mc J_U$ is the sheaf of holomorphic sections of $J_U \to U$ and where $\mc J_U[3] \subset \mc J_U$ is the sheaf of $3$-torsion points. The non triviality (for very general $X$) of this class corresponds to the non existence, for the universal family of hyperplanes sections of $X$, of a relative $1$-cycle of degree $1$.
The main result of the paper is to produce, for general $X$, a \hk compactification $J^T=J^T(X)$ with Lagrangian fibration to $\P^5$ extending $J_U^T \to U$. This builds on the compactification of \cite{LSV}. 
We will refer to this \hk manifold as the twisted intermediate Jacobian fibration. This \hk manifold is deformation equivalent to the non twisted version $J(X)$, as they agree as soon as $X$ has a $2$-cycle which restricts to a $1$-cycle of degree $1$ or $2$ on its hyperplane sections. 
Lemma \ref{cor not uniruled}, Proposition \ref{prop hk model} and Theorem \ref{hkcom} work the same for the non-trivial torsor $J_U^T \to U$, giving  a Lagrangian fibered \hk $J^T=J^T(X)$ for every smooth $X$. In Section \ref{Kuz} we will return to the twisted intermediate Jacobian fibration and in Corollary \ref{J not JT} we prove that for very general $X$ these two fibrations are not birational and that on $J$ there is a unique isotropic class in the movable cone of $J$. This fact will be used in the Appendix \ref{appendix}.
\end{rem}

Finally, we show that the Lagrangian fibered \hk compactification exists generically also over $\mc C_{6}$, the  divisor in the moduli space of cubic fourfolds whose general point parametrizes cubics with one $A_1$ singularity. The following Proposition is an adaptation of  \cite[\S 2]{LSV} to the case of a cubic fourfold with mild singularities.

\begin{prop} \label{checa}
Let $X_0 \subset \P^5$ be a cubic fourfold with one simple node $o \in X_0$ and no other singularities, let $U \subset \P^5$ be the open locus parametrizing smooth hyperplane sections, and let $\pi_U: J_U=J_U(X_0) \to U$ be the Donagi-Markman fibration. Then, there exits a holomorphic symplectic form $\sigma_U$ on $J_U$, which extends to a holomorphic two form on any smooth projective compactification.  As a consequence, Lemma \ref{cor not uniruled} holds for $J_U$, namely  
 any projective compactification of $J_U$ has smooth locus admitting a generically non-degenerate holomorphic $2$-form extending  $\sigma_U$ and is not uniruled.
Similarly, for the twisted intermediate Jacobian $J_U^T=J_U^T(X_0)$.
\end{prop}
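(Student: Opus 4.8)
The plan is to re-run the cycle-theoretic construction of the holomorphic two form of \cite{LSV}, observing that every ingredient it uses is supported over $U$, hence over the smooth locus $X_0^{sm}=X_0\setminus\{o\}$, so that the node plays no role. First I would record that $U$ is a non-empty Zariski open subset of $\Pfd$: for a general hyperplane $H$ with $o\notin H$ the section $X_0\cap H$ is smooth away from $o$ by Bertini (as $X_0$ is smooth away from $o$) and avoids $o$, hence is a smooth cubic threefold. For every $H\in U$ one then has $o\notin H$, so every subvariety of $X_0\cap H$ — in particular every line contained in it — lies in $X_0^{sm}$. Consequently the Donagi--Markman fibration $\pi_U\colon J_U\to U$ is defined over $U$, with fibre over $H$ the intermediate Jacobian of the smooth cubic threefold $X_0\cap H$, and is smooth and quasi-projective; this is Lemma \ref{lem Juno} restricted to $U$ (equivalently \cite[Cor. 2.38]{Collino-Murre-I} and \cite[Lem. 5.2]{LSV}), and uses only smoothness of the fibres, not of $X_0$. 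Similarly the restrictions to $U$ of the universal hyperplane section $\mc Y_U\to U$ and of the relative variety of lines of hyperplane sections $\mc F_U\to U$ are smooth quasi-projective varieties, exactly as when $X_0$ is smooth, since over $U$ they map to $X_0^{sm}$.

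Next I would construct $\sigma_U$. The construction of the symplectic form in \cite{LSV} (following \cite{Donagi-Markman}) produces it from the relative Abel--Jacobi map together with the Hodge theory of the relevant auxiliary variety ($\mc Y_U$, resp. $\mc F_U$), and its non-degeneracy at a point of $J_U$ over $H\in U$ is a pointwise statement about the cohomology of the smooth cubic threefold $Y=X_0\cap H$ and of its normal bundle $\mc O_Y(1)$ inside $X_0$, which depends only on $X_0$ in a Zariski neighbourhood of $Y$ — where $X_0$ is smooth. Since every ingredient of the construction is supported over $U$, i.e. over $X_0^{sm}$, the argument of \cite{LSV} goes through verbatim and yields a holomorphic symplectic form $\sigma_U$ on $J_U$ with respect to which $\pi_U$ is Lagrangian.

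For the extension to a smooth projective compactification $J\supset J_U$ I would use the same mechanism as \cite[Thm. 1.2 iii)]{LSV}: $\sigma_U$ is the image, under an algebraic correspondence between smooth projective varieties — a smooth projective compactification of the auxiliary variety and $J$, given by the closure of the relative Abel--Jacobi cycle — of a fixed holomorphic form; since the action of an algebraic correspondence on holomorphic two-forms, i.e. on $H^{2,0}=H^0(\Omega^2)$, is a birational invariant of smooth projective varieties and is compatible with restriction to dense open subsets, $\sigma_U$ extends to a holomorphic two form on $J$. The stated consequence then follows exactly as Lemma \ref{cor not uniruled} was deduced from \cite[Thm. 1.2 iii)]{LSV}: given any normal projective compactification $\bar J$ of $J_U$, a resolution $\rho\colon\widetilde J\to\bar J$ which is an isomorphism over $\bar J^{sm}\supseteq J_U$ is a smooth projective compactification of $J_U$, so $\sigma_U$ extends to $\widetilde\sigma\in H^0(\widetilde J,\Omega^2)$; its restriction to $\rho^{-1}(\bar J^{sm})\cong\bar J^{sm}$ is the required two form on the smooth locus of $\bar J$, and it is generically non-degenerate because $J_U\subset\bar J^{sm}$ is dense and $\sigma_U$ is symplectic; finally $\widetilde\sigma^{\wedge 5}$ is a non-zero section of $K_{\widetilde J}$, so $K_{\widetilde J}$ is effective and $\widetilde J$, hence $\bar J$, is not uniruled by \cite{Miyaoka-Mori}. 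The twisted case is identical: $\pi_U^T\colon J_U^T\to U$ is a torsor under $\pi_U$, so étale-locally on $U$ it is isomorphic to it, and Voisin's construction \cite{Voisin-twisted} of the symplectic form on $J_U^T$ — built from the relative family of degree-one $1$-cycles, which again lives over $X_0^{sm}$ — together with the same birational-invariance argument gives the statement for $J_U^T$.

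The main obstacle is bookkeeping rather than conceptual: one must check that \emph{every} step of the \cite{LSV} construction of $\sigma$ — including the parts concerning good lines and the relative compactified Prym, should they intervene, and the non-degeneracy computation in the Jacobian ring — genuinely sees only the smooth locus of $X_0$, and in particular does not implicitly rely on $F(X_0)$ being smooth hyper-K\"ahler or on the dual hypersurface $X_0^\vee$ being as mild as in the smooth case. Once that is confirmed, the remaining assertions are formal.
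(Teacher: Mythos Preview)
Your proposal has a genuine gap. The \cite{LSV} construction of $\sigma_U$ does \emph{not} use only data supported over $U$: its starting point is a global Hodge class $\eta \in H^1(X,\Omega^3_X)=H^{3,1}(X)$ on the smooth projective fourfold $X$, and the form on $J_U$ is obtained from $\eta$ via correspondences. For the nodal $X_0$ this vector space is not available, and your sentence ``the argument of \cite{LSV} goes through verbatim'' hides precisely the step that fails. The paper's remedy is to replace $X_0$ by its blowup $\wt X_0=Bl_o X_0\cong Bl_S\P^4$, where $S$ is the smooth $(2,3)$ K3 surface of lines through $o$; then $H^1(\wt X_0,\Omega^3_{\wt X_0})$ is one-dimensional and one picks a generator $\eta$. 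Non-degeneracy then reduces, as in \cite{LSV}, to showing $\eta_{|Y}\neq 0$ for a smooth hyperplane section $Y\subset \wt X_0$. Contrary to what you write, this is \emph{not} a pointwise statement about $Y$ and its normal bundle alone: it is a statement about the restriction of a global class on the ambient fourfold, and it requires an actual computation. The paper identifies, via $\Omega^3_{\wt X_0}\cong T_{\wt X_0}(-3)(2E)$, the generator $\eta$ with the extension class of $0\to T_{\wt X_0}\to (T_{\wt\P^5})_{|\wt X_0}\to \mc O_{\wt X_0}(3)(-2E)\to 0$, restricts to $Y$, and compares with the non-split sequence $0\to T_Y\to (T_{\P^4})_{|Y}\to \mc O_Y(3)\to 0$ via a short diagram chase to conclude $\eta_{|Y}\neq 0$.

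Calling this ``bookkeeping'' is misleading: once one recognises that $\eta$ is global and that $X_0$ must be replaced by a smooth model, choosing $\wt X_0$ and verifying $\eta_{|Y}\neq 0$ is essentially the whole proof. Your extension argument in the third paragraph (correspondences and birational invariance of $H^{2,0}$, then the deduction of Lemma~\ref{cor not uniruled}) is fine and matches the paper once $\sigma_U$ has been correctly constructed.
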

\begin{proof}
Let $\wt X_0$ (resp. $\wt \P^5$) be the blow-up of $X_0$ (resp. $\P^5$) at the point $o$. Let $E \subset \wt \P^5$ be the exceptional divisor. Projection from $o$ determines an isomorphism  $\wt X_0 \cong  BL_S \P^4$, where $S$ is the $(2,3)$ complete intersection in $ \P^3$ parametrizing lines in $X_0$ by $o$. The surface $S$ is a smooth K3 surface and  thus $H^1(\wt X_0, \Omega^3_{\wt X_0} )$ is one dimensional; let $\eta$ be a generator. The same argument as in \cite[Theorem 1.2 ]{LSV} shows that $\eta$ induces a holomorphic two form $\sigma$ on $J_U$, with respect to which the fibers of $J_U \to U$ are isotropic. To show that $\sigma$ is non-degenerate, it suffices to show that for any smooth hyperplane section $Y$ (which in particular does not pass by the point $o$), the map 
\be \label{etat}
T_{[Y]}U=H^0(Y,\mc O_Y(1)) \to H^1(Y, \Omega^2_Y)=H^0(J_U, \Omega^1_{J_U}),
\ee
induced by $\sigma$, via the fact that the fibers of  $J_U \to U$  are isotropic, is an isomorphism. By \cite[Theorem 1.2 (ii)]{LSV}, this map is given by the cup product with a class $\eta_Y \in H^1(Y, \Omega^2_Y(-1))$ defined in the following way:  let $\eta_{|Y} \in H^1(Y, (\Omega^3_{\wt X_0})_{|Y} )$ be the restriction of $\eta$ to $Y$. Since $H^1(Y, \Omega^3_Y)=0$, the exact sequence $0 \to \Omega^2_Y(-1) \to (\Omega^3_{\wt X_0})_{|Y} \to \Omega^3_Y \to 0$ implies that $\eta_{|Y}$ lifts to a class $\eta_Y \in  H^1( \Omega^2_Y(-1))$. By Griffiths residue theory (see \cite[ Lemma 1.7]{LSV}), $H^1( \Omega^2_Y(-1))$ is one dimensional and cup product with any non-zero element induces an isomorphism $H^0(Y,\mc O_Y(1)) \to H^1(Y, \Omega^2_Y)$;
more precisely, using  the canonical isomorphism $\Omega^2_Y(-1)=T_Y(3)$, this space is spanned by the class of the non-trivial extension $0 \to T_Y \to (T_{\P^4})_{|Y} \to \mc O_Y(-3) \to 0$.
It follows that to show that (\ref{etat}) is an isomorphism, we only need to show that $\eta_Y \neq 0$, which amounts to showing that $\eta_{|Y} \neq 0$.
Under the isomorphism $ \Omega^3_{\wt X_0}=T_{\wt X_0}(-3)(2E)$, the class of a generator of $H^1(\wt X_0, \Omega^3_{\wt X_0} )$ corresponds to the class of the extension $ 0 \to T_{\wt X} \to (T_{\wt \P^5})_{|\wt X} \to \mc O_{\wt X}(3)(-2E) \to 0$. Restricting to $Y$ and considering the tangent bundle sequence for $Y$ in $\P^4$ we get the following diagram of short exact sequences
\[
\xymatrix{
0  \ar[r] & (T_{\wt X})_{|Y}  \ar[r] &  (T_{\wt \P^5})_{|Y}  \ar[r] &  \mc O_Y(3)  \ar[r] &  0\\
0  \ar[r] & T_Y  \ar[r] \ar[u] &  (T_{ \P^4})_{|Y}  \ar[u]^\alpha \ar[r] &  \mc O_Y(3) \ar@{=}[u]  \ar[r] & 0
}
\]
where the first two vertical arrows are injective. The extension class of the first row is $\eta_{|Y}$ and the second row is non split, as we already observed.  Since $\coker (\alpha)=\mc O_Y(1)$, then $\Hom(\mc O_Y(3),\coker (\alpha))=0$. Thus any splitting of the first row would induce a splitting of the second row, giving a contraction.
\end{proof}

\begin{rem} \label{mild sing}
The Proposition \ref{checa} holds, more generally, for any cubic fourfold with isolated singularities, as long as a general one parameter smoothing of it has finite monodromy. This corresponds to the K3 surface $S$ of lines through one of the singular points having canonical singularities. The case of the degeneration to the chordal cubic \cite{Hassett-special}, which has finite monodromy but central fiber with $2$--dimensional singular locus, will be discussed at length in Section \ref{deg chordal}.
\end{rem}

\begin{prop} \label{Csix}
Let $X_0 \subset \P^5$ be as in Proposition \ref{checa} (or as in Remark \ref{mild sing}) and let $\pi_U: J_U \to U$  be the corresponding intermediate Jacobian fibration. Then there exists a \hk compactification $J=J(X_0)$ of $J_U$, with a regular flat morphism to $\Pfd$ extending $\pi_U$. Moreover, if $\mc X \to \Delta$ is a general family of smooth cubic fourfolds degenerating to $X_0$, then up to a base change, there exists a family of Lagrangian fibered \hk manifolds 
\[
\mc J \to \P^5_\Delta \to \Delta
\]
such that for $t \neq 0$, $\mc J_t=J(\mc X_t)$ is the LSV compactification and, for $t=0$, $\mc J_0$ is a \hk compactification of $J_U=J_U(X_0)$. Similarly, the analogue statement holds for the twisted intermediate Jacobian.
\end{prop}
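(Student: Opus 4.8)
The plan is to deduce both assertions from the smooth case by a degeneration argument, using Proposition \ref{checa} as the substitute for Lemma \ref{cor not uniruled} at every point where non-uniruledness and the extended holomorphic symplectic form were used in the proofs of Proposition \ref{prop hk model} and Theorem \ref{hkcom}. First I would fix a general one-parameter family $\mc X \to \Delta$ of cubic fourfolds with $\mc X_0 = X_0$; since $\mc C_6$ (respectively, in the situation of Remark \ref{mild sing}, the relevant locus) is a divisor in the moduli space of cubic fourfolds and a general one-parameter deformation of $X_0$ is general in the sense of \cite{LSV}, after shrinking $\Delta$ we may assume $\mc X_t$ is general in the sense of \cite{LSV} for $t \ne 0$ and, by \cite[Prop. 2.10]{LSV}, that every hyperplane section of $\mc X_t$ admits a very good line.

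Next I would globalize the construction of \cite[\S 5]{LSV} over $\mc V := (\P^5)^\vee \times \Delta \setminus (\Sigma_0 \times \{0\})$, where $\Sigma_0 \subset (\P^5)^\vee$ is the closed set of hyperplanes $H$ for which $X_0 \cap H$ has a singularity worse than a single $A_1$ point, or does not carry a very good line. The step I expect to be the main obstacle is to prove that $\codim \Sigma_0 \ge 2$ and that the construction goes through over $(\P^5)^\vee \setminus \Sigma_0$: a general hyperplane through the node $o$ of $X_0$ meets $X_0$ in a cubic threefold with a single node at $o$ and no further singularity, because the hyperplanes tangent to $X_0$ at a smooth point form a proper subvariety of the $\P^4$ of hyperplanes through $o$; hence inside that $\P^4$ the locus of bad sections is a proper closed subset, and away from $o$ one argues as for smooth cubic fourfolds. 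One then has to check that the smooth and one-nodal sections parametrized by $(\P^5)^\vee \setminus \Sigma_0$ --- in particular the one-nodal ones with node at $o$, which are the genuinely new case --- still admit very good lines in the sense of \cite[Def. 2.9]{LSV}, that the conic-bundle and compactified-Prym picture of \cite[\S 5]{LSV} and its independence of the chosen very good line survive, and that the total space is smooth. This is the adaptation of \cite[\S 2, \S 5]{LSV} to mildly singular cubics announced before the statement; Proposition \ref{checa} already provides the nondegenerate holomorphic two-form. The outcome is a projective morphism $\mc J_{\mc V} \to \mc V$, fibered in compactified Prym varieties, with smooth total space, inducing a smooth quasi-projective $\mc J_{\mc V} \to \Delta$ with $\mc J_t = J(\mc X_t)$ the LSV fibration for $t \ne 0$ and $\mc J_0 = J_{U_1}(X_0)$ a smooth partial compactification of $J_U(X_0)$ over $U_1 := (\P^5)^\vee \setminus \Sigma_0$; by Proposition \ref{checa} and Hartogs (applicable since $\codim \Sigma_0 \ge 2$) it carries a symplectic form extending $\sigma_U$, and being birational to $J_U(X_0)$ it is not uniruled.

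With $J_{U_1}(X_0)$ in hand, the first assertion follows by repeating verbatim the proofs of Proposition \ref{prop hk model} and Theorem \ref{hkcom}. Extending $\mc J_{\mc V} \to \Delta$ to a projective morphism $\widetilde{\mc J} \to \Delta$, the multiplicity-one component of $\widetilde{\mc J}_0$ containing $J_{U_1}(X_0)$ is not uniruled by the previous paragraph, so \cite[Cor. 5.2]{KLSV} yields a smooth projective \hk model $M$ of $J_{U_1}(X_0)$ of OG$10$-type (the analogue of Proposition \ref{prop hk model}); then the mmp with scaling relative to $\P^5$ applied to a smooth compactification of $J_{U_1}(X_0)$ runs exactly as in the proof of Theorem \ref{hkcom}, because the canonical class of any such compactification is supported on the complement of $J_{U_1}(X_0)$, hence maps into $\Sigma_0$, which has codimension $\ge 2$ (so it is relatively exceptional), and because Proposition \ref{checa} guarantees that the smooth locus of every intermediate model carries the symplectic form; it terminates at a $\Q$-factorial terminal symplectic compactification $J(X_0)$, which is smooth by Proposition \ref{GLR} since it is birational to $M$, and which carries a flat morphism $\pi$ to $\P^5$ extending $\pi_U$.

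For the family statement I would instead run the mmp with scaling relative to $\P^5_{\Delta}$, after a finite base change $\Delta' \to \Delta$ needed (as in the proof of Theorem \ref{deg lagr 1}) to put the central fibre in good position: the general fibre over $\Delta'$ is the LSV compactification $J(\mc X_t)$, a \hk variety with trivial canonical class and flat over $\P^5$, so the mmp is an isomorphism there, while over $t=0$ it contracts the relatively exceptional canonical locus and produces, by the argument of the previous paragraph, a Lagrangian-fibered \hk compactification $\mc J_0$ of $J_U(X_0)$; the resulting $\mc J \to \P^5_{\Delta'} \to \Delta'$ is the desired family. This is precisely the argument that will prove Theorem \ref{deg lagr 1}, which uses only the properties of $J_U$ recorded in Proposition \ref{checa}, so it carries over with $X_0$ in place of a smooth cubic fourfold. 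The twisted case is identical: one works throughout with the torsor $J_U^T$ of \cite{Voisin-twisted} in place of $J_U$ --- Proposition \ref{checa} and the family compactified-Prym construction hold for $J_U^T$ as well --- and with $J^T(\mc X_t)$ as the general fibre for $t \ne 0$.
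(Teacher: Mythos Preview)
Your argument is largely sound, but it takes a considerably longer route than the paper and introduces an obstacle that is not actually needed.

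The paper's proof is a two-line application of Theorem~\ref{deg lagr 1}. One sets up the family $\mc J_{\mc V}\to\mc V$ so that for $t\neq 0$ the fibre is the LSV fibration $J(\mc X_t)$ and over $t=0$ it is simply $J_U(X_0)\to U$, the fibration over the \emph{smooth} hyperplane sections. By Proposition~\ref{checa} the central fibre is not uniruled, and Theorem~\ref{deg lagr 1} then produces both the family $\mc J\to\P^5_\Delta\to\Delta$ and, as its central fibre $\mc J_0$, the desired smooth Lagrangian-fibered hyper-K\"ahler compactification of $J_U(X_0)$. Assertion~1 is thus a byproduct of assertion~2.

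Your approach differs in that you try to first build a partial compactification $J_{U_1}(X_0)$ over an open set $U_1\subset\P^5$ with codimension-$2$ complement, by extending the compactified Prym construction of \cite{LSV} to the one-nodal hyperplane sections of the \emph{nodal} $X_0$. You correctly identify this as the ``main obstacle'' (existence of very good lines for such sections, smoothness of the relative Prym, etc.) and you do not actually resolve it. You then use this $J_{U_1}(X_0)$ to re-run Proposition~\ref{prop hk model} and Theorem~\ref{hkcom} for assertion~1, and separately sketch the proof of Theorem~\ref{deg lagr 1} for assertion~2.

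The point is that the codimension-$2$ requirement, and hence the whole construction of $J_{U_1}(X_0)$, is unnecessary. You need it only because you run the mmp for assertion~1 relative to $\P^5$ rather than relative to $\P^5_\Delta$: over $\P^5$ alone you need the canonical locus to map into codimension~$\ge 2$, which forces you to extend $J_U$ over the generic one-nodal locus. Working instead relative to $\P^5_\Delta$ as in Theorem~\ref{deg lagr 1}, the extra components of the central fibre automatically map into $\P^5\times\{0\}$ without dominating it, hence into codimension~$\ge 2$ of $\P^5_\Delta$, regardless of how large the bad locus is inside $\P^5_0$. So the open abelian fibration $J_U\to U$ suffices and the ``obstacle'' disappears.
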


\begin{proof}
By Proposition \ref{checa} above, $J_U$ has a holomorphic symplectic form that extends to a regular form on any smooth projective compactification. As in Lemma \ref{cor not uniruled}, it follows that $J_U$ is not uniruled. Let $\mc X \to \Delta$ be a  family of smooth cubic fourfolds degenerating to $\mc X_0=X_0$ with the property that for $t \neq 0$ $\mc X_t$ is general in the sense that of LSV. As in the beginning of Proposition \ref{hkcom}, let  $\mc J_{\mc V} \to \mc V$ be such that the fiber over $t \neq 0$ of $\mc J_{\mc V} \to \Delta$ is the LSV compactification $J(\mc X_t)$ and, over $t=0$, is $J_U \to U$.
We are thus in the position of applying Theorem \ref{deg lagr 1} below, which proves the proposition.
\end{proof}

A consequence of this proposition is a positive answer to a question of Beauville  \cite{Beauville-Fano-K3}, as explained in the following remark.

\begin{rem} \label{rem beau}
Given a smooth cubic threefold $Y$, let $\ell \subset Y$ be a line. In \cite{Beauville-cubicthreefolds, Druel}, it is shown that the moduli space of Ulrich bundles on $Y$ with rank $2$, $c_1=0$ and $c_2=2\ell$ is birational to the intermediate Jacobian of $Y$ (more precisely, it can be identified with the blowup of the intermediate Jacobian fibration along the Fano surface).
Now let $X_0$ be cubic fourfold with one simple node and let $S \subset \P^4$ be the $(2,3)$ complete intersection K3 surface  parametrizing lines through the singular point of $X_0$. Consider the Mukai vector $v=2v_0=2(1,0,-1)  \in H^*(S,\Z)$ and let $\wt M_{2v_0}(S)$ be the symplectic resolution of the singular moduli space of OG$10$-type (cf. \S \ref{ogtentype}). \\
By considering the relative moduli spaces of Ulrich bundles supported on the $5$-dimensional family of cubic threefolds containing $S$ and by restricting the bundles to $S$, Beauville \cite[\S 5, Example $d=3$]{Beauville-Fano-K3} shows that there is a birational map $ J_U \dashrightarrow M_v(S)$. This induces a rational map $M_{2v_0}(S)\dashrightarrow \P^5$ and Beauville asks whether there exists a \hk manifold birational to $M_v(S)$ which admits a \emph{regular} morphism to $\P^5$.
Proposition \ref{Csix} thus gives a positive answer to this question.
\end{rem}

The proof of the proposition above relies on the following Theorem, which is the Lagrangian fibration analogue of results from \cite[Thm 2.1 and Cor. 5.2]{KLSV}. Theorem \ref{deg lagr 1} will be used also in Section \ref{section general X} for the proof of Proposition \ref{3fams} (and thus also of Theorem \ref{NSU}).
As usual, $\Delta$ is an open affine subset of a smooth curve, or a small analytic disk. In both cases, we keep the notation $t=0$ to denote a chosen special point in $\Delta$ and $t \neq 0$ to denote any other point. 

\begin{thm} \label{deg lagr 1} Let $\tilde f: \wt{\mc J} \to \Delta$ be a projective degeneration of  \hk manifolds of dimension $2n$. Suppose that there is a commutative diagram
\[
\xymatrix{
\wt{\mc J} \ar[dr]_{\tilde f} \ar[r]^{\tilde \pi} & \P^n_\Delta \ar[d]^p \\
& \Delta
}
\]
where $\wt{\mc J} \to \P^n_\Delta$ is a projective fibration such that for  $t \neq 0$, $\mc J_t \to \P^n_t$ is a Lagrangian fibration. Assume that the central fiber $\wt{\mc J}_0= Y_0 +\sum_{i \in I} m_i Y_i$ has a reduced component $Y_0$ which is not uniruled. Suppose, furthermore, that there is an open subset of $ Y_0  \setminus \cup _{ i \ge 1} (Y_i \cap Y_0)$ such that the morphism to $\P^n_0$ is a fibration $J_{U_0} \to U_0 \subset \P^n_0$ in abelian varieties. Then
\begin{enumerate}
\item There exists a projective degeneration $\bar f: \Jbmc \to \Delta$ of \hk manifolds such that:  
\begin{enumerate}
\item $\Jbmc$ is  $\Q$-factorial, terminal, and  isomorphic to $\wt{\mc J}$ over $\Delta^*$;
\item  The central fiber $\Jbmc_0$ is a reduced, irreducible, and a normal symplectic variety with canonical singularities and admitting a symplectic resolution;
\item There is a relative Lagrangian fibration $\bar \pi: \Jbmc \to  \P^n_\Delta$ compatible, via the birational map $ \Jbmc \dashrightarrow  \wt{\mc J}$, with  $\tilde \pi$ and such that, up to restricting the open set $ U_0 \subset \P^n_0$,  the morphism $\Jbmc_0 \to \P^n_0$ extends the abelian fibration $J_{U_0} \to U_0$.
\end{enumerate}

\item Up to a base change $\Delta' \to \Delta$, there exists a (non necessarily projective) family $\mc J \to {\Delta'}$ of \hk manifolds,  with a birational morphism $\mc J  \to \Jbmc':= \Jbmc \times_{\Delta'} \Delta$ over $\Delta'$, which is an isomorphism away from the central fiber and in the central fiber is a symplectic resolution of $\Jbmc_0$. Moreover, $\mc J$ has a family of Lagrangian fibrations $ \pi{'}: \mc J   \to  \P^n_{\Delta'}$ compatible with the base change of $\bar \pi$.
\end{enumerate}
\end{thm}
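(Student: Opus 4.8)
The strategy is to run the relative minimal model program for $\wt{\mc J}$ over $\Delta$, as in \cite{KLSV} and in the proof of Theorem \ref{hkcom}, and then to recover on the resulting model the projection to $\P^n$, using the structure and deformation theory of Lagrangian fibrations of \cite{Matsushita-Def,Matsushita-almost}. Throughout we may shrink $\Delta$. Replacing $\wt{\mc J}$ by a resolution which is an isomorphism over $\Delta^*$, we may assume $\wt{\mc J}$ smooth; the strict transform of $Y_0$ is still a reduced non--uniruled component of the central fibre. Set $L:=\tilde\pi^*\mc O_{\P^n}(1)$, a nef divisor restricting to the Lagrangian polarization on $\mc J_t$, $t\ne 0$. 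Since $\mc J_t$ has trivial canonical bundle, $K_{\wt{\mc J}}$ is effective with support over $t=0$, so by Lemma \ref{cor not uniruled} the smooth locus of every normal projective model of $\wt{\mc J}$ over $\Delta$ carries a holomorphic two--form extending $\sigma_{\mc J_t}$, and no such model is uniruled. Because the central fibre contains the reduced non--uniruled component $Y_0$, the monodromy is finite and \cite[Thm.~2.1, Cor.~5.2]{KLSV} applies: running the relative mmp with scaling over $\Delta$ (\S\ref{subsectmmp}) — where, exactly as in the proof of Theorem \ref{hkcom}, the $K$--negative extremal rays are spanned by rational curves contained in $\Supp K_{J_i}$ (hence in the complement of the abelian fibration), the intermediate models remain $\Q$--factorial and terminal, the process terminates by \cite{BCHM}, and the strict transform of $Y_0$ is never contracted (it is not uniruled) — we obtain a projective $\Q$--factorial terminal model $\bar f:\Jbmc^{(1)}\to\Delta$, isomorphic to $\wt{\mc J}$ over $\Delta^*$ and over (a shrinking of) $U_0$, with $K_{\Jbmc^{(1)}}=0$ (so $\Jbmc^{(1)}$ is symplectic, by Lemma \ref{cor not uniruled}), and with $\Jbmc^{(1)}_0$ reduced, irreducible, normal, symplectic, with canonical singularities and admitting a symplectic resolution.

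\emph{Putting back the Lagrangian fibration.} Let $L'$ be the strict transform of $L$ on $\Jbmc^{(1)}$; on $\mc J_t$ it is the $q$--isotropic, semiample Lagrangian class, so its relative stable base locus $\mathbb B(L'/\Delta)$ is a proper closed subset of the irreducible divisor $\Jbmc^{(1)}_0$, i.e.\ $L'$ is relatively movable over $\Delta$. A finite sequence of flops over $\Delta$ — which are isomorphisms over $\Delta^*$, preserve $\Q$--factoriality, terminality, $K=0$ and the symplectic form, do not touch $U_0$, and (arguing as in \cite{KLSV}) preserve the property that the central fibre is reduced, irreducible, normal, symplectic, with canonical singularities and a symplectic resolution — leads to a model $\Jbmc\to\Delta$ on which $L'$ is relatively nef. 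Now $L'$ is nef, restricts to the ample pullback on $\mc J_t$ for $t\ne 0$, and satisfies $q(L'|_{\Jbmc_0})=0$; by the structure theory of Lagrangian fibrations on \hk manifolds of the relevant deformation type, applied to the general fibre and propagated to the special one (cf.\ \cite{Matsushita-Def,Matsushita-almost}, after transferring if necessary to the symplectic resolution constructed below), $L'$ is relatively semiample over $\Delta$, and the induced projective morphism $\bar\pi:\Jbmc\to\P^n_\Delta$ over $\Delta$ agrees with $\tilde\pi$ over $\Delta^*$ and restricts over $U_0$ to $J_{U_0}\to U_0$. This proves (1).

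\emph{Part (2).} By \cite[Cor.~5.2]{KLSV}, after a finite base change $\Delta'\to\Delta$ totally ramified over $0$, the normalization of $\Jbmc\times_\Delta\Delta'$ admits a resolution $\mc J\to\Jbmc':=\Jbmc\times_\Delta\Delta'$ which is an isomorphism over $\Delta'^*$ and is over $0$ a symplectic resolution $\mc J_0\to\Jbmc_0$, with $\mc J$ smooth and $\mc J\to\Delta'$ a (not necessarily projective) smooth family of \hk manifolds deformation equivalent to $\mc J_t$. Composing with the base change of $\bar\pi$ gives a morphism $\pi':\mc J\to\P^n_{\Delta'}$, which over $\Delta'^*$ is the given family of Lagrangian fibrations and whose central fibre $\pi'_0:\mc J_0\to\P^n$ is a fibration with abelian general fibre (over $U_0$ it is $J_{U_0}\to U_0$); by Matsushita's structure theorem \cite{Matsushita-almost}, $\pi'_0$ is a Lagrangian fibration. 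This proves (2).

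\emph{Main obstacle.} The crux is keeping the projection to $\P^n$ under control: \cite{KLSV} produces a model minimal over $\Delta$, not a priori over $\P^n_\Delta$. One must therefore flop $\Jbmc^{(1)}$ into the chamber of the relative movable cone where the Lagrangian class $L'$ is nef — without spoiling the good central fibre — and then upgrade the resulting nef, isotropic class to a semiample one over $\Delta$ despite the singularities of $\Jbmc_0$; the latter is the point where one reduces, via the symplectic resolution, to the smooth family $\mc J\to\Delta'$ and invokes Matsushita's deformation theory, the isotropy $q(L')=0$ being essential.
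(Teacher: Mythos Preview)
Your approach diverges from the paper's in a way that creates real gaps. The paper does \emph{not} run the mmp over $\Delta$ and then try to recover the Lagrangian fibration afterwards. Instead it observes that, since $Y_0\cap Y_i$ lies outside $J_{U_0}$ and the fibres of $\wt{\mc J}_0\to\P^n_0$ are connected, each component $Y_i$ ($i\ge1$) maps to a proper subvariety of $\P^n_0$, hence is $\tilde\pi$-exceptional. One is therefore in exactly the setting of Theorem~\ref{hkcom}: a smooth total space whose canonical class is an effective divisor all of whose components are exceptional for a fixed projective morphism. The mmp with scaling is then run \emph{over $\P^n_\Delta$}, so the projection to $\P^n_\Delta$ is carried along automatically; the output $\Jbmc$ already comes with $\bar\pi$, and one finishes with the argument of \cite[Thm.~1.1, Cor.~4.2]{LSV} for the properties of $\Jbmc_0$.

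Your alternative route has two unresolved steps. First, after reaching $K=0$ over $\Delta$, you assert that a finite sequence of $K$-trivial flops makes $L'$ relatively nef and that these flops ``do not touch $U_0$''; but termination of such flops is not supplied, and there is no a priori reason the flopping loci avoid the abelian locus (you no longer have $K$-negativity forcing the curves into $\Supp K$). Second, the passage from ``$L'$ nef, isotropic'' to ``$L'$ relatively semiample over $\Delta$'' is not justified on the singular model: Matsushita's results concern smooth hyper-K\"ahler manifolds, and your proposed reduction to the symplectic resolution is circular, since you need $\bar\pi$ in part~(1) before constructing the smooth family in part~(2). Running the mmp over $\P^n_\Delta$ from the outset bypasses both difficulties.
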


\begin{proof}
The proof follows ideas from \cite{Takayama,Kollar-Elliptic,KLSV}. Up to passing to a log resolution of the pair $(\wt{\mc J}, \wt{\mc J}_0)$, we can assume that $\wt{\mc J}_0= Y_0 +\sum_{i = 1}^k m_i Y_i$ is a normal crossing divisor. By \cite[Thm 2.1 and Cor 5.2]{KLSV}, running the mmp over $\Delta$ contracts the components $Y_i$, for $i \ge 1$, and yields a birational model of $\tilde{\mc J}$ with an irreducible central fiber which is a symplectic variety. In particular, $Y_0$ is the unique component of  $\wt{\mc J}_0$ that is not uniruled (cf. \cite[Remark 2.2]{KLSV}). To prove the theorem we only need to show that the birational maps required to contract the other components can be preformed relatively to $\P^n_\Delta$ and, furthermore, that they induce isomorphism away from $\cup _{ i \ge 1} Y_i$. This is to ensure that the central fiber has a Lagrangian fibration extending $J_{U_0} \to U_0$ (maybe up to restricting the open subset $U_0 \subset \P^n_0$).

The canonical class $K_{\wt{\mc J}}$ is trivial over $\Delta^*$, so it is $\tilde f$-equivalent to divisor of the form $\sum_{i = 0}^k a'_i Y_i$. Following \cite[2.3 (1)]{Takayama} we set $r=\min{a'_i/m_i}$, so\footnote{For a projective morphism $f:A \to B$ and two $\Q$-Cartier divisors $D$ and $D'$ on $A$, we write $D=_{\Q, B}D'$ or $D\sim_{\Q, f}D'$  iff $D$ and $D'$ are $\Q$-linearly equivalent up to the pullback of a $\Q$-Cartier divisor from $B$.}
\[
K_{\wt{\mc J}}=_{\Q, \Delta} \sum_{i = 0}^k a_i Y_i,
\]
where $a_i=a'_i-rm_i \ge 0$ are non negative rational numbers and for at least one $i$, $a_i=0$. Let $J \subsetneq \{0, 1, \dots, k\}$ be the set of indices such that $a_i >0$ and let $J^c$ be its complement.
By \cite[Prop. 5.1]{Takayama}
\begin{enumerate}
\item For every $j \in J$, the irreducible component $Y_j$ is uniruled.
\item If $|J^c| \ge 2$, then for every $j \in J^c$, the irreducible component $Y_j$ is uniruled.
\end{enumerate}

Since $Y_0$ is not uniruled, it follows that $J=\{1, \cdots, k\}$ and thus
\[
K_{\wt{\mc J}}=_{\Q, \tilde \pi} \sum_{i=1}^k a_i Y_i, \quad \,\, a_i > 0.
\]

By assumption, for every $i \ge 1$, the closed subset $Y_0 \cap Y_i$ is in the complement of $J_{U_0}$ and, since the fibers of $\wt{\mc J}_0 \to \P^n_\Delta$ are connected, it follows that the induced map $Y_i \to  \P^n_0$ is not dominant. Thus,  the codimension of $ \tilde \pi(Y_i) $ in $\P^n_\Delta$ is greater or equal to $2$. In other words, $Y_i$ is $\tilde \pi$-exceptional.

We are in the same setting of Theorem \ref{hkcom}, namely a projective morphism from a smooth quasi-projective variety with a canonical class that is relatively $\Q$-linearly equivalent to an effective divisor all of whose components are relatively exceptional.
We can thus argue as in the proof of Theorem \ref{hkcom}, running the mmp over $\P^n_\Delta$ with scaling of an ample divisor in order to contract each of the $Y_i$, $i \ge 1$. This yields a birational map $\wt{\mc J} \dashrightarrow \Jbmc$ over $\P^n_\Delta$, where  $\Jbmc \to \Delta$ has irreducible fibers and the fibration $\Jbmc \to  \P^n_\Delta$ has $\Q$-factorial terminal total space and is such that $K_\Jbmc= \tilde \pi^*  B $, for some $\Q$-divisor $B$ on $\P^n_\Delta$.  Since  at each step the $K$-negative rays of the mmp are contained in uniruled components of the central fiber, it follows that the birational map $\wt{\mc J} \dashrightarrow \Jbmc$ is an isomorphism away from $\cup_{i \ge 1} Y_i$. In particular, the central fiber $\Jbmc_0$, which is irreducible, has an open subset which is isomorphic to $ J_{U_0} $.
Since for $t\neq 0 $, $(K_{\Jbmc})_{|\mc J_t}=0$,  $B_{|\mc \P^n_t}=0$ for $t\neq 0$. In particular, $B$ is $p$-trivial, where $p: \P^n_\Delta \to \Delta$ is the projection, and thus $K_{\Jbmc}$ is $\tilde f$-trivial. We can now argue as in the last part of the proof of  \cite[Theorem 1.1]{LSV} to show that  $\Jbmc_0$ is a normal with canonical singularities. As in \cite[Corollary 4.2]{LSV} it follows that $\Jbmc_0$ is a symplectic variety and that, up to a base change $\Delta' \to \Delta$ there  exits a smooth family $\mc J \to \Delta'$ with a birational morphism $\mc J  \to \Jbmc':= \Jbmc \times_{\Delta'} \Delta$ with the desired properties.
\end{proof}

\begin{rem} \label{rem other proof}
 Theorem \ref{deg lagr 1} gives another proof of Theorem \ref{hkcom}, as well as the stronger statement of the existence of a relative intermediate Jacobian fibration $\mc J \to \P^5_\Delta$, associated to any family $\mc X \to \Delta$ of smooth cubic fourfolds for which the general fiber is general in the sense of LSV.
\end{rem}

%%%%%%%%%%%%%%%%%%%%%%%%%%%%%%%%%%%%%%
%%%%%%%%%%%%%%%%%%%%%%%%%%%%%%%%%%%%%%
%%%%%%%%%%%%%%%%%%%%%%%%%%%%%%%%%%%%%%
%%%%%%%%%%%%%%%%%%%%%%%%%%%%%%%%%%%%%%
%%%%%%%%%%%%%%%%%%%%%%%%%%%%%%%%%%%%%%
%%%%%%%%%%%%%%%%%%%%%%%%%%%%%%%%%%%%%%

\section{Moduli spaces of OG$10$-type} \label{ogtentype}

By \cite[Cor 6.3]{LSV} (see also \cite[\S 6.3]{KLSV}) any \hk compactification $J$ of $J_U$ is deformation equivalent to O'Grady's $10$-dimensional example. We start this section by recalling the basic definitions and first properties of those singular moduli spaces of sheaves on a K3 surface whose symplectic resolutions are \hk manifolds in this deformation class. Then we use the methods of Bayer-Macr\`i, as adapted by Meachan-Zhang to this class of singular moduli spaces, to study the movable cone of certain moduli spaces that appear naturally as limits of the intermediate Jacobian fibration, when the underlying cubic fourfold degenerates to the chordal cubic (cf. Section \ref{deg chordal}).

We start by recalling the following fundamental theorem.

\begin{thm}[\hspace{1sp}{\cite{Mukai,Yoshioka,OGrady99,Lehn-Sorger, Kaledin-Lehn-Sorger,Perego-Rapagnetta}}] \label{OG10} Let $(S,H)$ be a general polarized K3 surface and let $v_0 \in H^*_{alg}(S,\Z)$ be a primitive Mukai vector which we suppose to be positive in the sense of  \cite[Def. 5.1]{Bayer-Macri-proj} (cf. also \cite[Rem. 3.1.1]{dCRS}). Let $m \ge 2$ be an integer. The moduli space $M_{mv_0, H}(S)$ of $H$--semistable sheaves on $S$ with Mukai vector $mv_0$  is an irreducible normal projective symplectic variety of dimension $m^2v_0^2+2$, which admits a symplectic resolution if and only if $m=2$ and $v_0^2=2$. 
When this is the case,  the symplectic resolution $\wt M_{2v_0, H}(S) \to M_{2v_0, H}(S)$ is the blow up of the singular locus $ \Sym^2 M_{v_0, H}(S) \subset M_{2v_0, H}(S)$, with its reduced induced structure. Moreover,  $\wt M_{2v_0, H}(S) $ is an irreducible holomorphic symplectic manifold and its deformation class is independent of $(S, H)$ and of $v_0$; in particular, $\wt M_{2v_0, H}(S) $ is deformation equivalent to O'Grady's  original $10$-dimensional exceptional example. 
\end{thm}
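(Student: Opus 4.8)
This theorem is a synthesis of several foundational results, and the plan is to assemble them in the following order. First I would recall the basic structure of the moduli space. By Mukai's construction the stable locus $M^s_{mv_0,H}(S)$ is smooth and carries a holomorphic symplectic form (via the Serre-duality pairing on $\Ext^1(\F,\F)$), and the whole $M_{mv_0,H}(S)$ is projective by the GIT construction; positivity of $v_0$ in the sense of Bayer-Macr\`i, together with genericity of $H$ (i.e. $v$-genericity for $v=mv_0$), guarantees non-emptiness and the expected chamber behaviour. That $M_{mv_0,H}(S)$ is irreducible, normal, and in fact a symplectic variety in the sense defined above, of dimension $v^2+2=m^2v_0^2+2$, is the content of the work of Yoshioka, O'Grady, Kaledin-Lehn-Sorger and Perego-Rapagnetta on moduli of (possibly strictly semistable) sheaves on K3 surfaces. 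For $m=2$ the strictly semistable locus is exactly the singular locus, and it is the image of $\Sym^2 M_{v_0,H}(S)$ under $\{\F_1,\F_2\}\mapsto \F_1\oplus\F_2$; for $m\ge 3$ there are additional strata, indexed by the decompositions $v=\sum n_iv_i$ with $v_i$ proportional to $v_0$.

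Next I would address the existence of a symplectic (equivalently, crepant) resolution. The key input is the \'etale-local structure of the singularities along the strata of the strictly semistable locus: by Luna's slice theorem the formal completion of $M_{mv_0,H}(S)$ at a polystable point is a symplectic reduction attached to the $\Ext^1$-quiver of that polystable sheaf, so one is reduced to deciding when these explicit conical symplectic singularities admit crepant resolutions. This is precisely the Kaledin-Lehn-Sorger dichotomy: the answer is negative as soon as $m\ge 3$ or $v_0^2\ge 4$, because the transverse singularity along the generic point of the singular locus is then of the form $\C^{2k}/\{\pm1\}$ with $k\ge 2$ (or a quiver cone of the same flavour), which admits no symplectic resolution; whereas for $m=2$ and $v_0^2=2$ the transverse singularity along the generic point of the singular locus is merely the $A_1$ surface singularity, and the total space is mild enough to be resolved globally. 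In that remaining case O'Grady's original construction for the Mukai vector $2v_0=(2,0,-2)$ (so $v_0=(1,0,-1)$), refined by Lehn-Sorger's local analysis of ``la singularit\'e de O'Grady'', shows that the blow-up of the \emph{reduced} singular locus $\Sym^2 M_{v_0,H}(S)\subset M_{2v_0,H}(S)$ has trivial discrepancy and is therefore a symplectic resolution $\wt M_{2v_0,H}(S)\to M_{2v_0,H}(S)$; its smoothness and the extension of the symplectic form then make $\wt M_{2v_0,H}(S)$ an irreducible holomorphic symplectic manifold.

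For the independence of the deformation class I would invoke Perego-Rapagnetta. The mechanism is that $\wt M_{2v_0,H}(S)$ varies well in families: deforming the polarized K3 surface and the primitive Mukai vector through the relevant period domain yields a smooth family of symplectic resolutions, the resolution being the relative blow-up of the relative singular locus, so the deformation class does not change; and moving $v_0$ inside the set of positive primitive classes with $v_0^2=2$ can be realized through such deformations together with Fourier-Mukai equivalences, which reduces everything to O'Grady's vector $(2,0,-2)$ on a general K3 surface. This gives the asserted deformation equivalence with O'Grady's original $10$-dimensional example, which by definition is this last model.

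The real content hidden in these citations, and the step I expect to be the main obstacle, is the \'etale-local analysis underlying the Kaledin-Lehn-Sorger classification: one must identify the transverse singularity type along each stratum of the semistable locus precisely enough to decide existence of a crepant resolution, combining Luna slice theory, the representation theory of the stabilizers of polystable sheaves, and the non-existence theorems for symplectic resolutions of the quotient singularities $\C^{2k}/\{\pm1\}$ and of the relevant quiver cones. The deformation-invariance theorem of Perego-Rapagnetta is the second substantial ingredient, requiring in particular that the symplectic resolution survive in families over the period domain and that the resulting monodromy be controlled.
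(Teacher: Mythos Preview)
The paper does not prove this theorem: it is stated as a compilation of results from the cited references, with no accompanying argument. Your proposal is a correct and well-organized synthesis of those references, attributing each assertion to the appropriate source (Mukai for the symplectic form on the stable locus; Yoshioka and Kaledin--Lehn--Sorger for irreducibility, normality and the symplectic-variety property; O'Grady and Lehn--Sorger for the explicit resolution when $m=2$ and $v_0^2=2$; Kaledin--Lehn--Sorger for the non-existence of a symplectic resolution otherwise; Perego--Rapagnetta for the deformation-class independence). Since the paper offers no proof to compare against, there is nothing further to say beyond noting that your roadmap is accurate and that the step you flag as the main obstacle---the \'etale-local analysis of the singularities via Luna slices and quiver cones---is indeed where the substantive work in the cited papers lies.
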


We will refer to a Mukai vector of the form $2 v_0$ with  $v_0^2=2$ as a Mukai vector of OG$10$--type and to a \hk manifold in this deformation class as a \hk of OG$10$-type.

\subsection{Contracting the relative theta divisor on relative Jacobian of curves} \label{ogten}

It is known \cite{Arcara-Bertram, Bayer-Macri-MMP, Bayer-Macri-proj, Bayer-BN}  that the birational geometry of moduli spaces of pure dimension one sheaves on a K3 surface is related to Brill-Noether loci. For example, on the degree $g-1$ Beauville-Mukai system of a genus $g$ linear system on a K3 surface, the relative theta divisor can be contracted, possibly after performing a finite sequence of birational transformations. This is the content of the following example.

\begin{example} [\hspace{1sp}\cite{Arcara-Bertram,Bayer-BN}]
Let $(S,C)$ be a general  polarized K3 surface of genus $g$, with $\NS(S)=\Z C$. Set $v=(0,C,0) \in H^*(S, \Z)$ and let $M_{v}$ be the moduli space of $C$-stable sheaves on $S$ with Mukai vector\footnote{This Mukai vector is not positive in the sense defined above, since both the first and last entry are zero. However, since for general $(S,C)$, tensoring by $C$ induces an isomorphism with $M_{v'}$, where $v'=(0,C,g-1)$,  the results of \cite{Bayer-Macri-MMP} still hold. See also \cite{Perego-Rapagnetta} for other considerations about the last entry of the Mukai vector.} $v$. Since we are assuming $(S,C)$ to be general in moduli, we are suppressing the polarization from the notation (thus $M_v$ will denote the moduli space of $C$-semistable sheaves on $S$ with Mukai vector $v$; when we consider instead a Bridgeland stability condition $\sigma$, the corresponding moduli space will be denoted $M_{v, \sigma}$). This moduli space is smooth and $M_{v} \to \P^g=|C|$ is the degree $g-1$ relative compactified Jacobian of the genus $g$ linear system $|C|$ on $S$. There is a naturally defined effective, irreducible, relatively ample theta divisor $\theta \subset M_{v}$ which parametrizes sheaves with a non trivial global section and which can be realized  as the zero locus of a canonical section of the determinant line bundle (see \cite[\S 2.3]{LePotier} or  \cite[Thm 5.3]{Alexeev-compactified}). Recall that there is a Hodge isometry $\NS(M_{v})\cong v^\perp= \langle (0,0,1), (1,0,0)\rangle$ (see for example \cite[Theorem 3.6 ]{Bayer-Macri-MMP}).

The class $\ell:=(0,0,1)$ is the class of the isotropic line bundle inducing the Lagrangian fibration  $M_{v} \to \P^g$ while the theta divisor $\theta$ corresponds to the class $-(1,0,1)=-v(\mc O_S)$ (see \cite[pg. 643]{LePotier} or also \cite[Prop. 7.1 and Thm 12.3]{Bayer-Macri-MMP}\footnote{Compared to \cite{Bayer-Macri-MMP}, there is a difference in a choice of sign in the isomorphism $\NS(M_{v})\cong v^\perp$.}).  Since $\theta^2=-2$, the irreducible effective divisor $\theta$ is prime exceptional. By \cite{Druel-exc}, it can be contracted on a \hk birational model of $M_v$. Since the rays corresponding to divisorial contractions and to Lagrangian fibrations must be in the boundary of the movable cone \cite{Huybrechts-kahler-cone}, it follows that
\[
\overline{ \Mov}(M_{v})=\R_{\ge 0}\ell +\R_{\ge 0}h
\]
where $h=(-1,0,1) \in \theta^\perp \cap v^\perp$ is a big line which is nef on some birational model of $M_{v}$ (this also follows from \cite[Thm 12.3]{Bayer-Macri-MMP}). Using \cite{Bayer-Macri-MMP}, the walls of the nef cones of the various birational models can be computed. Since we don't need this, we omit the computation.
\end{example}

\subsection{Movable cones of certain moduli spaces of OG$10$-type.}  \label{movog10} If we consider a non primitive genus $g$ linear system $|mC|$, $m \ge 2$, then the relative compactified Jacobian of degree $g-1$ is singular. For singular moduli spaces of OG$10$-type, i.e. when $v=2w$ with $w^2=2$, \cite{Meachan-Zhang} have adapted the techniques of Bayer-Macr\`i \cite{Bayer-Macri-proj,Bayer-Macri-MMP} to compute the nef and movable cones of these moduli spaces. We  refer to \cite{Bridgeland,Arcara-Bertram,Bayer-Macri-proj,Bayer-Macri-MMP} for the relevant definitions and main results on Bridgeland stability conditions on K3 surfaces and to \cite{Meachan-Zhang} for the results on moduli spaces of OG$10$ type.

By \cite[Thm. 7.6 (3)]{Meachan-Zhang}, all birational models of $M_{2w}=M_{2w, C}$ which are isomorphic to $M_{2w}$ in codimension one are isomorphic to a Bridgeland moduli space $M_{2w, \sigma}$, for some Bridgeland stability condition  $\sigma$ on $S$. Moreover, by \cite[Cor. 2.8]{Meachan-Zhang}
\be \label{NSsing}
\NS(M_{2w, \sigma}) \cong w^\perp.
\ee

 We now apply the results  of \cite{Meachan-Zhang} to describe the nef and movable cones of certain singular models of OG$10$ appearing as limits of the intermediate Jacobian fibration. By \cite{Perego-Rapagnetta-factoriality}, the factoriality properties of a singular moduli space $M_{2w}$ of OG$10$ type depend on the divisibility of the primitive Mukai vector $w \in H^*_{alg}(S,\Z)$.  More precisely, by \cite[Thm 1.1]{Perego-Rapagnetta-factoriality}, $M_{2w}$ is factorial if and only if $w \cdot u \in 2\Z$ for every $u \in H^*_{alg}(S, \Z)$. Otherwise, $M_{2w}$ is $2$-factorial. Since there can be different birational models with different factoriality properties (cf. Remark \ref{g12}), it is important to choose the correct model to work with.

Now let $(S,C)$ be a general K3 surface of degree $2$ and set
\be \label{vkappa}
v_k:=(0, C, k-2),
\ee
The Le Potier morphism $\pi: M_{2v_k} \to \P^5$ realizes the singular moduli space  $M_{2v_k}$ as a compactification of the degree $2k$ relative Jacobian of  the genus $5$ hyperelliptic linear system $|2C|$.  Composing $\pi$ with the sympectic resolution $m: \wt M_{2v_k} \to M_{2v_k} $, we get a natural Lagrangian fibration:
\be \label{lagrfibr}
\wt \pi: \wt M_{2v_k} \to \P^5
\ee
By the result of Perego-Rapagnetta mentioned above, $M_{2v_k}$ is factorial if and only if $k$ is even. It turns out that the birational class of these moduli spaces is independent of $k$, but the isomorphism class depends on the parity of $k$ (see  \cite[Proposition 3.2.7]{dCRS}): Indeed, tensoring a pure dimension one sheaf by $\mc O_S(C)$ determines an isomorphism 
\be \label{isokeven}
M_{2v_k} \stackrel{\sim}{\to} M_{2v_{k+2}}.
\ee

\begin{rem} \label{g12}
Tensoring a line bundle supported on a smooth hyperelliptic curve of genus $5$ by the unique $g^1_2$ on the curve defines a birational morphism $M_{2v_k} \dashrightarrow M_{2v_{k+1}}$
(I thank A. Rapagnetta for pointing out this to me). As a side remark, notice that the map thus defined is \emph{not} an isomorphism in codimension one. Indeed, it can be checked that when passing to the birational morphism $\wt M_{2v_k} \dashrightarrow \wt M_{2v_{k+1}} $ between the two resolutions, which is an isomorphism in codimension $2$, the exceptional divisor of one model is exchanged with the proper transform of the locus parametrizing sheaves on reducible curves on the other model.
\end{rem}

In view of Lemma \ref{collino} below and the isomorphism (\ref{isokeven}), we will focus on the case $k=0$. 

\begin{rem} \label{zerosect}
For general $(S, C)$ it is not hard to check that the structure sheaf of every curve in $|2C|$ satisfies the numerical criterion for $C$-stability and hence that the fibration $M_{2v_0} \to \P^5$ admits a regular zero section. Notice also that the image of this section is not contained in the singular locus of $M_{2v_0}$.
\end{rem}

By \cite[Cor. 2.8]{Meachan-Zhang},  $\NS(M_{2v_0}) \cong v_0^\perp= U= \langle (0,0,1), (-1,C,0)\rangle$, where, as above,
\be \label{ell}
\ell=(0,0,1)
\ee
is the line bundle inducing the Lagrangian fibration $\pi: M_{2v_0} \to \P^5=|2C|$.  Under the isomorphism $M_{2v_2} \cong M_{2v_{0}}$, induced by tensoring with $\mc O_S(-C)$, the relative theta divisor is mapped isomorphically to the prime exceptional divisor
\be \label{deftheta}
\theta:=-(1,- C, 2),
\ee
parametrizing sheaves which receive a non trivial morphism from the spherical object $\mc O_S(-C)$ (see also Lemma \ref{MZ}). Indeed, the relative theta divisor in $M_{2v_2}$ parametrizes sheaves with a non trivial morphism from $\mc O_S$ and thus its image in $M_{2v_0}$ is exactly the divisor $\theta$.
Notice that
\be \label{thetasq}
\theta^2=-2.
\ee
For later use we highlight the following remark:

\begin{rem} \label{Alexeev}
The effective divisor $\theta \subset M_{2v_0}$ with cohomology class (\ref{deftheta}) does not contain the singular locus of $M_{2v_0}$: Using the description of $\theta$ as the zero locus of a section of the determinant line bundle  \cite[Thm 5.3]{Alexeev-compactified}, which is compatible with $S$-equivalence classes, it is enough to show that the section defining $\theta$ is not identically zero on the singular locus of $M_{2v_0}$. It is therefore sufficient to show that there are $S$-equivalence classes of polystable sheaves all of whose members have a zero space of global sections. This is clear, since the generic semistable sheaf with Mukai vector $2v_0$ is an extension of two degree $1$ line bundles each supported on two distinct curves of genus $2$. 
\end{rem}

The following Lemma is an application of \cite[Thms 5.1-5.2-5.3]{Meachan-Zhang} to $M_{2v_0}$ (N.B. Example 8.6 of loc. cit. is for odd $k$, so in view of Remark \ref{g12} it is concerned with a birational model of $M_{2v_0}$ which is not isomorphic in codimension one and hence we cannot immediately apply it here).

\begin{lemma} \label{MZ}
Let the notation be as above. Then
\[
\Nef(M_{2v_0})=  \R_{\ge 0}{\ell}+\R_{\ge 0}h_0 , \quad \Mov(M_{2v_0,C})=  \R_{\ge 0} \ell+\R_{\ge 0} h .
\]
where
\[
\ell = (0,0,1), \quad h_0=(-1,C,1), \quad h=(-1,C,0).
\]
Moreover, the wall spanned by $h_0=(-1,C,1) $ contracts the zero section of $M_{2v_0} \to \P^5$ and the class corresponding to $h=(-1,C,0)$ is big and nef on the Mukai flop of $M_{2v_0} $ along the zero section and contracts the proper transform of $\theta$.

\end{lemma}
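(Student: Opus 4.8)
The plan is to carry out all computations in the Mukai lattice $H^*_{alg}(S,\Z)$ and then read off the walls using the Meachan--Zhang wall-and-chamber description \cite{Meachan-Zhang}. By \cite[Cor.~2.8]{Meachan-Zhang} one has $\NS(M_{2v_0})\cong v_0^\perp$, and since $(S,C)$ is general of degree $2$ the Mukai lattice has rank three, so $v_0^\perp\cong U$ has basis $\ell=(0,0,1)$, $e:=(-1,C,0)$ with $\ell^2=0$, $e^2=2$, $\ell\cdot e=1$. In this basis $h_0=\ell+e$, $h=e$ and $\theta=e-2\ell$, so $\ell^2=0$, $h^2=2>0$, $h_0^2=4>0$, $\theta^2=-2$ (recovering \eqref{thetasq}) and $\theta\cdot h=0$; the positive cone is bounded by the isotropic rays $\R_{\ge 0}\ell$ and $\R_{\ge 0}(e-\ell)$, inside which $\ell$, $h_0$, $h$ lie in this cyclic order, while $\theta$ lies just outside it. By \cite[Thm.~7.6(3)]{Meachan-Zhang} every birational model of $M_{2v_0}$ isomorphic to it in codimension one is a Bridgeland moduli space $M_{2v_0,\sigma}$ with $\NS\cong v_0^\perp$, so it suffices to locate the walls in $v_0^\perp\otimes\R$.

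For the movable cone I would argue as in the example above. The class $\ell=\pi^*\mc O_{\P^5}(1)$ is nef and isotropic, so $\R_{\ge 0}\ell$ is an extremal ray of both $\overline{\Mov}(M_{2v_0})$ and $\overline{\Nef}(M_{2v_0})$. On the other side, $\theta$ is an irreducible effective divisor with $\theta^2=-2$ --- irreducibility being clear from its realization as the zero locus of a section of the determinant line bundle that does not vanish identically on $\Sing M_{2v_0}$ (Remark~\ref{Alexeev}) --- hence a prime exceptional divisor; by \cite{Druel-exc} it is contracted by a divisorial contraction on a hyper-K\"ahler birational model $N$, and the corresponding face of $\overline{\Mov}(M_{2v_0})$ is the ray $\R_{\ge 0}h$ (which lies in $\theta^\perp$). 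Since $\theta$ lies outside the positive cone, $\overline{\Mov}(M_{2v_0})$ is contained in the cone spanned by $\ell$ and $h$; being convex and containing both (the first nef on $M_{2v_0}$, the second nef on $N$), it equals $\R_{\ge 0}\ell+\R_{\ge 0}h$. Finally $h$ is big ($h^2>0$) and nef on $N$, and, being orthogonal to $\theta$, the contraction it induces on $N$ contracts precisely the proper transform of $\theta$; this gives the last two assertions of the lemma.

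It remains to subdivide $\overline{\Mov}(M_{2v_0})$ into nef chambers, which is where \cite[Thms.~5.1--5.3]{Meachan-Zhang} is used in earnest. Because $M_{2v_0}$ is factorial (\cite[Thm.~1.1]{Perego-Rapagnetta-factoriality}, as $k=0$ is even) and $x\cdot v_0$ is even for every $x\in H^*_{alg}(S)$, the Meachan--Zhang criteria restrict the relevant destabilizing classes to spherical $s$ with $0\le s\cdot v_0\le 2$, hence $s\cdot v_0\in\{0,2\}$. An explicit enumeration in the lattice shows that, up to sign, the only such classes meeting the movable cone are $s=v(\mc O_S(-C))=(1,-C,2)$ (with $s\cdot v_0=0$, giving exactly the divisorial wall $\R_{\ge 0}h$ found above) and $v(\mc O_S)=(1,0,1)$, $v(\mc O_S(-2C))=(1,-2C,5)$ (with pairing $2$, both yielding the single interior wall $\R_{\ge 0}h_0$); all other candidate walls turn out to be fake. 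Hence $\overline{\Nef}(M_{2v_0})=\R_{\ge 0}\ell+\R_{\ge 0}h_0$ and $\overline{\Nef}(N)=\R_{\ge 0}h_0+\R_{\ge 0}h$, and the unique interior wall $\R_{\ge 0}h_0$, attached to spherical classes with nonzero pairing with $v_0$, is a flopping wall. To identify this flop geometrically I would note that the objects destabilized across it are precisely those fitting in an exact sequence $0\to\mc O_S(-2C)\to\mc O_S\to\mc O_{C'}\to0$ with $C'\in|2C|$, i.e.\ the structure sheaves $\mc O_{C'}$, which form the image $P\cong\P^5=\P(H^0(S,\mc O_S(2C)))$ of the zero section (Remark~\ref{zerosect}); since $P$ is a section of a Lagrangian fibration its normal bundle in $M_{2v_0}$ is $\Omega^1_{\P^5}$, so the contraction attached to $h_0$ is the Mukai flop of $M_{2v_0}$ along $P$, as asserted.

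The delicate part is this last step: running the Meachan--Zhang analysis carefully enough to be certain that $\R_{\ge 0}h_0$ is the only interior wall (that the remaining numerically admissible walls really are fake), that it is a flopping rather than divisorial wall, and --- subtlest of all --- that the locus it flops is exactly the image of the zero section and nothing larger. One must also take care to work with the factorial model $M_{2v_0}$ (for $k$ odd one obtains only $2$-factoriality and a genuinely different model, cf.\ Remark~\ref{g12}) so that the Bayer--Macr\`i/Meachan--Zhang formalism applies in the stated form, and to use the generality of $(S,C)$ to keep the set of relevant spherical classes finite and explicit.
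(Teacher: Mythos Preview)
Your approach is essentially the same as the paper's: both identify $\ell$ as one extremal ray, then use the Meachan--Zhang classification \cite[Thms.~5.1--5.3]{Meachan-Zhang} to locate the divisorial wall (from the spherical class $s=(1,-C,2)$ with $s\cdot v_0=0$) and the unique flopping wall (from $w=v(\mc O_S)=(1,0,1)$ with $w\cdot v_0=2$, or equivalently $2v_0-w$). Your explicit identification of the flopped locus as the structure sheaves $\mc O_{C'}$ is a bit more detailed than the paper, which simply cites \cite[Remark~8.5]{Meachan-Zhang}.

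One point to tighten: in your second paragraph you invoke \cite{Druel-exc} to conclude that the prime exceptional divisor $\theta$ is contracted on a birational model. Druel's result is stated for \emph{smooth} hyper-K\"ahler manifolds, whereas $M_{2v_0}$ is singular; you would need to pass through the symplectic resolution $\wt M_{2v_0}$ or otherwise justify the extension. This detour is in any case unnecessary, since your third paragraph recovers the same divisorial wall directly from \cite[Thm.~5.3]{Meachan-Zhang} (the BNU-type contraction for $s=(1,-C,2)$), exactly as the paper does. Dropping the Druel-exc step and going straight to Meachan--Zhang makes the argument both shorter and logically clean.
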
 

\begin{proof} Since $\ell=\pi^* \mc O_{\P^5}(1)$ is nef and isotropic, it is one of the two rays of both the Nef and the Movable cones of $M_{2v_0}$. By \cite[Thm 5.3]{Meachan-Zhang} there is a divisorial contraction of BNU-type (notation as in loc. cit), determined by the spherical class $s=(1,-C,2)$, which is orthogonal to $v_0$. 
The second ray of the movable cone is thus determined by $s^\perp \cap v_0^\perp$. We pick $h=(-1,C,0)$ as a generator of this ray, since $h \cdot \ell >0$.
By the same theorem in \cite{Meachan-Zhang}, the flopping walls are determined by $w^\perp \cap v_0^\perp$ for $w$ spherical and such that $w \cdot v_0=2$. There is a unique ray in $\Mov( M_{2v_0})$, that is of this form. It is determined by $w=(1,0,1)=v(\mc O_S)$ or, equivalently, by $w'=(-1,2C, -5)=2v_0-w$.  We can choose $h_0=(-1,C,1)$ as generator of this ray. 
As in \cite[Remark 8.5]{Meachan-Zhang}, we can see that this wall corresponds to the flop of the $\P^5$ corresponding to the sheaves with a morphism from $\mc O_S$, i.e., of the image of the zero section.

\end{proof}

\begin{rem}
It can be shown that the birational model on the other side of the wall can be identified with the Gieseker moduli space $M_{2w_0}$, where $w_0=(2,C,0)$. 
Since we don't need this in the rest of the paper, we omit the proof.
\end{rem}

\begin{rem} \label{thetaamp}
The theta divisor $\theta$ is Cartier, since by \cite{Perego-Rapagnetta} $M_{2v_0}$ is factorial (see also \S \ref{movog10}). Moreover, it is relatively ample over $\P^5$, since by the description of the Nef cone of Lemma \ref{MZ} we can write  $\theta$ as a sum of an ample line bundle and a multiple of $\ell=\pi^* \mc O_{\P^5}(1)$. 
%Hence, $\theta \cdot C=h \cdot C>0$ for so it intersects positively any curve that is contained in the fibers of the Lagrangian fibration.
\end{rem}

\section{The relative theta divisor on the intermediate Jacobian fibration.} \label{section theta}

For any smooth cubic threefold $Y$, there is a canonically defined theta divisor in $\Jac(Y)$, which is $(-1)$-invariant and whose unique singular point lies at the origin. For the \hk compactification $J=J(X) \to \Pfd$  of the intermediate Jacobian fibration associated to a smooth cubic  $4$-fold $X$, there is an effective relative theta divisor $\Theta \subset J$, which is defined as the closure of the union of the canonical theta divisor in the smooth fibers. More precisely, by 
 \cite{Clemens-Griffiths,Clemens} (see also \cite[Lem. 5.4]{LSV}), $\Theta $ can be defined
 as the closure of the image of the Abel-Jacobi difference mapping
\be \label{defTheta}
\begin{aligned}
\mc F \times_{\Pfd} \mc F & \dashrightarrow J\\
 (\ell, \ell', Y) & \longmapsto  \phi_Y(\ell-\ell')
\end{aligned}
\ee
 The relative theta divisor $\Theta$ played an important role in \cite{LSV}, where it was shown that for general $X$ the divisor $\Theta$ is $\pi$-ample and $J$ is identified with the relative Proj of the sheaf of $\mc O_{\P^5}$-algebras associated with this divisor. Another useful way of realizing the Theta divisor is using twisted cubics \cite{Clemens}.
Let $Z=Z(X)$ be the Lehn-Lehn-Sorger-van Straten $8$fold \cite{LLSvS}. Then $Z$ is the blowdown $g: Z' \to Z$ of a smooth $10$ fold $Z'$ whose points parametrize nets of (generalized) twisted cubics. The exceptional locus of $g$ parametrizes non ACM cubics and its image in $Z$ is isomorphic to the cubic itself.
Let
\[
r: \P_{Z'} \to {Z'}
\]
 be the $\P^1$-bundle over $Z'$ whose fiber over a twisted cubic $[C] \in Z'$ is the pencil $\P^1_{ C }$  of hyperplane sections of $X$ containing $\Sigma_C:=X \cap \langle C \rangle$. Here $ \langle C \rangle=\P^3$ is the linear span of the curve. By \cite[Sublemma 5.5]{LSV} (see also \cite[\S 4]{Clemens} or \cite[Prop. 6.10]{KLSV}), the Abel-Jacobi map
\be \label{AJtwisted}
\begin{aligned}
\varphi: \P_{Z'} & \dashrightarrow J\\
(C, Y) & \longmapsto  \phi_Y(C-h^2)
\end{aligned}
\ee
 is birational onto its image, which is precisely $\Theta$. Here $h^2$ is the class of the intersection of two hyperplanes in $Y$.

 \begin{rem} \label{nonACM}
 For later use, we remark the following two facts. First of all that the restriction of $\P_{Z'}$ to the locus of non-CM cubics is mapped to the zero section of $J \to \P^5$ (which lies in $\Theta$).
Second, using the Gauss map (see \cite[\S 12]{Clemens-Griffiths} or also \cite[\S 3]{Harris-Roth-Starr}), one can see that if $C $ is a twisted cubic in a smooth cubic threefold $Y$ with the property that $ \phi_Y(C-h^2)=0$ in $\Jac(Y)$, then the cubic surface $\Sigma_C= Y \cap \langle C \rangle$ is singular. 
\end{rem}

For every $X$, the N\'eron-Severi group of $J=J(X)$ has at least rank $2$, since
\[
\NS(J(X)) \supset  \langle L, \Theta \rangle.
\]
Here $L=\pi^* \mc O_{\P^5}(1)$ and $\Theta$ is, as above, the relative theta divisor obtained as the closure of the image of (\ref{defTheta}).

\begin{lemma} \label{tr lattice} For any smooth $X$, there is an isomorphism of rational Hodge structures
$H^2(J, \Q)_{tr} \cong H^4(X, \Q)_{tr}$. In particular, $\rho(J)=\rk H^{2,2}(X,\Q)+1$.
\end{lemma}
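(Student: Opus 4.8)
The plan is to establish the isomorphism of rational Hodge structures $H^2(J,\Q)_{tr}\cong H^4(X,\Q)_{tr}$ by a deformation argument that reduces to the case where $X$ is general in the sense of LSV, for which the statement is essentially known from \cite{LSV}. Concretely, I would first recall that for $J$ a hyper--K\"ahler manifold of OG$10$-type one has $\dim H^2(J,\Q)=24$, and that the algebraic part $\NS(J)_\Q$ always contains the rank-two sublattice $\langle L,\Theta\rangle$. The transcendental part $H^2(J,\Q)_{tr}$ is by definition the orthogonal complement (with respect to the Beauville--Bogomolov form) of $\NS(J)_\Q$ inside $H^2(J,\Q)$, and it is the minimal sub-Hodge structure whose complexification contains $H^{2,0}(J)$. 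On the cubic side, $H^4(X,\Q)_{tr}$ is the orthogonal complement of $H^{2,2}(X,\Q)$ in the primitive cohomology $H^4(X,\Q)_{0}$ (equivalently in all of $H^4(X,\Q)$, since $h^2$ is algebraic), and it carries a weight-two Hodge structure after a Tate twist, with $h^{3,1}(X)=1$.

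The heart of the argument is to produce a canonical morphism of Hodge structures between these two spaces and to check it is an isomorphism on a dense locus, hence everywhere by the local constancy of the relevant local systems. First I would work over the moduli space (or a suitable \'etale/analytic neighborhood) of smooth cubic fourfolds: the family $\mc X\to \mathcal{M}$ gives a variation of Hodge structure with fibers $H^4(\mc X_t,\Q)$, and the (birational) family of intermediate Jacobian fibrations of Proposition \ref{prop hk model} and Theorem \ref{deg lagr 1} gives — after the base changes and birational modifications described there, which do not affect $H^2$ up to the action of a finite group and do not change the transcendental part — a variation of Hodge structure with fibers $H^2(\mathcal J_t,\Q)$. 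Both are local systems on a Zariski-open dense subset of $\mathcal M$, so it suffices to produce the isomorphism for $X$ general in the sense of LSV. For such $X$, the cycle-theoretic construction of the symplectic form in \cite[\S 1, Thm.\ 1.2]{LSV} — the Abel--Jacobi/normal-function construction, together with the identification $T_{[Y]}U\cong H^0(Y,\mc O_Y(1))\cong H^1(Y,\Omega^2_Y)$ used in the proof of Proposition \ref{checa} — identifies $H^{2,0}(J)$ with $H^{3,1}(X)$ and, more globally, exhibits $H^4_{tr}(X)(1)$ as a sub-Hodge structure of $H^2(J,\Q)$; since $J$ is of OG$10$-type and $\rho(J)=2$ for general $X$, a dimension count ($\dim H^2_{tr}(J) = 24 - 2 = 22 = \dim H^4_{tr}(X)$ when $\rho_{trans}(X)$ is minimal, i.e.\ $\dim H^{2,2}(X,\Q)=1$) forces equality, and the orthogonality statement follows.

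The second assertion, $\rho(J)=\rk H^{2,2}(X,\Q)+1$, then falls out by a counting argument: $\NS(J)_\Q=H^{1,1}(J,\Q)=H^2(J,\Q)\ominus H^2(J,\Q)_{tr}$ (as a hyper--K\"ahler manifold $J$ has $H^2$ of Hodge type concentrated in weights making $\NS$ the full $(1,1)$-part), so $\rho(J)=24-\dim H^2(J,\Q)_{tr}=24-\dim H^4(X,\Q)_{tr}$; and $\dim H^4(X,\Q)_{tr}=23-\rk H^{2,2}(X,\Q)$ because $\dim H^4(X,\Q)=23$ and $H^4(X,\Q)=H^{2,2}(X,\Q)\oplus H^4(X,\Q)_{tr}$ orthogonally. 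Hence $\rho(J)=24-(23-\rk H^{2,2}(X,\Q))=\rk H^{2,2}(X,\Q)+1$. The main obstacle I anticipate is making the correspondence between $H^2(J,\Q)$ and $H^4(X,\Q)$ genuinely canonical and checking it is a morphism of Hodge structures over the whole family rather than just fiberwise at a general point — i.e.\ controlling how the birational modifications and base changes in Theorem \ref{deg lagr 1} interact with $H^2$ and the monodromy, and ruling out that extra algebraic classes on special $J$ fail to be accounted for by $H^{2,2}(X)$ (this is where one genuinely uses that $J$ is hyper--K\"ahler, so that $\NS(J)_\Q$ is exactly the $(1,1)$-part, together with the orthogonality of $L,\Theta$ to the transcendental lattice coming from the geometric construction). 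Once the Hodge-structure isomorphism is in place over a dense open set, spreading it out and the numerical corollary are formal.
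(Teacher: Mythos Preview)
Your proposal is essentially correct and follows the same underlying idea as the paper, but is far more elaborate than the paper's own proof. The paper's proof is two sentences: it simply cites \cite{LSV} for the first isomorphism (``already noted in \cite{LSV}'') and derives the numerical consequence from $b_2(J)=24$ and $b_4(X)=23$, exactly as you do in your counting paragraph. So the second half of your proposal is identical to the paper's argument.

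Where you differ is in attempting to reconstruct the content behind the citation. Your deformation strategy --- establish the isomorphism for $X$ general in the sense of LSV via the cycle-theoretic construction of the symplectic form, then spread out over the moduli space using that transcendental lattices vary in local systems --- is a legitimate route, and your worry about controlling $H^2$ under the birational modifications of Theorem~\ref{deg lagr 1} is well placed (it is handled by the standard fact that birational hyper--K\"ahler manifolds have isomorphic $H^2$ as Hodge structures). That said, the deformation step is heavier than necessary: the isomorphism can be read off directly, for any smooth $X$, from the Leray spectral sequence for $J_U\to U$ (or from Deligne's invariant-cycle theorem applied to the universal family of hyperplane sections), identifying the transcendental part of $H^2(J,\Q)$ with the primitive transcendental cohomology of $X$. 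This avoids the family-over-moduli argument entirely and is presumably closer to what \cite{LSV} actually records.
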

\begin{proof}
The first statement was already noted in \cite{LSV}, while the second  follows from the first and the fact that $b_2(J)=24$ and $b_4(X)=23$. 
\end{proof}

\begin{rem} \label{section deform}
The locus, inside  $\Def(J)$, parametrizing intermediate Jacobian fibrations is of codimension $2$ and corresponds to the locus where the classes $L$ and $\Theta$ stay of type $(1,1)$. By \cite[Thm 6]{Sawon}, a Lagrangian fibration with a section deforms, as Lagrangian fibration with a section, over a smooth codimension $2$ locus of the  deformation space of the underlying \hk manifold. Since by Theorem \ref{LSVthm} for general $X$ the LSV compactification $J(X)$ has a section, it follows that the codimension $2$ locus where $L$ and $\Theta$ stay algebraic is exactly the locus where the section deforms.
\end{rem}

We highlight the following corollary for future reference. 

\begin{cor} \label{onlyJ} For very general $X$, $\rho(J)=2$. Thus,
\begin{enumerate}
\item $J$ is the only projective \hk birational model of $J_U$ where $L$ is nef. In particular, any \hk compatification of $J_U$  with a Lagrangian fibration extending $J_U \to U$ is isomorphic to the \cite{LSV} compactification.
\item There is at most one prime exceptional divisor on $J$.
\end{enumerate}
\end{cor}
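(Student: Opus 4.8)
\textbf{Proof proposal for Corollary~\ref{onlyJ}.}

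The plan is to deduce everything from the Picard rank computation of Lemma~\ref{tr lattice}. First I would observe that for very general $X$ the primitive Hodge structure $H^{4}(X,\Q)_{tr}$ carries no Hodge class, so $\rk H^{2,2}(X,\Q)=1$ (spanned by $h^{2}$); by Lemma~\ref{tr lattice} this gives $\rho(J)=2$. From here the statements are formal consequences of the structure of $\NS(J)$ for a projective \hk manifold of Picard rank $2$: the Beauville--Bogomolov form $q$ restricted to $\NS(J)_{\R}$ is non-degenerate of signature $(1,1)$, and $L=\pi^{*}\mc O_{\P^{5}}(1)$ is nef with $q(L)=0$ (pullbacks under Lagrangian fibrations are $q$-isotropic, by Matsushita).

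For (1): any projective \hk manifold $J'$ birational to $J_{U}$ is birational to $J$, hence isomorphic to it in codimension one, so its Néron--Severi lattice and movable cone are canonically identified with $\NS(J)$ and $\overline{\Mov}(J)$, and by the general theory of the movable cone the nef cones of the birational models tile $\overline{\Mov}(J)$. Since $\dim\NS(J)_{\R}=2$, the cone $\overline{\Mov}(J)$ is two-dimensional, the ray $\R_{\ge 0}L$ is one of its two extremal rays (an isotropic nef class lies on $\partial\overline{\Mov}$), and exactly one nef chamber is adjacent to that ray; as $L$ is nef on $J$, that chamber is $\Nef(J)$, so $J$ is the unique birational model on which $L$ is nef. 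For the final assertion I would note that if $\pi'\colon J'\to\P^{5}$ is another such compactification, then $J'\setminus J_{U}=(\pi')^{-1}(X^{\vee})$ is, up to lower-dimensional pieces over $\Sing(X^{\vee})$, the reduced irreducible preimage of the discriminant (the generic fibre over $X^{\vee}$ being irreducible by Lemma~\ref{lem Juno}), hence a prime divisor whose class is proportional to $(\pi')^{*}\mc O(1)$; therefore the proper transform of $L$ to $J'$ is exactly $(\pi')^{*}\mc O_{\P^{5}}(1)$, which is nef, and uniqueness forces $J'\cong J$.

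For (2): suppose $D_{1}\neq D_{2}$ were two prime exceptional divisors. Each is effective with $q(D_{i})<0$ (Boucksom--Druel), and $q(L,D_{i})\ge 0$ (a nef class pairs non-negatively with an effective one, via Fujiki's relation) while $q(D_{1},D_{2})\ge 0$ (again Fujiki, since $\int_{J}D_{1}D_{2}\,\omega^{2n-2}\ge 0$ for distinct prime divisors). Writing $\NS(J)_{\R}=\R L\oplus\R M$ where $M$ spans the second isotropic ray normalized by $q(L,M)=1$, so that $q(aL+bM)=2ab$ and $q(L,aL+bM)=b$, the conditions $q(L,D_{i})\ge 0$ and $q(D_{i})<0$ force $D_{i}=a_{i}L+b_{i}M$ with $a_{i}<0<b_{i}$; but then $q(D_{1},D_{2})=a_{1}b_{2}+a_{2}b_{1}<0$, a contradiction. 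Hence there is at most one prime exceptional divisor (which, by Theorem~\ref{NSU}, is $\Theta$).

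I expect the only delicate point to be the last assertion of (1): identifying the proper transform of $L$ on an arbitrary Lagrangian-fibred compactification with $(\pi')^{*}\mc O_{\P^{5}}(1)$, i.e.\ ruling out a boundary divisor that is not proportional to the fibre class. The rest is bookkeeping with a rank-two hyperbolic lattice, once $\rho(J)=2$ is in hand.
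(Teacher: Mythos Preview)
Your argument for (1) is essentially the paper's, only spelled out more carefully; the paper just says that with $\rho(J)=2$ the movable cone has two extremal rays, $L$ is one of them, and that is enough.  Your worry about identifying the proper transform of $L$ on another \hk compactification $J'$ with $(\pi')^{*}\mc O_{\P^5}(1)$ is not a real issue: the birational map $J\dashrightarrow J'$ is the identity on $J_U$ and an isomorphism in codimension one, so the two line bundles agree under the identification $\NS(J)\cong\NS(J')$.

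For (2) you take a genuinely different route.  The paper quotes two results of Markman: any prime exceptional divisor has class orthogonal to the second extremal ray of $\overline{\Mov}(J)$ \cite[Thm~1.5]{Markman-Survey}, so all such classes are proportional, and two prime exceptional divisors with proportional classes coincide \cite[Cor.~3.6(3)]{Markman-prime-exceptional}.  Your direct hyperbolic-lattice computation is a nice alternative, and the conclusion is correct, but one step is not justified as written: from $\int_{J}D_1D_2\,\omega^{2n-2}\ge 0$ and the polarized Fujiki relation
\[
\int_{J}D_1D_2\,\omega^{2n-2}=\frac{c}{2n-1}\,q(\omega)^{n-2}\Bigl(2(n-1)\,q(D_1,\omega)q(D_2,\omega)+q(\omega)\,q(D_1,D_2)\Bigr)
\]
you cannot read off $q(D_1,D_2)\ge 0$, since the cross term $q(D_1,\omega)q(D_2,\omega)$ is already non-negative.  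The inequality $q(D_1,D_2)\ge 0$ for distinct prime exceptional divisors is true, but needs a separate input (Boucksom's divisorial Zariski decomposition, or the uniruled-divisor argument the paper itself uses in the proof of Proposition~\ref{Thetasq}).  Once that is in place your computation goes through and gives a self-contained proof not relying on Markman's structural theorems; the paper's version is shorter because it outsources the work to those references.
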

\begin{proof}
(1) Since $\rho(J)=2$,  the boundary of movable cone of $J$ has two rays, of which $L$ is  one. (2)  If there is a prime exceptional divisor, its class has to be orthogonal to the second extremal ray of the movable cone \cite[Thm 1.5]{Markman-Survey}. Since two prime exceptional divisors with proportional classes have to be isomorphic \cite[Cor. 3.6 (3)]{Markman-prime-exceptional}, there is at most one prime exceptional divisor. 
\end{proof}

The following Lemma was communicated to me by K. Hulek and R. Laza. I thank them for sharing this observation with me and for raising the question of computing $q(\Theta)$.

\begin{lemma} \label{LTheta}
$q(L, \Theta)=1$. In particular, $\langle L, \Theta \rangle$ is a primitive sublattice of $\NS(J)$, isomorphic to the standard hyperbolic lattice $U$ of rank $2$. For very general $X$, $\NS(J)=U$.

\end{lemma}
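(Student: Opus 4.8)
The plan is to compute $q(L,\Theta)$ by restricting to a single fiber of $\pi$ and using the known structure of the theta divisor on an intermediate Jacobian, together with a standard relation for Lagrangian fibrations. First I would recall that for a Lagrangian fibration $\pi: J \to \P^5$ on a \hk manifold, the class $L = \pi^*\mathcal O_{\P^5}(1)$ satisfies $q(L) = 0$, and that for a general fiber $A = \Jac(Y)$ (a ppav of dimension $5$) the restriction $\Theta|_A$ is the canonical principal polarization $\theta_Y$. The key computational input is the Beauville--Fujiki relation: for any class $\alpha \in H^2(J,\Z)$ one has $\int_J \alpha^{10} = c_J\, q(\alpha)^5$ for a fixed constant $c_J$, and more generally the polarized version expressing $\int_J L^5 \alpha^5$ in terms of $q$. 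Since $q(L)=0$, the dominant term when expanding $q(L+t\Theta)$ forces $\int_J L^5 \Theta^5$ to be controlled by $q(L,\Theta)^5$ up to the Fujiki constant; concretely one gets a relation of the shape $\int_J L^5\Theta^5 = c_J \cdot 5!/\text{(something)} \cdot q(L,\Theta)^5$, and the left side can be evaluated geometrically.

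The geometric evaluation of $\int_J L^5 \Theta^5$ is the heart of the matter: the class $L^5$ is represented (up to rational multiple, in fact integrally by generality) by a single fiber $A = \pi^{-1}(\text{pt}) = \Jac(Y)$ for a general smooth hyperplane section $Y$, so $\int_J L^5\Theta^5 = \int_A (\Theta|_A)^5 = \int_{\Jac(Y)} \theta_Y^5 = 5!$ by the standard fact that a principal polarization on a $5$-dimensional ppav has top self-intersection $5!$. Next I need the Fujiki constant $c_J$ and the precise form of the Beauville--Fujiki relation for OG$10$-type manifolds; this is known — O'Grady's $10$-dimensional deformation class has Fujiki constant $c = 945 = 9 \cdot 105$ (equivalently, the normalization such that $\int \alpha^{10} = 945\, q(\alpha)^5$), and the polarized relation gives $\int_J \beta^5 \gamma^5 = \frac{945}{\binom{10}{5}} \big( \text{a polynomial in } q(\beta), q(\gamma), q(\beta,\gamma) \big)$, which when $q(\beta) = q(L) = 0$ collapses to a constant times $q(\beta,\gamma)^5$. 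Matching $\int_A \theta_Y^5 = 5!$ against this formula then pins down $q(L,\Theta)^5$, and since everything in sight is a positive integer one extracts $q(L,\Theta) = 1$.

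Once $q(L,\Theta) = 1$ is established, the remaining assertions are lattice-theoretic and quick. Together with $q(L) = 0$ and $q(\Theta) = -2$ (Theorem \ref{NSU}, which the reader may cite) or, if one prefers not to invoke it here, one can instead argue purely from $q(L,\Theta)=1$ and $q(L)=0$: the sublattice $\langle L, \Theta\rangle$ has Gram matrix with $q(L,L)=0$ and $q(L,\Theta)=1$, hence determinant $-1$, so it is unimodular of rank $2$ and signature $(1,1)$, therefore isometric to the hyperbolic plane $U$; a rank-$2$ unimodular sublattice of any lattice is automatically primitive, and moreover saturated. For very general $X$, Lemma \ref{tr lattice} gives $\rho(J) = \rk H^{2,2}(X,\Q) + 1 = 1 + 1 = 2$, so $\NS(J)$ has rank $2$; since $\langle L,\Theta\rangle \subseteq \NS(J)$ is already a primitive rank-$2$ sublattice, it must equal $\NS(J)$, giving $\NS(J) = U$.

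The main obstacle I anticipate is getting the numerical bookkeeping in the Beauville--Fujiki relation exactly right — in particular being careful about the Fujiki constant for the OG$10$ deformation class and the combinatorial factor relating $\int_J L^5\Theta^5$ to $q(L,\Theta)^5$ when $q(L)=0$ — and confirming that $L^5$ is represented by a single \emph{reduced} fiber (so that the intersection number is $5!$ on the nose rather than a multiple), which uses that $\pi$ has irreducible general fiber and that $\mathcal O_{\P^5}(1)$ is very ample so its five general hyperplanes meet in a point. An alternative, perhaps cleaner route avoiding Fujiki constants altogether: restrict $\Theta$ to a fiber to see $q(L,\Theta) \neq 0$ is at least forced to be nonzero (since $\Theta|_A$ is ample on $A$, $\Theta$ is not pulled back from the base), then use that $L$ is primitive and isotropic in the unimodular lattice $H^2(J,\Z)$ together with the classification of such to see $q(L, -)$ is surjective onto $\Z$; pinning the value to exactly $1$ still needs the self-intersection computation, so the Fujiki approach seems unavoidable for the precise constant. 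I would therefore present the Fujiki computation as the main step.
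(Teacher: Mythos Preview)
Your proposal is correct and follows essentially the same approach as the paper's own proof: expand the Fujiki relation $(L+t\Theta)^{10}=c\,q(L+t\Theta)^5$ with $c=945$ (the Fujiki constant for OG$10$, due to Rapagnetta), use $q(L)=0$, and evaluate $L^5\Theta^5=(\Theta_{|J_b})^5=5!$ by restriction to a smooth fiber, then deduce the lattice statements from the resulting Gram determinant $-1$ and Lemma \ref{tr lattice}. One small remark: to conclude the sublattice is $U$ rather than $I_{1,1}$ you should note that the Beauville--Bogomolov form on OG$10$ is even (so $q(\Theta)\in 2\Z$), and you should not invoke $q(\Theta)=-2$ from later in the paper since Lemma \ref{LTheta} precedes that computation.
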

\begin{proof} The computation of $q(L, \Theta)$ goes as in \cite[Lemma 1]{Sawon-discr}: one expands in $t$ the Fujiki equality $q(L+t \Theta)^5=c(L+t \Theta)^{10}$, where $c=945$ is the Fujiki constant  \cite{Rapagnetta} and uses the fact that $\Theta^5L^5=(\Theta_{|J_{[H]}})^5=5!$. The final statement follows from Lemma \ref{tr lattice}.
\end{proof}

I thank C. Onorati for many discussions around $\Theta$ and for his interest in the following computation.

\begin{prop} \label{Thetasq} The irreducible divisor $\Theta \subset J$ is prime exceptional, in particular, it can be contracted on some projective birational \hk model of $J$. Moreover,  $q(\Theta)=-2$. 
\end{prop}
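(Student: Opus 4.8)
The plan is as follows. First I would check that $\Theta$ is a \emph{prime} divisor. Irreducibility is immediate from \eqref{AJtwisted}: $\Theta$ is the closure of the image of the irreducible variety $\P_{Z'}$ under the Abel--Jacobi map $\varphi$ (equivalently, the closure of the image of $\mc F\times_{\Pfd}\mc F$ under \eqref{defTheta}). It is generically reduced because over the smooth locus $U\subset\Pfd$ it restricts, fibrewise, to the reduced principally polarized theta divisor $\Theta_Y\subset\Jac(Y)$; since $J$ is smooth, $\Theta$ is reduced, hence prime. Moreover, by Lemma \ref{LTheta} its class is not orthogonal to $L$, so $\Theta$ is a ``horizontal'' (non $\pi$-vertical) prime divisor.

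Next I would show that $\Theta$ is exceptional, i.e.\ $q(\Theta)<0$. The key point is that $\Theta$ is covered by rational curves along which it is negative. Indeed, since $\varphi\colon\P_{Z'}\dashrightarrow\Theta$ is birational and $r\colon\P_{Z'}\to Z'$ is a $\P^1$-bundle over the $8$-fold $Z'$, the images $R_C:=\varphi(\P^1_C)$ of the fibres of $r$ form a family of rational curves sweeping out $\Theta$, and these curves are exactly the fibres of the induced rational fibration $\Theta\dashrightarrow Z'$. A direct local computation — or the identification, carried out later in Section \ref{section general X}, of this fibration with a genuine divisorial contraction after the Mukai flop of the zero section — gives $\Theta\cdot R_C<0$. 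Consequently $\Theta$ cannot be movable: if some multiple of $\Theta$ moved, a general member of the corresponding linear system would not contain a general $R_C$, forcing $\Theta\cdot R_C\ge 0$. A prime divisor lying outside the closure of the movable cone has negative Beauville--Bogomolov square (Boucksom's divisorial Zariski decomposition on \hk manifolds; cf.\ \cite{Druel-exc}, \cite{Markman-Survey}), so $q(\Theta)<0$, and by \cite{Druel-exc} $\Theta$ can be contracted on a projective birational \hk model of $J$. This proves that $\Theta$ is prime exceptional.

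Finally, to pin down $q(\Theta)=-2$ I would invoke Markman's theorem that the reflection $R_\Theta(x)=x-\frac{2\,q(x,\Theta)}{q(\Theta)}\Theta$ in a prime exceptional class is a monodromy operator \cite{Markman-prime-exceptional}; its integrality forces $q(\Theta)$ to divide $2\,\mathrm{div}(\Theta)$, where $\mathrm{div}(\Theta)$ is the divisibility of $[\Theta]$ in $H^2(J,\Z)$. By Lemma \ref{LTheta} we have $q(L,\Theta)=1$, so $\mathrm{div}(\Theta)=1$ and hence $q(\Theta)\mid 2$; combined with $q(\Theta)<0$ and the fact that $q(\Theta)$ is even (Lemma \ref{LTheta}), this leaves only $q(\Theta)=-2$. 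As a consistency check, $q(\Theta)=-2$ also follows a posteriori from the degeneration of Section \ref{section general X}: there $\Theta$ specializes to the relative theta divisor $\theta$ on the OG$10$-type moduli space $M_{2v_0}$, for which $q(\theta)=\theta^2=-2$ by \eqref{thetasq}, and $q$ is a deformation invariant.

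The main obstacle is the middle step — specifically the negativity $\Theta\cdot R_C<0$, equivalently that $\Theta$ is not movable. The cleanest route to it uses the divisorial contraction produced in Theorem \ref{main thm 2}, which is only established later (and only for very general $X$); for an arbitrary smooth $X$ one must either perform the local normal-bundle computation along the curves $R_C$ directly, or argue by deformation from the very general case. Everything else — primality of $\Theta$, and the passage from $q(\Theta)<0$ to $q(\Theta)=-2$ via the divisibility $\mathrm{div}(\Theta)=1$ and Markman's monodromy-reflection theorem — is formal.
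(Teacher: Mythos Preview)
Your outline has the right shape, but the proof as written has a genuine gap precisely at the point you identify as ``the main obstacle'': the inequality $\Theta\cdot R_C<0$. Both of the remedies you propose are problematic in the context of this paper.

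First, invoking Theorem~\ref{NSU} or the degeneration of Section~\ref{section general X} is circular. The proof of Proposition~\ref{3fams} uses Corollary~\ref{J not JT}, and the proof of Lemma~\ref{Udeforms} (which identifies the limit of $\Theta$ with $\theta$) explicitly invokes Proposition~\ref{Thetasq}; both of these are in turn needed for Theorem~\ref{NSU}. So neither the divisorial contraction of Theorem~\ref{NSU} nor the ``consistency check'' via $q(\theta)=-2$ is available to you here.

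Second, the ``local normal-bundle computation'' you allude to is exactly what the paper does, and it is not a throwaway step. The paper first argues carefully that for a general twisted cubic $C$ the curve $R=\varphi(\P^1_C)$ lies in the \emph{smooth} locus of $\Theta$ (this requires controlling the singular locus of $\Theta$, using Remark~\ref{nonACM} and a dimension count for the preimage of $\Theta_{X^\vee}$). Only then can one compare $(T_J)_{|R}$ and $(T_\Theta)_{|R}$; the paper uses the self-duality of $T_J$ together with generic global generation of $(T_\Theta)_{|R}$ (Lemma~\ref{normalbundle}) to conclude $(T_J)_{|R}=\mc O_R^{\oplus 8}\oplus\mc O_R(2)\oplus\mc O_R(-2)$ and hence $\Theta\cdot R=-2$. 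From there the paper runs a deformation argument over $\Def(J)_R$ to deduce $q(\Theta)<0$, and then identifies the class of $R$ in $H_2(J,\Z)$ with that of $\Theta$ (using $q(\Theta,L)=1=R\cdot L$ and the proportionality of \cite{Markman-prime-exceptional,Druel-exc}) to get $q(\Theta)=\Theta\cdot R=-2$ on the nose.

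Your route from $q(\Theta)<0$ to $q(\Theta)=-2$ via the integrality of Markman's reflection $R_\Theta$, the divisibility $\mathrm{div}(\Theta)=1$ (from $q(L,\Theta)=1$), and the evenness of the OG10 lattice is a legitimate and pleasant alternative to the paper's endgame. But it only kicks in once $q(\Theta)<0$ is established, and for that you still owe the normal-bundle computation above (or some independent argument that does not feed back through Section~\ref{section general X}).
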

\begin{proof}
Let $\varphi: \P_{Z'} \dashrightarrow \Theta$ be the Abel-Jacobi map as in (\ref{AJtwisted}) and let  $V \subset Z'$ be a non empty open subset such that the restriction of $\varphi$ to $r^{-1}(V)=:\P_V$ is regular. 
By restricting $V$ if necessary, we can assume that all twisted cubic parametrized by $V$ are such that $\Sigma_C$ is smooth and, in particular, that $C$ is ACM.

Recall that for any  twisted cubic $[C] \in V$, we have set $\P^1_{C}=r^{-1}(C)$. The rational curve $\varphi(\P^1_{C}) \subset J$ is smooth because it maps to $\pi(\varphi(\P^1_{C})) \subset \P^5$, which is the pencil of hyperplane sections of $X$ that contain the curve $C$. Moreover,  $(\pi \circ \varphi)(\P_V)$ intersects the dual variety $X^\vee$ in a dense open subset and, similarly, $\varphi(\P_V)$ intersects $\Theta_{X^\vee}$, the restriction of $\Theta$ to $X^\vee$,  in a dense open subset (this statement follows from \cite{Clemens} and the fact, proven there, that for a cubic threefold with one $A_1$ singularity the Abel-Jacobi mapping is birational onto its image).

We start by showing that for a general $[C] \in V$, the smooth rational curve $\varphi(\P^1_{C})$ is contained in the smooth locus of $\Theta$.
The singular locus $\Theta_{sing}$ of $\Theta$ has an irreducible component that is equal to the closure of the zero section of $J_U \to U$, while any other irreducible component of the singular locus is properly contained in $\Theta_{X^\vee}$, the restriction of $\Theta$ to $X^\vee$.
Since $V$ parametrizes ACM curves, by Remark \ref{nonACM} it follows that the intersection of $\varphi(\P_V)$ with the image of the zero section of $J \to \P^5$ is contained in $J_{X^\vee}$, the restriction of $J$ to $X^\vee \subset \P^5$.
Let $B:= \varphi^{-1}(\Theta_{sing} \cap \varphi(\P_V))$ be the locus in $\P_V$ parametrizing points mapped to the singular locus of $\Theta$ and let $W_V :=(\pi \circ \varphi)^{-1}(X^\vee \cap (\pi \circ \varphi)(\P_V) )$ be the locus in $\P_V$ of pairs $(C, Y)$ such that $Y$ is  singular. By what we have observed, it follows that $B \subseteq W_V$. 
Notice that $W_V$ is irreducible of dimension $8$, because it maps to an open subset of $X^\vee$, with fibers parametrizing equivalence classes of twisted cubics contained in a cubic threefold with one $A_1$ singularity;  these form an irreducible subset of $Z(X)$, as follows from \cite[\S 3]{Clemens}. 
We have already observed that the general point of $\Theta_{X^\vee}$ is contained in the image $ \varphi(\P_V)$. 
Thus, if $Y_0$ corresponds to a general point in $X^\vee$, there is a twisted cubic $C \subset Y_0$, with $[C] \in V$ and such that $\varphi(C,Y_0)=\phi_{Y_0}(C)$ lies in the smooth locus of $\Theta$. 
It follows that $B$ is strictly contained in $W_V$. Since $W_V$ is irreducible, $\dim B=7$. Thus $B$ does not dominate $V$ and hence the image of the open subset $\P_{V'}:=r^{-1}(V')$, where  $V':=V \setminus r(B)$ is contained in the smooth locus of $\Theta$.

For the general point in $(C, Y)  \in \P_V$, let $R:=\varphi(\P^1_{C}) \subset \Theta$ be the corresponding element of the ruling. 
By generic smoothness, the differential of $\varphi$ is of maximal rank at a general point $x \in R$, so by \cite[II. 3.4]{Kollar-rational-curves}, the vector bundle $(T_\Theta)_{|R}$ is globally generated at $x \in R$. 
It follows that $(T_\Theta)_{|R}=\oplus \mc O_R(a_i)$, with $a_i \ge 0$. 
By Lemma \ref{normalbundle} below, the restriction of the tangent bundle of $J$ to the smooth rational curve $R$ is of the form $\mc O_R^{\oplus 8} \oplus \mc O_R (2) \oplus \mc O_R(-2)$.  
%\footnote{Let
%It is true for any smooth rational curve $R$ in a \hk manifold $M$ of dimension $2n$ that sweeps out a divisor, that the restriction of the tangent bundle of $M$ to $R$ is of the form $\mc O_R^{\oplus 2n-2} \oplus \mc O_R (2) \oplus \mc O_R(-2)$, Indeed,  since $T_M$ is self dual by \cite[II. 3.4]{Kollar-rational-curves}, it follows that $(T_M)_{|R}=\mc O_R^{\oplus 2n-2} \oplus \mc O_R (a) \oplus \mc O_R(-a)$, for some $a \ge 2$. Then one argues that since the normal bundle of $R $ in $M$ is torsion free, so is the quotient $\mc O_R(a) \slash \mc O_R(2)$, and thus $a=2$.} 
Using this and the fact that $R $ is contained in the smooth locus of $\Theta$, we find that $(T_\Theta)_{|R}= \mc O_R(2) \oplus \mc O_R^{\oplus 8}$ and hence that $N_{R | \Theta}=\oplus \mc O_R^{\oplus 8}$. In particular, $\Theta \cdot R=-2$.

Consider the lattice embedding $H^2(J, \Z) \subset H^2(J,\Z)^\vee=H_2(J,\Z)$ induced by the Beauville-Bogomolov form. We claim that under this embedding, the classes of $R$ and of $\Theta$ are equal, i.e. that $R \cdot x= q(\Theta, x)$ for every $x \in H^2(J,\Z)$. 
This immediately proves the proposition, as it implies that $q(\Theta)=\Theta \cdot R=-2$.  By \cite[Cor. 3.6 (1)]{Markman-prime-exceptional} and \cite[Prop. 4.5]{Druel-exc}, the class of the ruling of a prime exceptional divisor is proportional, via a positive constant, to the class of the exceptional divisor. Thus, to prove the claim it suffices to show that $\Theta$ is prime exceptional, since the constant would have to be equal to $1$, as both $R \cdot L$ and $q(\Theta, L)$ are equal to $1$.

To prove that $\Theta$ is prime exceptional we use standard techniques on deformations of maps from rational curves to \hk manifolds, following \cite[\S 5.1]{Markman-prime-exceptional} or also \cite[\S 3]{Charles-Mongardi-Pacienza}. 
We include a proof because of  setting of Markman is different and because the proof in  \cite[\S 3]{Charles-Mongardi-Pacienza} is for projective families of \hk manifolds. 
Choose $R\subset \Theta$ a general element in the ruling and let $\Def(J)_R \subset \Def(J)$ be the smooth hypersurface in the deformation space of $J$ where the class of $R$ stays of Hodge type. 
Let $\mc {H}ilb \to \Def(J)_R$ be the component of the relative Duady space containing the point $[R]$. Since $N_{R|J}=\mc O_R(-2) \oplus \mc O_R^{\oplus 8}$, then by \cite[Thm 1]{Ran} it follows that the morphism $\rho: \mc {H}ilb \to \Def(J)_R$ is smooth at $R$ and of relative dimension $8$. 
Let $T \subset \Def(J)_R$ be a general curve containing $0$ (in particular we can assume that for very general $t \in T$, the N\'eron-Severi of the corresponding deformation $\mc J_t$ of $J$ is one dimensional and spanned by a line bundle whose class is proportional to $R_t$, the parallel transport of the class of $R$ to  $\mc J_t$) and let $\rho_T: \mc H ilb_T \to T$ be the component of the base change to $T$ of $\mc H ilb \to \Def(J)_R$ that contains $[R]$. 
Since $\rho$ is smooth at $[R]$, $\rho_T$ is dominant of relative dimension $8$. 
Up to a base change and to restricting $T$, we can assume that $\mc H ilb_T \to T$ has irreducible fibers for $t \neq 0$. 
Let $\mc J_T \to T$ be the base change of the universal family to $ T \to \Def(J)$ and let $\mc D \subset \mc J_T$ be the image of the universal family over $\mc H ilb_T$ under the evaluation map. 
Then $\mc D$ is irreducible of relative codimension $1$. Moreover, $\mc D_t$ irreducible for $t \neq 0$, and  $D:=\mc D_0$ is a union of effective uniruled divisors containing $\Theta$ as an irreducible component (with a given  multiplicity $m\ge 1$).  By the choice of $T$, for very general $t$, $\rho(\mc J_t)=1$. It follows that the class of $\mc D_t$ is proportional to the class of $R_t$ and hence that the class of $D=\mc D_0$ is  proportional to that of $R$. Moreover, the proportionality constant is positive, as both $D$ and $R$ intersect positively with a K\"ahler class. 
Hence, since $\Theta \cdot R$ is negative, so is $q(\Theta, D)$. Moreover, since the product of two distinct irreducible uniruled divisor is non negative, it follows that $q(\Theta,D) \ge m \, q(\Theta, \Theta)$.
%\footnote{For $i=1,2$ let  $D_i$ be two distinct irreducible uniruled divisors. If one of them, say $D_1$ is prime exceptional, then its class is proportional with a positive constant to class of its ruling and hence $q(D_1,D_2) \ge 0$. Hence, we can assume that $q(D_i) \ge 0$ for both $i$. Let $H$ be ample, so $q(H, D_i) > 0$ by \cite{Huybrechts-kahler-cone}. Thus, there exists two positive rational numbers $a,b$ such that $q(H, aD_1-bD_2)=0$. By the Hodge index theorem, $q(aD_1-bD_2)<0$, hence $2abq(D_1,D_2) > a^2q(D_1) +b^2  q(D_2) \ge 0$.}).
Thus $q(\Theta, \Theta)<0$, i.e., $\Theta$ is prime exceptional. Thus, as already observed, the classes of $\Theta$ and of $R$ have to be the same and hence $q(\Theta, \Theta)=-2$.
\end{proof}

\begin{rem}
A posteriori, once we know that $\Theta$ is prime exceptional, we can use  \cite[Lem 5.1]{Markman-prime-exceptional} to show that $\mc D_0=\Theta$.
\end{rem}

\begin{lemma} \label{normalbundle}
Let $M$ be a \hk manifold of dimension $2n$ and $R \subset M$ be a smooth rational curve. Suppose $R$ is a general ruling of a uniruled divisor. Then
\[
(T_M)_{|R}=\mc O_R^{\oplus 2n-2} \oplus \mc O_R (2) \oplus \mc O_R(-2), \quad  \text{and thus} \quad N_{R|M}=\mc O_R(-2) \oplus \mc O_R^{\oplus 2n-2}
\]
%It is true for any smooth rational curve $R$ in a \hk manifold $M$ of dimension $2n$ that sweeps out a divisor, that the restriction of the tangent bundle of $M$ to $R$ is of the form $\mc O_R^{\oplus 2n-2} \oplus \mc O_R (2) \oplus \mc O_R(-2)$, Indeed,  since $T_M$ is self dual by \cite[II. 3.4]{Kollar-rational-curves}, it follows that $(T_M)_{|R}=\mc O_R^{\oplus 2n-2} \oplus \mc O_R (a) \oplus \mc O_R(-a)$, for some $a \ge 2$. Then one argues that since the normal bundle of $R $ in $M$ is torsion free, so is the quotient $\mc O_R(a) \slash \mc O_R(2)$, and thus $a=2$.
\end{lemma}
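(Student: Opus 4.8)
The plan is to restrict the tangent bundle to $R\cong\P^1$, write $(T_M)_{|R}=\bigoplus_{i=1}^{2n}\mc O_R(a_i)$ with $a_1\ge\cdots\ge a_{2n}$, and pin down the splitting type using three inputs: the triviality of $K_M$, the self-duality coming from the holomorphic symplectic form, and the fact that $R$ sweeps out a divisor and no more. First, $\sum_i a_i=\deg (T_M)_{|R}=-K_M\cdot R=0$ since $M$ is \hk. Second, the symplectic form gives an isomorphism $T_M\cong\Omega^1_M=(T_M)^\vee$, hence $(T_M)_{|R}\cong((T_M)_{|R})^\vee$, so the multiset $\{a_i\}$ is symmetric about $0$ (equivalently $(T_M)_{|R}$ is a symplectic bundle on $\P^1$, hence $(T_M)_{|R}=\bigoplus_{k=1}^n(\mc O_R(a_k)\oplus\mc O_R(-a_k))$ with $a_k\ge 0$). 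Since $R$ is smooth, $T_R=\mc O_R(2)$ is a subbundle of $(T_M)_{|R}$, so $a_1\ge 2$.

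Next I would exploit that $R$ is a general ruling of the uniruled divisor $D$. A \hk manifold is not uniruled (Lemma \ref{cor not uniruled}), so the deformations of $R$ inside $M$ cannot sweep out anything of dimension $>\dim D$; since $R$ does move in $D$, its deformations form a family of dimension $2n-2$, and a general member is a free rational curve of minimal degree in $D$, so that $N_{R/D}=\mc O_R(1)^{\oplus p}\oplus\mc O_R^{\oplus q}$ with $p+q=2n-2$ (a general minimal free curve is standard, so $N_{R/D}$ has no summand of degree $\ge 2$; cf. \cite{Kollar-rational-curves}). From $R\subset D\subset M$ with $D$ a Cartier divisor, $0\to N_{R/D}\to N_{R/M}\to \mc O_R(D\cdot R)\to 0$; taking degrees gives $-2=\deg N_{R/M}=p+D\cdot R$, so $D\cdot R=-2-p$, and this extension splits since $\Ext^1(\mc O_R(-2-p),N_{R/D})=H^1(R,N_{R/D}(2+p))=0$. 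Feeding the resulting $N_{R/M}=\mc O_R(1)^{\oplus p}\oplus\mc O_R^{\oplus q}\oplus\mc O_R(-2-p)$ into $0\to T_R\to (T_M)_{|R}\to N_{R/M}\to 0$, which also splits because $\Ext^1(N_{R/M},\mc O_R(2))=0$ (each summand of $\mc O_R(2)\otimes N_{R/M}^\vee$ has degree $\ge 1$), yields $(T_M)_{|R}=\mc O_R(2)\oplus\mc O_R(1)^{\oplus p}\oplus\mc O_R^{\oplus q}\oplus\mc O_R(-2-p)$.

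Finally I would invoke the self-duality from the first step: the multiset $\{2,1^{(p)},0^{(q)},-2-p\}$ must be stable under $a\mapsto -a$. The entry $-2-p$ has to be matched by $2+p$, but the only non-negative entries are $2$, $1$, $0$, which forces $2+p=2$, i.e. $p=0$. Hence $(T_M)_{|R}=\mc O_R(2)\oplus\mc O_R^{\oplus 2n-2}\oplus\mc O_R(-2)$; since $\mc O_R(2)$ is the unique top-degree summand it must be $T_R$, so quotienting gives $N_{R|M}=\mc O_R(-2)\oplus\mc O_R^{\oplus 2n-2}$. The main obstacle is the geometric input in the middle paragraph: that a general ruling of $D$ is free of minimal degree inside $D$, so that $N_{R/D}$ has shape $\mc O_R(1)^{\oplus p}\oplus\mc O_R^{\oplus q}$ with no summand of degree $\ge 2$, together with the correct count $2n-2$ for the dimension of the ruling family; once these are in place the rest is a formal chase of $\Ext^1$-vanishings on $\P^1$ plus the symplectic self-duality, and in the application to $\Theta$ one even knows $N_{R/D}$ is trivial directly since $\Theta$ is birationally a $\P^1$-bundle via $\varphi\colon\P_{Z'}\dashrightarrow\Theta$.
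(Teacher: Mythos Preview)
Your route through the normal bundle of $R$ in $D$ is different from the paper's, and the gap you flag in your last paragraph is real. The hypothesis is only that $R$ is a \emph{general} member of a covering family of rational curves on $D$, not that it is of minimal anticanonical degree in $D$; so you cannot invoke the ``standard'' shape $N_{R/D}=\mc O_R(1)^{\oplus p}\oplus\mc O_R^{\oplus q}$, which is the statement for minimal free curves. You also implicitly need $R\subset D_{\mathrm{sm}}$ in order to have the exact sequence $0\to N_{R/D}\to N_{R/M}\to \mc O_R(D\cdot R)\to 0$ of vector bundles; this is not part of the hypothesis either (and in the application to $\Theta$ the paper spends most of Proposition~\ref{Thetasq} verifying exactly this for a general $R$). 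Finally, a small citation slip: Lemma~\ref{cor not uniruled} concerns compactifications of $J_U$, not arbitrary \hk manifolds; the general fact you want is simply that $K_M$ trivial forces $M$ not uniruled.

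The paper avoids all of this by never working inside $D$. It uses only two ingredients on $(T_M)_{|R}$: self-duality, giving $(T_M)_{|R}=\bigoplus_{k=1}^n(\mc O_R(a_k)\oplus\mc O_R(-a_k))$ with $a_k\ge 0$; and the evaluation-map criterion \cite[II.3.4]{Kollar-rational-curves}, which says that the generic rank of $H^0(R,(T_M)_{|R})\otimes\mc O_R\to (T_M)_{|R}$ equals the dimension of the locus swept out by deformations of $R$ in $M$. Since $M$ is not uniruled but $R$ moves in a divisor, this rank is exactly $2n-1$, i.e.\ exactly one summand has negative degree. By self-duality exactly one is positive, so $(T_M)_{|R}=\mc O_R(a_1)\oplus\mc O_R^{\oplus 2n-2}\oplus\mc O_R(-a_1)$ with $a_1\ge 2$ (from $T_R\hookrightarrow (T_M)_{|R}$). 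Then $T_R=\mc O_R(2)$ must map into the unique summand $\mc O_R(a_1)$ of degree $\ge 2$, and since $N_{R|M}=(T_M)_{|R}/T_R$ is locally free this forces $a_1=2$. This buys you the result with no assumption on $D$ beyond ``$R$ sweeps out a divisor and no more''.
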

\begin{proof}
Since $T_M$ is self dual, $(T_M)_{|R}=\bigoplus_i  \mc O_R (a_i) \oplus \bigoplus_i  \mc O_R (-a_i)$, for some $a_i \ge 0$.  Since $R$ is general and its deformations sweep out a divisor, by \cite[II. 3.4]{Kollar-rational-curves}, the rank of the evaluation map $\rk[H^0(R, (T_M)_{|R}) \otimes \mc O_R \to  (T_M)_{|R}]$ at a general point of $R$ is equal to $2n-1$. Hence $a_2= \dots =a_n=0$ and $a_1 \ge 2$ (cf. \cite[Prop. 4.5]{Druel-exc}). Since the normal sheaf of $R $ in $M$ is torsion free and contains the quotient $\mc O_R(a_1) \slash T_R=\mc O_R(a_1) \slash \mc O_R(2)$, it follows that $a_1=2$.
\end{proof}

Notice that the same argument as the last part of the proof of Proposition \ref{Thetasq} shows the following.

\begin{prop}
Let $M$ be a \hk manifold of dimension $2n$ and let $E \subset M$ be an irreducible uniruled divisor. Suppose that a general curve $R$ in the ruling is smooth and that $E \cdot R <0$ (e.g. if $R$ is contained in the smooth locus of $E$), then $E$ is prime exceptional and hence, under the lattice embedding $H^2(M, \Z) \subset H^2(M,\Z)^\vee=H_2(M,\Z)$ induced by the Beauville-Bogomolov form, the classes of $E$ and $R$ are proportional by a positive constant.
\end{prop}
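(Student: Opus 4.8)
The plan is to run the argument of the last part of the proof of Proposition \ref{Thetasq} essentially verbatim, now starting from the two hypotheses provided: $E$ is an irreducible uniruled divisor in the $2n$-dimensional \hk manifold $M$, and its general ruling curve $R$ is smooth with $E \cdot R < 0$. First I would invoke Lemma \ref{normalbundle}: since $R$ is a general ruling of a uniruled divisor, $(T_M)_{|R} = \mc O_R^{\oplus 2n-2} \oplus \mc O_R(2) \oplus \mc O_R(-2)$, hence $N_{R|M} = \mc O_R(-2) \oplus \mc O_R^{\oplus 2n-2}$. The hypothesis $E \cdot R < 0$ replaces the step in Proposition \ref{Thetasq} where one had to work to show the general ruling lies in the smooth locus of $\Theta$; here it is either assumed directly or follows from $R$ lying in the smooth locus of $E$ (in which case $N_{R|E} \subset N_{R|M}$ forces $N_{R|E} = \mc O_R^{\oplus 2n-2}$ and $E \cdot R = \deg N_{R|M} - \deg N_{R|E} = -2 < 0$).

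Next I would deform $R$ together with $M$. Let $\Def(M)_R \subset \Def(M)$ be the smooth hypersurface where the class of $R$ remains of Hodge type, and let $\rho: \mc{H}ilb \to \Def(M)_R$ be the component of the relative Douady space through $[R]$; by \cite[Thm 1]{Ran}, since $N_{R|M} = \mc O_R(-2) \oplus \mc O_R^{\oplus 2n-2}$, the map $\rho$ is smooth at $[R]$ of relative dimension $2n-2$. Choosing a general curve $T \subset \Def(M)_R$ through $0$ such that the very general member $\mc M_t$ has $\rho(\mc M_t) = 1$, spanned by a class proportional to the parallel transport $R_t$, and base-changing as needed so that the relevant Hilbert scheme has irreducible fibers over $t \neq 0$, I would form the divisor $\mc D \subset \mc M_T$ swept out by the evaluation map. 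Then $\mc D$ is irreducible of relative codimension one, $\mc D_t$ is an irreducible uniruled divisor for $t \neq 0$, and $D := \mc D_0$ is a union of effective uniruled divisors containing $E$ as a component with some multiplicity $m \geq 1$. Since $\rho(\mc M_t) = 1$ for very general $t$, the class of $\mc D_t$ is proportional to $R_t$ by a positive constant (both pair positively with a K\"ahler class), hence so is the class of $D$; thus $q(E, D) < 0$ because $E \cdot R < 0$, and $q(E, D) \geq m\, q(E, E)$ because distinct irreducible uniruled divisors intersect non-negatively. Therefore $q(E, E) < 0$, i.e.\ $E$ is prime exceptional.

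Finally, having established that $E$ is prime exceptional, I would conclude as in Proposition \ref{Thetasq}: by \cite[Cor. 3.6 (1)]{Markman-prime-exceptional} and \cite[Prop. 4.5]{Druel-exc}, the class of the ruling $R$ of a prime exceptional divisor is a positive multiple of the class of $E$ under the embedding $H^2(M,\Z) \subset H^2(M,\Z)^\vee = H_2(M,\Z)$ induced by the Beauville-Bogomolov form. This is exactly the asserted proportionality by a positive constant, completing the proof.

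I expect the only genuinely delicate point to be, as in Proposition \ref{Thetasq}, the bookkeeping around the base change and the claim that $D = \mc D_0$ genuinely contains $E$ as a component — one must ensure the ruling curves $R$ do sweep out $E$ and that the limit divisor does not drop the component $E$. But this is handled exactly as in the cited proof (and a posteriori, using \cite[Lem 5.1]{Markman-prime-exceptional}, one even gets $\mc D_0 = E$), so no new difficulty arises; the statement is really just the abstract skeleton of the argument already carried out for $\Theta$.
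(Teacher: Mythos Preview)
Your proposal is correct and follows exactly the approach the paper intends: the paper itself gives no separate proof for this proposition, stating only that ``the same argument as the last part of the proof of Proposition \ref{Thetasq} shows the following.'' You have faithfully extracted and reproduced that argument in the general setting, including the invocation of Lemma \ref{normalbundle}, Ran's deformation result, the Picard-rank-one specialization, and the Markman--Druel proportionality, so nothing further is needed.
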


\begin{cor} \label{J not JT}
For very general $X$, the movable cone of $J(X)$ is spanned by $L$ and $H$, where $H$ is a generator of $\Theta^\perp \subset \NS(J)$ with $q(H,L)>0$ and $q(H) >0$, i.e.
\[
\Mov (J)=\R_{\ge 0} L+\R_{\ge 0} H .
\]
In particular, there is a unique \hk model of $J$ with a Lagrangian fibration and $J$ is not birational to the twisted intermediate Jacobian fibration $J^T$.
\end{cor}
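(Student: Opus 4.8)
The plan is to determine $\overline{\Mov}(J)$ by a rank-two lattice computation and then read off both assertions. Throughout $X$ is very general, so by Lemma~\ref{tr lattice} (equivalently Corollary~\ref{onlyJ}) $\rho(J)=2$, and by Lemma~\ref{LTheta} together with Proposition~\ref{Thetasq} the N\'eron--Severi lattice is $\NS(J)=\langle L,\Theta\rangle\cong U$ with $q(L)=0$, $q(L,\Theta)=1$ and $q(\Theta)=-2$. First I would fix notation by setting $H:=2L+\Theta$. Then $q(H,\Theta)=2q(L,\Theta)+q(\Theta)=0$, so $H$ is a primitive generator of $\Theta^{\perp}\cap\NS(J)$, while $q(H)=4q(L,\Theta)+q(\Theta)=2>0$ and $q(H,L)=q(\Theta,L)=1>0$; thus $H$ is big and lies in the interior of the positive cone $\mc C_J$, the component of $\{q>0\}$ having the nef class $L$ in its closure (here one uses that $L$ is nef on $J$, Corollary~\ref{onlyJ}(1)). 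Since $U$ has signature $(1,1)$, the null cone $\partial\overline{\mc C_J}$ consists of exactly the two rays $\R_{\ge 0}L$ and $\R_{\ge 0}(L+\Theta)$, and $H=L+(L+\Theta)$ lies strictly between them.

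The key step is the computation of the movable cone. By Proposition~\ref{Thetasq}, $\Theta$ is a prime exceptional divisor with $q(\Theta)=-2$, and by Corollary~\ref{onlyJ}(2) it is, up to scaling, the only prime exceptional divisor on $J$; hence by \cite{Markman-prime-exceptional} the reflection $R_\Theta(\alpha)=\alpha+q(\alpha,\Theta)\Theta$ is a monodromy operator and generates the relevant Weyl group $\cong\Z/2$. By Markman's structure theorem for the movable cone of a projective \hk manifold (see \cite[Thm.~6.25]{Markman-Survey}, and \cite{Druel-exc}), $\overline{\Mov}(J)$ is the closed fundamental domain for $\langle R_\Theta\rangle$ acting on $\overline{\mc C_J}$ that contains $\Nef(J)$; concretely $\overline{\Mov}(J)=\{\alpha\in\overline{\mc C_J}:q(\alpha,\Theta)\ge0\}$. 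Since $R_\Theta$ fixes the ray $\R_{\ge 0}H$ and interchanges the two null rays $\R_{\ge 0}L$ and $\R_{\ge 0}(L+\Theta)$, the fundamental domain containing $L$ is exactly $\R_{\ge 0}L+\R_{\ge 0}H$. This is the asserted formula $\Mov(J)=\R_{\ge 0}L+\R_{\ge 0}H$, and $q(H,L)>0$, $q(H)>0$ were checked above.

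Next I would deduce the uniqueness of the Lagrangian model. Inside $\overline{\Mov}(J)=\R_{\ge 0}L+\R_{\ge 0}H$ the only isotropic ray is $\R_{\ge 0}L$, since the only isotropic rays of $\overline{\mc C_J}$ are $\R_{\ge 0}L$ and $\R_{\ge 0}(L+\Theta)$ and the second is not contained in the movable cone. Now let $J'$ be a \hk manifold birational to $J$ admitting a Lagrangian fibration $\pi'\colon J'\to\P^5$. Birational \hk manifolds are isomorphic in codimension one, so $\NS(J')\cong\NS(J)$ compatibly with $q$ and $\Nef(J')\subseteq\overline{\Mov}(J)$; the class $\pi'^{*}\mc O(1)$ is nef on $J'$ and $q$-isotropic by \cite{Matsushita-Def}, hence proportional to $L$. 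Therefore $L\in\Nef(J')$, and since $\R_{\ge 0}L$ is a boundary ray of $\overline{\Mov}(J)$ only one birational chamber abuts it, namely $\Nef(J)$ (equivalently Corollary~\ref{onlyJ}(1)), so $J'\cong J$ with $\pi'$ identified with $\pi$ up to an automorphism of $\P^5$.

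Finally, for the non-birationality with $J^{T}$: by Remark~\ref{twisted} the twisted fibration $J^{T}=J^{T}(X)$ is a \hk manifold with a Lagrangian fibration $\pi^{T}\colon J^{T}\to\P^5$ for every smooth $X$. If $J$ were birational to $J^{T}$, the uniqueness just established gives an isomorphism $\phi\colon J^{T}\xrightarrow{\sim}J$ carrying $\pi^{T}$ to $\pi$ up to some $g\in\Aut(\P^5)$; since $g$ must preserve the common discriminant hypersurface $X^{\vee}$ and $\Aut(X^{\vee})\cong\Aut(X)=\{1\}$ for very general $X$, in fact $\phi$ lies over $\mathrm{id}_{\P^5}$ and restricts to an isomorphism $\phi_U\colon J_U^{T}\xrightarrow{\sim}J_U$ of schemes over $U$. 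Pulling back the zero section of the abelian scheme $J_U\to U$ along $\phi_U$ produces a section of $J_U^{T}\to U$, so the $\mc J_U$-torsor $J_U^{T}$ is trivial, contradicting its non-triviality for very general $X$ \cite{Voisin-twisted}. Hence $J$ is not birational to $J^{T}$. The step I expect to be the main obstacle is the movable-cone computation: one must invoke the correct form of Markman's theorem to be certain that the second wall of $\overline{\Mov}(J)$ is precisely $\Theta^{\perp}$ and nothing larger, after which everything reduces to the elementary lattice computation above; the reduction of ``Lagrangian fibration'' to the isotropic class $L$, and the $\Aut(\P^5)$ book-keeping at the end, are routine given the genericity of $X$.
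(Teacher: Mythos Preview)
Your proof is correct and follows the same line as the paper's: both use Markman's result that $\overline{\Mov}(J)$ is cut out inside $\overline{\mc C_J}$ by the half-spaces $q(\,\cdot\,,E)\ge 0$ for $E$ prime exceptional, together with Proposition~\ref{Thetasq} and Corollary~\ref{onlyJ}(2), to identify the second ray as $\Theta^\perp$; the paper then simply observes that $q(H)>0$ forces $\Mov(J)\subsetneq\overline{\mc C_J}$, so $L$ is the unique isotropic movable class, and leaves both ``in particular'' conclusions to the reader. Your explicit lattice computations and the detailed torsor argument for $J\not\simeq_{\mathrm{bir}}J^T$ are fine elaborations of what the paper omits; one small simplification in your last paragraph: you do not need $\Aut(X^\vee)\cong\Aut(X)=\{1\}$, since any $g\in\Aut(\P^5)$ compatible with $\phi$ already preserves the common discriminant $X^\vee$ and hence $U$, so $\phi^{-1}\circ s\circ g$ is a section of $J^T_U\to U$ regardless of whether $g=\mathrm{id}$.
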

\begin{proof} We already know that one of the rays of the movable cone of $J$ is spanned by $L$.
By \cite[Thm 1.5]{Markman-prime-exceptional} the closure of the movable cone is spanned by classes that intersect non-negatively with all prime exceptional divisors. Since by Proposition \ref{Thetasq} $\Theta$ is prime exceptional, the second ray of the movable cone is determined by $\Theta^\perp$, which is spanned by a class $H$ which is big and nef on some birational \hk model of $J$. Thus,  $q(H) >0$ and $q(H,L)>0$. In particular, the movable cone is strictly contained in the positive cone implying that the only isotropic class that is movable is $L$.
\end{proof}

In terms of the other  projective \hk birational models of $J$, we can actually prove something more precise. The main result of \S \ref{section general X} describes, for general $X$, on which birational model of $J$ the proper transform of $\Theta$ can be contracted.

\subsection{Induced automorphisms}  \label{induced auto}
For \hk manifolds of K3$^{[n]}$-type, a considerable amount of literature has been devoted to the study and classification of automorphism groups. This includes studying the automorphisms induced from a K3 surface to the moduli spaces of sheaves on it. In view of Theorem \ref{hkcom}, a natural question is to study the induced action on $J$ of the automorphism group of $X$ in relation to the Lagrangian fibration structures. I thank G. Pearlstein for asking  questions that led me to the following observations.

Let $X$ be a smooth cubic fourfold and let $\tau$ be an automorphism of $X$. Then $\tau$ acts on the universal family of hyperplane sections of $X$ and thus also on the Donagi-Markman fibration $J_U \to U$ (which is identified with the relative $\Pic^0$ of the family of Fano surfaces of the hyperplane sections of $X$). By abuse of notation we denote by 
\[
\tau: J \dashrightarrow J
\]
the induced birational morphism. Notice that $\tau$ preserves $\Theta$ and $L$ so the induced action of $\tau^*$ is the identity on $U= \langle L, \Theta \rangle  \subset \NS(J)$.

\begin{prop} \label{auto}
Let $X$ be a smooth cubic fourfold and suppose that the fibers of $\pi: J \to \P^5$ are  irreducible (by  \cite{LSV} this happens for general $X$). 
\begin{enumerate}
\item Then $\Theta$ is   $\pi$-ample and so is any $B \in \NS(J)$ with $q(L,B)>0$.
\item Any  birational automorphism $\tau: J\dashrightarrow J$ which fixes $L=\pi^* \mc O(1)$ extends to a regular automorphism.
\item $L$ is nef on a unique \hk birational model of $J$. In other words, if $J' $ is a birational \hk model of $J$, with birational map $f:J' \dashrightarrow J$, and the induced map $\pi': J' \dashrightarrow J \to \P^5$ is regular, then $f$ is an isomorphism.
\end{enumerate}
\end{prop}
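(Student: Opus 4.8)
Throughout write $f^*$ for the (well--defined) action of a birational map of \hk manifolds on divisor classes, noting that such maps are isomorphisms in codimension one and preserve the Beauville--Bogomolov form.

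\emph{Proof of (1).} Since $\pi$ is projective, a line bundle on $J$ is $\pi$--ample exactly when it restricts to an ample class on every (irreducible) fibre. That $\Theta$ is $\pi$--ample is \cite{LSV} (the hypothesis that the fibres of $\pi$ are irreducible is precisely ``$X$ general in the sense of LSV''), and on a general fibre $\Jac(Y)$ the divisor $\Theta$ restricts to the canonical principal polarization. Let now $B\in\NS(J)$ with $q(L,B)=m>0$. A general cubic threefold $Y$ has $\NS(\Jac(Y))$ of rank one, so $B|_{\Jac(Y)}$ is a multiple of the polarization, and a one--line expansion of the Fujiki relation identifies the multiple: in $\int_J L^5 B\,\Theta^4$ each of the five factors $L$ must pair with one of $B,\Theta,\Theta,\Theta,\Theta$ since $q(L,L)=0$, so $\int_J L^5 B\,\Theta^4=5!\,q(L,B)q(L,\Theta)^4=5!\,m$, while $\int_{\Jac(Y)}\Theta^5=5!$; hence $B$ restricts to $m\Theta$ on a general fibre. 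Equivalently $D:=B-m\Theta$, which has $q(L,D)=0$, restricts numerically trivially on a general fibre; as the relative degree $\int_{\pi^{-1}(b)}D\cdot\Theta^4=\int_J L^5 D\,\Theta^4=5!\,q(L,D)=0$ is independent of $b$, a specialisation argument over the discriminant (using smoothness of $J$) shows $D$ is numerically $\pi$--trivial. Therefore $B$ restricts to the ample class $m\Theta$ on every fibre and is $\pi$--ample.

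\emph{Proof of (3), hence of (2).} Let $f\colon J'\dashrightarrow J$ be a birational map from a \hk manifold such that $\pi':=\pi\circ f$ is a morphism; then $f$ is a birational map over $\P^5$, an isomorphism in codimension one, and $J'$ is smooth and projective over $\P^5$ with $K_{J'}=0$. The point is to show that $J$ admits no flopping contraction over $\P^5$. Suppose it did, with corresponding flop $f_0\colon J\dashrightarrow J^{+}$ over $\P^5$, along a curve $C$ which, being $\pi$--vertical, lies over a proper closed subset $Z_0\subsetneq\P^5$. Then $J^{+}$ is a smooth \hk manifold, projective over $\P^5$ with morphism $\pi^{+}$, and every fibre of $\pi^{+}$ is the flop of --- hence birational to, hence irreducible as --- the corresponding fibre of $\pi$. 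So (1) applies to $(J^{+},\pi^{+})$: since the proper transform $\Theta^{+}=f_0^*\Theta$ satisfies $q(L,\Theta^{+})=q(f_0^*L,f_0^*\Theta)=q(L,\Theta)=1>0$ (Lemma \ref{LTheta}; $f_0^*L=L$ as the flop is over $\P^5$), $\Theta^{+}$ is $\pi^{+}$--ample, in particular $\pi^{+}$--nef, so $\Theta^{+}\cdot C^{+}>0$ for the flopped curve $C^{+}$. But $\Theta\cdot C>0$ because $\Theta$ is $\pi$--ample, and passing to the flop reverses this sign, so $\Theta^{+}\cdot C^{+}<0$ --- a contradiction. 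Hence $J$ has no flopping contraction over $\P^5$, and since two smooth, $K$--trivial projective models birational over a fixed base are joined by a chain of flops over that base (see e.g. \cite{Kollar-Mori}), $J$ is the unique such model, so $f$ is an isomorphism. This is (3); (2) is the case $J'=J$, $f=\tau$: if $\tau^*L=L$ then $\pi\circ\tau$ is the morphism attached to $\big(L,H^0(J,L)\big)$, which is $\pi$ itself, so $\pi\circ\tau$ is regular and the above applies.

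\emph{Where the difficulty lies.} The delicate step, shared by (1) and the proof of (3), is the assertion that a class pairing positively with $L$ restricts to an ample class on \emph{every} fibre --- equivalently, in (3), that the flopped curve $C^{+}$ lies in a fibre of $\pi^{+}$ on which $\Theta^{+}$ is forced to be positive even when that fibre sits over the discriminant. This is exactly where one uses, on the one hand, the relative ampleness of $\Theta$ on $J$ from \cite{LSV}, and on the other, the propagation of numerical triviality of $B-q(L,B)\Theta$ from the general fibre to the special ones by a limiting argument using smoothness of the total space; I expect verifying that no subtlety is lost in the ``a flop over $\P^5$ still has irreducible fibres'' step to absorb most of the remaining care. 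Everything downstream is formal, being the standard principle that a birational map of \hk manifolds carrying a relatively ample class to a relatively anti--ample one cannot exist once the relatively ample class is there to begin with.
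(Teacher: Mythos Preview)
Your argument for (1) is essentially along the right lines, but you take $\Theta$ as the anchor and then appeal to \cite{LSV} for its $\pi$-ampleness, writing that ``the hypothesis that the fibres of $\pi$ are irreducible is precisely `$X$ general in the sense of LSV'\,''. That is not correct: the Proposition assumes only irreducible fibres for some \hk compactification $J$ as in Theorem~\ref{hkcom}, which is \emph{a priori} weaker than (and not known to be equivalent to) the LSV genericity hypothesis. The paper's proof avoids this by taking an ample class $H$ on $J$ as the anchor instead of $\Theta$: on a smooth fibre $J_b$ one has $[H_{|J_b}]=m[\Theta_{|J_b}]$ for some $m>0$, and then irreducibility of \emph{all} fibres (via \cite[Lem.~4.4]{Voisin-twisted}) plus Nakai--Moishezon gives $\pi$-ampleness of $\Theta$ directly. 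The same comparison with $H$ handles any $B$ with $q(L,B)>0$, so the Fujiki computation identifying the multiple is not needed.

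For (3) you take a genuinely different route from the paper and there is a real gap, which you yourself flag. Your argument needs that a flop $J\dashrightarrow J^{+}$ over $\P^5$ preserves irreducibility of fibres, so that (1) can be applied to $J^{+}$. But a small modification over $\P^5$ replaces a codimension~$\ge 2$ locus, and nothing prevents that locus from contributing a new $5$-dimensional irreducible component to some special fibre of $\pi^{+}$; equidimensionality of $\pi^{+}$ (Matsushita) does not rule this out. So the step ``every fibre of $\pi^{+}$ is birational to, hence irreducible as, the corresponding fibre of $\pi$'' is unjustified, and without it the contradiction $\Theta^{+}\cdot C^{+}>0$ does not follow. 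The paper's argument sidesteps flops entirely: take $H'$ ample on $J'$, note $q(L,f_*H')=q(f^*L,H')=q(L',H')>0$, and apply (1) on $J$ to conclude $f_*H'$ is $\pi$-ample; then $f_*H'+kL$ is ample on $J$ and its transform $H'+kL'$ is ample on $J'$, so $f$ carries a relatively ample class to a relatively ample class and is an isomorphism. This is a two-line application of (1) and needs no control of what a hypothetical flop does to the fibres.

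Your reduction of (2) to (3) is valid, but since (3) as you argue it is incomplete, so is (2). The paper instead proves (2) directly and independently of (3), by the same mechanism as (1): from $\tau^*L=L$ one gets $q(L,\tau^*\Theta)=q(L,\Theta)=1$, so $\tau^*\Theta$ is $\pi$-ample by (1), and a birational self-map over $\P^5$ carrying one $\pi$-ample class to another is biregular.
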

\begin{proof}
(1) Let $H$ be an ample line bundle on $J$ and let $J_t$ be a smooth fiber of $J \to \P^5$. Then $[H_{|J_t}]=m [\Theta_{|J_t}]$ for a positive integer $m$ so the restrictions of $H$ and $m\Theta$ are topologically equivalent for any smooth fiber. Since the fibers of $\pi$ are irreducible, it follows that  the restrictions of $H$ and $m \Theta$ to any fiber are numerically equivalent (see \cite[Lem 4.4]{Voisin-twisted}). By Nakai-Moishezon, $m \Theta$ is $\pi$-ample. Similary, if $q(B,L)>0$, then there exists positive integers $a$ and $b$ such that $aB$ and $b\Theta$ are numerically equivalent on every fiber.

(2) By assumption, $\tau^* L=L$ so $q(\tau^* \Theta, L)=q(\Theta, L)=1$. As a consequence, $\tau^* \Theta$ and $\Theta$ are topologically equivalent on the smooth fibers and hence, as above, numerically equivalent on every fiber. Thus, $\tau^* \Theta$ is $\pi$-ample. It follows that $\tau$ is a regular morphism.

(3)  Let $H'$ be any ample line bundle on $J'$ and let $L'=f^{*}L={\pi'}^*\mc O(1)$. Then $0< q(L', H')=q(L, f^*H')$, so by (1) $ f^*H'$ is ample and $f$ is an isomorphism.
\end{proof}

In addition to birational automorphisms  induced by the automorphisms of $X$, some examples of birational automorphisms which preserve $L$ are
\begin{enumerate}
\item $\iota: J \to J$ induced by the action of $(-1)$ on the smooth fibers of $J \to \P^5$.
\item $t_\alpha: J \to J$  induced by the translation of a rational section of $\alpha: \P^5 \dashrightarrow J$ (cf.\S \ref{section MW}).
\item More generally,  any birational automorphism induced by an element of the automorphism group of $J_K$, the generic fiber of $J \to \P^5$.
\end{enumerate}

\begin{rem}
As already mentioned just below Theorem \ref{LSVthm}, a necessary condition for the irreducibility of the fibers of $J \to \P^5$ is given in \cite{Brosnan}. This condition is satisfied if and only if the hyperplane sections $Y$ of $X$ satisfy $d(Y):=b_2(Y)-b_4(Y)=0$, where $b_i(Y)$ denotes the $i$-th Betti number of $Y$ and where $d(Y)$ is called the defect of $Y$. It is easy to see that if $Y$ contains a plane then $d(Y)>0$. 
\end{rem}

\section{Birational geometry of $J(X)$, for general $X$}  \label{section general X}

To describe the birational geometry of the intermediate Jacobian fibration we degenerate  the underlying cubic to the chordal cubic, following an idea already contained in \cite{KLSV}. There, it is observed that the central fiber of the corresponding family of intermediate Jacobian fibrations can be chosen to be birational to a moduli space of sheaves of OG$10$-type on a K3 surface of genus $2$. As in Section \ref{ogtentype}, by moduli space of OG$10$-type we mean a moduli space of sheaves on a K3 surface with Mukai vector $2w$, with $w^2=2$. We first refine the construction of this degeneration in order to have a central fiber that is actually \emph{isomorphic} to a certain singular moduli space of sheaves on the associated K3 surface. In this way, we can keep track of the limits of the relative theta divisor and of the line bundle inducing the Lagrangian fibration. This is done in Section \ref{deg chordal}. The results of Meachan--Zhang \cite{Meachan-Zhang}, which were recalled in Lemma \ref{MZ} imply that the central fiber of the relative theta divisor can be contracted after a Mukai flop of the zero section. For $X$ general, we then deduce the same result for $J(X)$ and, for very general $X$, we compute the nef and movable cone of $J(X)$. This is the content of Theorem \ref{NSU}. 

\begin{thm} \label{NSU} Let $X$ be a smooth cubic fourfold and let $J=J(X) \to \P^5$ be a \hk compactification of the intermediate Jacobian fibration as in \S \ref{birational section}. 

For very general $X$, 
\begin{enumerate}
\item There is a unique other \hk birational model of $J$, denoted by $N$,  which is the Mukai flop $p: J \dashrightarrow N$ of $J$ along the image of the zero section;
\item There is a divisorial contraction $h: N \to \bar N$ which contracts the proper transform of $\Theta$ onto an $8$-dimensional variety which is birational to the LLSvS $8$-fold $Z(X)$.
\end{enumerate}
In other words, we have $\Mov (J)=\langle L, H \rangle= \Nef(J) \cup p^* \Nef(N)$,  $ \Nef(J)=\langle L, H_0 \rangle$,  and  where $p^* \Nef(N)=\langle H_0 , H \rangle$, were $H_0$ is a big and nef line bundle on $J$ which contracts the zero section of $J \to \P^5$ and $H$ is as in Corollary \ref{J not JT}.

For general $X$, the relative theta divisor $\Theta$ can be contracted after the Mukai flop of the zero section of $J \to \P^5$.

\end{thm}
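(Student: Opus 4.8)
The plan is to prove the theorem by degeneration to the chordal cubic, transporting the description of the nef and movable cones of the limiting moduli space of $\mathrm{OG}10$-type (obtained in Lemma \ref{MZ}) back to $J=J(X)$.

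First I would set up the degeneration: take a generic pencil (or one-parameter family) $\mc X \to \Delta$ of smooth cubic fourfolds with $\mc X_t$ general in the sense of \cite{LSV} for $t\neq 0$ and $\mc X_0$ the chordal cubic (the secant variety of the Veronese $\P^2 \hookrightarrow \P^5$). By Remark \ref{mild sing} this degeneration has finite monodromy, and the associated K3 surface is the genus $2$ double cover $S \to \P^2$ branched along the sextic cut out on the Veronese by $\mc X_0$. The key preliminary, carried out in \S\ref{deg chordal} (Proposition \ref{3fams}), is to refine the construction of \cite{KLSV} so that the central fiber of the resulting relative Lagrangian fibration is genuinely \emph{isomorphic} to the singular moduli space $M_{2v_0}(S) \to \P^5=|2C|$ of Section \ref{ogtentype}, in such a way that the limit of the relative theta divisor $\Theta$ is the divisor $\theta$ of (\ref{deftheta}), the limit of $L=\pi^*\mc O(1)$ is $\ell$ of (\ref{ell}), and the limit of the zero section is the zero section of $M_{2v_0}(S)\to\P^5$ (Remark \ref{zerosect}). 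Here Theorem \ref{deg lagr 1} is the tool that produces the family $\mc J \to \P^5_\Delta \to \Delta$ of Lagrangian fibered \hk manifolds with $\mc J_t = J(\mc X_t)$ for $t\neq0$ and $\mc J_0$ a symplectic resolution $\wt M_{2v_0}(S)$.

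Next, the geometric statement. By Lemma \ref{MZ}, on $\wt M_{2v_0}(S) = \mc J_0$ the wall spanned by $h_0=(-1,C,1)$ contracts the zero section, and on the Mukai flop along the zero section the class $h=(-1,C,0)$ is big and nef and contracts the proper transform of $\theta$ onto an $8$-dimensional base. Now I would use parallel transport in the family $\mc J \to \Delta$ together with a specialization/deformation argument (in the spirit of \cite{Charles-Mongardi-Pacienza}, \cite{Markman-prime-exceptional}): the class $h_0$ on $\mc J_0$, transported to $\mc J_t = J(\mc X_t)$, remains a big and nef (or movable) class whose associated contraction contracts the zero section; the existence of the Mukai flop of the zero section and of the subsequent divisorial contraction of the proper transform of $\Theta$ are stable under this deformation because for $X$ general both $L$ (Theorem \ref{LSVthm}), $\Theta$ (Proposition \ref{Thetasq}, giving $q(\Theta)=-2$), and the class $H \in \Theta^\perp$ (Corollary \ref{J not JT}) stay algebraic along the codimension $2$ locus of intermediate Jacobian fibrations (Remark \ref{section deform}), and the walls/chambers in the movable cone, as governed by the Torelli-type results for $\mathrm{OG}10$ and by $q(\Theta)=-2$, are determined by this lattice data alone. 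Concretely: $\Theta$ prime exceptional with $q(\Theta)=-2$ forces a divisorial contraction on some model; the other extremal ray $H$ of $\Mov(J)$ has $q(H)>0$; and the class $H_0$ with $q(H_0,L)>0$, $q(H_0,\Theta)>0$, $q(H_0)=0$ on the hyperbolic lattice $U=\langle L,\Theta\rangle$ sits strictly inside $\Mov(J)$ and spans the wall separating $\Nef(J)$ from $p^*\Nef(N)$, where $p\colon J\dashrightarrow N$ is the corresponding flop. The only isotropic movable classes are $L$ and (a multiple of) $H_0$'s... no: the isotropic classes in $U$ are $L$ and $(0,0,1)$-analogue, but only $L$ is movable, which is why $N\to\bar N$ is divisorial, not another Lagrangian fibration. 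Identifying $\bar N$ with (a birational model of) $Z(X)$ follows from the description (\ref{AJtwisted}) of $\Theta$ via the space $Z'$ of twisted cubics: the map $h$ contracts the $\P^1$-bundle $r\colon \P_{Z'}\to Z'$ onto $Z'$, and $g\colon Z'\to Z(X)$ is the LLSvS blowdown.

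The main obstacle, I expect, is establishing that the \emph{limit} of $\Theta$ is exactly the divisor $\theta$ with class $-(1,-C,2)$ and that the limit of the zero section is the (non-singular-locus) zero section — i.e.\ making the degeneration of \cite{KLSV} precise enough to control these cycles, rather than just knowing that $\mc J_0$ is \emph{birational} to a moduli space. This requires matching the Abel--Jacobi description of $\Theta$ (via lines on the cubic threefold, or via twisted cubics) on the chordal cubic with the determinantal/Brill--Noether description of $\theta$ on $M_{2v_0}(S)$, using that lines in a hyperplane section of the chordal cubic through a general point correspond, after projection from the Veronese, to line bundles on curves in $|2C|$ on $S$. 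A secondary subtlety is the factoriality bookkeeping of Section \ref{ogtentype}: one must work with the model $M_{2v_0}$ (even $k$), which is factorial, so that $\theta$ is Cartier (Remark \ref{thetaamp}) and the walls computed by Meachan--Zhang genuinely govern the birational geometry on the nose; the isomorphism (\ref{isokeven}) and Remark \ref{g12} must be invoked carefully to land on the right model. Once the limit of the theta divisor is pinned down and the deformation-stability of the relevant contractions is in place, the cone computation is a formal consequence of $\rho(J)=2$ (Corollary \ref{onlyJ}), $q(\Theta)=-2$, and Lemma \ref{MZ}.
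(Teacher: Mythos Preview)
Your overall strategy---degenerate to the chordal cubic, invoke Lemma \ref{MZ} on the central fiber, and transport the cone structure back to $J(X)$---is exactly the paper's route, and you have correctly identified the two delicate points (matching $\Theta$ with $\theta$, and the factoriality of $M_{2v_0}$). However, the paper implements both technical steps differently from what you propose, and your lattice description of $H_0$ contains an error.

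On the limit of $\Theta$: rather than a direct cycle-theoretic comparison between the Abel--Jacobi theta divisor and the Brill--Noether locus on $M_{2v_0}(S)$, the paper argues indirectly (Lemma \ref{Udeforms}). The limit $\Theta_{\mc M_0}$ restricts to a principal polarization on smooth fibers, so it is linearly equivalent to $\theta + a\ell$ for some $a\in\Z$. Since $m_0^*\theta$ is irreducible (Remark \ref{Alexeev}) and prime exceptional (Proposition \ref{Thetasq}), by \cite{Markman-prime-exceptional} it deforms to an effective prime exceptional divisor over $\Delta$; but for very general $t$ there is a \emph{unique} prime exceptional divisor (Corollary \ref{onlyJ}), so this deformation must coincide with $\Theta_t$, forcing $a=0$. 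This uniqueness argument avoids entirely the geometric matching you sketch.

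On deforming the contractions: you invoke abstract wall-and-chamber and Torelli-type results for OG$10$, but the paper (Proposition \ref{propmukflop}) instead performs the Mukai flop \emph{relatively} over $\Delta$ along the family of zero sections (regular by Corollary \ref{zerosectgenX}), obtaining $\mc N \to \Delta$ whose central fiber is projective by Lemma \ref{MZ}. The big and nef class $\mc H_0$ on $\mc N_0$ deforms to a line bundle $\mc H$ on $\mc N$ (Lemma \ref{Udeforms}); since $\mc N_0$ has rational singularities, Kodaira-type vanishing gives that $g_*\mc H^k$ satisfies base change and semi-ampleness propagates from the central fiber to all fibers, yielding the contraction of $(P_t)_*\Theta_t$ for every $t$. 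This is more concrete than an appeal to lattice-theoretic wall structure.

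Finally, your claim that $q(H_0)=0$ is wrong: in the limit $h_0=(-1,C,1)$ has $h_0^2=4$, not $0$. The isotropic classes in $U=\langle L,\Theta\rangle$ are $L$ and $L+\Theta$ (up to scalar), and the latter is \emph{not} movable because $\Theta$ is prime exceptional---this is precisely Corollary \ref{J not JT}. The wall class $H_0$ is big and nef with positive square, inducing a small (flopping) contraction of the zero section, not a Lagrangian fibration. Your self-correction in the next sentence lands on the right conclusion, but the preceding description should be fixed.
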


Before the proof of the Theorem, which will be given in the following Section  \S \ref{deg chordal}, we mention, as a consequence of the Theorem above, the relation between the intermediate Jacobian fibration and moduli spaces of objects in the Kuznetsov component of $X$.

\subsection{Comparison with moduli spaces of objects in the Kuznetsov component of $X$} \label{Kuz}

The recent  paper \cite{BLMNPS} establishes the existence and the  fundamental properties of moduli spaces of objects in the Kuznetsov component $\mc{K}u(X)$ of a smooth cubic fourfold $X$. We refer the reader to \S 29 of loc. cit for the relevant definitions and the precise statements of the results. 

Given a smooth cubic fourfold $X$, the extended Mukai lattice $\wt H(\mc K u (X), \Z)$ is a lattice, whose underlying group is the topological $K$-theory of $\mc K u (X)$ and whose Mukai pairing and  weight two Hodge structure are induced from those on $X$. The only classes in $\wt H(\mc K u (X), \Z)$  that are of type $(1,1)$ for very general $X$ are contained in a rank $2$ lattice $A_2$, which is spanned by  two classes $\lambda_1$ and $\lambda_2$, that satisfy $\lambda_1^2=\lambda_2^2=2$ and $\lambda_1 \cdot \lambda_2=-1$ (see \cite[(29.1)]{BLMNPS}). A description of a full connected component of the space of Bridgeland stability conditions on $\mc{K}u(X)$ is also produced (Thm 29.1 of loc. cit.). It is shown that, for a primitive Mukai vector with $v^2 \ge -2$ and for a $v$-generic stability condition $\sigma$ in this component, the moduli space $M_\sigma(\mc{K}u(X), v)$ of Bridgeland stable objects in  $\mc{K}u(X)$ with  Mukai vector $v$ is a non empty smooth projective \hk manifold of dimension $v^2+2$, deformation equivalent to a Hilbert scheme of points on a K3 surface; moreover, the formation of these moduli spaces works in families (see Thm 29.4 of loc. cit. for the precise statement).

For a Mukai vector of OG$10$-type in the $A_2$ lattice, i.e., of the form $v=2\lambda$ with  $\lambda^2=2$, \cite{Pertusi-et-al} shows that for a $\lambda$-generic stability condition $\sigma$ the moduli space $M_\sigma(\mc{K}u(X), v)$ is an irreducible normal projective symplectic variety of dimension $10$ admitting a symplectic resolution which is deformation equivalent to a manifold of OG$10$-type. The genericity condition here means that the polystable objects with Mukai vector $v$ are the direct sum of two stable objects with Mukai vector $\lambda$. More precisely, the singular locus of  $M_\sigma(\mc{K}u(X), v)$   is isomorphic $\Sym^2 M_\sigma(\mc{K}u(X), \lambda)$. 

Moreover, in \cite{Pertusi-et-al} it is shown that for general $X$ the twisted intermediate Jacobian fibration $J^T(X)$ is birational to $M_\sigma(\mc{K}u(X), 2\lambda)$, for $\lambda^2=2$.
For the non-twisted case we have the following corollary of Theorem \ref{NSU} that goes in the opposite direction.

\begin{cor} \label{notkuz} For very general $X$, $J(X)$ is not birational to a moduli space of the form $M_\sigma(\mc{K}u(X),v )$. \end{cor}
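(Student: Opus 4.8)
The plan is to use the very-general hypothesis on $X$ to cut the list of possible moduli spaces $M_\sigma(\mc K u(X),v)$ down to a single candidate --- a singular symplectic variety of OG$10$-type --- and then to deduce the non-birationality from \cite{Pertusi-et-al} together with Corollary \ref{J not JT}.

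First I would observe that, since birational varieties have the same dimension, a birational equivalence $J(X)\dashrightarrow M_\sigma(\mc K u(X),v)$ (with $\sigma$ a $v$-generic stability condition, as in \cite{BLMNPS,Pertusi-et-al}) forces $\dim M_\sigma(\mc K u(X),v)=10$, hence $v^2=8$ by the dimension formula $\dim M_\sigma(\mc K u(X),v)=v^2+2$ recalled above. Write $v=m v_0$ with $v_0$ primitive and $m\ge 1$. Because $X$ is very general, $v$ is of type $(1,1)$, so $v_0$ lies in the rank-$2$ lattice $A_2=\langle\lambda_1,\lambda_2\rangle$ with $\lambda_1^2=\lambda_2^2=2$ and $\lambda_1\cdot\lambda_2=-1$. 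Since $A_2$ is even and positive definite, a primitive vector $a\lambda_1+b\lambda_2$ has square $2(a^2-ab+b^2)$ with $\gcd(a,b)=1$, and $a^2-ab+b^2$ takes the values $1,3,7,13,\dots$ on such pairs; thus $v_0^2\in\{2,6,14,\dots\}$, and the equation $m^2 v_0^2=8$ has the unique solution $m=2$, $v_0^2=2$. Consequently $v=2\lambda$ with $\lambda:=v_0$ a root of $A_2$, and $M_\sigma(\mc K u(X),v)$ is exactly the singular symplectic variety of OG$10$-type studied in \cite{Pertusi-et-al}. (Note in particular that, for very general $X$, there is no primitive algebraic Mukai vector of square $8$, since $a^2-ab+b^2=4$ has no coprime solution; so the K3$^{[5]}$-type moduli spaces never arise.)

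I would then conclude as follows. By \cite{Pertusi-et-al} the moduli space $M_\sigma(\mc K u(X),2\lambda)$ is birational to the twisted intermediate Jacobian fibration $J^T(X)$. Hence a birational equivalence $J(X)\dashrightarrow M_\sigma(\mc K u(X),2\lambda)$ would produce a birational equivalence $J(X)\dashrightarrow J^T(X)$, contradicting Corollary \ref{J not JT}, which says that for very general $X$ the hyper-K\"ahler manifolds $J(X)$ and $J^T(X)$ are not birational. This proves the corollary.

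Since the statement is ultimately a formal consequence of Theorem \ref{NSU} (through Corollary \ref{J not JT}) and of the main result of \cite{Pertusi-et-al}, there is no serious obstacle; the only point requiring a short argument is the lattice-arithmetic reduction in $A_2$ that eliminates every algebraic Mukai vector other than $2\lambda$ with $\lambda^2=2$. One could also phrase the contradiction directly on N\'eron--Severi lattices: for very general $X$ one has $\NS(J)\cong U$ by Lemma \ref{LTheta}, whereas a symplectic resolution of $M_\sigma(\mc K u(X),2\lambda)$ has non-unimodular N\'eron--Severi lattice; however, verifying this non-unimodularity uses precisely the non-triviality of Voisin's torsor (equivalently, the failure of the Lagrangian fibration on $J^T$ to admit a rational section), so citing Corollary \ref{J not JT} is the most economical route.
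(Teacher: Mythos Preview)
Your argument is correct, but it follows a genuinely different line from the paper's own proof. Both proofs begin with the dimension count $v^2=8$ and the reduction to the case $v=2\lambda$ with $\lambda^2=2$; you eliminate the primitive case by the $A_2$ arithmetic (no coprime $(a,b)$ with $a^2-ab+b^2=4$), whereas the paper simply remarks that a primitive $v$ gives a manifold of K3$^{[5]}$-type, hence not of OG$10$-type. From there the two arguments diverge. You invoke the full birationality $M_\sigma(\mc K u(X),2\lambda)\sim_{\mathrm{bir}} J^T(X)$ from \cite{Pertusi-et-al} and conclude via Corollary~\ref{J not JT} that $J(X)\not\sim_{\mathrm{bir}} J^T(X)$. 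The paper instead uses only the description of the \emph{singular locus} $\Sym^2 M_\sigma(\mc K u(X),\lambda)\subset M_\sigma(\mc K u(X),2\lambda)$ from \cite{Pertusi-et-al}, together with the full strength of Theorem~\ref{NSU}: since for very general $X$ the unique non-small contraction on a birational model of $J$ is $N\to\bar N$ with image of the exceptional locus birational to $Z(X)$, one would be forced to have $Z(X)\sim_{\mathrm{bir}}\Sym^2 M_\sigma(\mc K u(X),\lambda)$, which fails because the latter has no symplectic resolution.

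Your route is shorter and, notably, uses less from the present paper: Corollary~\ref{J not JT} is a consequence of Proposition~\ref{Thetasq} alone and does not require the finer wall-crossing analysis behind Theorem~\ref{NSU}. The trade-off is that you import a stronger statement from \cite{Pertusi-et-al} (the birationality with $J^T$), while the paper's argument needs only the structure of the singular locus and is more internal to the birational geometry developed here.
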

\begin{proof}
First of all,  by \cite[Rem 29.3]{BLMNPS}, if non empty, the dimension of a  moduli space $M_\sigma(\mc{K}u(X), v)$ is $v^2+2$. This dimension is equal to $10$ if and only if either $v$ is primitive (hence $M_\sigma(\mc{K}u(X), v)$ is of K3$^{[5]}$-type and thus cannot be birational to $J(X)$) or else $v=2\lambda$ with $\lambda^2=2$.
By the results of \cite{Pertusi-et-al} cited in the remark above, for $v=2\lambda$ with $\lambda \in A_2$ and $\lambda^2=2$ and $\sigma$ a $\lambda$-generic stability condition, the singular locus of $M_\sigma(\mc{K}u(X), v)$ is isomorphic the second symmetric product of a  \hk manifold of K3$^{[2]}$--type. By dimension reasons, the symplectic resolution $\wt M_\sigma(\mc{K}u(X),v ) \to M_\sigma(\mc{K}u(X), v)$ is not a small contraction. Suppose by contradiction that $J(X)$ is birational to $\wt M_\sigma(\mc{K}u(X), v)$. Then by Theorem \ref{NSU},  the symplectic resolution has to coincide with $N \to \overline N$ and $M_\sigma(\mc{K}u(X), v) \cong \bar N$. This implies that the singular locus of $M_\sigma(\mc{K}u(X), v)$ has to be birational to the Lehn-Lehn-Sorger-van Straten $8$-fold $Z(X)$, which gives a contradiction. Indeed, $Z(X)$ cannot be birational to $\Sym^2 M_\sigma(\mc{K}u(X), \lambda)$ since, by Proposition \ref{GLR}, this would imply that the latter has a symplectic resolution. This, however, is not true because $\Sym^2 M_\sigma(\mc{K}u(X), \lambda)$ is a $\Q$-factorial sympectic variety with singular locus of codimension strictly greater than $2$ and hence does not admit a symplectic resolution (since it does not admit a semi-small resolution).
\end{proof}

\begin{rem} We expect the more general statement to hold: for very general $X$, $J(X)$ is not birational to a Bridgeland moduli space of objects on a $2$-CY category that is deformation equivalent to the derived category of a K3 surface. We present a rough sketch of the argument. Assume there is a family of Bridgeland stability conditions on the family of derived categories realizing the deformation. Then as in  \cite[21.24]{BLMNPS}  a relative moduli space exists as an algebraic space; by a generalization of a theorem of Mukai \cite[Thm 1.4]{Perry} the stable locus of each fiber is smooth and has a holomorphic symplectic form; the singular locus parametrizing strictly semi-stable objects of codimension $\ge 2$.  One then expects such moduli spaces to be normal and irreducible. As in the proof of the projectivity in \cite[Thm 29.4]{BLMNPS} then shows that these moduli spaces are projective. Finally, a similar argument to the one above shows that the contraction $N \to \bar N$ cannot be the symplectic resolution of one of these moduli spaces.
\end{rem}

%%%%%%%%%%%%%%%%%%%%%%%%%%%%
%%%%%%%%%%%%%%%%%%%%%%%%%%%%
%%%%%%%%%%%%%%%%%%%%%%%%%%%%
%%%%%%%%%%%%%%%%%%%%%%%%%%%%
%%%%%%%%%%%%%%%%%%%%%%%%%%%%
%%%%%%%%%%%%%%%%%%%%%%%%%%%%

In the next subsection we construct the degeneration of the intermediate Jacobian fibration that will allow us to prove Theorem \ref{NSU}. The proof of the Theorem will be given at the end of the Section.

\subsection{Degeneration to the Chordal Cubic} \label{deg chordal}

The secant variety to the Veronese embedding of $\P^2$ in $\P^5$ is a cubic hypersurface isomorphic to $\Sym^2 \P^2$, called the chordal cubic. Such a singular cubic fourfold is unique up to the action of the projective linear group.
Given a one parameter family of cubic fourfolds degenerating to the chordal cubic, it was proved  in \cite{KLSV}  that, up to a base change, one can fill the corresponding degeneration of intermediate Jacobian fibrations with a smooth central fiber that is birational to  $\wt M_{2v_0}=\wt M_{2v_0}(S)$, where $(S,C)$ is the degree $2$ $K3$ surface associated to the degeneration of cubic fourfolds  as in \cite{Collino-fundamental,Hassett-special, Laza} and where $v_0=(0,C,-2)$ is as in  (\ref{vkappa}). We will use this degeneration to study the birational properties of the intermediate Jacobian fibration, at least for general $X$. For this purpose, we need to control what happens to the line bundles $L$ and $\Theta$ under the corresponding degeneration of intermediate Jacobian fibrations. We achieve this by constructing a particular degeneration whose central fiber is precisely the singular moduli space $M_{2v_0}$ and  is such that the Lagrangian fibrations of the members of this degeneration fit in a relative Lagrangian fibration. This is done in Proposition \ref{3fams}. With this degeneration, we are not only able to identify precisely the limits of $L$ and $\Theta$ (see Lemma \ref{Udeforms}), but we are also able to deform the results about the birational geometry of $M_{2v_0}$  away from the central fiber (see Proposition \ref{propmukflop}), eventually proving Theorem \ref{NSU}.

Let $\mc X \to \Delta$ be a one parameter family of cubic fourfolds degenerating to the chordal cubic. By this we will mean $\Delta$ is a small disk or an open affine subset in the base of a pencil of cubic fourfolds with the property that the general fiber is smooth and the central fiber is isomorphic to the chordal cubic.
Recall the following facts (proved in \cite{Hassett-special}, cf. also \cite{Laza} and \cite{KLSV}): (a) the monodromy of this family  has order $2$; (b) to such a degeneration one can associate a degree $2$ polarized K3 surface $(S,C)$; (c) for a general pencil, the polarized K3 surface $(S,C)$ is general in moduli.

Suppose that for $t \neq 0$ the cubic fourfold $\mc X_t$ is general in the sense of LSV (i.e. in the sense that the construction of the \hk compactification of \cite{LSV} works for $J_U(\mc X_t)$) and let $\mc J^* \to \Delta^*$ be the family of intermediate Jacobians associated to the smooth locus  $\mc X^* \to \Delta^*$ of the pencil, with corresponding family of Lagrangian fibrations $\pi_{\Delta^*}:\mc J^* \to \P^5 _ {\Delta^*}$. 

\begin{lemma}[\hspace{1sp}\cite{Collino-fundamental, KLSV}] \label{collino} 

Up to a degree $2$ base change, we can extend $\pi_{\Delta^*}:\mc J^* \to \P^5 _ {\Delta^*}$ to a projective morphism $\pi_{\mc V}: \mc J_{\mc V} \to \mc V$, where $\mc V \subset \P^5 \times \Delta$ is an open subset such that $\mc V_t=\P^5$ for $t \neq 0$ and $ \mc V_0 \subset \P^5$ is non empty for $t=0$, and where $\mc { J}_0 \to \mc V_0 \subset \P^5$ is identified with the restriction of $M_{2v_0}(S) \to |2C|=\P^5$ (cf. \ref{lagrfibr})  to an open subset $V \subset |2C|$. Moreover, $\mc J_{\mc V} \to \mc V$ has a zero section and is polarized by a relative principal polarization.
\end{lemma}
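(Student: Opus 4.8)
The plan is to track the degeneration of the intermediate Jacobian fibration fiberwise over $\P^5$ and to identify its limit with the relative compactified Jacobian of the hyperelliptic genus $5$ system $|2C|$ on the associated K3 surface $S$. First I would record where the degree $2$ base change enters: by \cite{Hassett-special} the monodromy of $\mc X\to\Delta$ around the chordal cubic point has order $2$, so after the double cover $\Delta'\to\Delta$ totally ramified over $0$ (renaming $\Delta'$ as $\Delta$) the monodromy of the variation of Hodge structure underlying $\mc J^*\to\P^5_{\Delta^*}$ becomes trivial. This removes the only monodromy obstruction to extending $\mc J^*\to\Delta^*$ across $t=0$, and it is the sole point at which the base change is used.

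Next I would recall the associated K3 geometry. Following \cite{Collino-fundamental,Hassett-special,Laza}, to the pencil one attaches a degree $2$ polarized K3 surface $(S,C)$, with $S$ the double cover of $\P^2$ branched along the sextic coming from the limiting discriminant data of $\mc X$ and $C$ the pullback of a line; then $C$ has genus $2$, $(2C)^2=8$ and $|2C|\cong\P^5$ is a hyperelliptic genus $5$ system. The key point is that the $\P^5$ of hyperplane sections $\Pfd$ of the chordal cubic $\Sym^2\P^2\subset\P(\Sym^2 V)=\P^5$ is canonically identified with $|2C|$, both being the projectivization of the vector space governing the chordal geometry; I would fix this identification $\Pfd\cong|2C|$ and let $V\subset|2C|$ be the nonempty open locus over which the corresponding curve $\Gamma_H\in|2C|$ is smooth.

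The heart of the argument is the fiberwise identification of limits. For $t\neq 0$ and general $H$ one has $\mc J^*_{(H,t)}=\Jac(\mc X_t\cap H)=\Prym(\widetilde{D}_H/D_H)$, the Prym of the \'etale double cover attached to a conic bundle structure on the cubic threefold $\mc X_t\cap H$. Collino's analysis \cite{Collino-fundamental} of the degeneration of a smooth cubic threefold to the chordal cubic threefold (the secant variety of the rational normal quartic in $\P^4$) shows that the limit mixed Hodge structure on $H^3$ is pure of weight one and that the limit of the intermediate Jacobian, principal polarization included, is $\Jac(\Gamma_H)$: in the limit the discriminant data collapses onto the hyperelliptic curve $\Gamma_H$ and the Prym degenerates to its Jacobian. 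Running this argument uniformly over $H\in V$, using the properness of the relative compactified Jacobian of $|2C|$ together with a semistable model of the family of threefolds over $V\times\Delta$, yields an isomorphism over $V$ between the limit of $\mc J^*\to\P^5_{\Delta^*}$ and the restriction to $V$ of the degree $0$ relative compactified Jacobian of $|2C|$. Matching the principal polarization and the natural zero section $\Gamma_H\mapsto\mc O_{\Gamma_H}$ (cf. Remark \ref{zerosect}) pins down the degree and identifies this relative compactified Jacobian, as a moduli space of pure dimension one sheaves on $S$, with $M_{2v_0}(S)$ for $v_0=(0,C,-2)$, that is, $k=0$ in (\ref{vkappa}), so that $2v_0=(0,2C,-4)$ restricts on a smooth $\Gamma\in|2C|$ to a line bundle in the degree $0$ class.

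Finally, the fibration $M_{2v_0}(S)\to|2C|$ of (\ref{lagrfibr}), its relative theta divisor (which is $\pi$-ample by Remark \ref{thetaamp}) and its regular zero section (Remark \ref{zerosect}) are compatible with the identifications above; so, after shrinking to an open $\mc V\subset\P^5\times\Delta$ with $\mc V_t=\P^5$ for $t\neq 0$ and $\mc V_0=V$, they glue with $\mc J^*\to\P^5_{\Delta^*}$ to produce the projective morphism $\pi_{\mc V}:\mc J_{\mc V}\to\mc V$ carrying the asserted zero section and relative principal polarization. I expect the only genuine difficulty to be the third step: making Collino's fiberwise degeneration canonical, and the semistable model of the total family of threefolds uniform enough over the base, so that the individual isomorphisms $\lim\Jac(\mc X_t\cap H)\cong\Jac(\Gamma_H)$ assemble into an isomorphism of families over $V$; by comparison, the two base changes and the shrinking of $\mc V$ are routine bookkeeping.
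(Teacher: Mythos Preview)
Your proposal is correct and follows the same route as the paper's own proof, which is essentially a citation: Collino \cite{Collino-fundamental} for the degeneration of a single intermediate Jacobian (including the identification of the limiting principal polarization with the theta divisor of the hyperelliptic Jacobian), and \cite[\S 6.3]{KLSV} for assembling this into a family over the open set $\mc V$. You have unpacked exactly these two ingredients, and you have correctly isolated the only nontrivial point---uniformizing Collino's fiberwise limit over $V\subset|2C|$---as the content of the KLSV reference. One small imprecision: the limiting Hodge structure on $H^3$ is pure of weight $3$ (not weight $1$); what you mean is that it is effective of level $1$, i.e.\ of type $(2,1)+(1,2)$, so that the limit intermediate Jacobian is a genuine abelian variety and in fact the Jacobian of the hyperelliptic curve $\Gamma_H$.
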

\begin{proof} Let $H \subset \P^5$ be a general hyperplane. For the degeneration $\mc Y:= \mc X \cap (H \times \P^5)$ of a single  smooth cubic $3$-fold the statement is due to Collino \cite{Collino-fundamental}. In Prop. 1.16 of loc. cit, it is also shown that the class of the limit polarization is the theta divisor of the Jacobian of the genus $5$ hyperelliptic curve, which is the limiting abelian variety. For the statement about the limit of the intermediate Jacobian fibration, this is \cite[\S 6.3 ]{KLSV}.
\end{proof}

We now compactify the projective family $\mc J_{\mc V}$ of the Lemma above to construct a family of Lagrangian fibered holomorphic symplectic varieties in such a way that the central fiber is exactly $M_{2v_0}=M_{2v_0}(S)$ (or $\wt M_{2v_0}=\wt M_{2v_0}(S)$) (cf. (\ref{lagrfibr})).

\begin{prop} \label{3fams} Let $\mc X \to \Delta$ be as above a general family of smooth cubic fourfolds degenerating to the chordal cubic. Suppose that for very general $t \in \Delta$, $\mc X_t$ is very general. Let $(S,C)$ be the corresponding K3 surface of degree $2$ as above. Then, possibly up to a base change, there are two degenerations of the corresponding intermediate Jacobian fibration, fitting in the following commutative diagram
\be \label{diagr3fam}
\xymatrix{
\wt{\mc{M}}  \ar[r]^m \ar[dr]_{\wt f} &  \mc{M} \ar[d]^{f}  \\
& \Delta 
}
\ee
where:

\begin{enumerate}
\item $\wt f: \wt{\mc{M}} \to \Delta$ is a family of smooth \hk manifolds, with $\wt{\mc{M}}_t=J(\mc X_t)$ for $t \neq 0$ and $\wt{\mc{M}}_0=\wt M_{v_0}(S)$. The family is equipped with a relative Lagrangian fibration $ \wt{\mc{M}} \to \P^5_\Delta$, where for each $t$ the corresponding Lagrangian fibration is the obvious one.
\item $f: \mc M \to \Delta$ is a degeneration of \hk manifolds, with ${\mc{M}}_t=J(\mc X_t)$ for $t \neq 0$ and ${\mc{M}}_0= M_{v_0}(S)$. The morphism $m: \wt{\mc{M}} \to \mc M$ is proper, birational, for $t \neq 0$ it is an isomorphism and  for $t=0$ it is the natural symplectic resolution $m_0: \wt M_{2v_0}(S) \to M_{2v_0}(S)$ of Theorem \ref{OG10}. Moreover,  there is a relative Lagrangian fibration ${\mc{M}} \to \P^5_\Delta$ where for each $t$ the corresponding Lagrangian fibration is the obvious one.
\end{enumerate}
\end{prop}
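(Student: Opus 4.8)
The plan is to fill in the family $\mc J_{\mc V}\to\mc V$ of Lemma~\ref{collino} to a family over all of $\P^5_\Delta$ by running the relative minimal model program as in Theorem~\ref{deg lagr 1}, and then to rigidify the central fibre by means of the relative theta divisor so that it becomes \emph{exactly} $M_{2v_0}(S)$, respectively its symplectic resolution $\wt M_{2v_0}(S)$. As a first step I would choose a projective compactification $\wt{\mc J}\to\Delta$ of $\mc J_{\mc V}\to\Delta$ carrying a regular morphism $\tilde\pi\colon\wt{\mc J}\to\P^5_\Delta$ extending $\mc J_{\mc V}\to\mc V\hookrightarrow\P^5_\Delta$ (take the closure of the graph and resolve indeterminacy), and then, after a semistable reduction and a log resolution of $(\wt{\mc J},\wt{\mc J}_0)$, arrange that $\wt{\mc J}$ is smooth and $\wt{\mc J}_0=Y_0+\sum_{i\ge 1}m_iY_i$ is a reduced normal crossing divisor whose multiplicity-one component $Y_0$ dominates $\P^5$, is birational to $M_{2v_0}(S)$, and agrees with it over the locus where the latter is smooth. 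Here we use that for $t\ne 0$ the cubic $\mc X_t$ is general in the sense of LSV, so over $\Delta^{*}$ the family is the LSV fibration $\mc J_t=J(\mc X_t)\to\P^5$. This merely places us in the setting of Theorem~\ref{deg lagr 1}.

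Next I would verify the hypotheses of that theorem. The component $Y_0$ is not uniruled, being birational to $M_{2v_0}(S)$, which by Theorem~\ref{OG10} admits the symplectic resolution $\wt M_{2v_0}(S)$, a \hk manifold and hence not uniruled. Over the open locus $U_0\subset\P^5=|2C|$ of smooth members of $|2C|$, the variety $M_{2v_0}(S)$ is smooth and maps to $|2C|$ as a fibration in abelian varieties (the relative Jacobian), and, since none of the modifications in the first step touch this locus, $J_{U_0}:=Y_0|_{U_0}$ sits inside $Y_0\setminus\bigcup_{i\ge 1}(Y_i\cap Y_0)$ as such an abelian fibration, as required. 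Theorem~\ref{deg lagr 1} then produces a $\Q$-factorial terminal projective degeneration $\overline{\mc J}\to\Delta$, isomorphic to $\wt{\mc J}$ over $\Delta^{*}$, with reduced irreducible normal symplectic central fibre $\overline{\mc J}_0$ (canonical singularities, admitting a symplectic resolution), a relative Lagrangian fibration $\overline\pi\colon\overline{\mc J}\to\P^5_\Delta$ extending $J_{U_0}\to U_0$, and---after a base change $\Delta'\to\Delta$---a smooth family $\mc J'\to\Delta'$ with a birational morphism $\mc J'\to\overline{\mc J}':=\overline{\mc J}\times_\Delta\Delta'$ which is an isomorphism off the central fibre and a symplectic resolution of $\overline{\mc J}_0$ there, all compatibly with the fibrations to $\P^5$.

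The heart of the argument---and the step I expect to be the real obstacle---is to identify $\overline{\mc J}_0$ with $M_{2v_0}(S)$ \emph{on the nose}, i.e.\ to rule out that the limit is a flop of $M_{2v_0}(S)$ over $\P^5$. For this I would use the relative theta divisor: the relative principal polarization on $\mc J_{\mc V}$ from Lemma~\ref{collino} propagates to a $\Q$-Cartier divisor $\Theta$ on $\overline{\mc J}$ which for $t\ne 0$ restricts to the relative theta divisor of $J(\mc X_t)$ (hence $\pi$-ample there by \cite{LSV}) and whose restriction $\Theta_0$ to $\overline{\mc J}_0$ agrees over $V$ with the class $\theta=-(1,-C,2)$ of \eqref{deftheta}. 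Since $\Theta$ is relatively nef and big over $\P^5_\Delta$ on the klt variety $\overline{\mc J}$, its relative section algebra is finitely generated, so I can pass to the relative $\Theta$-canonical model $\mc M:=\mathrm{Proj}_{\P^5_\Delta}\big(\bigoplus_{m\ge 0}\overline\pi_\ast\mc O_{\overline{\mc J}}(m\Theta)\big)$, with its morphism $f\colon\mc M\to\Delta$; then $\mc M_t\cong J(\mc X_t)$ for $t\ne 0$ (there $\Theta$ is already ample), while $\mc M_0$ is the relative $\theta$-canonical model of $\overline{\mc J}_0$ over $\P^5$. On the other hand $M_{2v_0}(S)$ is \emph{itself} such a relative canonical model over $|2C|$: by \cite{Perego-Rapagnetta-factoriality} it is factorial, so $\theta$ is Cartier, and by Remark~\ref{thetaamp} $\theta$ is $\pi$-ample, whence $M_{2v_0}(S)=\mathrm{Proj}_{\P^5}\big(\bigoplus_m\pi_\ast\mc O(m\theta)\big)$. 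Both graded $\mc O_{\P^5}$-algebras are reflexive sheaves (pushforwards of powers of a relatively ample line bundle under the equidimensional, hence flat, Lagrangian fibrations of these normal varieties), and they agree over the open set where $\overline{\mc J}_0\to\P^5$ is identified with $M_{2v_0}(S)\to|2C|$. The delicate point---the place needing the most care---is that this open set has complement of codimension $\ge 2$ in $\P^5$: over the locus of integral members of $|2C|$ this is classical (limits of Jacobians are compactified Jacobians, and $M_{2v_0}(S)$ is smooth there), over the remaining codimension-one locus of reducible members it follows because both sides carry the same relative theta polarization and are the relative $\mathrm{Proj}$ of its powers, and the residual locus is codimension $\ge 2$. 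Granting this, uniqueness of the reflexive extension of a coherent sheaf across a codimension-$\ge 2$ subset forces $\overline\pi_{0\ast}\mc O(m\Theta_0)=\pi_\ast\mc O(m\theta)$ for all $m$, and hence $\mc M_0\cong M_{2v_0}(S)$ over $\P^5$.

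Granting the identification, the rest is bookkeeping. Set $\wt{\mc M}:=\mc J'$ and let $m\colon\wt{\mc M}\to\mc M$ be the composite $\mc J'\to\overline{\mc J}'\to\mc M'$ (all base changed to $\Delta'$, which we then rename $\Delta$); for $t\ne 0$ this is an isomorphism, while on central fibres it is a symplectic resolution of $\overline{\mc J}_0$ followed by the crepant morphism $\overline{\mc J}_0\to M_{2v_0}(S)$, hence a symplectic resolution of $M_{2v_0}(S)$, which by the uniqueness statement of Theorem~\ref{OG10} must be $m_0\colon\wt M_{2v_0}(S)\to M_{2v_0}(S)$; in particular $\wt{\mc M}_0=\wt M_{2v_0}(S)$. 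Every morphism to $\P^5$ in sight extends $\mc J_{\mc V}\to\mc V\subset\P^5_\Delta$, so on each fibre the Lagrangian fibrations of $\mc M$ and $\wt{\mc M}$ are the tautological ones; and the zero section and the relative principal polarization of $\mc J_{\mc V}$ survive all of the above operations. This yields the commutative diagram \eqref{diagr3fam}.
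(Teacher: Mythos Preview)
Your overall strategy---run Theorem~\ref{deg lagr 1}, then pin down the central fibre---is reasonable, but the identification step has a genuine gap, and the paper takes a different route precisely to avoid it.

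The problem is the sentence ``Since $\Theta$ is relatively nef and big over $\P^5_\Delta$ on the klt variety $\overline{\mc J}$''. Relative nefness on the central fibre is exactly what is at stake: a priori $\overline{\mc J}_0$ could be the Mukai flop of $M_{2v_0}(S)$ along the zero section (the model $N$ of Lemma~\ref{MZ}), on which the class $\theta$ is \emph{not} nef. Your attempt to rule this out by comparing section algebras is circular: over the codimension-one locus of reducible curves you assert that ``both sides carry the same relative theta polarization and are the relative $\mathrm{Proj}$ of its powers'', but this presupposes that $\Theta_0$ is relatively ample on $\overline{\mc J}_0$, which is what you are trying to prove. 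Reflexivity of the pushforwards only pins them down across codimension~$\ge 2$, and you have not established agreement over the codimension-one boundary. A secondary issue: the map $\overline{\mc J}\dashrightarrow\mc M$ to your $\mathrm{Proj}$ need not be a morphism unless you already know $\Theta$ is relatively semiample.

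The paper sidesteps this entirely. Rather than trying to recognise the central fibre of the MMP output, it constructs a \emph{second} family whose central fibre is $\wt M_{2v_0}(S)$ by definition, and then shows the two families agree over $\Delta^*$. Concretely: from the birational map $\phi_0\colon\wt{\mc J}_0\dashrightarrow\wt M_{2v_0}$ one gets compatible markings and hence a lift $\xi\colon\Delta\to\Def(\wt M_{2v_0})$ of the period map; pulling back the universal family gives $\wt{\mc M}\to\Delta$ with $\wt{\mc M}_0=\wt M_{2v_0}$. For every $t$ the marked pairs $(\wt{\mc J}_t,\eta_t)$ and $(\wt{\mc M}_t,\rho_t)$ are non-separated, so by Huybrechts there is a cycle $\Gamma_t=Z_t+\sum W_{i,t}$ with $Z_t$ the graph of a birational map. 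The key input is then Corollary~\ref{J not JT} (there is a unique isotropic movable class on $J(\mc X_t)$ for very general $t$) together with Corollary~\ref{onlyJ} (uniqueness of the \hk model with $L$ nef): these force $Z_t$ to be the graph of an \emph{isomorphism} for very general $t$, and a Hilbert-scheme countability argument as in \cite{KLSV} spreads this to all $t\neq 0$. The singular family $\mc M$ is then obtained from $\wt{\mc M}$ via Namikawa's finite map $\Def(\wt M_{2v_0})\to\Def(M_{2v_0})$, not by a $\mathrm{Proj}$ construction. Note that this argument uses Proposition~\ref{Thetasq} (that $\Theta$ is prime exceptional), which is logically prior; your approach would make the dependence go the other way.
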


\begin{proof} Start from the projective morphism $\pi_{\mc V}:\mc J_{\mc V} \to \mc V$ of Lemma \ref{collino}. There is an isomorphism $\mc J_{\mc V_0}\cong  (\wt M_{2v_0})_V$ , where $ (\wt M_{2v_0})_V$ is the restriction of the Lagrangian fibration $\wt \pi:  \wt M_{2v_0} \to \P^5$ (cf. (\ref{lagrfibr})) to an open subset $V \subset \P^5$. 
Let $\overline{\mc J_{\mc V}} \to \P^5_\Delta$ be any projective morphism extending $\pi_{\mc V}$. Applying Theorem \ref{deg lagr 1} (2) to $\overline{\mc J_{\mc V}} \to \P^5_\Delta$ yields, possibly up to the base change, a family $\wt g: \wt{\mc J} \to \Delta$ of smooth \hk manifolds (projective over $(\Delta)^*$), with a relative Lagrangian fibration $\wt{\mc J} \to \P^5_{\Delta}$. Let  $\mc L$ be the line bundle on $\wt{\mc J}$ inducing it on every fiber. Let
\be
\phi_0:  \wt{\mc J}_{0} \dashrightarrow \wt M_{2v_0}
\ee
be the birational morphism induced by the isomorphism of open subsets $\mc J_{\mc V_0} \cong (\wt M_{2v_0})_V$. Then $(\phi_0)_* \mc L_0=\wt \ell:=\wt \pi^* \mc O_{\P^5}(1)$.

We now use an argument very similar to that in the proof of  \cite[Thm 1.3 and 1.7]{KLSV}, to construct a family which is isomorphic to $\mc J$ over $\Delta$ and whose central fiber is actually \emph{isomorphic} to $ \wt M_{2v_0}(S)$.
Let $\Lambda$ be the OG$10$ lattice. Fixing a marking of the central fiber and trivializing the local system $R^2 \wt g_* \Z$ induces a marking $\eta_t: H^2(\wt{\mc J}_t, \Z) \to \Lambda$ of every fiber. Let  $\mc D \subset \P (\Lambda \otimes_\Z \C)$  be the period domain and let $\mc P: \Delta \to \mc D$ be the period mapping induced by these markings.
Let $\rho_0= \eta_0 (\phi_0)^*:  H^2( \wt M_{2v_0}, \Z) \to \Lambda$ be the induced marking on $\wt M_{2v_0}$.  
Let $\rho_t: H^2(\wt{\mc M}_t, \Z) \to \Lambda$ be markings induced by $\rho_0= \eta_0 (\phi_0)^*$ on fibers of the universal family over  $\Def(\wt M)$ and let $\mc P_{\wt M}: \Def(\wt M) \to \mc D$ be the induced period mapping. Since $\mc P_{\wt M}$ is a local isomorphism, we can lift $\mc P$ to a map $ \xi: \Delta \to \Def(\wt M_{2v_0})$. Pulling back the universal family gives a family $\wt f: \wt{\mc M} \to \Delta$ with central fiber $\wt{\mc M}_0=\wt M_{2v_0}$. As in \cite{KLSV} the two families $\wt g: \wt{\mc J} \to {\Delta}$ and $\wt f: \wt{\mc M} \to \Delta$ are relatively birational over $\Delta$, since for every $t \in \Delta$, the marked pairs $(\wt{\mc J}_{t}, \eta_t)$ and $(\wt{\mc M}_t, \rho_t)$ are non separated points.
To show that the two families $\wt{\mc J}$ and $\wt{\mc M}$ are isomorphic away from the central fiber, first recall that by \cite[Thm 4.3]{Huybrechts-compact-hk} (cf. also \cite[Thm 3.2]{Markman-Survey}), for every $t$ there exists an effective cycle
\[
\Gamma_t = Z_t+ \sum W_{i,t}
\]
of pure dimension $10$ in $\wt{\mc M}_t \times \wt{\mc J}_t$ such that:  $Z_t$ is the graph of a birational map;  the codimension of the images of the $W_{i,t}$ in  $\wt{\mc M}_t $ and in $ \wt{\mc J}_t$ are equal and positive;  $[\Gamma_t]_*$ is a Hodge isometry and is equal to $\rho_t^{-1} \circ \eta_t: H^2(\wt{\mc J}_t, \Z) \to H^2(\wt{\mc M}_t, \Z)$.
Let $\wt{\mc L}$ be the line bundle on $\mc M$ such that $\wt{\mc L}_t=\rho_t^{-1} \eta_t ({\mc L})=[\Gamma_t]_*(\mc L_t)$. Since $\wt{\mc L}_0=\wt \pi^* \mc O_{\P^5}(1)$ induces a Lagrangian fibration on $\wt{\mc M}_0=\wt M_{2v_0}$, by \cite{Matsushita-Def} $\wt{\mc L}$ induces a Lagrangian fibration on $\mc M_t$ for every $t$ (maybe up to restricting $\Delta$).
For very general $t$, $\wt{\mc L}_t=[Z_t]_*(\mc L_t)$,  since the isotropic class $[Z_t]_*(\mc L_t)$ lies in the movable cone of $\wt{\mc M}_t$ and hence by Corollary \ref{J not JT} it has to be equal to $\wt{\mc L}_t$. 
Corollary \ref{onlyJ} implies that for very general $t$, $Z_t$ is the graph of an \emph{isomorphism} between $\wt{\mc J}_t$  and $\wt{\mc M}_t$. 
The same countability argument as in the proof of \cite[Thms 1.3 and 1.7]{KLSV} shows that there exists a component of the Hilbert scheme parametrizing  graphs  of such cycles $Z_t \subset \mc J_t \times \mc M_t$ that dominates $\Delta$. It follows that there is a cycle $\mc Z$ in the fiber product $\wt{\mc J} \times_\Delta \wt{\mc M}$ which, maybe up to restricting $\Delta$, induces an isomorphism for $t \neq 0 $.
The conclusion is that the family  $\wt{\mc M} \to \Delta$ is such that central fiber is $\wt{\mc M} _0 \cong \wt M_{2v_0}$ while for $t \neq 0$, we have  $\wt{\mc M}_t \cong J(\mc X_t)$.

Now we construct the second family.
By \cite[Thm 2.2]{Namikawa-def} there is a finite morphism
\[
\Xi:  \Def(\wt M_{2v_0}) \to  \Def( M_{2v_0})
\]
 induced by the symplectic resolution $m_0: \wt M_{2v_0} \to M_{2v_0} $  and compatible with the  universal families on the two deformation spaces (for more details see \S 2 of loc. cit.).  Set $\nu=\Xi \circ \xi: \Delta \to \Def( M_{2v_0})$ and let
\[
\mc M \to \Delta
\]
be the pullback via $\nu$ of the universal family via $\nu$ on $\Def( M_{2v_0})$. Then the birational map $m: \wt{\mc M} \to \mc M$ over $\Delta$ induced by \cite[Thm 2.2]{Namikawa-def} has the desired properties.

Finally, the statement about the Lagrangian fibrations follows from the fact that, since the Lagrangian fibration $\wt M_{2v_0} \to \P^5$ in the central fiber factors via $\wt M_{2v_0} \to M_{2v_0}$, the morphism $\wt {\mc M} \to \P^5_{\Delta}$ factors via a morphism $\mc M \to  \P^5_{\Delta}$.
\end{proof}

As a consequence of the last part of the proof, notice that there is a line bundle $\mc L_{\mc M}$ on ${\mc M}$ with
\[
m^* \mc L_{\mc M}=\wt{\mc L}
\]
and such that its restriction to the central fiber satisified $\mc L_{{\mc M}_0}=\ell$, where $\ell$ is as in (\ref{ell}). 

For any $t \neq 0$, let $ \Theta_t$  be the relative Theta divisor in $ \mc M_t=J(\mc X_t)$.

\begin{lemma} \label{lemmacomp}
For $\star=\wt{\mc M} $ or $\mc M$, let  $\Theta_\star$ be the divisor defined as the closure  of  $ \cup_{t \neq 0} \Theta_t$ in $\star$. 
Then,   $\Theta_{{\mc M}}$ is a Cartier divisor and hence the following compatibility conditions hold (notation as in diagram (\ref{diagr3fam})):
\be \label{compatibility}
\Theta_{\wt{\mc M}_0} \cong (m^*\Theta_{\mc M})_{|\wt{\mc M}_0} =m_0^*\Theta_{\mc M_0}
\ee
where $\Theta_{\star_0}:=(\Theta_\star)_{|0}$ is the fiber of $\Theta_\star$ over $t=0$. 
\end{lemma}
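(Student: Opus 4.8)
The plan is to prove that $\Theta_{\mc M}$ is Cartier on $\mc M$ by showing that the total space $\mc M$ is a $\Q$-factorial variety on which, moreover, the divisor class in question has trivial obstruction to being Cartier. First I would observe that since $\mc M \to \Delta$ agrees with the family of LSV fibrations over $\Delta^*$, the divisor $\Theta_{\mc M}$ restricts to the relative theta divisor $\Theta_t$ on each $\mc M_t$, $t \neq 0$, which is Cartier there because $\mc M_t = J(\mc X_t)$ is smooth. The only issue is thus at $t = 0$, i.e.\ along the central fiber $\mc M_0 = M_{2v_0}(S)$. The key input is Remark \ref{thetaamp} together with Theorem \ref{OG10}: by \cite{Perego-Rapagnetta-factoriality} (recalled in \S\ref{movog10}) the singular moduli space $M_{2v_0}(S)$ is \emph{factorial}, since $v_0 = (0,C,-2)$ satisfies $v_0 \cdot u \in 2\Z$ for all $u \in H^*_{alg}(S,\Z)$ when $(S,C)$ is a general degree $2$ K3 surface; hence the Weil divisor $\theta = \Theta_{\mc M_0}$, which by Remark \ref{Alexeev} does not contain the singular locus and is the zero scheme of a section of the determinant line bundle, is Cartier on $\mc M_0$.

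Next I would upgrade factoriality of the central fiber to $\Q$-factoriality of the total space $\mc M$. This follows from the construction in Proposition \ref{3fams}: $\mc M \to \Delta$ is obtained by pulling back the universal family over $\Def(M_{2v_0})$ along $\nu = \Xi \circ \xi$, and by \cite[Thm 2.2]{Namikawa-def} the deformation $\mc M_0 = M_{2v_0}(S)$ propagates its local analytic structure; in particular, since terminal $\Q$-factoriality of symplectic varieties is stable under the locally trivial deformations parametrized by $\Def(M_{2v_0})$ (the singularities along the central fiber deform locally trivially, cf.\ also the klt/terminal statements in \S\ref{subsectmmp}), $\mc M$ is $\Q$-factorial with terminal singularities. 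Alternatively one can argue that $\mc M$ is $\Q$-factorial because $\wt{\mc M} \to \mc M$ is a semismall (indeed symplectic) resolution, so a Weil divisor on $\mc M$ is $\Q$-Cartier iff its proper transform is, and $\wt{\mc M}$ is smooth.

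Having $\Q$-factoriality of $\mc M$ and genuine Cartier-ness of $\theta = \Theta_{\mc M_0}$ on the central fiber, the last step is to promote $\Theta_{\mc M}$ from $\Q$-Cartier to Cartier. Here I would use that $\Theta_{\mc M}$ is Cartier away from the central fiber and Cartier in a neighbourhood of $\mc M_0$ inside $\mc M_0$ itself; since $\mc M_0$ is a Cartier divisor in $\mc M$ (it is a fiber of $f: \mc M \to \Delta$ over a point of the smooth curve $\Delta$), local deformation of the line bundle $\mc O_{\mc M_0}(\theta)$ over the henselian/analytic local ring — with vanishing obstruction because the relevant $H^2$ with local supports vanishes along the factorial locus, or simply because the generic fiber already carries the Cartier extension $\Theta_t$ — shows $\mc O_{\mc M}(\Theta_{\mc M})$ is invertible. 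Once $\Theta_{\mc M}$ is Cartier, pulling back along $m: \wt{\mc M} \to \mc M$ and restricting to $\wt{\mc M}_0$ gives $(m^*\Theta_{\mc M})|_{\wt{\mc M}_0} = m_0^*(\Theta_{\mc M}|_{\mc M_0}) = m_0^* \Theta_{\mc M_0}$ by compatibility of restriction with pullback, and the identification $\Theta_{\wt{\mc M}_0} \cong m_0^* \Theta_{\mc M_0}$ follows since the proper transform of $\Theta_{\mc M}$ on the central fiber of $\wt{\mc M}$ is by construction the limit $\Theta_{\wt{\mc M}_0}$ of the relative theta divisors, which agrees with the total transform as $m_0$ is a symplectic (crepant) resolution not contracting any divisor onto a divisor through the generic point of $\theta$ (cf.\ Remark \ref{g12}).

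The main obstacle I expect is the $\Q$-Cartier-to-Cartier upgrade, i.e.\ verifying that no torsion appears in the local class group of $\mc M$ along the central fiber — equivalently that factoriality of $M_{2v_0}(S)$ spreads out over $\Delta$. This is where the precise citation to \cite{Perego-Rapagnetta-factoriality} and the local-triviality of the deformation from \cite{Namikawa-def} do the real work; the rest is bookkeeping with pullbacks of Cartier divisors along $m$.
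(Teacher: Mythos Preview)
Your core idea --- use factoriality of the central fiber $\mc M_0 = M_{2v_0}(S)$ from \cite{Perego-Rapagnetta-factoriality} --- is exactly the same as the paper's. But the route you take to turn this into ``$\Theta_{\mc M}$ is Cartier'' is considerably more roundabout, and the step you yourself flag as the main obstacle is not actually resolved by what you wrote.

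The paper bypasses all of your machinery about $\Q$-factoriality of the total space $\mc M$ and deformation of line bundles. Instead it argues directly with the ideal sheaf: since $\Theta_{\mc M}$ is the closure of $\cup_{t \neq 0}\Theta_t$, it is an integral subscheme dominating the smooth curve $\Delta$, hence $\Theta_{\mc M} \to \Delta$ is flat. Flatness implies that the restriction $(\mc I_{\Theta_{\mc M}})_{|\mc M_0}$ is the ideal sheaf of $\Theta_{\mc M_0}$ in $\mc M_0$. Factoriality of $\mc M_0$ then makes this restricted ideal sheaf invertible, and Nakayama's lemma lifts invertibility from the fiber to the total space: $\mc I_{\Theta_{\mc M}}$ is locally free, i.e.\ $\Theta_{\mc M}$ is Cartier. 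No $\Q$-factoriality of $\mc M$, no deformation-theoretic obstruction analysis, no torsion in local class groups enters.

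Your approach, by contrast, first reaches only for $\Q$-factoriality of $\mc M$ (via Namikawa or via the semismall resolution $\wt{\mc M}\to\mc M$) and then tries to upgrade $\Q$-Cartier to Cartier. The upgrade is where you are vague: deforming the line bundle $\mc O_{\mc M_0}(\theta)$ off the central fiber has an obstruction in $H^2(\mc M_0,\mc O_{\mc M_0})$, which for a $10$-dimensional symplectic variety is one-dimensional, not zero; and even granting an extension, you still need to identify it with the given Weil divisor $\Theta_{\mc M}$, which is precisely the content you are trying to prove. Saying ``the generic fiber already carries the Cartier extension $\Theta_t$'' does not close this, since the issue is purely local along $\mc M_0$. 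The flatness-plus-Nakayama argument handles all of this in one stroke.
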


\begin{proof} Let $\mc I_{\Theta_{\mc M}} \subset \mc O_{{\mc M}}$ be the ideal sheaf of $\Theta_{\mc M}$ in ${\mc M}$. Since the morphism $\Theta_{\mc M} \to \Delta$ is flat, it follows that the restriction $(\mc I_{\Theta_{\mc M}} )_{|\mc M_0}$ is the ideal sheaf of $\Theta_{\mc M_0}$ in ${\mc M_0}$. By \cite{Perego-Rapagnetta} (cf. \S \ref{movog10}), $\mc M_0=M_{2v_0}$ is factorial so $(\mc I_{\Theta_{\mc M}} )_{|\mc M_0}$ is locally free. Hence, so is $\mc I_{\Theta_{\mc M}} $. It follows that the divisors $\Theta_{\wt{\mc M}}$ and $m^*\Theta_{\mc M}$ agree and so do their central fibers. 
\end{proof}

The next Lemma identifies the limit of $\Theta_t$ in $M_{2v_0}=\mc M_0$ and shows that all line bundles on $M_{2v_0}$ deform over $\mc M \to \Delta$. Recall first that by (\ref{NSsing}), $\NS(M_{2v_0})=U=\langle \ell, \theta \rangle$ and that for every $t$
\[
\NS(\mc M_t) \supseteq U_t=\langle \mc L_t, \Theta_t \rangle
\]
equality holding for very general $t$. Here $\Theta_0=\Theta_{\wt{ \mc M_0}}$. In particular, inside $\NS(\wt M_{2v_0})$ we have the following rank $2$ sublattices both of which are isomorphic to the hyeperbolic lattice $U$: The limit lattice $U_0$ spanned by the limits $\mc L_0=\wt \ell$ and $\Theta_0$ and the pullback lattice $m_0^*\NS(M_{2v_0})=\langle m_0^* \ell, m_0^*\theta \rangle$.

\begin{lemma} \label{Udeforms} Let the notation be as above. Then
\begin{enumerate}
\item The two  sublattices $U_0=\langle \wt \ell,  \Theta_{\wt{ \mc M_0}}\rangle$ and $\langle m_0^* \ell, m_0^*\theta \rangle$ of $\NS(\wt M_{2v_0})$ are the same.
\item The limit of the relative Theta divisor in $\mc M_0$ is precisely  $\theta$, the relative theta divisor on $M_{2v_0}(S)$ of (\ref{deftheta}).
\end{enumerate}
\end{lemma}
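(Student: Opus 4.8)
The plan is to prove (1) by a discriminant comparison and then extract (2) by a short Beauville--Bogomolov computation. The input for both is that the two limiting classes $\wt\ell=\mc L_0$ and $\Theta_0=\Theta_{\wt{\mc M}_0}$ lie in the pullback lattice $m_0^*\NS(M_{2v_0})\subseteq\NS(\wt M_{2v_0})$. For $\wt\ell$ this holds because the Lagrangian fibration $\wt\pi\colon\wt M_{2v_0}\to\P^5$ factors through $m_0$ and through the Le Potier morphism $M_{2v_0}\to\P^5$, so $\wt\ell=\wt\pi^*\mc O_{\P^5}(1)=m_0^*\ell$. For $\Theta_0$ this is exactly the compatibility (\ref{compatibility}) of Lemma \ref{lemmacomp}: writing $\Theta_{\mc M_0}$ for the limit in $M_{2v_0}$ of the relative theta divisors, $\Theta_0=m_0^*\Theta_{\mc M_0}$. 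Hence $U_0=\langle\wt\ell,\Theta_0\rangle\subseteq m_0^*\NS(M_{2v_0})$ is an inclusion of rank--two lattices.

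Both of these lattices are isometric to the unimodular lattice $U$: by (\ref{NSsing}), $\NS(M_{2v_0})\cong v_0^\perp\cong U$, and $m_0^*$ is an isometric embedding for the Beauville--Bogomolov forms; on the other side, parallel transport in the smooth family $\wt f\colon\wt{\mc M}\to\Delta$ (i.e.\ trivialising $R^2\wt f_*\Z$ over $\Delta$) identifies $U_0$ with $U_t=\langle\mc L_t,\Theta_t\rangle$ for $t\neq0$, which is isometric to $U$ by Lemma \ref{LTheta}. A finite--index sublattice of a unimodular lattice that is itself unimodular must coincide with the ambient lattice, since the index squared equals the ratio of discriminants; therefore $U_0=m_0^*\NS(M_{2v_0})$, which is statement (1).

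It remains to identify $\Theta_{\mc M_0}$. By (1) we may write $\Theta_{\mc M_0}=a\ell+b\theta$ with $a,b\in\Z$, where $\{\ell,\theta\}$ is a $\Z$--basis of $\NS(M_{2v_0})$ with $q(\ell)=0$, $q(\ell,\theta)=1$, $q(\theta)=-2$ by (\ref{ell}), (\ref{deftheta}), (\ref{thetasq}). Since $m_0^*$ preserves the Beauville--Bogomolov form and that form is a deformation invariant along $\wt f$, and since $\mc L_0=m_0^*\ell$, $\Theta_0=m_0^*\Theta_{\mc M_0}$, we get $q(\Theta_{\mc M_0},\ell)=q(\Theta_t,\mc L_t)=1$ and $q(\Theta_{\mc M_0})=q(\Theta_t)=-2$ for $t\neq0$, the right--hand equalities coming from Lemma \ref{LTheta} and Proposition \ref{Thetasq} applied to $\mc M_t=J(\mc X_t)$. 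The first relation forces $b=1$, and then $2ab-2b^2=-2$ forces $a=0$, so $\Theta_{\mc M_0}=\theta$, which is (2).

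I expect the part needing the most care to be the bookkeeping of the bilinear forms: that $m_0^*$ is an isometry for the Beauville--Bogomolov forms of a symplectic resolution, that parallel transport in the (a priori only smooth, not projective) family $\wt{\mc M}\to\Delta$ preserves it, and that the Mukai--pairing identification $\NS(M_{2v_0})\cong v_0^\perp$ of \cite{Meachan-Zhang} is compatible with the Beauville--Bogomolov form used throughout. These are all standard, but should be invoked with attention; granting them, the discriminant comparison and the $2\times2$ computation are immediate. (A more geometric route to $b=1$ would observe that $\Theta_{\mc M_0}$ restricts to a principal polarization on a general fibre of $M_{2v_0}\to\P^5$ --- the limiting abelian variety being the Jacobian of a genus--five hyperelliptic curve, cf.\ Lemma \ref{collino} --- a property of $\theta$ but not of $\ell$; pinning down $a=0$, though, would still need the Beauville--Bogomolov computation, or the fact that $\Theta_{\mc M_0}$ contains the zero section of $M_{2v_0}\to\P^5$ together with an analysis of which classes do.)
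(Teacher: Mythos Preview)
Your argument is correct, and for part (2) it takes a genuinely different route from the paper's. Both proofs begin by placing $\Theta_{\mc M_0}$ in $\NS(M_{2v_0})=\Z\ell\oplus\Z\theta$; the paper gets the coefficient of $\theta$ equal to $1$ geometrically, from Collino's identification of the limit polarization (Lemma \ref{collino}), while you get it from $q(L,\Theta)=1$ and parallel transport. The real divergence is in nailing down $a=0$: you use $q(\Theta_t)=-2$ (Proposition \ref{Thetasq}) and the deformation invariance of $q$, whereas the paper argues that $m_0^*\theta$ is an irreducible prime exceptional divisor (Remark \ref{Alexeev}), deforms it over $\Delta$ via Markman's theorem \cite[\S 5.1]{Markman-prime-exceptional}, and then invokes the uniqueness of the prime exceptional divisor on $\mc M_t$ for very general $t$ (Corollary \ref{onlyJ}) to force this deformation to coincide with $\Theta_{\wt{\mc M}}$. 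Your route is shorter and avoids the deformation theory of prime exceptional divisors entirely; the paper's route yields equality of $\Theta_{\mc M_0}$ and $\theta$ as \emph{effective divisors} rather than just as classes, though since a prime exceptional class has a unique effective representative this upgrade follows automatically from yours. The bookkeeping you flag (isometry of $m_0^*$, topological invariance of $q$, compatibility of the Mukai pairing with $q$) is standard---see \cite{Perego-Rapagnetta} and \cite{Meachan-Zhang}---so there is no gap there.
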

\begin{proof}
By Lemma \ref{collino} and Lemma \ref{lemmacomp}, the limit theta divisor $(\Theta_{\mc M})_0$ is an effective line bundle  on $\mc M_0=M_{2v_0}$ which restricts to a theta divisor on the smooth fibers of  $M_{2v_0} \to \P^5$. Thus $\Theta_{\mc M_0}$ is linearly equivalent to an effective line bundle of the form $\theta+a \ell$ for some integer $a$.  We  show that $a=0$. By (\ref{compatibility}), $ \Theta_{\wt{\mc M}_0}  =m_0^*\Theta_{\mc M_0}=m_0^*(\theta+a \ell)$ and $\wt{\mc L}_0=m_0^*\ell $. This is enough to conclude that the two sublattices
\[
U=\langle \Theta_{\wt{\mc M}_0}, \wt{\mc L}_0  \rangle \quad  \text{ and } U=m_0^* \langle  \theta, \ell \rangle=m_0^* \NS(M_{2v_0})
\]
 of $\NS (\wt M_{2v_0})$ are the same. 
 This proves the first part of the Lemma. 
By Remark \ref{Alexeev} above, $\theta$ does not contain the singular locus of $M$, thus $m_0^* \theta$ coincides with its proper transform and is  irreducible. 
Since it has negative Beauville-Bogomolov square (cf. (\ref{thetasq})), it is a prime exceptional divisor.  
By \cite[\S 5.1]{Markman-prime-exceptional}, a prime exceptional divisor deforms where its first Chern class remains algebraic. 
Thus  $m_0^* \theta$ deforms to relative effective prime exceptional divisor $\theta_{\wt{\mc M} }$ on $\wt{\mc M}$. 
 By Corollary \ref{onlyJ} and Proposition \ref{Thetasq}, for very general $t \neq 0$, the fiber over $t$ of the two irreducible effective divisors $\theta_{\wt{\mc M} }$ and $\Theta_{\wt{\mc M}}$ have to agree since there is only one prime exceptional divisors on $\mc M_t$. Thus $\theta_{\wt{\mc M} }$ and $\Theta_{\wt{\mc M}}$ have to be equal for every $t$. In particular, so are their restrictions to the central fiber.
 \end{proof}

\begin{cor} \label{zerosectgenX}
Let $X$ be a general cubic fourfold and let $\pi: J(X) \to \P^5$ be the intermediate Jacobian fibration of \cite{LSV}. The natural rational zero section of $\pi$ is regular.
\end{cor}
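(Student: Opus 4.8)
The plan is to reduce to the case of a very general cubic fourfold via a degeneration to the chordal cubic, produce there a \emph{regular} section of the Lagrangian fibration, upgrade this to regularity of the natural zero section using the translation automorphisms of Proposition \ref{auto}, and finally spread the property out to all general $X$ by an openness argument.

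\emph{The very general case.} Fix a general one--parameter family $\mc X \to \Delta$ of cubic fourfolds degenerating to the chordal cubic, with associated general degree--$2$ K3 surface $(S,C)$, and with $\mc X_t$ very general for very general $t\in\Delta$. Applying Proposition \ref{3fams} I obtain the commutative diagram (\ref{diagr3fam}): a family $f:\mc M\to\Delta$ with $\mc M_0=M_{2v_0}(S)$ and $\mc M_t=J(\mc X_t)$ for $t\neq 0$, together with its fibrewise symplectic resolution $\wt f:\wt{\mc M}\to\Delta$ of smooth \hk manifolds, $\wt{\mc M}_0=\wt M_{2v_0}(S)$ and $\wt{\mc M}_t=J(\mc X_t)$ for $t\neq 0$, both carrying relative Lagrangian fibrations to $\P^5_\Delta$ which induce on every fibre the natural one. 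By Remark \ref{zerosect} the fibration $M_{2v_0}(S)\to\P^5$ carries a \emph{regular} zero section whose image is not contained in the singular locus, and by the description in Lemma \ref{MZ} this section is a flopping Lagrangian $\P^5$; hence it lies in the smooth locus of $M_{2v_0}(S)$ and lifts to a regular section $\sigma_0:\P^5\to\wt M_{2v_0}(S)=\wt{\mc M}_0$ of $\wt\pi$.

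Next I deform $\sigma_0$ along $\wt f$. The class $\wt\pi^*\mc O(1)$ is algebraic over all of $\Delta$ by construction, and by Lemma \ref{Udeforms} so is the theta class $\Theta_{\wt{\mc M}}$; hence the arc $\Delta$ maps into the codimension--two locus of $\Def(\wt M_{2v_0}(S))$ on which both of these classes remain of type $(1,1)$, which is exactly the locus over which a Lagrangian fibration together with a regular section deforms (Sawon's theorem, used in Remark \ref{section deform}). Therefore $\sigma_0$ deforms to a regular section $\tau_t:\P^5\to\wt{\mc M}_t=J(\mc X_t)$ of $\pi$, for $t$ near $0$. Since for very general $t$ the compactification $J(\mc X_t)$ is the one of \cite{LSV}, it has irreducible fibres, so Proposition \ref{auto} applies. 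Let $\mu:J(\mc X_t)\dashrightarrow J(\mc X_t)$ be the birational automorphism which on the abelian scheme $J_U\to U$ is translation by $-\tau_t|_U$; it fixes $L=\pi^*\mc O(1)$, hence is a regular automorphism by Proposition \ref{auto}(2). Then $\mu\circ\tau_t:\P^5\to J(\mc X_t)$ is a regular section of $\pi$ whose restriction to $U$ is the identity section of $J_U\to U$; being the unique rational extension of that identity section, the natural rational zero section coincides with $\mu\circ\tau_t$ and is therefore regular. This proves the corollary for $X=\mc X_t$, hence for very general $X$.

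\emph{From very general to general.} The condition ``the natural rational zero section of $J(X)\to\P^5$ is everywhere defined'' cuts out an open locus $G$ in the parameter space of cubic fourfolds general in the sense of \cite{LSV}: the indeterminacy locus of the relative rational zero section is closed in $\P^5\times\{\text{parameters}\}$, and its image in the parameter space is closed by properness, so $G$ is its complement. Every nonempty open subset of the parameter space is met by a general pencil $\mc X\to\Delta$ degenerating to the chordal cubic in a set of positive dimension, while the previous step shows such a pencil meets $G$ in a cocountable subset; hence $G$ is dense, so it contains the general cubic fourfold.

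\emph{Main obstacle.} The delicate step is the middle one: checking that Sawon's deformation result genuinely applies here, i.e. that the central fibre $\wt M_{2v_0}(S)$ really carries a \emph{regular} section of its Lagrangian fibration (the subtlety being the lift of the zero section of the singular model $M_{2v_0}(S)$ through the symplectic resolution, which rests on the Mukai--flop description of Lemma \ref{MZ}), and that the arc $\Delta$ lands in the correct codimension--two stratum of the deformation space so that this section survives to all nearby fibres.
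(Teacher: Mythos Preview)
Your argument is correct, but it takes a noticeably different route from the paper's. The paper also uses the degeneration $\mc M\to\Delta$ of Proposition \ref{3fams}, but instead of invoking Sawon's deformation theorem and then translating, it argues directly with the $(-1)$ involution: since $\Theta_{\mc M}$ is Cartier (Lemma \ref{lemmacomp}) and relatively ample (Remark \ref{thetaamp}), and is $(-1)$-invariant, the birational involution $(-1)$ pulls back a relatively ample class to itself and is therefore biregular on $\mc M$. The zero section is then recovered as a component of the fixed locus of this regular involution; since by Remark \ref{zerosect} it is regular on the central fibre $M_{2v_0}(S)$, it is regular for general $t$.

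Your approach is more indirect: you first produce \emph{some} regular section $\tau_t$ by deforming $\sigma_0$ via Sawon (which requires checking that the arc lands in the right codimension--two locus of $\Def(\wt M_{2v_0})$, and that the zero section of $M_{2v_0}$ lies entirely in the smooth locus so as to lift to $\wt M_{2v_0}$), and then use Proposition \ref{auto}(2) to translate $\tau_t$ back to the natural zero section. Both proofs ultimately rest on the same principle---a birational self-map preserving a relatively ample class is biregular---but the paper applies it once to $(-1)$ and reads off the zero section from the fixed locus, whereas you apply it to a translation after an auxiliary deformation step. The paper's route is shorter and avoids the appeal to Sawon; your route has the merit of illustrating how Proposition \ref{auto} interacts with the Mordell--Weil group, but the ``main obstacle'' you flag (the Sawon step) is genuinely an extra verification that the paper's argument sidesteps entirely.
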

\begin{proof} Consider a degeneration of cubic fourfolds to the chordal cubic as in Proposition \ref{3fams} and let $\mc M \to \Delta$ be the corresponding family. By Lemma \ref{lemmacomp} the divisor $\Theta_{\mc M}$ is Cartier and by Remark \ref{thetaamp}  it is relatively ample (up to restricting $\Delta$). Since $\Theta_{\mc M}$ is $-1$-invariant, it follows that the birational involution $-1$ is biregular. One component of the fixed locus of this involution has the property that its restriction to every fiber is precisely the closure of the corresponding rational zero section. Since by Remark \ref{zerosect} in the central fiber the section is regular, it follows that for general $t \in \Delta$ the corresponding rational section is also regular.
\end{proof}

Consider the family  ${\mc M} \to \Delta$ of Proposition \ref{3fams}, with its relative theta divisor $\Theta_{{\mc M}}$. By Druel \cite{Druel} we know that for every $t$, the prime exceptional divisor $\Theta_t$ can be contracted on a \hk projective birational model of ${\mc M} _t$. In the central fiber ${\mc M} _t=M_{2v_0}$ we have, by Lemma \ref{Udeforms}, that $\Theta_{{\mc M_0}}=\theta$. By Lemma \ref{MZ} this divisor can be contracted after a Mukai flop. We now show that the same is true for any $t \neq 0$, namely, that after a Mukai flop the relative theta divisor can be contracted, possibly up to restricting $\Delta$.

\begin{prop} \label{propmukflop} For general $X$, the relative theta divisor $\Theta$ on $J=J(X)$ can be contracted after the Mukai flop of the zero section.
\end{prop}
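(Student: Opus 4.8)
The plan is to transport the wall-crossing picture of Lemma~\ref{MZ} from the central fibre $\mc M_0=M_{2v_0}(S)$ of the degeneration $f\colon\mc M\to\Delta$ of Proposition~\ref{3fams} to its general fibre $\mc M_t=J(\mc X_t)$. The first step is to build a \emph{relative} zero section: by Corollary~\ref{zerosectgenX} and Remark~\ref{zerosect} the rational zero section of $\mc M\to\P^5_\Delta$ is regular on every fibre and meets $\mc M_0$ in the smooth locus, so it defines a closed $\mc Z\subset\mc M$, flat over $\Delta$, with each $\mc Z_t\cong\P^5$ a Lagrangian projective space inside the smooth locus of $\mc M_t$. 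Since the Mukai flop along a Lagrangian $\P^n$ is a purely local construction (blow up $\mc Z$, blow down the resulting flag-bundle the other way), it can be performed relatively over $\Delta$, giving $p\colon\mc M\dashrightarrow\mc N$ over $\Delta$, an isomorphism away from $\mc Z$, which on each fibre is the Mukai flop of $\mc M_t$ along its zero section; thus $\mc N_0$ is the Mukai flop of $M_{2v_0}(S)$ along the zero section and $\mc N_t=N(\mc X_t)$ for $t\neq0$.

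Next I would track the relevant line bundles. Since $\Theta_{\mc M}$ (Lemma~\ref{lemmacomp}) and $\mc L_{\mc M}$ are line bundles on $\mc M$ and $p$ is an isomorphism in codimension two, the proper transforms $\mc H_0:=p_*(\Theta_{\mc M}+3\mc L_{\mc M})$ and $\mc G:=p_*(\Theta_{\mc M}+2\mc L_{\mc M})$ are $\Q$-Cartier on $\mc N$ and line bundles on each $\mc N_t$ with $t\neq0$. From $q(\Theta_t)=-2$ (Proposition~\ref{Thetasq}), $q(\mc L_t)=0$ and $q(\mc L_t,\Theta_t)=1$ (Lemma~\ref{LTheta}) one computes $q(\mc G_t)=2>0$ and $q(\mc G_t,\Theta_t)=0$, so $\mc G_t$ lies in the positive cone on the face $\Theta_t^\perp$. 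On the central fibre, Lemma~\ref{Udeforms} gives $(\Theta_{\mc M})_0=\theta$ and $(\mc L_{\mc M})_0=\ell$, hence $(\mc H_0)_0=h_0$ and $\mc G_0=h$ in the notation of Lemma~\ref{MZ}, and that lemma says $h_0$ spans the unique interior wall of $\overline{\Mov}(M_{2v_0})$, its small contraction contracts $\mc Z_0$, and on $\mc N_0$ the class $h=\mc G_0$ is big and nef with its semiample morphism contracting the proper transform of $\theta$.

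The heart of the argument is to promote this to the general fibre, i.e.\ to show $\mc G_t$ is big and nef on $\mc N_t=N(\mc X_t)$. Bigness is immediate from $q(\mc G_t)=2>0$ and continuity. For nefness, which is not open in families and so cannot be specialized from $\mc G_0$, I would use lattice rigidity: for very general $t$ one has $\NS(\mc M_t)=\NS(\mc N_t)=U=\langle L,\Theta_t\rangle$ (Corollary~\ref{onlyJ}), the same lattice as $\NS(M_{2v_0})$ by (\ref{NSsing}); and the subdivision of the movable cone of an OG$10$-type manifold into the nef cones of its birational models is governed by monodromy-invariant data of the N\'eron--Severi lattice (Markman's theory, \cite[Thm 1.5]{Markman-prime-exceptional}, together with the classification of walls in this deformation type). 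Hence for very general $t$ this subdivision coincides with the one for $M_{2v_0}$ computed in Lemma~\ref{MZ}; transporting it via the flat family $\mc Z$ and the specializations of Lemma~\ref{Udeforms} gives $\Nef(\mc M_t)=\langle L,(\mc H_0)_t\rangle$, with the $(\mc H_0)_t$-wall being exactly the Mukai flop of the zero section (the relative $\mc H_0$ restricts to $h_0$ on $\mc M_0$, whose contraction contracts $\mc Z_0$, and $\mc Z\to\Delta$ is flat), and $\Nef(\mc N_t)=\langle(\mc H_0)_t,\mc G_t\rangle$; in particular $\mc G_t$ is nef on $\mc N_t$. Since $\mc G$ is relatively big over $\Delta$, if it failed to be relatively nef then the $\mc G$-MMP over $\Delta$ would yield a birational modification $\mc N\dashrightarrow\mc N'$ over $\Delta$ nontrivial on a dense open set of fibres, contradicting that $\mc G_t$ is nef on $\mc N_t$ for very general $t$; so $\mc G$ is relatively nef. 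The relative basepoint-free theorem applied to $K_{\mc N}+(1+\varepsilon)\mc G\sim_{\Q,\Delta}(1+\varepsilon)\mc G$ with $0<\varepsilon\ll1$ (using $K_{\mc N}\sim_{\Q,\Delta}0$ and that $\mc N$ has canonical singularities) then produces a relative semiample contraction $g\colon\mc N\to\bar{\mc N}$ over $\Delta$ whose fibrewise exceptional locus is the proper transform of $\Theta_{\mc M}$; on the central fibre $g_0$ is the divisorial contraction of Lemma~\ref{MZ}, so by flatness $g_t\colon\mc N_t\to\bar{\mc N}_t$ is a divisorial contraction of the proper transform of $\Theta_t$ for every $t$, in particular for general $X=\mc X_t$.

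The hard part will be the nefness of $\mc G_t$ on $\mc N_t$ in the third step: nefness does not deform well, so one cannot simply specialize from $\mc N_0$, and one must invoke the deformation invariance of the chamber structure of the movable cone for OG$10$-type manifolds and carefully match the degenerating data $(\mc Z,\Theta_{\mc M},\mc L_{\mc M})$ with the classes $\ell,\theta,h_0,h$ of Lemma~\ref{MZ}; the remaining bookkeeping --- notably descending from ``very general $t$'', where $\NS=U$, to ``general $t$'' --- is routine once the relative contraction $g$ is shown to exist over all of $\Delta$.
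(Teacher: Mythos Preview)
Your overall architecture matches the paper's: perform the relative Mukai flop $p\colon\mc M\dashrightarrow\mc N$ along the zero section, identify the line bundle $\mc G$ on $\mc N$ restricting to $h$ on $\mc N_0$, and show it induces a divisorial contraction fibrewise. The divergence is entirely in the crucial step of establishing that $\mc G_t$ is nef (or semiample) on $\mc N_t$ for $t$ near $0$.

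The paper avoids the lattice-theoretic detour you propose. Once one knows that $\mc G_0=h$ is big and nef on $\mc N_0$ (Lemma~\ref{MZ}) and that $\mc N_0$ has rational singularities (it agrees with $M_{2v_0}$ away from the flopped locus), Kawamata--Viehweg vanishing gives $H^i(\mc N_0,\mc G_0^{\,k})=0$ for $i>0$ and $k\ge0$. Hence the pushforward $g_*\mc G^{\,k}$ is locally free and satisfies base change. Since $\mc G_0$ is semiample, for $k\gg0$ the evaluation map $g^*g_*\mc G^{\,k}\to\mc G^{\,k}$ is surjective over $t=0$, hence over all nearby $t$. Thus $\mc G_t$ is semiample for every $t$ near $0$, and the contraction is read off directly from the relative morphism $\Psi\colon\mc N\to\P(g^*g_*\mc G^{\,k})$. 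This sidesteps precisely the non-openness of nefness that you correctly flag as the hard point.

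Your proposed route has two genuine gaps. First, the appeal to ``monodromy-invariant data of the N\'eron--Severi lattice'' is imprecise: even granting a classification of wall divisors for OG$10$, this determines the chamber structure of $\overline{\Mov}$ only up to the birational monodromy action; you still need to identify \emph{which} chamber is $\Nef(\mc N_t)$, and you cannot do so without already knowing what the flop does, which is circular. Second, the MMP step (``if $\mc G$ were not relatively nef the $\mc G$-MMP would modify a dense open set of fibres'') is wrong as stated: a relative extremal ray can be supported entirely over a single special value $t_0$ where $\rho(\mc N_{t_0})>2$, so ``nef on very general $t$ and on $t=0$'' does not yield ``relatively nef''. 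The paper's vanishing-and-base-change argument is exactly what closes this gap, and it is both shorter and does not require any input about the global wall structure for OG$10$.
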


\begin{proof}
Let ${\mc M} \to \P^5_{\Delta}$ be as in Proposition \ref{3fams}. By Corollary \ref{zerosectgenX}, there is a relative zero section $s: \P^5_\Delta \to \mc M$. Let $T$ be its image. Then $T$ is contained in the smooth locus of  the fibers of $\wt{\mc M} \to {\Delta}$. Let 
\[
P: \mc M \dashrightarrow \mc N
\]
be the relative Mukai flop of $T$  in ${\mc M}$. By Proposition \ref{MZ},  the Mukai flop of the zero section in the central fiber  $M_{2v_0}$ can be preformed in the projective category. Thus, the central fiber of $\mc N$ is projective and so are all the fibers of $g: \mc N \to \Delta$ (since by Lemma \ref{Udeforms} there is an ample class on the central fiber that deforms over $\Delta$). For $t \neq 0$, $\mc N_t$ is smooth while the central fiber $\mc N_0 $ has the same singularities as $\mc M_0=M_{2v_0}$, since they are isomorphic away from the flopped locus which does not meet the singular locus. 
 Via the birational morphism $P$, which is a relative isomorphism in codimension $1$, we can identify the second integral cohomology group of the fibers of the two families. 
 In particular, for every $t \in \Delta$ we have $P_*U_t \subset \NS(\mc N_t)$ with equality holding for very general $t$ and for $t=0$.  In what follows we freely restrict $\Delta$, if necessary, without any mention.   
 As in Propostion \ref{MZ}, let $H$ be the big and nef line bundle on $\mc N_0$ that contracts $\theta$ (i.e. $H$ is a generator of the ray $\theta^\perp$). Since $H \in P_*U_0$, by Lemma \ref{Udeforms}, $H$ deforms to a line bundle $\mc H$ on $\mc N$. For very general $t$, its restriction $\mc H_t$ is a generator of the one dimensional space $(P_t)_*\Theta_t^ \perp \subset \NS(\mc N_t)$. 
By \cite{Druel-exc}, $(P_t)_*\Theta_t$ can be contracted on a birational model of $\mc N_t$. We now show that it can be contracted on $\mc N_t$ itself. For very general $t$, the line bundle inducing the divisorial contraction has to be $\mc H_t$, or rather its proper transform on an appropriate birational model of $\mc N_t$. It follows that for very general $t$ (and thus for all $t$) $\mc H_t$ is big. Moreover, since $\mc H_0$ is big and nef and $\mc N_0$ has rational singularities, $H^i(\mc N_0, \mc H_0^k)=0$ for $i>0$ and any $k \ge 0$. It follows that  the locally free sheaf $g_* \mc H^k$ satisfies base change. Since $\mc H_0$ is semi-ample,  so is $\mc H_t$ for all $t$ in $\Delta$. For $k \gg 0$, the regular morphism $\Psi: \mc N \to \P (g^*g_* \mc H^k)$, relative over $\Delta$,  is birational onto its image and  contracts $\Theta_t$ for very general $t$ and for $t=0$. Up to further restricting $t$, we can assume that the locus contracted on $\mc N_t$ is irreducible, and hence that $\Psi_t$ contracts precisely $(P_t)_*\Theta_t$ for every $t$.

\end{proof}

The proof of Theorem \ref{NSU} is now complete:

\begin{proof}[Proof of Theorem \ref{NSU}]  Let $X$ be  general. By Proposition \ref{propmukflop},   the Mukai flop $p: J \dashrightarrow N$ of $J$ along the zero section is projective and on $J$ there exists a  big and nef line bundle  that contracts the zero section.  For very general $X$, $H_0$ is unique, up to a positive rational multiple and $\Nef(J)=\langle L, H_0 \rangle$. Moreover, we have showed that for general $X$ there is a divisorial contraction $N \to \bar N$, contracting $p_*\Theta$. Since the divisorial contraction $N \to \ov N$ contracts the ruling of $\Theta$ (cf. Prop. \ref{Thetasq}), by (\ref{AJtwisted}) it follows that the image of $\Theta$ in $\bar N$ is birational to the LLSvS $8$-fold $Z(X)$.
 For very general $X$, $\Nef(N)=\langle p_*H_0, p_*H \rangle$, where $p^*H$ is the unique (up to a positive multiple) big and nef line bundle inducing the contraction.
By \cite[Prop. 4.2]{Huybrechts-kahler-cone}, $H$ is the second ray of the movable cone of $J$, i.e. $\Mov(J)=\langle L, H \rangle$. 
\end{proof}

%%%%%%%%%%%%%%%%%%%%%%%%%%%%%%%%%%%%%%
%%%%%%%%%%%%%%%%%%%%%%%%%%%%%%%%%%%%%%
%%%%%%%%%%%%%%%%%%%%%%%%%%%%%%%%%%%%%%
%%%%%%%%%%%%%%%%%%%%%%%%%%%%%%%%%%%%%%
%%%%%%%%%%%%%%%%%%%%%%%%%%%%%%%%%%%%%%
%%%%%%%%%%%%%%%%%%%%%%%%%%%%%%%%%%%%%%
%%%%%%%%%%%%%%%%%%%%%%%%%%%%%%%%%%%%%%
%%%%%%%%%%%%%%%%%%%%%%%%%%%%%%%%%%%%%%
%%%%%%%%%%%%%%%%%%%%%%%%%%%%%%%%%%%%%%
%%%%%%%%%%%%%%%%%%%%%%%%%%%%%%%%%%%%%%
%%%%%%%%%%%%%%%%%%%%%%%%%%%%%%%%%%%%%%
%%%%%%%%%%%%%%%%%%%%%%%%%%%%%%%%%%%%%%
%%%%%%%%%%%%%%%%%%%%%%%%%%%%%%%%%%%%%%
%%%%%%%%%%%%%%%%%%%%%%%%%%%%%%%%%%%%%%
%%%%%%%%%%%%%%%%%%%%%%%%%%%%%%%%%%%%%%
%%%%%%%%%%%%%%%%%%%%%%%%%%%%%%%%%%%%%%
%%%%%%%%%%%%%%%%%%%%%%%%%%%%%%%%%%%%%%
%%%%%%%%%%%%%%%%%%%%%%%%%%%%%%%%%%%%%%

\section{The Mordell-Weil group of $J(X)$} \label{section MW}

Let  $a: \mc A \to B$ be a projective family of abelian varieties over an irreducible basis $B$ and suppose that $a$ admits a zero section.  The Mordell-Weil group $MW(a)$ of $a: \mc A \to B$ is  the group of rational sections of $a: \mc A \to B$. Equivalently, if $K$ denotes the function field of $B$, $MW(a)$ is the group of $K$-rational points of the generic fiber $\mc A_K$. For Lagrangian \hk manifolds, the study of the Mordell-Weil group of abelian fibered \hk manifolds was started by Oguiso in \cite{Oguiso-shioda,Oguiso-MW}. 
The aim of this section is to prove the following theorem

\begin{thm} \label{MW}
Let $X$ be a smooth cubic fourfold and  let $\pi: J=J(X) \to \P^5$  be as in Theorem \ref{hkcom}, a smooth projective \hk compactification of $J_U$. Let $MW(\pi)$ be the Mordell-Weil group of $\pi$, i.e., the group of rational sections of $\pi$ and let $H^{2,2}(X, \mathbb Z)_0$ be the primitive degree $4$ integral cohomology of $X$.
 The natural group homomorphism 
\[
\phi_X: H^{2,2}(X, \mathbb Z)_0 \to MW(\pi)
\]
induced by the Abel-Jacobi map (cf. \ref{AbelJacobi}) is an isomorphism.
\end{thm}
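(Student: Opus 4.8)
The plan is to handle $\phi_X$ in three steps --- construction and well-definedness, injectivity, surjectivity --- throughout using the dictionary between rational sections of $\pi$ and normal functions for the variation of Hodge structure $\mc H=R^3f_{U*}\Z(2)$ carried by the universal family $f_U\colon\mc Y_U\to U$ of smooth hyperplane sections of $X$. For the construction: given $\gamma\in H^{2,2}(X,\Z)_0$, Voisin's integral Hodge conjecture for cubic fourfolds provides an algebraic cycle $Z$ with $[Z]=\gamma$; for a smooth hyperplane section $Y=X\cap H$ the $1$-cycle $Z|_Y$ has class in $H^4(Y,\Z)\cong\Z$ measured by $\langle\gamma,h^2\rangle=0$, so, that group being torsion free, $Z|_Y$ is homologically trivial and defines a point $\phi_Y(Z|_Y)\in\Jac(Y)$; letting $H$ vary in $U$ this is a normal function $\nu_Z\colon U\to J_U$, which, since $\pi\colon J\to\Pfd$ is proper (Theorem \ref{hkcom}), extends to a rational section of $\pi$, namely $\phi_X(\gamma)$. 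I would check well-definedness by observing that the cohomology class of $\nu_Z$ depends only on $[Z]=\gamma$ (it is the image of $g^*\gamma$ under the Leray map described below) and that a normal function here is determined by its cohomology class, since the monodromy acts on $H^3(Y,\Q)$ without invariants; additivity of $\phi_X$ is immediate from that of the Abel--Jacobi map.

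Next I would identify $MW(\pi)$ with the group of admissible normal functions for $\mc H$. A rational section of the proper morphism $\pi$ extends over every codimension one point of $\Pfd$ by the valuative criterion, and, using the partial compactification $J_{U_1}$ of Lemma \ref{lem Juno} together with Zucker's extension theorem for normal functions across a divisor, it produces a normal function on $U$; admissibility is automatic because $J$ is projective. Conversely a normal function is by definition a holomorphic section of $J_U\to U$, hence a rational section of $\pi$, and the two constructions are mutually inverse.

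To compare normal functions with Hodge classes on $X$, the key tool is the incidence variety $\mc Y=\{(x,H):x\in X\cap H\}\subset X\times\Pfd$: the projection $g\colon\mc Y\to X$ is a $\P^4$-bundle, so $H^*(\mc Y,\Z)=\bigoplus_{i=0}^4 g^*H^{*-2i}(X,\Z)\cdot\xi^i$ with $\xi=f^*\mc O_{\Pfd}(1)$, while $f\colon\mc Y\to\Pfd$ is the family of hyperplane sections. Analysing the Leray filtration of $f$ on $H^4(\mc Y)$ identifies $g^*$ as an isomorphism of integral Hodge structures $H^4(X,\Z)_0\xrightarrow{\ \sim\ }E_\infty^{1,3}$, compatibly with the cohomology class of a normal function. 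The Griffiths--Zucker theory then gives: the cohomology class map from admissible normal functions to $H^1(U,\mc H)$ is injective (a normal function of trivial class is a flat section, and there are none, by the no-invariants property); its image is exactly the subgroup of Hodge classes, which corresponds under $g^*$ to $H^{2,2}(X,\Z)_0$; and the class of $\nu_Z$ corresponds to $\gamma$. Combined with the previous paragraph this shows that $\phi_X$ is a bijective homomorphism, hence an isomorphism.

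The main difficulty I expect is the normal function bookkeeping over the discriminant $X^\vee\subset\Pfd$ for an arbitrary smooth cubic fourfold, where $X^\vee$ can be highly singular: one has the good partial compactification $J_{U_1}$ only over $U_1=\Pfd\setminus\Sing(X^\vee)$, and over the codimension $\ge2$ locus $\Sing(X^\vee)$ one must rely on properness of $J$. Concretely, the work is in showing (a) that every rational section of $\pi$ gives rise to a genuine admissible normal function on $U$ whose cohomology class lies in the geometric piece $E_\infty^{1,3}$ rather than merely in $H^1(U,\mc H)$, and (b) that the Hodge-theoretic condition cutting out the classes of normal functions inside $H^1(U,\mc H)$ matches the \emph{integral} Hodge classes $H^{2,2}(X,\Z)_0$ --- the integrality, exactly as in Voisin's proof of the integral Hodge conjecture for cubic fourfolds, being the subtle point, and also being what makes the cycle-theoretic map $\phi_X$ coincide with the Hodge-theoretic isomorphism constructed above.
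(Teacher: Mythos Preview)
Your plan has the right architecture --- construct $\phi_X$ via the integral Hodge conjecture, prove injectivity through the cohomology class of a normal function, then surjectivity --- and this matches the paper. But the execution differs in one crucial respect, and that is where there is a genuine gap.

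You attempt to carry out the normal-function/Leray analysis over the full family $f\colon\mc Y\to\Pfd$ (or its restriction to $U$), aiming for a clean integral isomorphism $H^4(X,\Z)_0\cong E_\infty^{1,3}$ and a direct identification of $MW(\pi)$ with integral Hodge classes in $H^1(U,\mc H)$. The paper points out explicitly (just before Lemma \ref{Leray filtration}) that for the full family of smooth hyperplane sections the Leray spectral sequence with $\Z$-coefficients does \emph{not} degenerate at $E_2$; this non-degeneration is precisely what underlies the non-trivial torsor $J^T_U$ of \cite{Voisin-twisted}. So the integral identification you want over $U$ is genuinely obstructed, and the ``Griffiths--Zucker theory'' you invoke is established for Lefschetz pencils, not higher-dimensional bases. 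The difficulties (a) and (b) you flag at the end are not peripheral bookkeeping; they are the heart of the matter, and your sketch does not resolve them.

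The paper sidesteps this by restricting everything to a general Lefschetz pencil $q\colon\mc Y'\to\P^1$, where the integral Leray spectral sequence \emph{does} degenerate (Lemma \ref{Leray filtration}) and the local invariant cycle property holds integrally (Lemma \ref{integral loc inv cycle}). Injectivity then follows from the diagram (\ref{diagraminj}) combining $\gamma\colon L_1\to H^1(\P^1,R^3q_*\Z)$ with the class map $cl$. For surjectivity the paper does \emph{not} argue purely Hodge-theoretically as you propose: it first uses Corollary \ref{ranks} (Oguiso's Shioda--Tate formula, which needs the hyper-K\"ahler compactification) to get $\rk MW(\pi)=\rk H^{2,2}(X,\Z)_0$, so the cokernel of $\phi_X$ is finite; then, given $\sigma\in MW(\pi)$ with $N\sigma=\phi_X(\alpha)$, it restricts to a pencil and invokes Voisin's geometric construction (Proposition \ref{lifting}, via the Markushevich--Tikhomirov--Druel moduli space of rank-$2$ sheaves) to produce a cycle $\beta'$ with $\phi'_{\mc Y'}(\beta')=\sigma'$, and checks that $\alpha=N\bar\beta'$ in $H^{2,2}(X,\Z)_0$. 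This cycle-theoretic input, together with the rank equality, is what replaces the global Hodge-theoretic surjectivity statement you were aiming for.

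A smaller point: for well-definedness of $\phi_X$ the paper does not use ``same cohomology class $+$ no monodromy invariants''; it uses that $\CH_0(X)=\Z$ forces $\CH^2(X)\to H^{2,2}(X,\Z)$ to be injective, so two cycles with the same class are already rationally equivalent on $X$ and hence on its hyperplane sections.
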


\begin{cor} The group $MW(\pi)$ is torsion free.
\end{cor}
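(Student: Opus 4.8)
The plan is to read this off directly from Theorem \ref{MW}. Since $\phi_X : H^{2,2}(X,\Z)_0 \to MW(\pi)$ is an isomorphism of abelian groups, it suffices to observe that the source is torsion free. But $H^{2,2}(X,\Z)_0$ is by construction a subgroup of $H^4(X,\Z)$, and the integral cohomology of a smooth cubic fourfold is torsion free: $X$ is a smooth hypersurface in $\P^5$, hence simply connected with $H^3(X,\Z)=0$, so by the Lefschetz hyperplane theorem and Poincar\'e duality together with the universal coefficient theorem all of $H^*(X,\Z)$ is free. A subgroup of a free abelian group is free, hence torsion free, and the corollary follows.

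There is essentially no obstacle here, as all the content is carried by Theorem \ref{MW}. One could instead try to argue intrinsically on the Mordell--Weil side — a torsion section of $\pi$ restricts over the generic point of $\P^5$ to a torsion $K$-point of the intermediate Jacobian of the generic hyperplane section — but showing such a point must be zero amounts to re-proving the injectivity part of $\phi_X$ (or rather, that its image contains no torsion), so nothing is gained by circumventing the theorem. Accordingly I would simply state: by Theorem \ref{MW} the group $MW(\pi)$ is isomorphic to $H^{2,2}(X,\Z)_0\subset H^4(X,\Z)$, which is free, hence torsion free.
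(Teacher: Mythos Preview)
Your proposal is correct and is exactly the intended argument: the paper states the corollary immediately after Theorem \ref{MW} without further proof, so the content is precisely that $MW(\pi)\cong H^{2,2}(X,\Z)_0\subset H^4(X,\Z)$ and the latter is torsion free.
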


\begin{rem} \label{rem og} In \cite{Oguiso-MW} Oguiso proved the existence of Lagrangian fibered \hk manifolds whose Mordell-Weil group has rank $20$. This is the maximal possible rank among all the known examples of \hk manifolds, as follows from the Shioda-Tate formula of \cite{Oguiso-shioda} (see also Proposition \ref{shioda-tate} below). Oguiso considers deformations of the abelian fibration $\wt M_{2v_0} \to \P^5$ (cf. \ref{lagrfibr}) preserving both the Lagrangian fibration structure and the zero section; among these deformations, Oguiso shows the existence of Lagrangian fibration with rank $20$ Mordell-Weil group  \cite[Thm 1.4 (2)]{Oguiso-shioda}.
The general deformation of $\wt M_{2v_0} \to \P^5$ for which both the Lagrangian fibration structure and the zero section are preserved (this is a codimension two condition) is, up to birational isomorphism,  $J(X)$ (see Remark \ref{section deform}). By the Theorem above Lagrangian fibrations of the form $J(X)$, for $X$ with $\rk H^{2,2}(X, \mathbb Z)=22$, satisfy $\rk MW(\pi)=20$. Thus, they provide an explicit description of Oguiso's examples.
\end{rem}

The following Proposition is essentially a reformulation of  results from \cite{Oguiso-shioda,Oguiso-MW}.

\begin{prop} \label{shioda-tate}
Let $\pi: M \to \P^n$ be a projective \hk manifold with a fixed (rational) section. Let $K=\C(\P^n)$ be the function field of the base and let $M_K$ the base change of $M$ to the generic point of $\P^n$.
There is a commutative diagram,
\[
\xymatrix{
 &  & 0 \ar[d]  & 0 \ar[d] & \\
0 \ar[r]  & \Z L  \oplus \bigoplus_i \Z D_{i} \ar[r] \ar@{=}[d] & L^\perp  \ar[d] \ar[r]& \Pic^0(M_K)  \ar[r] \ar[d]  & 0 \\
 0 \ar[r]  &\Z L \oplus \bigoplus_i \Z D_{i} \ar[r]  & \NS(M)  \ar@{->>}[d]^{r_b} \ar[r]^{r_K} & \Pic(M_K) \ar[r] \ar@{->>}[d] & 0 \\
 &  & \Z  \ar@{=}[r]  & NS(M_K) &
}
\]
where $L=\pi^* \mc O_{\P^n}(1)$ and where the $D_{1}, \dots ,D_k$ are the irreducible components of the complement of the regular locus of $\pi$ that do not meet the section. In particular, $\rk (MW(\pi))=\rk (\NS(M))-\rk  \Z L \oplus \Z D_{i}-1=\rk (\NS(M))-k$.
\end{prop}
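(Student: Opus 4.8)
The plan is to reproduce, in this Lagrangian-fibred setting, Shioda's argument for elliptic surfaces (as carried out by Oguiso in arbitrary dimension): everything is controlled by the restriction-to-the-generic-fibre homomorphism $r_K\colon \NS(M)=\Pic(M)\longrightarrow \Pic(M_K)$, together with the principal polarisation of the abelian variety $M_K$. (Here $\NS(M)=\Pic(M)$ because $M$ is simply connected with $H^1(\mc{O}_M)=0$.) First I would identify $MW(\pi)$ with $\Pic^0(M_K)$: the chosen rational section provides $M_K$ with a $K$-rational origin, so $M_K(K)=MW(\pi)$ and the Brauer obstruction vanishes, while the relative theta divisor $\Theta$ restricts to a principal polarisation on $M_K$ and hence to an isomorphism $M_K\xrightarrow{\ \sim\ }\widehat{M_K}=\Pic^0_{M_K/K}$; taking $K$-points gives a functorial isomorphism $MW(\pi)\cong\Pic^0(M_K)$.

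Next I would establish the middle row. Surjectivity of $r_K$ is a spreading-out argument: a line bundle on $M_K$ is defined over a dense open $V\subseteq\P^n$, and the closure in $M$ of a representative divisor is a Weil, hence Cartier, divisor restricting to the given class. For the kernel, if $r_K(D)=0$ then $D|_{M_K}=\operatorname{div}(f)$ for some $f\in K(M)^\times$, and $D-\operatorname{div}_M(f)$ is a divisor whose support does not dominate $\P^n$; since all fibres of a Lagrangian fibration are equidimensional of dimension $n$, it is a $\Z$-combination of irreducible components of the fibres over the discriminant. Using the relations $\pi^{-1}(\Delta_j)\sim(\deg\Delta_j)L$ in $\NS(M)$ and the fact that the section meets exactly one component of each such fibre, this combination rewrites uniquely as a combination of $L$ and of the remaining components $D_i$; together with the linear independence of $\{L\}\cup\{D_i\}$ (equivalently, negative-definiteness of the vertical lattice modulo $L$, which one may check after a Lefschetz restriction to a general complete-intersection surface) this gives $\ker r_K=\Z L\oplus\bigoplus_i\Z D_i$.

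The columns, and with them the top row, follow from one numerical input. Let $r_b\colon\NS(M)\to\Z$ be the primitive functional proportional to $D\mapsto q(D,L)$, so that $\ker r_b=L^\perp$ and, since $L$ and every $D_i$ restrict to $0$ on $M_K$, $\Z L\oplus\bigoplus_i\Z D_i\subseteq L^\perp$. Expanding the polarised Fujiki relation for $\int_M D\cdot L^n\cdot\Theta^{n-1}$ and using $q(L,L)=0$ forces every monomial to equal $q(D,L)\,q(\Theta,L)^{n-1}$, so that
\[
\int_M D\cdot L^n\cdot\Theta^{n-1}\;=\;c\,q(D,L)\,q(\Theta,L)^{n-1},\qquad c>0;
\]
since $L^n$ is a positive multiple of the class of a general fibre $F$, the left-hand side is a positive multiple of $\deg_{\Theta|_F}(D|_F)$, and hence $\deg(D|_{M_K})$ is a positive multiple of $q(D,L)$. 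As the generic fibre has Picard number one (in the case at hand $M_K=\Jac(Y)$ for $Y$ a very general cubic threefold), ``degree zero'' is the same as ``lies in $\Pic^0(M_K)$'', so $r_K^{-1}(\Pic^0(M_K))=L^\perp$; this simultaneously yields the exact column $0\to L^\perp\to\NS(M)\xrightarrow{r_b}\Z\to 0$, the identification $\Z\cong\NS(M_K)$ and, by restricting the middle row to the sublattice $L^\perp$, the top row $0\to\Z L\oplus\bigoplus_i\Z D_i\to L^\perp\to\Pic^0(M_K)\to 0$, all squares commuting by construction. Counting ranks along the top row finally gives $\rk MW(\pi)=\rk\Pic^0(M_K)=\rk L^\perp-\rk\big(\Z L\oplus\bigoplus_i\Z D_i\big)=\rk\NS(M)-\rk\big(\Z L\oplus\bigoplus_i\Z D_i\big)-1$.

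The main obstacle is the sharp computation of $\ker r_K$: showing that every vertical divisor class is an \emph{integral} combination of $L$ and the section-avoiding fibre components, and that these are $\Z$-independent, since this is exactly what upgrades the rank estimate to the Shioda-Tate equality. A secondary point to be invoked explicitly is the Picard number one of the generic fibre, without which $q(D,L)=0$ would only give numerical triviality of $D|_{M_K}$ against $\Theta$ rather than membership in $\Pic^0(M_K)$.
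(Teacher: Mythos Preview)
Your approach is essentially the one the paper takes: both arguments reproduce Oguiso's Shioda--Tate analysis, organising everything around the restriction map $r_K\colon\NS(M)\to\Pic(M_K)$ and identifying $L^\perp=\ker r_b$ via the Fujiki relations (the paper invokes this as ``the same argument as in Lemma \ref{LTheta}''). Your computation of $\ker r_K$ by writing a vertical divisor as an integral combination of $L$ and the section-avoiding components $D_i$, and your check of the $\Z$-independence of $\{L,D_i\}$, match what the paper does by citing \cite[Lem.~2.4]{Oguiso-shioda}.

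There is one genuine gap. The proposition is stated for an \emph{arbitrary} Lagrangian-fibred projective hyper-K\"ahler $M$, and the step you flag as ``secondary'' --- that the generic fibre has Picard number one --- is in fact the only non-formal input in the whole argument, and it is not available by inspection outside the intermediate Jacobian case. Your Fujiki computation only shows that $D\in L^\perp$ forces $D|_{M_K}$ to be numerically trivial \emph{against the chosen relatively ample class}; without $\rk\NS(M_K)=1$ this does not place $D|_{M_K}$ in $\Pic^0(M_K)$, and the diagram does not close. The paper handles this by quoting the general result for Lagrangian fibrations on hyper-K\"ahler manifolds: $\im\big[r_b\colon\NS(M)\to H^2(M_b)\big]=\Z$ for $b\in U$ \cite{Voisin-lagr}, together with \cite[Thm.~1.1]{Oguiso-MW}. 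You should invoke the same theorem rather than the special fact that $\Jac(Y)$ has Picard number one for very general $Y$. A minor related point: your identification $MW(\pi)\cong\Pic^0(M_K)$ via a \emph{principal} polarisation is again special to the intermediate Jacobian situation; in general one only gets an isogeny from an arbitrary relative polarisation, but that already suffices for the rank formula, which is all the proposition claims.
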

\begin{proof}
The column on the left is exact by definition. By \cite{Voisin-lagr}, for $b$ in the  locus $U \subset \P^n$ parametrizing smooth fibers of $\pi$, $\im[r_b: \NS(M) \to H^2(M_b)]=\Z$. The same argument as in Lemma \ref{LTheta} shows that a line bundle $D$ on $M$ lies in $L^\perp$ if and only if $D^n \cdot L^n=(D_{|M_b})^n=0$. Since $\rk r_b=1$, this holds if and only if $D \cdot L^n=D_{|M_b}=0$, which is equivalent to $D \in \ker r_b$. This shows that the central column is exact. The same argument of \cite[Thm 1.1]{Oguiso-MW}, which was used to show that $\rk \NS(M_K)=1$, shows that any element in $\ker(r_b)=L^\perp$ goes to zero in $\NS(M_K)$. Thus there are induced horizontal morphisms $L^\perp \to  \Pic^0(M_K) $ and $\Z \to \NS(M_K) $. Since $\NS(M) \to \Pic(M_K) $ is surjective, the bottom horizontal morphism is an isomorphism. The natural morphism  $\Z L \oplus \Z D_{i} \to \NS(M)$ is injective, since by \cite[Lem 2.4]{Oguiso-shioda} it has maximal rank over $\Q$ and $\NS(M)$ is torsion free. Clearly, $ \Z L \oplus_i \Z D_{i}  \subset \ker(r_K)$. To show the reverse inclusion, let $D$ be any line bundle on $M$ that goes to zero in $\Pic(M_K)$. Then, by what we have already proved, for any smooth fiber we have $r_b(D)=[D_{|M_b}]=0$. It follows that $D$ is a linear combination of $L=\pi^* \mc O_{\P^n}(1)$ and boundary divisors, i.e., $D \in \Z L \oplus_i \Z D_{i}$. Since $\rk(MW(\pi))=\rk  \Pic^0(M_K)$, the last statement also follows.

\end{proof}

\begin{rem}
The study of the Mordell-Weil group for the Beauville-Mukai system is being carried out in a joint work in progress with Chiara Camere. 
\end{rem}

\begin{cor} \label{ranks}
Let $J=J(X) \to \P^5$ be a \hk compactification of the intermediate Jacobian fibration. Then
\[
\rk MW(\pi)=\rk \NS(J)-2=\rk H^{2,2}(X,\Z)_0.
\]
\end{cor}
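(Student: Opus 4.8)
Corollary \ref{ranks} will follow by reading off both equalities from Proposition \ref{shioda-tate} together with Lemma \ref{tr lattice}. The plan is as follows. First I would apply Proposition \ref{shioda-tate} to $\pi\colon J\to\P^5$, using the rational zero section of $\pi$ provided by the origin of each intermediate Jacobian $\Jac(Y)$; we may assume $J$ is one of the compactifications of \S\ref{birational section}, so that $J_{U_1}$ is an open subset of $J$. In the notation of that proposition, $\rk MW(\pi)$ equals $\rk\NS(J)$ minus the rank of the sublattice generated by $L=\pi^*\mc O_{\P^5}(1)$ and by the divisors $D_1,\dots,D_k$ — the irreducible components of the complement of the regular locus of $\pi$ that are disjoint from the zero section — minus $1$. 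So the first real step is to show that there are \emph{no} such $D_i$, i.e. $k=0$, whence $\rk MW(\pi)=\rk\NS(J)-2$.

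To see this, I would argue that the regular locus of $\pi$ contains $\pi^{-1}(U)$, so every $D_i$ is a divisorial component of $\pi^{-1}(X^\vee)$, where $X^\vee=\P^5\setminus U$ is the (irreducible) dual hypersurface of $X$. Since $\pi$ is flat with five-dimensional fibres, $\pi^{-1}(\Sing X^\vee)$ has dimension at most $\dim\Sing X^\vee+5\le 3+5=8$ and so contains no divisor of $J$; hence the unique $9$-dimensional component of $\pi^{-1}(X^\vee)$ is the closure of $\pi^{-1}$ of the smooth locus of $X^\vee$. That smooth locus lies in $U_1$, so by Lemma \ref{lem Juno} the fibres of $\pi$ over it are irreducible, and therefore this divisor is irreducible. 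Finally, a rational section of a projective morphism over the smooth variety $\P^5$ is defined in codimension one, hence along the generic point of $X^\vee$; its value there lies in the fibre of $\pi$ over that point, hence in this divisorial component of $\pi^{-1}(X^\vee)$. Thus the only divisorial component of the non-regular locus meets the zero section, so it is not one of the $D_i$ and $k=0$.

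It then remains to compare $\rk\NS(J)$ with $\rk H^{2,2}(X,\Z)_0$: by Lemma \ref{tr lattice}, $\rho(J)=\rk H^{2,2}(X,\Q)+1$, and by the Lefschetz decomposition $H^{2,2}(X,\Q)=H^{2,2}(X,\Q)_0\oplus\Q h^2$, so $\rk H^{2,2}(X,\Q)=\rk H^{2,2}(X,\Z)_0+1$ and hence $\rk\NS(J)=\rk H^{2,2}(X,\Z)_0+2$. Combining the two steps gives $\rk MW(\pi)=\rk\NS(J)-2=\rk H^{2,2}(X,\Z)_0$. The only point that genuinely needs care is the structure of $\pi^{-1}(X^\vee)$ — namely its irreducibility as a divisor and the fact that the zero section, being defined in codimension one, passes through it — everything else being bookkeeping with the exact sequences of Proposition \ref{shioda-tate}.
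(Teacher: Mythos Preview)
Your argument is correct and follows the same route as the paper: you apply Proposition~\ref{shioda-tate}, use the irreducibility of the discriminant $X^\vee$ together with Lemma~\ref{lem Juno} (irreducibility of the fibres over the generic point of $X^\vee$) to conclude that $\ker r_K=\Z L$, and then invoke Lemma~\ref{tr lattice} for the second equality. The only difference is that you spell out explicitly the dimension count excluding divisors over $\Sing X^\vee$ and the fact that the zero section, being defined in codimension one, meets the unique divisorial component over $X^\vee$; the paper leaves these points implicit.
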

\begin{proof}
The discriminant locus of $\pi$ is irreducible and the fibers of $\pi$ over the general point of the discriminant are also irreducible (cf. Lemma \ref{lem Juno}). Thus, in the notation of Proposition above, $\ker r_K=\Z L$ and the equality $\rk MW(\pi)=\rk \NS(J)-2$ follows. The remaining equality follows from Lemma \ref{tr lattice}.

\end{proof}

\begin{rem} The corollary just proven, which relies on Oguiso's Shioda-Tate formula above, is the only part of this section where we use that $J_U$ admits a \hk compactification with a regular Lagrangian fibration extending $J_U \to U$. Indeed, to define the Abel-Jacobi map $\phi_X$ and to prove that it is injective (\S \ref{sec injectivity}), we don't need to assume the existence of a \hk compactification. However, we will use this Corollary in the proof of the surjectivity  (\S \ref{sec surjectivity}).
\end{rem}

\begin{rem} An interesting problem is to study the action on $J$ of the birational automorphisms induced by translation by a non-trivial element of $MW(\pi)$ as well as to study the automorphism group of the generic fiber $J_K$.
Notice that, as a consequence of the observations of \S \ref{induced auto}, if $J \to \P^5$ has irreducible fibers then the birational automorphisms induced by translation are regular morphisms.
\end{rem}

\subsection{The Abel-Jacobi mapping} \label{subsection AJ}
This sections uses some ingredients from the theory of normal functions (certain holomorphic sections of intermediate Jacobian fibrations), as developed and used by Griffiths \cite{Griffiths-periods,Griffiths-periods-III}, Zucker \cite{Zucker-inventiones,Zucker-HC}, and Voisin \cite{Voisin-Some-Aspects}. We refer to these papers, as well as to  \cite[\S 7.2.1, 8.2.2]{Voisin2} for the relevant theory.

The first task is to define the morphism $\phi_X: H^{2,2}(X,\Z)_0 \to MW(\pi)$. One way to do this is to use relative Deligne cohomology, which allows to define an algebraic section of the fibration $J_U \to U$. See, for example,  \cite{Voisin-Some-Aspects, ElZein-Zucker}.

A more geometric way to define the morphism $\phi_X$ is in terms of algebraic cycles and Abel-Jacobi maps, which is what we use here. This is possible because the integral Hodge conjecture holds for degree $4$ Hodge class on $X$  \cite{Voisin-Some-Aspects, Zucker-HC}. It allows us to avoid, in the current presentation, defining the normal function associated with a cohomology class. The reader should keep in mind, however, that constructing an algebraic section of the intermediate Jacobian fibrations with a Hodge class on $X$ is a key ingredient in the proof of the Hodge conjecture of  \cite{Voisin-Some-Aspects, Zucker-HC}, so the short cut is only at the level of our presentation.

As  already mentioned, the integral Hodge conjecture holds for degree $4$ Hodge classes on $X$. In particular for every  class $\alpha \in H^{2,2}(X, \Z)$ there is an algebraic cycle $Z$ such that $[Z]=\alpha$. Let $V \subset \P^5$ be the open subset parametrizing smooth hyperplane sections of $X$ that do not contain any of the components of $Z$. If $\alpha$ is a primitive cohomology class, then for  $b=[Y_b] \in V$ the $1$-cycle $Z_b$ satisfies
\[
[Z_{Y_b}]=0 \,  \text{ in } \,H^4(Y_b, \Z) = \Z
\]
and hence determines a point $ \phi_{Y_b}(Z_b) \in \Jac(Y_b)$ in the intermediate Jacobian of $Y_b$.  By Griffiths \cite{Griffiths-periods-III} (see also  \cite[\S 7.2.1]{Voisin2}) the assignment
\[
\begin{aligned}
\sigma_Z: V & \longrightarrow  J_V,\\
 b & \longmapsto  \phi_{Y_b}(Z_b)
\end{aligned}
\]
defines a holomorphic section of the restriction of $J$ to $V \subset \P^5$. By \cite{Zucker-inventiones}, this section is, in fact, algebraic: indeed, consider a  Lefschetz pencil  $\Yp \to \P^1 $ of hyperplanes of $X$ with $\P^1 \subset V$ and with the property that none of the singular points of the members of the pencil are contained in $Z$. By \cite[(4.58)]{Zucker-inventiones} the restriction of $\sigma_Z$ to the non-empty open subset $V \cap \P^1$ of the pencil extends to a holomorphic function on all of $\P^1$ and is thus algebraic (see also \cite{ElZein-Zucker}). Since a holomorphic function that is algebraic in each variable is algebraic (see for example \cite[Thm. 5 Chap. IX]{Bochner-Martin}), it follows that $\sigma_Z$ actually defines a rational function on $\P^5$, i.e.
\[
\sigma_Z \in MW(\pi).
\]
Notice that the holomorphic section $\sigma_Z$ does not depend on the algebraic cycle representing $\alpha$. Indeed, since $\CH_0(X)=\Z$, by \cite[Thm 6.24]{Voisin-Chow} it follows that the cycle map $\CH^2(X) \to H^{2,2}(X,\Z)$ is injective. It follows that if $Z$ and $Z'$ are homologous, then they are  rationally equivalent in $X$ and hence so are their restrictions to a general smooth hyperplane section.
The conclusion of this discussion  is that the Abel-Jacobi map induces a well defined group homomorphism
\be \label{AbelJacobi}
\begin{aligned}
\phi_X: H^{2,2}(X, \Z)_0 & \longrightarrow MW(\pi).\\
 \alpha=[Z] & \longmapsto \sigma_\alpha:=\sigma_Z
 \end{aligned}
\ee

We prove injectivity of $\phi_X$ in \S \ref{sec injectivity} and surjectivity in \S \ref{sec surjectivity}. Since we will restrict to general pencils in $\P^5$, we start by recalling a few standard facts about Lefschetz pencils of cubic threefolds.

\subsection{Preliminaries on Lefschetz pencils}

We start by setting up the notation. Let $\P^1 \subset (\P^5)^\vee$ be a Lefschetz pencil with base locus a smooth cubic surface $\Sigma \subset X$. We have the following diagram
\[
\xymatrix{
& \Sigma \times \P^1 \ar[dl]_{p_1}  \ar@{^{(}->}[r]^i & \Yp \ar[dl]_p  \ar[dr]^q & \\
\Sigma \ar@{^{(}->}[r] & X &  & \P^1
}
\]
where $\Yp=Bl_\Sigma X$, where $q:\Yp \to \P^1$ is the fibration of threefolds, and where $i: \Sigma \times \P^1 \to \Yp$ is the inclusion of the exceptional divisor in $\Yp$.  Let $j: \Up \subset \P^1$ be the open subset parametrizing  smooth fibers.

The following Lemma is standard. We include a proof for lack of reference. 

\begin{lemma} \label{integral loc inv cycle}  The homology and cohomology groups of a cubic threefold which is smooth or has one $A_1$ singularity have no torsion. Moreover, using the notation as above, 
\[
R^1q_* \Z=0, \quad R^2q_* \Z=\Z, \quad R^3q_* \Z=j_* j^* R^3q_* \Z, \quad R^4q_* \Z=\Z.
\]
\end{lemma}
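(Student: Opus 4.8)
The plan is to prove the torsion statement first and then feed it into the analysis of the sheaves $R^iq_*\Z$, which is nothing but the standard Picard--Lefschetz study of the pencil $q\colon\Yp\to\P^1$ combined with the explicit cohomology of the fibres.

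\textbf{Torsion-freeness of the fibres.} For a smooth cubic threefold $Y\subset\P^4$ I would argue formally: the Lefschetz hyperplane theorem gives $H_i(Y,\Z)\cong H_i(\P^4,\Z)$ for $i\le 2$, which is torsion free, so by universal coefficients $H^i(Y,\Z)$ is torsion free for $i\le 2$; Poincar\'e duality $H^{6-i}(Y,\Z)\cong H_i(Y,\Z)$ then propagates this upward ($H_4\cong H^2$ has no torsion since $H_1=0$, then $H_3\cong H^3$ has no torsion since $H_2$ has none, and so on). For a cubic threefold $Y$ with one simple node $o$, projection from $o$ identifies $\wt Y:=\mathrm{Bl}_oY$ with the blow-up $\mathrm{Bl}_\Gamma\P^3$ along the smooth genus-$4$ curve $\Gamma$ (the $(2,3)$ complete intersection parametrizing lines through $o$; smooth since the node is $A_1$), and the contraction $\sigma\colon\wt Y\to Y$ collapses the proper transform $E_1\cong\P^1\times\P^1$ of the projectivized tangent cone at $o$ to the point $o$. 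The blow-up formula gives $H^k(\wt Y,\Z)=H^k(\P^3,\Z)\oplus H^{k-2}(\Gamma,\Z)$, which is torsion free; and since $Y$ is the homotopy pushout of $\{o\}\leftarrow E_1\hookrightarrow\wt Y$, Mayer--Vietoris produces short exact sequences $0\to\operatorname{coker}\!\bigl(H_k(E_1)\xrightarrow{i_*}H_k(\wt Y)\bigr)\to H_k(Y,\Z)\to\ker\!\bigl(H_{k-1}(E_1)\xrightarrow{i_*}H_{k-1}(\wt Y)\bigr)\to 0$. As $H_\bullet(\P^1\times\P^1,\Z)$ is free and in even degrees, it only remains to see that $i_*$ has saturated image on $H_2$ and $H_4$: on $H_4$ the class $[E_1]$ is the primitive divisor class $2h-e$ ($h$ = pulled-back hyperplane, $e$ = exceptional class over $\Gamma$), while on $H_2$ the two rulings of $E_1$ both map to the primitive class $\ell-3f$ (lift of a line meeting $\Gamma$ in three points, minus three fibres). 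Hence every $H_k(Y,\Z)$ is torsion free, and by universal coefficients so is every $H^k(Y,\Z)$.

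\textbf{The sheaves $R^iq_*\Z$ for $i\ne 3$.} By proper base change $(R^iq_*\Z)_t=H^i(Y_t,\Z)$, so the previous step together with the Lefschetz hyperplane theorem for the fibres gives $H^1(Y_t,\Z)=0$, $H^2(Y_t,\Z)=\Z$, $H^4(Y_t,\Z)=\Z$ for all $t\in\P^1$, and $H^3(Y_t,\Z)\cong\Z^{10}$ for $t\in U'$, $\cong\Z^{9}$ for $t\notin U'$. Thus $R^1q_*\Z$ has all stalks $0$, so it vanishes. The class $c_1(\mc O_X(1))$ restricts to a global section of $R^2q_*\Z$ whose value at each $t$ generates $H^2(Y_t,\Z)\cong\Z$, so $\underline\Z_{\P^1}\xrightarrow{\ \sim\ }R^2q_*\Z$. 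For $R^4q_*\Z$: over $U'$ the monodromy on $H^4$ is trivial (the primitive cohomology of a cubic threefold lives only in degree $3$), so $R^4q_*\Z|_{U'}$ is the constant sheaf $\Z$; and at each critical value $t_0$ the restriction map $H^4(Y_{t_0},\Z)\to H^4(Y_t,\Z)$ sends $c_1(\mc O(1))^2$ to $c_1(\mc O(1))^2\ne 0$ (its cube computes the degree $3$), hence is injective, so the adjunction unit $R^4q_*\Z\to j_*j^*R^4q_*\Z=\underline\Z_{\P^1}$ is injective with full stalks, i.e. an isomorphism.

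\textbf{The sheaf $R^3q_*\Z$.} Set $\mc L:=R^3q_*\Z|_{U'}$, a rank-$10$ local system. The adjunction unit $\beta\colon R^3q_*\Z\to j_*\mc L=j_*j^*R^3q_*\Z$ is an isomorphism over $U'$, so it suffices to check it on stalks at each critical value $t_0$, where it is the natural map $H^3(Y_{t_0},\Z)\to\mc L^{T_{t_0}}$ ($T_{t_0}$ the local monodromy). On a disk $D\ni t_0$ one has $q^{-1}(D)\simeq Y_{t_0}$, and since the fibre acquires a single ordinary double point the vanishing-cycle complex $R\phi_q\Z$ is a skyscraper $\underline\Z$ placed in degree $3$; the triangle relating $i^*Rq_*\Z$, the nearby cycles $R\psi_q\Z$ and $R\phi_q\Z$ therefore yields, on hypercohomology, the exact sequence
\[
0\to H^3(Y_{t_0},\Z)\xrightarrow{\ r\ }H^3(Y_t,\Z)\xrightarrow{\ v\ }\Z\xrightarrow{\ \partial\ }H^4(Y_{t_0},\Z)\to H^4(Y_t,\Z)\to 0
\]
together with $H^k(Y_{t_0},\Z)\cong H^k(Y_t,\Z)$ for $k\le 2$. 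Since $H^4(Y_{t_0},\Z)\to H^4(Y_t,\Z)$ is injective (previous step), $\partial=0$, so $v$ is surjective; hence the vanishing class $\delta$ is primitive in $H^3(Y_t,\Z)$, and by the Picard--Lefschetz formula ($T_{t_0}(x)=x-\langle x,\delta\rangle\,\delta$, $\langle\delta,\delta\rangle=0$) we get $\mc L^{T_{t_0}}=\delta^{\perp}=\ker v=\operatorname{im}r$. Thus $r$ identifies $H^3(Y_{t_0},\Z)$ with $\mc L^{T_{t_0}}$, $\beta$ is an isomorphism, and $R^3q_*\Z=j_*j^*R^3q_*\Z$.

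\textbf{Main obstacle.} The delicate point is the \emph{integral} local invariant cycle statement for $R^3q_*\Z$: that $H^3(Y_{t_0},\Z)$ maps \emph{onto}, not merely with finite index into, the invariants $\delta^{\perp}$. This is exactly what requires pinning down the vanishing cycle $\delta$ as a primitive vector, where the torsion-freeness of the fibres and the structure ($\mu=1$) of the vanishing-cycle sheaf of a node are used; the rational statement is classical and comparatively cheap. A secondary, more bookkeeping, obstacle is the geometric input $\mathrm{Bl}_oY\cong\mathrm{Bl}_\Gamma\P^3$ together with the verification that the Gysin maps in the Mayer--Vietoris sequence for the contraction $\sigma$ have saturated image.
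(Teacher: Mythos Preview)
Your proof is correct and follows essentially the same approach as the paper's: reduce $R^3q_*\Z=j_*j^*R^3q_*\Z$ to the stalk at each critical value, identify that stalk with the local monodromy invariants $\delta^\perp$ via Picard--Lefschetz, and use an exact sequence to show the specialization map $H^3(Y_{t_0},\Z)\to H^3(Y_t,\Z)$ has image exactly $\delta^\perp$. The only differences are packaging. For torsion-freeness the paper simply cites Dimca, whereas you give the explicit geometric argument via $\mathrm{Bl}_oY\cong\mathrm{Bl}_\Gamma\P^3$ and Mayer--Vietoris; your saturation checks ($[E_1]=2h-e$ on $H_4$, both rulings mapping to $\ell-3f$ on $H_2$) are correct. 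For the key exact sequence the paper starts from the homology sequence $0\to\Z\delta\to H_3(Y_b,\Z)\to H_3(Y_{b_0},\Z)\to 0$ (quoting \cite[Cor.~2.17]{Voisin2}) and dualizes, while you obtain the same cohomology sequence directly from the nearby/vanishing cycles triangle; the two are equivalent. Your treatment of $R^4q_*\Z$ is in fact a bit more careful than the paper's, which jumps from ``each stalk is $\Z$'' to ``the sheaf is $\Z$'' without explicitly checking constancy.
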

\begin{proof}
The  statement about the homology groups of a cubic threefold with at most an $A_1$ singularity follow from \cite[Example 5.3 and Theorem 2.1]{Dimca-homology-compositio}; using the universal coefficient theorem, the statement on the cohomology groups then follow. From loc. cit it also follows that $H^4(Y, \Z)=H_4(Y, \Z)^\vee=\Z$, and hence $R^4q_* \Z=\Z$ follows by proper base change.
The first two statements on the higher direct images follow from Lefschetz hyperplane section theorem. The third equality, which is also known as the ``local invariant cycle'' property, can be seen as follows (it is well known to hold with $\Q$-coefficents, we now show it with $\Z$-coefficients). By adjunction, there is a natural morphism
\[
\epsilon: R^3q_* \Z \to j_* j^* R^3q_* \Z
\]
which is an isomorphism over $U$. To show $\epsilon$ is an isomorphism over any point of $B:=\P^1\setminus \Up$ we  restrict, for every $b_0 \in B$, to a small disk $\Delta$ centered at $b_0$. Then $\epsilon$ is an isomorphism around $b_0$ if and only if the specialization morphism
\[
H^3(Y_{b_0}, \Z) \cong H^3(\Yp, \Z) \to H^3(Y_{b}, \Z)^{inv}=(j_* j^* R^3q_* \Z)_{b_0}
\]
is an isomorphism (cf. \cite[pg 439-440]{Steenbrink}), where $b \in \Delta \cap \Up$ and $H^3(Y_{b}, \Z)^{inv} \subset H^3(Y_{b}, \Z)$ are the local monodromy invariants. Let $\delta \in  H_3(Y_{b}, \Z)$ be the vanishing cycle of $\Yp_\Delta$. By the Picard--Lefschetz formula $H^3(Y_{b}, \Z)^{inv}=\Z \delta^\perp$, where $\perp$ is taken with respect to the intersection product (which is non--degenerate since $H^3(Y_{b}, \Z)$ is torsion free). By \cite[Cor. 2.17]{Voisin2}, there is a short exact sequence
\[
0 \to \Z \delta  \to H_3(Y_b, \Z) \to H_3(\Yp_\Delta, \Z)\cong H_3(Y_{b_0}, \Z) \to 0.
\]
where $0 \neq \delta \in H_3(Y_b, \Z)$ is the class of the vanishing cycle. 
Dualizing, we get a short exact sequence
\[
0 \to H^3(Y_{b_0}, \Z)\to H^3(Y_b,\Z) \to (\Z \delta)^\vee \to 0.
\] 
(recall  the absence of torsion in the homology groups of $Y_b$ and $Y_{b_0}$).
Using the isomorphism $H_3(Y_b,\Z) \cong H^3(Y_b, \Z)$ induced by Poincar\'e duality we make the identification $\Z \delta^\perp=\ker [H^3(Y_{b}, \Z)\to \Z \delta^\vee]=\im[H^3(Y_{b_0}, \Z)\to H^3(Y_b,\Z)]$.
\end{proof}

It is well known that for a Lefschetz pencil the Leray spectral sequence with coefficients in $\Q$ degenerates at $E_2$. For a Lefschetz pencil of cubic threefolds, this is true also for $\Z$ coefficients. Again, we include a proof for lack of reference.
For the whole family of smooth hyperplane sections of $X$ the Leray spectral sequence with integers coefficients does \emph{not} degenerate at $E_2$; this is the starting point of the construction of the non trivial $J_U$-torsor of \cite{Voisin-twisted} (cf. Remark \ref{twisted}).

\begin{lemma} \label{Leray filtration} Let $q: \Yp \to \P^1$ be as above. The Leray spectral sequence with $\Z$ coefficients degenerates at $E_2$. In particular, the Leray filtration on $H^4(\Yp, \Z)$ is given by:
\be \label{leray filtration}
\begin{aligned}
\Z =H^2(\P^1, R^2 f_*\Z) \subset L_1 \subset H^4(\Yp, \Z) \twoheadrightarrow H^0(\P^1, R^4 f_*\Z)=\Z\\
0 \to H^2(\P^1, R^2 f_*\Z) \to L_1 \stackrel{\gamma}{\to} H^1(\P^1, R^3 f_*\Z) \to 0.
\end{aligned}
\ee
\end{lemma}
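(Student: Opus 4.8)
The strategy is to compute the Leray spectral sequence $E_2^{p,q}=H^p(\P^1,R^qq_*\Z)\Rightarrow H^{p+q}(\Yp,\Z)$ by hand. Since $\P^1$ has cohomological dimension $2$, the only nonzero columns are $p=0,1,2$; hence every differential $d_r$ with $r\ge 3$ vanishes, and on the $E_2$ page the only differentials that can be nonzero are $d_2\colon E_2^{0,q}\to E_2^{2,q-1}$. Thus degeneration at $E_2$ is equivalent to the vanishing of all these maps. The first step is to pin down the sheaves $R^qq_*\Z$. Lemma~\ref{integral loc inv cycle} gives $R^0q_*\Z=\Z$, $R^1q_*\Z=0$, $R^2q_*\Z=\Z$, $R^3q_*\Z=j_*j^*R^3q_*\Z$ and $R^4q_*\Z=\Z$; moreover $R^qq_*\Z=0$ for $q\ge 7$ (the fibers are threefolds), and $R^5q_*\Z=0$ because $H^5(Y_b,\Z)=0$ for a cubic threefold with at most one $A_1$ singularity (this follows from the computations of \cite{Dimca-homology-compositio}; alternatively, resolving the node produces a smooth simply connected threefold with exceptional divisor a smooth quadric surface, whence $H^5$ of the nodal threefold is $\coker\bigl(H^4(\text{resolution})\to H^4(\P^1\times\P^1)\bigr)=0$). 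Plugging in these sheaves, the differential $d_2\colon E_2^{0,q}\to E_2^{2,q-1}$ has zero source or zero target whenever $q\notin\{3,4\}$, so those cases are automatic.

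For $q=3$ I would check that $E_2^{0,3}=0$. Indeed $E_2^{0,3}=H^0(\P^1,j_*j^*R^3q_*\Z)=H^3(Y_b,\Z)^{\pi_1(U)}$, and the monodromy of a Lefschetz pencil carries no nonzero invariants on $H^3(Y_b,\Q)$: the vanishing cohomology is all of $H^3(Y_b,\Q)$ since $H^3(\P^4,\Q)=0$, and it has trivial monodromy invariants (e.g.\ by irreducibility of the monodromy action, \cite{Voisin2}). Since $H^3(Y_b,\Z)$ is torsion free by Lemma~\ref{integral loc inv cycle}, the group $H^3(Y_b,\Z)^{\pi_1(U)}$ is torsion free of rank $0$, hence $0$. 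Consequently $d_2\colon E_2^{0,3}\to E_2^{2,2}$ vanishes, and the only differential left to kill is $d_2\colon E_2^{0,4}=H^0(\P^1,R^4q_*\Z)=\Z\to E_2^{2,3}=H^2(\P^1,j_*j^*R^3q_*\Z)$, whose target is in general nonzero.

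This last vanishing is the crux and the main obstacle: rationally the spectral sequence degenerates (Deligne), so a priori this $d_2$ need only have torsion image, and one must rule out a genuine torsion obstruction in $E_2^{2,3}$. To do so I would produce a class $\xi\in H^4(\Yp,\Z)$ restricting on every fiber $Y_b=X\cap H_b$ to a generator of $H^4(Y_b,\Z)=\Z$ --- the class of a line of the cubic threefold $Y_b\subset\P^4$. Since $\Yp=Bl_\Sigma X$ with $\Sigma=X\cap H_1\cap H_2$ a smooth cubic surface and $N_{\Sigma/X}\cong\mc O_\Sigma(1)^{\oplus 2}$, the exceptional divisor is $E=\P(N_{\Sigma/X})\cong\Sigma\times\P^1$, and each fiber $Y_b$ (the strict transform of $X\cap H_b$) meets $E$ along $\Sigma\times\{b\}\cong\Sigma$, transversally for general $b$. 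Picking a line $\lambda\subset\Sigma$ (one of the $27$ lines of the cubic surface) and setting $\xi:=[\lambda\times\P^1]\in H^4(\Yp,\Z)$ (the class of the surface $\lambda\times\P^1\subset E\subset\Yp$), one gets $\xi|_{Y_b}=[\lambda\times\{b\}]=[\lambda\subset Y_b]$, a generator of $H^4(Y_b,\Z)$. Because $R^4q_*\Z=\Z$ is constant, the edge homomorphism $H^4(\Yp,\Z)\twoheadrightarrow E_\infty^{0,4}\hookrightarrow E_2^{0,4}=\Z$, which is restriction to a fiber, is therefore surjective; hence $E_\infty^{0,4}=E_2^{0,4}$. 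As the column $p=3$ vanishes, $E_\infty^{0,4}=\ker\bigl(d_2\colon E_2^{0,4}\to E_2^{2,3}\bigr)$, so this forces $d_2=0$.

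All differentials now vanish, so $E_2=E_\infty$ and the spectral sequence degenerates at $E_2$. Reading off the Leray filtration $L_\bullet$ on $H^4(\Yp,\Z)$: since the column $p=3$ vanishes, the bottom piece is $E_\infty^{2,2}=H^2(\P^1,R^2q_*\Z)=\Z$, the top quotient is $H^4(\Yp,\Z)/L_1=E_\infty^{0,4}=H^0(\P^1,R^4q_*\Z)=\Z$, and $L_1$ fits in the extension $0\to H^2(\P^1,R^2q_*\Z)\to L_1\to E_\infty^{1,3}=H^1(\P^1,R^3q_*\Z)\to 0$, which is exactly \eqref{leray filtration}.
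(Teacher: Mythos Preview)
Your argument is correct and follows the same strategy as the paper: identify the potentially nonzero $d_2$ differentials and kill the crucial one $d_2\colon E_2^{0,4}\to E_2^{2,3}$ by exhibiting a class in $H^4(\Yp,\Z)$ that restricts to the class of a line on every fiber, so that the edge map is surjective. The paper's proof is terser---it folds the vanishing of $E_2^{0,3}$ and of $R^5q_*\Z$ into the phrase ``many vanishings'' and simply says the edge map is surjective ``since both groups are generated by the class of a line''---whereas you spell out the monodromy argument for $E_2^{0,3}=0$, justify $R^5q_*\Z=0$, and give the explicit lift $\xi=[\lambda\times\P^1]$ via a line on the base-locus cubic surface $\Sigma$; this extra care is warranted, since the vanishing of $d_2\colon E_2^{0,3}\to E_2^{2,2}$ is genuinely needed for the claimed form of the bottom piece $H^2(\P^1,R^2q_*\Z)\subset L_1$.
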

\begin{proof} Because of the many vanishings in the $E_2$-page of the spectral sequence, the only map we need to show is trivial is $H^0(\P^1, R^4 q_*\Z) \to H^2(\P^1, R^3 q_*\Z)$. For this, it is enough to show that $H^4(Y_b,\Z) \to H^0(\P^1, R^4 q_*\Z)$ is surjective, which is clearly true since both groups are generated by the class of a line.
\end{proof}

Consider the decomposition
\be \label{blowup}
H^4(\Yp,\Z)=H^4(X,\Z) \oplus H^2(\Sigma,\Z) \oplus H^0(\Sigma, \Z),
\ee
given by the blowup formula.
The inclusion of the first summand is given by the pullback $p^*$; we freely omit the symbol $p^*$ when viewing $H^4(X,\Z) $ as a subspace of $H^4(\Yp,\Z)$.  The inclusion of the second factor is given by $
H^2(\Sigma,\Z) \ni C   \mapsto i_*(C \times \P^1) \in  H^4(\Yp,\Z)$. Finally,
 the inclusion of the last summand is given by $H^0(\Sigma,\Z)=H^0(\Sigma, \Z) \otimes H^2(\P^1,\Z) \ni [\Sigma]=[\Sigma \times p] \mapsto i_*([\Sigma \times p]) \in H^4(\Yp,\Z)$, where $p \in \P^1$ is a point.
We highlight the following results for later use.

\begin{lemma} \label{lemmakerAJ} There is a natural isomorphism $H^0(\Sigma, \Z)  \cong H^2(\P^1, R^2 q_* \Z)$ which allows the identification of the inclusion $H^0(\Sigma, \Z)\cong H^0(\Sigma, \Z)\otimes H^2( \P^1,  \Z)  \stackrel{i_*}{\to} H^4(\Yp,\Z)$ of (\ref{blowup}) with the inclusion $H^2(\P^1, R^2 q_* \Z) \to H^4(\Yp,\Z)$ induced by the Leray filtration (\ref{Leray filtration}).
\end{lemma}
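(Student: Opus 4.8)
The plan is to pin down both rank-one subgroups of $H^4(\Yp,\Z)$ named in the statement by showing each equals $\Z\cdot c$, where $c:=[E]\cdot q^*[\mathrm{pt}]$, $E\subset\Yp$ is the exceptional divisor of $\Yp=Bl_\Sigma X$, and $[\mathrm{pt}]\in H^2(\P^1,\Z)$ is the point class. Once both subgroups are identified with $\Z\cdot c$, the ``natural'' isomorphism $H^0(\Sigma,\Z)\cong H^2(\P^1,R^2q_*\Z)$ is the unique isomorphism of these two copies of $\Z$ compatible with their inclusions into $H^4(\Yp,\Z)$.

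\emph{The blowup side.} Since $\Sigma$ is the base locus of the pencil, it lies in every member $X\cap H_b$, so the exceptional divisor becomes a relative ``section'' of $q$: one has $N_{\Sigma/X}\cong\mc O_\Sigma(1)^{\oplus 2}$, with the summands spanned by the two equations cutting out the pencil, and $q|_E\colon E=\P(N_{\Sigma/X})\to\P^1$ is identified with the projection $\Sigma\times\P^1\to\P^1$. Indeed, for each $b$ the proper transform $Y_b:=q^{-1}(b)$ of $X\cap H_b$ meets $E$ along the \emph{constant} section of $\P(N_{\Sigma/X})\to\Sigma$ cut out by the sub-line-bundle $N_{\Sigma/(X\cap H_b)}\hookrightarrow N_{\Sigma/X}$ (the quotient $N_{\Sigma/X}\twoheadrightarrow N_{(X\cap H_b)/X}|_\Sigma$ is given by a constant linear combination of the two coordinates of $\mc O_\Sigma(1)^{\oplus 2}$), and these sections exhaust $E$ as $b$ varies. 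Hence $\Sigma\times\{b\}=q|_E^{-1}(b)=E\cap Y_b$, and the generator $[\Sigma]\otimes[\mathrm{pt}]$ of $H^0(\Sigma,\Z)\otimes H^2(\P^1,\Z)\subset H^2(E)$, i.e.\ the image of $[\Sigma]\in H^0(\Sigma,\Z)$ under the isomorphism of (\ref{blowup}), is sent by $i_*$ to $[E\cap Y_b]=[E]\cdot[Y_b]=[E]\cdot q^*[\mathrm{pt}]=c$.

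\emph{The Leray side.} The term $H^2(\P^1,R^2q_*\Z)$ of (\ref{leray filtration}) is, by the degeneration at $E_2$ of Lemma \ref{Leray filtration}, equal to $E_\infty^{2,2}=E_2^{2,2}$, and $R^2q_*\Z=\Z$ by Lemma \ref{integral loc inv cycle}; it thus sits in $H^4(\Yp,\Z)$ as the bottom filtration term $L_2H^4(\Yp,\Z)$. The Leray filtration being multiplicative and $q^*H^2(\P^1,\Z)$ lying in the bottom term, $c=[E]\cdot q^*[\mathrm{pt}]$ lies in $L_2H^4(\Yp,\Z)=H^2(\P^1,R^2q_*\Z)$. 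That $c$ \emph{generates} it follows from the compatibility of the product structure of the Leray spectral sequence with cup product: the image of $c$ in $E_\infty^{2,2}=H^2(\P^1,R^2q_*\Z)$ is $[\mathrm{pt}]\otimes\bigl([E]|_{Y_b}\bigr)$, and the fiberwise restriction $[E]|_{Y_b}$ is the hyperplane class $h$ (under $Y_b\cong X\cap H_b$, $E$ cuts out $\Sigma=Y_b\cap H'$), which is a global section of $R^2q_*\Z=\Z$ generating the stalk over the smooth locus and hence — the sheaf being constant — every stalk. Therefore $c$ maps to the generator $[\mathrm{pt}]\otimes h$ of $H^2(\P^1,R^2q_*\Z)\cong\Z$. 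Combining the two sides, both inclusions have image $\Z\cdot c$, and the resulting identification $H^0(\Sigma,\Z)\cong H^2(\P^1,R^2q_*\Z)$, $[\Sigma]\otimes[\mathrm{pt}]\mapsto[\mathrm{pt}]\otimes h$, is the asserted natural isomorphism.

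The only genuinely non-formal step is the identification $q|_E=\mathrm{pr}_2$ in the blowup side: checking that $Y_b\cap E$ is the constant section of $\P(N_{\Sigma/X})\to\Sigma$ determined by $N_{\Sigma/(X\cap H_b)}$, and that these exhaust $E$. This is a short local computation with the two forms defining the pencil, but it is the geometric input; the rest is routine bookkeeping with the multiplicative Leray spectral sequence and the vanishings of Lemmas \ref{integral loc inv cycle} and \ref{Leray filtration}.
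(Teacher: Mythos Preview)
Your proof is correct. The approach, however, is more computational than the paper's: you pin down an explicit generator $c=[E]\cdot q^*[\mathrm{pt}]$ of the rank-one group and verify separately that each of the two inclusions lands on $\Z\cdot c$, using the multiplicativity of the Leray filtration to locate $c$ in $E_\infty^{2,2}$ and the product structure on $E_\infty$ to identify its class there. The paper instead argues in one line that the closed embedding $i\colon \Sigma\times\P^1\hookrightarrow\Yp$ over $\P^1$ induces (via the fiberwise Gysin map) an isomorphism of constant sheaves ${p_2}_*\Z\cong R^2q_*\Z$, and then invokes the fact that the Gysin pushforward $i_*$ is compatible with Leray filtrations, so that applying $H^2(\P^1,-)$ to this sheaf isomorphism recovers the desired identification of subgroups of $H^4(\Yp,\Z)$. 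Your argument has the virtue of making the implicit compatibility step in the paper's proof fully explicit, at the cost of some extra bookkeeping; the paper's argument is cleaner but leaves the reader to supply the compatibility of $i_*$ with the Leray spectral sequences of $p_2$ and $q$. Both routes ultimately rest on the same geometric input, namely that $E\cap Y_b\cong\Sigma$ and that $[\Sigma]_{Y_b}$ is the hyperplane class generating $H^2(Y_b,\Z)$.
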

\begin{proof}
The closed embedding $i: \Sigma \times \P^1 \hookrightarrow \Yp$ determines an isomorphism $ {p_2}_* \Z \cong R^2 q_* \Z$ of  constant local systems. Here $p_2: \Sigma \times \P^1 \to \P^1$ is the projection on the section factor. Since $H^2( \P^1, {p_2}_*  \Z) = H^0(\Sigma, \Z)  \otimes H^2(\P^1,\Z) $ the lemma follows.
\end{proof}

Via $p^*$,  we  make the identification $H^4(X,\Z)_0  \cong L_1 \cap H^4(X,\Z)$ and we set $ L_1^{2,2}=L_1 \cap H^{2,2}(\mc Y',\Z)$. Here $L_1 \subset H^4(\mc Y',\Z)$ denotes the second piece of the Leray filtration (cf. (\ref{leray filtration})).

\begin{cor} \label{periniettivita} The surjective  morphism $\gamma: L_1 \to H^1(\P^1, R^3 q_* \Z)$ of (\ref{leray filtration}) restricts to an injection
\[
\bar \gamma:L_1 \cap (H^{2,2}(X,\Z) \oplus H^2(\Sigma,\Z)) \cong L_1^{2,2} \slash \ker(\gamma) \to  H^1(\P^1, R^3 q_* \Z).
\]
\end{cor}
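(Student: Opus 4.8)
The plan is to deduce the statement formally from the degeneration of the Leray spectral sequence (Lemma~\ref{Leray filtration}), the blowup decomposition~(\ref{blowup}), and the identification of $\ker(\gamma)$ provided by Lemma~\ref{lemmakerAJ}. First I would record, from the short exact sequence in~(\ref{leray filtration}), that $\ker(\gamma)=H^2(\P^1,R^2q_*\Z)$ sits inside $L_1\subseteq H^4(\Yp,\Z)$, and that by Lemma~\ref{lemmakerAJ} this subgroup is exactly the third summand $H^0(\Sigma,\Z)=\Z\, i_*[\Sigma\times p]$ of~(\ref{blowup}). In particular $\ker(\gamma)$ is generated by the class of an algebraic cycle --- the surface $\Sigma$ sitting inside a single fiber of $q$ --- so it is of Hodge type $(2,2)$; as it also lies in $L_1$, it lies in $L_1^{2,2}$.

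Next I would use that~(\ref{blowup}) is a decomposition of integral Hodge structures (with the evident Tate twists on the two exceptional summands): Gysin pushforward along the codimension-one immersion $i$ shifts Hodge type by $(1,1)$, and $\Sigma$, being a smooth cubic surface, is a rational surface, so $H^2(\Sigma,\Z)$ is purely of type $(1,1)$. Hence both exceptional summands consist of $(2,2)$-classes and $H^{2,2}(\Yp,\Z)=H^{2,2}(X,\Z)\oplus H^2(\Sigma,\Z)\oplus H^0(\Sigma,\Z)$, with the last summand equal to $\ker(\gamma)$. I would then invoke the elementary fact that a subgroup $A$ of a direct sum $B\oplus C$ with $C\subseteq A$ satisfies $A=(A\cap B)\oplus C$, applied to $A=L_1^{2,2}$, $B=H^{2,2}(X,\Z)\oplus H^2(\Sigma,\Z)$, $C=\ker(\gamma)$; since $B$ already lies in $H^{2,2}(\Yp,\Z)$, this gives
\[
L_1^{2,2}=\bigl(L_1\cap(H^{2,2}(X,\Z)\oplus H^2(\Sigma,\Z))\bigr)\oplus\ker(\gamma).
\]

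This identity delivers at once the isomorphism $L_1^{2,2}/\ker(\gamma)\cong L_1\cap(H^{2,2}(X,\Z)\oplus H^2(\Sigma,\Z))$ and, because $\ker(\gamma)\subseteq L_1^{2,2}$, the fact that the restriction of $\gamma$ to $L_1^{2,2}$ descends to an injection of this quotient into $H^1(\P^1,R^3q_*\Z)$ --- this is the map $\bar\gamma$. I do not expect a genuine obstacle: the whole argument is formal once Lemmas~\ref{Leray filtration} and~\ref{lemmakerAJ} are available, and the one point that needs a little care is verifying that the blowup decomposition is compatible with the integral Hodge structures and that the cubic surface $\Sigma$ carries no transcendental cohomology.
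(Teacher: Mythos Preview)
Your proof is correct and follows essentially the same approach as the paper: both identify $\ker(\gamma)=H^0(\Sigma,\Z)$ via Lemmas~\ref{Leray filtration} and~\ref{lemmakerAJ}, and then use the elementary splitting fact (which is exactly the paper's equation~(\ref{uffa})) to conclude. Your version is a bit more explicit about why the exceptional summands are of type $(2,2)$, but the logical skeleton is identical.
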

\begin{proof} From   lemmas  \ref{Leray filtration} and \ref{lemmakerAJ} above, it follows that $\ker (\gamma)=H^2(\P^1, R^2 q_* \Z)=H^0(\Sigma, \Z) $. Thus, by (\ref{blowup}), it follows that $H^4(X,\Z)\oplus H^2(\Sigma,\Z)) \cap \ker(\gamma)=\{0\}$. Since $H^0(\Sigma, \Z) \subset L_1$ and
\be \label{uffa}
L_1 \cap \Big(H^{2,2}(X,\Z) \oplus H^2(\Sigma,\Z) \oplus  H^0(\Sigma, \Z)\Big)=L_1 \cap \Big(H^{2,2}(X,\Z) \oplus H^2(\Sigma,\Z) \Big) \oplus  H^0(\Sigma, \Z)
\ee
the Corollary follows.
\end{proof}

\begin{lemma} \label{fiveterm}
The restriction morphism $H^1(\P^1, R^3 q_*\Z) \to H^1(\Up, R^3 {q_\Up}_*\Z)$ is injective.
\end{lemma}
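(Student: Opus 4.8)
The label of this lemma already announces the intended strategy: the map in question is the first edge homomorphism in the five-term exact sequence of a Leray spectral sequence, and I would set things up so that this is the Leray spectral sequence of the open immersion $j\colon \Up \hookrightarrow \P^1$ applied to the sheaf $R^3 {q_\Up}_*\Z$. Write $\mc F := R^3 q_*\Z$ on $\P^1$ and $\mc F_\Up := R^3 {q_\Up}_*\Z = j^*\mc F$ on $\Up$ (the last identification by base change along the open immersion $j$). The one genuinely non-formal ingredient, which is exactly what Lemma \ref{integral loc inv cycle} supplies, is the integral ``local invariant cycle'' identity $\mc F = j_* j^* \mc F = j_*\mc F_\Up = R^0 j_*\mc F_\Up$; this is where the torsion-freeness of the cohomology of cubic threefolds with at most an $A_1$ singularity enters, and it is what makes the statement true with $\Z$-coefficients rather than merely rationally.

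Granting this, the argument is pure homological bookkeeping. I would consider the Leray spectral sequence $E_2^{p,q}=H^p(\P^1, R^q j_*\mc F_\Up)\Rightarrow H^{p+q}(\Up,\mc F_\Up)$. No nonzero differential enters or leaves $E_2^{1,0}$ (for $r\ge 2$ the maps $d_r$ into or out of position $(1,0)$ have a source or a target in a column of negative index, hence vanish), so $E_2^{1,0}=E_\infty^{1,0}$, and $E_\infty^{1,0}$ is the bottom step of the Leray filtration on $H^1(\Up,\mc F_\Up)$, hence a subgroup of it. Since $R^0 j_*\mc F_\Up = \mc F$ by Lemma \ref{integral loc inv cycle}, the resulting inclusion $H^1(\P^1,\mc F)=E_2^{1,0}\hookrightarrow H^1(\Up,\mc F_\Up)$ is precisely the restriction map (it is obtained by applying $H^1(\P^1,-)$ to $R^0 j_*\mc F_\Up \hookrightarrow Rj_*\mc F_\Up$, i.e. to the adjunction unit $\mc F\to Rj_*j^*\mc F$). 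Equivalently, I would simply quote the first terms of the five-term exact sequence of this spectral sequence,
\[
0\longrightarrow H^1(\P^1,\mc F)\longrightarrow H^1(\Up,\mc F_\Up)\longrightarrow H^0\bigl(\P^1, R^1 j_*\mc F_\Up\bigr)\xrightarrow{\ d_2\ }H^2(\P^1,\mc F),
\]
where $R^1 j_*\mc F_\Up$ is a skyscraper sheaf supported on the finite set $\P^1\setminus\Up$ with stalks the local monodromy coinvariants of $\mc F_\Up$ around the missing points; exactness at $H^1(\P^1,\mc F)$ is the assertion.

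I do not expect a real obstacle here: once Lemma \ref{integral loc inv cycle} is in hand, nothing deep remains. The only points needing care are (i) checking that the edge map is genuinely the restriction morphism and (ii) checking that $E_2^{1,0}$ computes $H^1(\P^1, R^3 q_*\Z)$ on the nose and not merely a subquotient — both of which are immediate from the identity $R^3 q_*\Z = j_* j^* R^3 q_*\Z$. One could instead run the long exact sequence of local cohomology along the finite complement $B=\P^1\setminus\Up$, in which case the injectivity becomes the vanishing $H^1_B(\P^1, R^3 q_*\Z)=0$; but that vanishing is not more transparent than the statement to be proved (indeed it is a consequence of it), so I would present the five-term exact sequence argument.
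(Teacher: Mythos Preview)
Your proposal is correct and follows essentially the same approach as the paper: both use the five-term exact sequence of the Leray spectral sequence for the open immersion $j\colon \Up \hookrightarrow \P^1$ applied to $j^*R^3 q_*\Z$, and both invoke the identity $R^3 q_*\Z = j_* j^* R^3 q_*\Z$ from Lemma~\ref{integral loc inv cycle} to identify the first nonzero term with $H^1(\P^1, R^3 q_*\Z)$. Your version is more detailed (you spell out why the edge map is the restriction and why $E_2^{1,0}$ survives unchanged), but the argument is the same.
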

\begin{proof}
The Leray spectral sequence for the open immersion $j: \Up \to \P^1$, applied to the sheaf $j^*R^3 q_*\Z= R^3 {q_\Up}_*\Z$, gives a $5$-term exact sequence starting with
\[
0 \to H^1(\P^1, j_* j^*R^3 q_*\Z) \to H^1(\Up, R^3 {q_\Up}_*\Z) \to \dots
\]
This concludes the proof, since by Lemma \ref{integral loc inv cycle}, $R^3 q_*\Z= j_* j^*R^3 q_*\Z$.
\end{proof}

\subsection{Injectivity of $\phi_X$.} \label{sec injectivity} The proof of injectivity uses the Hodge class of a normal function (cf. \cite{Zucker-inventiones} and \cite[\S 8.2.2]{Voisin2}).

For a pencil $\Yp \to \P^1$ as above, set
\[
H^{2,2}(\Yp, \Z)_0:=L_1^{2,2}=\ker[H^{2,2}(\Yp, \Z) \to H^0(\P^1, R^4 q_*\Z)]=L_1 \cap H^{2,2}(\Yp, \Z).
\]
and let
\[
\pi'=J' \to \P^1, \text{ and } J_\Up \to \Up,
\]
 be the restriction of the intermediate Jacobian fibration to $\P^1$ and to $\Up$. Choosing a set of generators for $H^{2,2}(X, \Z)_0$, let $\Yp \to \P^1$ be  a general enough pencil so that the restriction morphism 
\be \label{AJ for pencil}
\phi_X': H^{2,2}(X, \Z)_0 \to MW(\pi')
\ee
is well defined. Here, $MW(\pi')$ is the group of rational sections of $\pi'$. Similarly, we get a group homomorphism $\phi'_\Yp:H^{2,2}(\Yp, \Z)_0 \to MW(\pi')$.
Moreover, if $\alpha \in H^{2,2}(X,\Z)_0$ then
\[
\phi'_\Yp(p^*\alpha)=\phi'_X(\alpha) \in MW(\pi').
\]

Recall the Hodge class of a normal function (cf. \cite[8.2.2]{Voisin2}, \cite[(3.9)]{Zucker-inventiones})). 
Let $ \mc H^3= R^3{q_\Up}_* \Z \otimes_\Z \mc O_\Up$ be the Hodge bundle associated to the weight $3$ variation of Hodge structure of the pencil and let $F^* \mc H^3$ be the Hodge filtration. The sheaf $\mc J_\Up$ of holomorphic sections of the intermediate Jacobian fibration fits in the following exact sequence 
\[
0 \to R^3{\fp_\Up}_* \Z \to \mc H^3 \slash F^2 \mc H^3 \to \mc J_\Up \to 0.
\]
and the coboundary morphism
\[
\begin{aligned}
H^0(\Up, \mc J_\Up ) & \stackrel{cl}{\longrightarrow} H^1(\Up,R^3f_* \Z)\\
\nu & \longmapsto cl(\nu)
\end{aligned}
\]
associates to every holomorphic section $\nu$ of $J_\Up \to \Up$ a class $cl(\nu)$  in $H^1(\Up,R^3q_* \Z)$, called the Hodge class of $\nu$ (in the present context, this class is of Hodge type with respect to the Hodge structure on $H^1(\Up,R^3q_* \Z)$ induced from that on $H^4(\Yp, \Z)$ via the degeneracy of the Leray spectral sequence, see \cite[\S 3]{Zucker-inventiones}).

\begin{lemma} \label{injectivity} Let $\Yp \to \P^1$ be a general pencil. The homomorphism of (\ref{AJ for pencil})
\[
\phi'_X: H^{2,2}(X, \Z)_0 \stackrel{\beta}{\longrightarrow}  MW(\pi')  
\]
is injective.
\end{lemma}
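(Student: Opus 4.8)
The plan is to run the classical normal--function argument of Griffiths--Zucker: a primitive Hodge class on $X$ gives, by fibrewise Abel--Jacobi, a normal function over the pencil, and I will show that its topological (Hodge) invariant already determines the class, so that a trivial section forces the class to vanish. First I would take $\alpha\in\ker\phi'_X$ and, using that the integral Hodge conjecture holds for degree $4$ classes on $X$ (\cite{Voisin-Some-Aspects,Zucker-HC}), choose a $2$-cycle $Z\subset X$ with $[Z]=\alpha$. For a general enough pencil $\Yp\to\P^1$ I may assume that no fibre $Y_b$ contains a component of $Z$ and that $Z$ meets the base surface $\Sigma$ properly, hence in finitely many points. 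Let $\tilde Z\subset\Yp=Bl_\Sigma X$ be the strict transform of $Z$. For general $b\in\Up$ the restriction of $\tilde Z$ to the fibre $q^{-1}(b)\cong Y_b$ is the curve $Z\cap Y_b$, which is homologically trivial in $Y_b$ because $\alpha$ is primitive; therefore $[\tilde Z]\in L_1\subset H^4(\Yp,\Z)$, and the normal function $\nu_{\tilde Z}\colon b\mapsto\phi_{Y_b}(\tilde Z\cap q^{-1}(b))$ agrees, over a dense open of $\P^1$, with the rational section $\sigma_Z$ representing $\phi'_X(\alpha)$ (that $\sigma_Z$ does not depend on the chosen cycle representing $\alpha$ is already explained in \S\ref{subsection AJ}, using $\CH_0(X)=\Z$).

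Next, the hypothesis $\phi'_X(\alpha)=0$ means that $\sigma_Z$ is the zero section, so its Hodge class vanishes: $cl(\sigma_Z)=cl(\nu_{\tilde Z})=0$ in $H^1(\Up,R^3q_*\Z)$. On the other hand, the theory of Hodge classes of normal functions attached to algebraic cycles (Zucker \cite{Zucker-inventiones}; see also \cite[\S 8.2.2]{Voisin2}) identifies $cl(\nu_{\tilde Z})$ with the image of $[\tilde Z]$ under the composite
\[
L_1\xrightarrow{\;\gamma\;}H^1(\P^1,R^3q_*\Z)\xrightarrow{\;\mathrm{res}\;}H^1(\Up,R^3q_*\Z).
\]
Since $\mathrm{res}$ is injective by Lemma \ref{fiveterm}, I conclude $\gamma([\tilde Z])=0$.

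To finish, I would compare $[\tilde Z]$ with $p^*\alpha$. By the blowup formula, and because $Z\cap\Sigma$ is finite, $p^*\alpha=[\tilde Z]+\epsilon$ with $\epsilon$ a class supported on the exceptional divisor, i.e. $\epsilon=\epsilon_2+\epsilon_0$ with $\epsilon_2\in H^2(\Sigma,\Z)$ and $\epsilon_0\in H^0(\Sigma,\Z)$ in the decomposition (\ref{blowup}). As $H^0(\Sigma,\Z)=\ker\gamma$ (Lemmas \ref{Leray filtration} and \ref{lemmakerAJ}), applying $\gamma$ gives $0=\gamma([\tilde Z])=\gamma(p^*\alpha-\epsilon_2)$; moreover $p^*\alpha-\epsilon_2=[\tilde Z]+\epsilon_0\in L_1$, so $p^*\alpha-\epsilon_2\in L_1\cap\big(H^{2,2}(X,\Z)\oplus H^2(\Sigma,\Z)\big)$ (using that $H^0(\Sigma,\Z)\subset L_1$, cf. (\ref{uffa})). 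By Corollary \ref{periniettivita}, $\gamma$ is injective on this subgroup, so $p^*\alpha-\epsilon_2=0$; since $p^*\alpha$ and $\epsilon_2$ lie in distinct summands of (\ref{blowup}), both vanish, and since $p^*$ is injective, $\alpha=0$. This proves injectivity of $\phi'_X$.

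The main obstacle is the identification, in the second paragraph, of the Hodge class of the normal function with $\mathrm{res}\circ\gamma$ applied to the cycle class $[\tilde Z]$. This is exactly where Zucker's analysis of normal functions over a curve is used; the two delicate points are that the normal function must extend suitably across the singular fibres of the Lefschetz pencil for this identification to be meaningful --- which here is ensured by the local invariant cycle property $R^3q_*\Z=j_*j^*R^3q_*\Z$ of Lemma \ref{integral loc inv cycle} --- and the bookkeeping relating $\tilde Z\subset\Yp$ to $p^*\alpha$, which the blowup formula takes care of. Granting these, the rest of the argument is formal once the preceding Lemmas \ref{integral loc inv cycle}, \ref{Leray filtration}, \ref{lemmakerAJ}, \ref{fiveterm} and Corollary \ref{periniettivita} are available.
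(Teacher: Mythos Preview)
Your argument is correct and rests on the same three ingredients as the paper's proof: Zucker's identification of the Hodge class of a normal function with the Leray--quotient map $\gamma$ (composed with restriction to $\Up$), the injectivity of the restriction $H^1(\P^1,R^3q_*\Z)\to H^1(\Up,R^3q_*\Z)$ (Lemma \ref{fiveterm}), and the injectivity of $\gamma$ on $L_1\cap(H^{2,2}(X,\Z)\oplus H^2(\Sigma,\Z))$ (Corollary \ref{periniettivita}).

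The difference is packaging. The paper avoids choosing a cycle altogether: since $\phi'_{\Yp}(p^*\alpha)=\phi'_X(\alpha)$ already at the level of cohomology classes, one can plug $p^*\alpha$ directly into the commutative square
\[
\xymatrix{
H^{2,2}(X,\Z)_0 \ar[r]^{p^*} \ar[dr]_{\phi'_X} & H^{2,2}(\Yp,\Z)_0 \ar[d]_{\phi'_{\Yp}} \ar[r]^{\gamma} & H^1(\P^1,R^3q_*\Z) \ar[d]^{\varepsilon} \\
& H^0(\Up,\mc J_\Up) \ar[r]_{cl} & H^1(\Up,R^3q_*\Z)
}
\]
and conclude immediately: $\varepsilon$ and $\gamma\circ p^*$ are injective, hence so is $cl\circ\phi'_X$, hence so is $\phi'_X$. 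Your detour through a representing cycle $Z$, its strict transform $\tilde Z$, and the blowup bookkeeping relating $[\tilde Z]$ to $p^*\alpha$ is valid but unnecessary: the class $p^*\alpha$ already lies in $L_1\cap H^{2,2}(X,\Z)$, so Corollary \ref{periniettivita} applies to it without any correction terms from $H^2(\Sigma,\Z)$ or $H^0(\Sigma,\Z)$. Everything you do is right, but the paper's version is a two--line diagram chase.
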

\begin{proof} 
By \cite[ Prop. (3.9)]{Zucker-inventiones} (see also \cite[Lem 8.20]{Voisin2}), the following diagram is commutative 
\be \label{diagraminj}
\xymatrix{
H^{2,2}(X, \Z)_0 \ar[r]^{p^*} \ar[dr]_{\phi'_X} & H^{2,2}(\Yp, \Z)_0 \ar[d]_{\phi'_\Yp}  \ar[r]^\gamma  & H^1(\P^1, R^3 p_* \Z) \ar[d]^\varepsilon \\
& H^0(\Up, \mc J_\Up) \ar[r]_{cl} &H^1(\Up,R^3\fp_* \Z)
}
\ee
The map $\varepsilon$ is injective by Lemma \ref{fiveterm}, and $p^* \circ \gamma$ is injective by Lemma \ref{periniettivita}. Hence, $cl \circ \phi'_X$ is injective and thus so is $\phi'_X$.

\end{proof}

\subsection{Surjectivity of $\phi_X$.} \label{sec surjectivity}

There are three ingredients in the proof of surjectivity: the fact that $\rk MW(\pi)=\rk H^{2,2}(X, \Z)_0$ as proved in Corollary \ref{ranks}; the restriction,  once again, to Lefschetz pencils; the techniques used in \cite{Voisin-Some-Aspects,Zucker-HC} for the proof of the integral Hodge conjecture for cubic fourfolds. We remark that we use their argument in a slightly different way. To prove the Hodge conjecture one starts with a cohomology class, uses it to define a normal function, and then uses  the normal function to construct an algebraic cycle representing the cohomology class (possibly up to a multiple of a complete intersection surface).  See \cite{Voisin-Some-Aspects} for more details. Here we start with a rational section of the intermediate Jacobian fibration, we restrict to a general pencil, and use the same method of Voisin to construct an algebraic cycle inducing the section via the Abel-Jacobi map. Then we have to check that the cohomology class representing this cycle is primitive, that it is independent of the pencil, and that it induces, via $\phi_X$, the section we started from. 

Since by Corollary \ref{ranks} the cokernel of the injection $\phi_X: H^{2,2}(X, \Z)_0 \to MW(\pi)$ is finite,  for any $\sigma \in MW(\pi)$ there is an integer $N$ and a cohomology class $ \alpha \in H^{2,2}(X, \Z)_0$ such that
 \be \label{Nsigma}
\sigma_\alpha:=\phi_X(\alpha)=N \sigma.
 \ee

We will show, again using Lefschetz pencils, that given $\sigma$ and $\alpha$ as above, there exists a $\bar \beta' \in H^{2,2}(X, \Z)_0$ such that $\alpha=N \bar \beta'$. This will give the desired surjectivity. Before we do so, let us introduce some results that we will need.

%on the nose and not just after multiplying by a suitable integer, we again use Lefschetz pencils.
For a general pencil $\Yp \to \P^1$, let
\[
 (J^T)' \to \P^1
\]
be the restriction of the  intermediate Jacobian fibration   $J^T \to \P^5$ of \cite{Voisin-twisted} (cf. Remark \ref{twisted}) to the pencil. For a conic $C \subset \Sigma$, consider the relative $1$-cycle of degree $2$ in $\Yp \to \P^1$ (any other degree $2$ relative $1$--cycle that comes from $\Sigma$ will do). This defines a section of $(J^T)' \to \P^1$, which trivializes the torsor $(J^T)_\Up$ inducing an isomorphism $J'_\Up \cong(J^T)_\Up$. It is easily seen that this extends to an isomorphism $t_C: J' \cong(J^T)'$ over $\P^1$. For any $\sigma' \in H^0(\Up, \mc J_\Up)$, we may consider the induced section
\[
(\sigma^T)':=t_C \circ \sigma' \in H^0(\Up, \mc J^T_\Up)
\]

The following result is proved in Voisin \cite{Voisin-Some-Aspects} (see also  \cite[(3.2)]{Zucker-HC}, where the result is proved over $\Q$).

\begin{prop} (\hspace{1sp}\cite[\S 2.3]{Voisin-Some-Aspects}) \label{lifting} For any  section $\sigma' \in MW(\pi') $

there is a relative $1$-cycle $Z$ on $ \Yp$ of degree $2$, such that the cohomology class
\[
\beta'=[Z]-[C \times \P^1] \in  H^{2,2}(\Yp, \Z)_0
\]
satisfies $\phi'_{\Yp}(\beta')=\sigma'$ in $MW(\pi') $.

\end{prop}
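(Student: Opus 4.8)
The plan is to follow the strategy of Voisin's proof of the integral Hodge conjecture for cubic fourfolds, adapting it to produce a cycle inducing a \emph{prescribed} normal function rather than one with prescribed cohomology class. First I would start with the section $\sigma' \in MW(\pi')$. After possibly shrinking $\Up \subset \P^1$, I may assume $\sigma'$ is a regular section of $J_{\Up} \to \Up$, i.e.\ a normal function. The key point is that a normal function on a one-parameter family of intermediate Jacobians of cubic threefolds can be realized by a family of $1$-cycles: for each $b \in \Up$ the point $\sigma'(b) \in \Jac(Y_b)$ is, by Clemens--Griffiths, the Abel--Jacobi image of a homologically trivial $1$-cycle on $Y_b$, and by a Hilbert-scheme/relative-components argument one can choose these cycles to vary algebraically. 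Concretely, I would use that on each cubic threefold $Y_b$ the Abel--Jacobi map from the family of (say) conics, or more flexibly from relative $1$-cycles supported on the fixed cubic surface $\Sigma$ plus a moving curve, is surjective, and that such families of curves fit together over a dense open subset of $\P^1$; over the missing finitely many points the relative cycle extends by properness of appropriate relative Hilbert/Chow schemes (or one passes to a resolution of the closure of the cycle inside $\Yp$). This produces a relative $1$-cycle $Z$ on $\Yp$ whose fiberwise class in $H^4(Y_b,\Z) = \Z$ is constant, say equal to $d$ times the class of a line; subtracting $\tfrac{d}{3}[C \times \P^1]$-type corrections (or, more carefully, replacing $Z$ by a $\Z$-combination of $Z$ and a relative conic $C \times \P^1$ coming from $\Sigma$) one arranges the fiberwise class to be $0$, so that after this adjustment $Z$ is a relative homologically trivial $1$-cycle.

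Next I would analyze the cohomology class. Set $\beta' = [Z] - [C \times \P^1] \in H^4(\Yp,\Z)$. Because the fiberwise restriction of $\beta'$ to a general $Y_b$ is homologically trivial, $\beta'$ lies in the second step $L_1$ of the Leray filtration of $H^4(\Yp,\Z)$ (Lemma \ref{Leray filtration}), hence in $L_1^{2,2} = H^{2,2}(\Yp,\Z)_0$, since $\beta'$ is visibly algebraic and thus of type $(2,2)$. It remains to check that $\phi'_\Yp(\beta') = \sigma'$ in $MW(\pi')$. By construction the relative Abel--Jacobi image of $Z$ over $\Up$ equals $\sigma'$ up to the contribution of $[C \times \P^1]$, which is constant and which I have subtracted; so the holomorphic sections $\phi'_\Yp(\beta')$ and $\sigma'$ agree on the dense open $\Up$, hence agree as rational sections of $\pi'$. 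This uses only that two normal functions agreeing on a dense open subset of the base are equal, together with the compatibility of the relative Abel--Jacobi construction with restriction, as recorded around \eqref{AbelJacobi} and in \cite{Griffiths-periods-III,Zucker-inventiones}.

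The main obstacle I expect is the algebraicity/extension step: showing that the fiberwise cycles representing $\sigma'(b)$ can be chosen to form an honest relative algebraic cycle $Z$ on all of $\Yp$ (equivalently, on $\Yp_\Up$, and then extended across $\P^1 \setminus \Up$). This is precisely the content of Voisin's argument in \cite[\S 2.3]{Voisin-Some-Aspects}: one uses that normal functions on such pencils come from families of curves (via the surjectivity of the Abel--Jacobi map on a well-chosen moduli of $1$-cycles, and the fact that the relevant relative Hilbert schemes have a component dominating the base), and that over the finitely many singular points of the pencil—whose singular points lie off $\Sigma$ and off $Z$—the cycle extends because cubic threefolds with one node have no extra obstruction (the local invariant cycle property, Lemma \ref{integral loc inv cycle}, and the irreducibility of the relevant family of curves on the nodal threefolds, cf.\ \cite[\S3]{Clemens}). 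Since the statement attributes the result to Voisin, I would simply invoke \cite[\S 2.3]{Voisin-Some-Aspects} for this step, indicating that the only modification is that one prescribes $\sigma'$ and extracts $Z$, rather than prescribing a Hodge class and extracting a cycle homologous to it; the normal-function intermediary is the same object in both formulations.
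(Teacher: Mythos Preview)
Your outline---build a relative cycle $Z$, show $\beta'=[Z]-[C\times\P^1]$ lands in $L_1^{2,2}=H^{2,2}(\Yp,\Z)_0$, and check it induces $\sigma'$---matches the paper. The gap is in the step you correctly flag as the crucial one: producing the algebraic family of cycles.

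The paper does \emph{not} argue via surjectivity of an Abel--Jacobi map from a Hilbert scheme of curves and a ``component dominating the base.'' Fiberwise surjectivity does not give you a rational section of the target; for that you need a rational inverse. The actual mechanism, which the paper sketches, is the theorem of Markushevich--Tikhomirov and Druel: the relative moduli space $\Mp_\Up$ of rank~$2$ sheaves on $\Yp_\Up/\Up$ with $c_1=0$ and $c_2=2\ell$ admits a \emph{birational} morphism $c_2:\Mp_\Up \to J^T_\Up$ over $\Up$ (to the twisted fibration). One first translates $\sigma'$ by the conic $C\subset\Sigma$ to a section $(\sigma^T)'$ of $J^T_\Up$ via the isomorphism $t_C:J'\cong (J^T)'$, then uses the rational inverse of $c_2$ to lift $(\sigma^T)'$ to a rational section of $\Mp_\Up$. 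Taking $c_2$ of the resulting family of sheaves gives a family $\mc C_\Up$ of degree~$2$ curves; its closure is $Z$.

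This explains two things your proposal leaves unclear: why the degree is exactly $2$ (it is forced by $c_2=2\ell$, and $C$ is a conic, also of degree $2$, so no ad hoc correction like ``$\tfrac{d}{3}[C\times\P^1]$'' is needed---that expression is not even integral), and why the twisted torsor $J^T$ and the trivialization $t_C$ enter the discussion just above the Proposition. Your references to the local invariant cycle property and to \cite[\S3]{Clemens} for the extension across the singular fibers are not part of the argument; the extension is simply taking the closure of $\mc C_\Up$ in $\Yp$.
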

\begin{proof}
For the reader's convenience, we give a brief sketch of the argument. By a result of Markushevich-Tikhomirov \cite{Markushevich-Tikhomirov} and Druel \cite{Druel} there is a a relative birational morphism $c_2:  \Mp_\Up \to  J^T_\Up$, where $ \Mp_\Up \to \Up$ is the relative moduli space of sheaves on $\Yp_{\Up} \to \Up$ with  $c_1=0$ and $c_2=2\ell$. The morphism associates to every sheaf corresponding to a point in $ \Mp_\Up $ the Abel-Jacobi invariant of its second Chern class. Given a section $(\sigma^T)' \in H^0(\Up, \mc J^T_\Up)$ as above, Voisin uses $ \Mp_\Up \to \Up$ to construct a family $\mc C_\Up$ of degree $2$ curves in the fibers of $\Yp_\Up \to \Up$ with the property that for every $b \in \Up$, the curve $\mc C_b$ represents the $c_2$ of a sheaf over $(\sigma^T)'(b)$. By construction, letting $Z $ be the closure of $\mc C_\Up$ in $\Yp$ and setting $\beta':=[Z]-[C \times \P^1] \in  H^{2,2}(\Yp, \Z)_0$, we have $\phi'_{\Yp}(\beta')=\sigma' $ in $H^0(\Up, \mc J_\Up)$.
\end{proof}

Let $\sigma \in MW(\pi)$. For a general pencil $\P^1 \subset \P^5$, let $\sigma'=\sigma_{|\P^1}$ be the restriction of $\sigma $ to $\P^1$, and let $\beta'$ be as in the Proposition above so that $\phi'_{\Yp}(\beta')=\sigma'$. It is tempting to say that, via $\phi_X$, the class $\beta'$ induces $\sigma$  globally and not just on that pencil. This is indeed the case, though we first  need to check that $\beta'$ lies in the primitive cohomology of $X$ and that $\beta'$ is independent of the pencil as well as of the chosen isomorphism $t_C: J' \cong(J^T)' $. More precisely, we need to check that $\beta'$ induces $\sigma$ over an open subset of $\P^5$ and not just on the chosen pencil. Before checking this, we have the following Proposition.

Recall that we have set $H^{2,2}(\Yp,\Z)_0=L_1 \cap H^{2,2}(\Yp,\Z)$.

\begin{prop}(\hspace{1sp}\cite[(4.17)]{Zucker-inventiones}) \label{propaj} The Abel-Jacobi morphism $\phi_{\Yp}: H^{2,2}(\Yp, \Z)_0 \to MW(\pi') \subset H^0(\Up, \mc J_\Up)$ is surjective and defines an isomorphism
\[
\bar \phi_{\Yp}: L_1 \cap (H^{2,2}(X,\Z) \oplus H^2(\Sigma,\Z)) \to MW(\pi')
\]
\end{prop}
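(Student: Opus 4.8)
The plan is to combine the Hodge-class description of the Abel--Jacobi map for normal functions with the degeneration of the Leray spectral sequence established above, so as to identify $MW(\pi')$ with a piece of the $E_2$-page. First I would observe that, by Zucker's theory of normal functions over a curve \cite{Zucker-inventiones}, every section $\sigma' \in MW(\pi') = H^0(\Up, \mc J_\Up)$ (the equality holds because $\P^1$ is a smooth projective curve, so a holomorphic section over a Zariski-open subset is automatically rational/algebraic) has a Hodge class $cl(\sigma') \in H^1(\Up, R^3 q_* \Z)$, and by the four-term exact sequence of Lemma \ref{fiveterm} this class lies in the image of the injection $\varepsilon: H^1(\P^1, R^3 q_* \Z) \into H^1(\Up, R^3 {q_\Up}_* \Z)$, since $R^3 q_* \Z = j_* j^* R^3 q_* \Z$ (Lemma \ref{integral loc inv cycle}). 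Moreover, by \cite[Prop. (3.9)]{Zucker-inventiones} (the commutative diagram \eqref{diagraminj}), the composition $cl \circ \phi'_{\Yp}$ agrees, under $\varepsilon$, with the edge map $\gamma: L_1 \cap H^{2,2}(\Yp,\Z) = H^{2,2}(\Yp,\Z)_0 \to H^1(\P^1, R^3 q_* \Z)$ from the Leray filtration \eqref{leray filtration}.

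Next I would prove surjectivity of $\phi_{\Yp}$. Given any $\sigma' \in MW(\pi')$, apply Proposition \ref{lifting} (Voisin's lifting result) to produce a relative $1$-cycle $Z$ of degree $2$ on $\Yp$ with $\beta' := [Z] - [C \times \P^1] \in H^{2,2}(\Yp,\Z)_0$ and $\phi'_{\Yp}(\beta') = \sigma'$. This immediately gives surjectivity of $\phi_{\Yp}: H^{2,2}(\Yp,\Z)_0 \to MW(\pi')$. It remains to identify the kernel. By the commutativity of \eqref{diagraminj} and the injectivity of $\varepsilon$, we have $\phi_{\Yp}(\beta) = 0$ iff $\gamma(\beta) = 0$ iff $\beta \in \ker(\gamma) = H^2(\P^1, R^2 q_* \Z)$. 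By Lemma \ref{lemmakerAJ} this kernel is exactly the summand $H^0(\Sigma, \Z) \subset H^4(\Yp, \Z)$ of the blow-up decomposition \eqref{blowup}. Therefore, using \eqref{uffa}, the restriction of $\phi_{\Yp}$ to the complementary summand $L_1 \cap (H^{2,2}(X,\Z) \oplus H^2(\Sigma,\Z))$ is injective, and since $H^0(\Sigma,\Z) \subset \ker \phi_{\Yp}$, this restriction is still surjective onto $MW(\pi')$. Hence $\bar\phi_{\Yp}$ is an isomorphism.

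The main subtlety, and the step I would be most careful about, is the \emph{integrality} of all of this: that the Leray spectral sequence degenerates at $E_2$ over $\Z$ (not just over $\Q$) — this is exactly Lemma \ref{Leray filtration}, whose proof rests on the surjectivity of $H^4(Y_b,\Z) \to H^0(\P^1, R^4 q_*\Z)$, i.e. on the fact that a line in the cubic surface $\Sigma$ generates $H^4(Y_b,\Z) \cong \Z$; that the local invariant cycle theorem holds with $\Z$-coefficients (Lemma \ref{integral loc inv cycle}, via the torsion-freeness of $H^*$ of a cubic threefold with at worst an $A_1$ singularity and the Picard--Lefschetz formula); and that the Hodge class $cl(\sigma')$ of an \emph{integral} normal function lies in the \emph{integral} cohomology $H^1(\P^1, R^3 q_*\Z)$. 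Granting these — all of which are in place in the preceding lemmas — the proof is essentially a diagram chase in \eqref{diagraminj} combined with the identification of $\ker\gamma$. One final point to check is that $\phi_{\Yp}$ indeed lands in $MW(\pi')$ rather than merely in $H^0(\Up, \mc J_\Up)$, but this is immediate since the Abel--Jacobi image of an algebraic cycle is an algebraic (hence rational) section, as already discussed in \S\ref{subsection AJ}.
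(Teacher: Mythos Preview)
Your proposal is correct and follows essentially the same route as the paper's own proof: surjectivity of $\phi_{\Yp}$ via Proposition \ref{lifting}, identification of $\ker\phi_{\Yp}$ with $\ker\gamma = H^0(\Sigma,\Z)$ using the commutative diagram \eqref{diagraminj} together with the injectivity of $\varepsilon$ (Lemma \ref{fiveterm}) and Lemma \ref{lemmakerAJ}, and the final splitting via \eqref{uffa}. Your additional remarks on the integrality issues (degeneration of Leray at $E_2$ over $\Z$, local invariant cycles over $\Z$) simply make explicit what the paper has already recorded in Lemmas \ref{integral loc inv cycle} and \ref{Leray filtration}. One small point: you assert $MW(\pi') = H^0(\Up,\mc J_\Up)$, whereas the paper only writes $MW(\pi')\subset H^0(\Up,\mc J_\Up)$; the equality is not needed for the argument, and the proof goes through with the inclusion.
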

\begin{proof} By diagram (\ref{diagraminj}) and the fact that $\epsilon$ is injective, $\ker (\phi_{\Yp})=\ker \gamma$, which by Lemma \ref{Leray filtration} is equal to $H^0(\Sigma, \Z)$. Since $\phi_{\Yp}$ is surjective by the proposition above, the induced morphism $\bar \phi_{\Yp}: H^{2,2}(\Yp,\Z)_0 \slash H^0(\Sigma, \Z) \to MW(\pi')$ is an isomorphism. Finally, by (\ref{uffa}),
$H^{2,2}(\Yp,\Z)_0 \slash H^0(\Sigma, \Z) \cong L_1 \cap (H^{2,2}(X,\Z) \oplus H^2(\Sigma,\Z))$.
\end{proof}

We can now end the proof of  surjectivity:
For $\sigma \in MW(\pi)$, let $\alpha \in H^{2,2}(X,\Z)_0$ be as in (\ref{Nsigma}). Restricting to a pencil $\Yp \to \P^1$,  
set $\sigma'=\sigma_{|\P^1}$ and let $\beta'$ be as in Proposition \ref{lifting} such that $\phi_{\mc Y'}(\beta')=\sigma'$. 
%Finally, let $\bar \beta' $ and $\bar \alpha$ be the projections of $\beta'$ and $p^*\alpha$ onto $L_1 \cap (H^{2,2}(X,\Z) \oplus H^2(\Sigma,\Z))$. 
Finally, let $\bar \beta' $  be the projection of $\beta'$ onto $L_1 \cap (H^{2,2}(X,\Z) \oplus H^2(\Sigma,\Z))$. Notice that abuse of notation, we are omitting $p^*$ from  the inclusion of $H^4(X,\Z)$ in $H^4(\mc Y',\Z)$ and we will write $\alpha$ instead of $p^* \alpha$.
We have,
\[
\phi_{\mc Y'}(\alpha)=(\phi_X(\alpha))_{|\P^1}=N \sigma'=N \phi_{\mc Y'}(\beta' )= \phi_{\mc Y'}(N \beta' ).
\]
By Proposition \ref{propaj}, $ \alpha=N \bar \beta' \in L_1 \cap (H^{2,2}(X,\Z) \oplus H^2(\Sigma,\Z))$. Since $ \alpha \in  H^{2,2}(X,\Z)_0  \subset  L_1 \cap \Big(H^{2,2}(X,\Z) \oplus H^2(\Sigma,\Z)\Big)$, it follows that $\bar \beta'$, too, has to lie in  $H^{2,2}(X,\Z)_0 \subset H^{2,2}(\Yp, \Z)$. Moreover, the class $ \bar \beta'$, which a priori depends on the chosen Lefschetz pencil is independent of the pencil. Set $\sigma_{\bar \beta'}=\phi_X(\bar \beta')$. Then, for  \emph{any} sufficiently general Lefschetz pencil $\P^1 \subset \P^5$ we have an equality of sections
\[
(\sigma_{\bar \beta'})_{|\P^1} =\sigma_{|\P^1},
\]
and hence the two rational sections $\sigma_{\bar \beta'}$ and $\sigma$ coincide. This ends the proof of surjectivity.

%%%%%%%%%%%%%%%%%%%%%%%%%%%%%%%%%%%%%%
%%%%%%%%%%%%%%%%%%%%%%%%%%%%%%%%%%%%%%
%%%%%%%%%%%%%%%%%%%%%%%%%%%%%%%%%%%%%%
%%%%%%%%%%%%%%%%%%%%%%%%%%%%%%%%%%%%%%
%%%%%%%%%%%%%%%%%%%%%%%%%%%%%%%%%%%%%%
%%%%%%%%%%%%%%%%%%%%%%%%%%%%%%%%%%%%%%
%%%%%%%%%%%%%%%%%%%%%%%%%%%%%%%%%%%%%%
%%%%%%%%%%%%%%%%%%%%%%%%%%%%%%%%%%%%%%
%%%%%%%%%%%%%%%%%%%%%%%%%%%%%%%%%%%%%%
%%%%%%%%%%%%%%%%%%%%%%%%%%%%%%%%%%%%%%
%%%%%%%%%%%%%%%%%%%%%%%%%%%%%%%%%%%%%%
%%%%%%%%%%%%%%%%%%%%%%%%%%%%%%%%%%%%%%
%%%%%%%%%%%%%%%%%%%%%%%%%%%%%%%%%%%%%%
%%%%%%%%%%%%%%%%%%%%%%%%%%%%%%%%%%%%%%
%%%%%%%%%%%%%%%%%%%%%%%%%%%%%%%%%%%%%%
%%%%%%%%%%%%%%%%%%%%%%%%%%%%%%%%%%%%%%
%%%%%%%%%%%%%%%%%%%%%%%%%%%%%%%%%%%%%%
%%%%%%%%%%%%%%%%%%%%%%%%%%%%%%%%%%%%%%
\appendix

\section{On the Beauville conjecture for  LSV varieties, by Claire Voisin} \label{appendix}

We explain in this appendix a consequence of Corollary \ref{J not JT}, on the following conjecture  made by Beauville in \cite{beau}.

\begin{conj} \label{conjbeau} Let $M$ be a projective  hyper-K\"{a}hler manifold. Any polynomial cohomological relation
$P(d_1,\ldots,d_r)=0$ in $H^*(M,\mathbb{Q})$, where $d_i$ are divisor classes on $M$, already holds in ${\rm CH}(M)$.
\end{conj}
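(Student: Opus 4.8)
The final statement to prove is Conjecture \ref{conjbeau} for $M = J(X)$ with $X$ very general of Picard number $2$ or $3$. Let me sketch how I would approach this.

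\medskip

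\textbf{The strategy.} The plan is to exploit the structure of $\mathrm{NS}(J)$ (or $\mathrm{NS}(N)$ on the flopped model) together with the fact that the Lagrangian fibration $\pi \colon J \to \mathbb{P}^5$ provides one isotropic divisor class $L = \pi^*\mathcal{O}(1)$, and the prime exceptional divisor $\Theta$ (or its transform $p_*\Theta$, contracted by $h \colon N \to \bar N$) provides the other relevant class, with $q(\Theta) = -2$ from Proposition \ref{Thetasq}. The point is that Conjecture \ref{conjbeau} is known to be essentially a statement about the Chow-theoretic structure of the subring generated by divisor classes, and the Beauville--Bogomolov form controls the cohomological relations among powers of divisors via the Fujiki relation $D^{2n} = c_M\, q(D)^n$. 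So the first step is to enumerate the polynomial relations in $H^*(J,\mathbb{Q})$ among $L$, $\Theta$ (and a third generator $D_3$ in the Picard number $3$ case): since $\mathrm{NS}(J)\otimes\mathbb{Q}$ has rank $2$ or $3$, the ring $\mathbb{Q}[d_1,\dots,d_r]$ surjecting onto $\mathrm{Sym}^\bullet \mathrm{NS}(J)_\mathbb{Q} \to H^*(J,\mathbb{Q})$ has a kernel which, by the Fujiki/Beauville--Bogomolov machinery, is generated in a controlled way; one needs to show each generator of this kernel already vanishes in $\mathrm{CH}(J)$.

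\medskip

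\textbf{Key steps.} First I would identify, using $q(L)=0$, $q(\Theta)=-2$, $q(L,\Theta)=1$ (Lemma \ref{LTheta}), the explicit cohomological relations: the most basic one is $L^6 = 0$ in $H^{12}(J,\mathbb{Q})$ (since $L$ is a pullback from $\mathbb{P}^5$), and this relation already holds in $\mathrm{CH}(J)$ because $L^6 = \pi^*(\mathcal{O}(1)^6) = \pi^*(0) = 0$ at the level of cycles. Next, the relation coming from $\Theta$: since $\Theta$ is a prime exceptional divisor contracted (after the flop) by $h\colon N \to \bar N$ onto an $8$-dimensional base, the restriction $\Theta|_{\text{ruling }R}$ has degree $-2$ and the contraction structure forces a polynomial relation involving $\Theta$ and $L$; the geometry of the divisorial contraction — the fibers of $h$ over the generic point of $h(p_*\Theta)$ are $\mathbb{P}^1$'s — gives this relation a cycle-theoretic lift because it is pulled back from the contraction. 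The third step, for $\rho = 3$, is to handle the extra generator: here one uses that a Picard number $3$ very general $J$ still has $\mathrm{NS}(J)$ of signature $(1,2)$ sitting inside the rank-$24$ lattice, and one must produce enough exceptional/fibration-type divisors (the third class should again be, up to the $U$-lattice, represented by a geometrically meaningful divisor — e.g. coming from a birational contraction on some model, or an isotropic class inducing another fibration) so that all cohomological relations are visibly pulled back from lower-dimensional varieties and hence hold in $\mathrm{CH}$.

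\medskip

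\textbf{The main obstacle.} The hard part will be the Picard number $3$ case: one needs to know that \emph{every} polynomial relation among the three divisor generators in $H^*(J,\mathbb{Q})$ is accounted for by relations that lift to $\mathrm{CH}(J)$, and this requires either (i) a complete description of the kernel of $\mathrm{Sym}^\bullet\mathrm{NS}(J)_\mathbb{Q}\to H^*(J,\mathbb{Q})$ — which by general results on hyper-K\"ahler cohomology (Verbitsky, Bogomolov) is generated by the Fujiki relations in degrees $\le 2n$ twisted appropriately — combined with (ii) a geometric realization showing each such Fujiki-type relation is pulled back along a morphism contracting the relevant divisor. The cleanest route is probably to observe that in Picard number $2$ the only nontrivial relations are $L^6=0$ and the one forced by the divisorial contraction $h$ (the proper transform of $\Theta$ has Fujiki-controlled self-intersections vanishing beyond a certain power because it contracts), and both are manifestly of pullback type; for Picard number $3$ one identifies the extra birational contraction or fibration present on some birational model (using that $q(\Theta)=-2$ pins down the movable cone geometry as in Corollary \ref{J not JT} and Theorem \ref{NSU}) and reduces to the rank-$2$ analysis on each extremal face. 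Throughout, the implicit input is that $\mathrm{CH}_0(J)$-triviality of the relevant subvarieties (zero section, LLSvS $8$-fold image, $\mathbb{P}^1$-fibers) lets one lift cohomological vanishings to rational equivalence — this is where I expect the technical care to be needed.
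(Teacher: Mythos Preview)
First, a framing issue: the statement is Beauville's \emph{conjecture}, and the paper does not prove it. The Appendix establishes it in full only for an LSV variety $J$ of Picard number $2$ (Theorem \ref{theoappendix}); in Picard number $3$ it proves just four of the thirteen degree-$6$ relations (Theorem \ref{theoapp2}), namely the two inherited from the rank-$2$ sublattice and the two ``tangent'' relations $L^5D=0$, $L'^5D=0$. Your proposal claims the full conjecture in both cases, which already overshoots what the paper actually does.

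More substantively, in the Picard-number-$2$ case your argument has a genuine gap. The two isotropic classes in $\langle L,\Theta\rangle$ are $L$ and $L'=L+\Theta$; the entire problem is to show $(L+\Theta)^6=0$ in $\mathrm{CH}(J)$. You propose to extract this from the divisorial contraction $h\colon N\to\bar N$, but that contraction is governed by the big class $H\in\Theta^\perp$ (proportional to $2L+\Theta$), not by the isotropic class $L'$. Relations ``pulled back along $h$'' concern powers of $H$ and its intersection with $p_*\Theta$, and do not yield $L'^6=0$. The paper's actual mechanism is completely different and you do not mention it: one invokes Riess' theorem \cite[Theorem 3.3]{riess} (built on Huybrechts' results on the birational K\"ahler cone) to produce a correspondence $\Gamma\in\mathrm{CH}^{10}(J\times J)$ such that $\Gamma^*$ is a ring automorphism of $\mathrm{CH}(J)$ and $\Gamma^*L'$ lies on the boundary of the birational K\"ahler cone. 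Corollary \ref{J not JT} --- the computation $q(\Theta)=-2$ forcing $L$ to be the \emph{only} movable isotropic class --- then shows $\Gamma^*L'$ must be proportional to $L$, whence $L'^6=\Gamma^{*-1}((\Gamma^*L')^6)=0$ in $\mathrm{CH}(J)$. This Riess/Huybrechts correspondence is the key missing idea.

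For Picard number $3$, your plan (find another contraction or fibration on some model realising the third generator, then reduce to rank-$2$ faces) is not what the paper does and is not justified: for a very general point of a Noether--Lefschetz divisor there is no reason the extra class $D$ should span an extremal ray of any chamber. The paper proves $L^5D=0$ in $\mathrm{CH}(J)$ by a torsion-point argument from \cite{voisintorsion}, which requires verifying a Griffiths-transversality condition on the IVHS of the family of cubic threefolds; it then deduces $L'^5D=0$ by applying the same Riess correspondence $\Gamma$ (noting $\Gamma^*$ acts by $\pm1$ on $\langle L,\Theta\rangle^\perp$). The remaining nine relations are left open.
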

Here ${\rm CH}(M)$ denotes the Chow groups of $M$ with rational coefficients.
Let now $M\rightarrow B$ be a projective hyper-K\"ahler manifold  of dimension $2n$ equipped with a Lagrangian fibration,
and let $L\in{\rm Pic}(M)={\rm NS}(M)$ be the Lagrangian class pulled back from $B$ (see \cite{matsu}). We have $q(L)=0$
by the Beauville-Fujiki relations, since $L^{2n}=0$. Let also $h\in{\rm Pic}(M)={\rm NS}(M)$ be the class of an ample divisor on $M$, so that
the intersection pairing $q$ restricted to $\langle L,\,h\rangle$ is nondegenerate by the Hodge index theorem.
The same argument as in \cite{bogover} shows that the polynomial cohomological relations between $L$ and $h$ are generated by the relations \begin{eqnarray}
\label{eqpourchlag} \alpha^{n+1}=0\,\,{\rm in}\,\,H^{2n+2}(M,\mathbb{Q})\,\,{\rm when}\,\,q(\alpha)=0, \text{ for } \alpha \in \langle L,\,h\rangle.
\end{eqnarray}
 Here we can restrict to rational cohomology classes because we know that there is an isotropic class in $\langle L,\,h\rangle$. We consider now more specifically  an  LSV variety $J$ (which is of dimension $10$, so $n=5$) which is constructed  in \cite{LSV} as a Lagrangian fibration over $\mathbb{P}^5$. The Picard group of a very general such variety is a rank $2$ lattice which contains as above the  Lagrangian class $L$ and an ample class, but we take as a basis
 the classes $L,\,\Theta$, where $\Theta$ was introduced in \cite{LSV} and is studied in the present paper.
 Riess proved in \cite{riess} that a hyper-K\"ahler manifold $M$ which has a Lagrangian fibration and satisfies the ``RLF conjecture'' characterizing classes associated to  Lagrangian fibrations, satisfies Beauville's conjecture. However we do not know  that the LSV vareties satisfy the RLF conjecture.
We  prove here the following result.
\begin{thm}\label{theoappendix}  The relations (\ref{eqpourchlag}) hold in ${\rm CH}(J)$ for the lattice $\langle L,\,\Theta\rangle$ of an  LSV variety $J$. Conjecture \ref{conjbeau} is thus satisfied by a LSV variety with Picard number $2$.
\end{thm}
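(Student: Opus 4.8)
Since we are in the Picard number $2$ case, Lemmas \ref{tr lattice} and \ref{LTheta} give $\NS(J)=U=\langle L,\Theta\rangle$, so every divisor class on $J$ is a rational combination of $L$ and $\Theta$, and by the discussion preceding the statement (following \cite{bogover}) the ideal of polynomial relations among divisors holding in $H^{*}(J,\Q)$ is the ideal of relations in $\Q[L,\Theta]$, which is generated by the two relations $\alpha^{6}=0$ attached to the isotropic directions of $\langle L,\Theta\rangle$. Since $q(L)=0$, $q(L,\Theta)=1$ and $q(\Theta)=-2$ (Beauville--Fujiki, Lemma \ref{LTheta}, Proposition \ref{Thetasq}), one has $q(aL+b\Theta)=2b(a-b)$, so those directions are $\langle L\rangle$ and $\langle L+\Theta\rangle$. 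Hence the theorem (and with it Conjecture \ref{conjbeau} for Picard number $2$) reduces to proving
\[
L^{6}=0\quad\text{and}\quad (L+\Theta)^{6}=0\qquad\text{in }\CH(J).
\]
The first relation is immediate, since $L=\pi^{*}\mc O_{\P^{5}}(1)$ and $\CH^{6}(\P^{5})=0$.

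For the second relation the point is that $L+\Theta=R_\Theta(L)$, where $R_\Theta$ is the reflection in the $(-2)$--class $\Theta$: from $q(\Theta)=-2$ and $q(L,\Theta)=1$ we get $R_\Theta(x)=x+q(x,\Theta)\Theta$, hence $R_\Theta(L)=L+\Theta$ and $R_\Theta(L+\Theta)=L$. I would try to realise $R_\Theta$ by an algebraic self--correspondence $\Gamma$ of $J$ acting multiplicatively on $\CH^{*}(J)$; granting this,
\[
(L+\Theta)^{6}=(\Gamma_{*}L)^{6}=\Gamma_{*}(L^{6})=\Gamma_{*}(0)=0 .
\]
The correspondence should come from the birational geometry of Theorem \ref{NSU}: let $p\colon J\dashrightarrow N$ be the Mukai flop of the zero section and $h\colon N\to\bar N$ the divisorial contraction of $\widetilde\Theta:=p_{*}\Theta$. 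Since $\widetilde\Theta\cdot R=\Theta\cdot R=-2$ for a general ruling $R$ of $\Theta$ (Proposition \ref{Thetasq}), $h$ contracts a generically $\P^{1}$--bundle $\widetilde\Theta\to Y$, with $\bar N$ having transverse $A_{1}$ singularities along the $8$--fold $Y$, which is birational to $Z(X)$. Using $\widetilde\Theta^{2}=-2$ together with the identity $[\widetilde\Theta]=[R]$ under $H^{2}(N,\Z)\hookrightarrow H_{2}(N,\Z)$ (Proposition \ref{Thetasq}, transported through $p$), one checks that the $10$--dimensional cycle $[\Delta_{N}]+[\widetilde\Theta\times_{Y}\widetilde\Theta]\subset N\times N$ induces $R_{\widetilde\Theta}$ on $H^{2}(N)$; composing with the graph of $p$, which induces an isomorphism of Chow \emph{rings} $\CH^{*}(J)\cong\CH^{*}(N)$ by Rie\ss\ \cite{riess}, then produces $\Gamma$ on $J$ realising $R_\Theta$ on $H^{2}(J)$.

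The main obstacle is precisely the multiplicativity of $\Gamma$ on $\CH^{*}(J)$: the naive cycle $[\Delta_{N}]+[\widetilde\Theta\times_{Y}\widetilde\Theta]$ is not a ring endomorphism, so the displayed identity above is not automatic, and one must either correct $\Gamma$ or bypass the issue. One route is to drop multiplicativity on all of $\CH^{*}(J)$ and only compute $\Gamma_{*}(L^{k})$ for $k\le 6$ inside the subring generated by $L$ and $\Theta$: since $L$ is pulled back from $\P^{5}$ and the correction term is supported on $\widetilde\Theta$, each $\Gamma_{*}(L^{k})$ can be evaluated using the $\P^{1}\times\P^{1}$--bundle structure of $\widetilde\Theta\times_{Y}\widetilde\Theta$, the goal being to identify $\Gamma_{*}(L^{6})$ with $\sum_{k=0}^{6}\binom{6}{k}L^{6-k}\Theta^{k}=(L+\Theta)^{6}$ — here the detailed description of the relative theta divisor from \S\ref{section theta} (in particular the geometry of $\Theta$ near the zero section) is what must be fed in. A second, more robust, option avoids correspondences altogether: $(L+\Theta)^{6}$ is cohomologically trivial by the first paragraph and restricts to zero on every smooth fibre of $\pi$ (as $(L+\Theta)|_{J_{t}}=\Theta_{t}$ and $\dim J_{t}=5<6$), hence by the localization sequence it is rationally equivalent to a cycle supported on $\pi^{-1}(D)$ for a proper closed $D\subsetneq\P^{5}$; running a d\'evissage of the Lagrangian fibration, with the cycle still restricting trivially on all fibres at every stage, one hopes to force rational triviality. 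In both approaches the delicate step is the passage from cohomological to rational equivalence, which is exactly where the precise results of Sections \ref{section theta} and \ref{section general X} — above all the value $q(\Theta)=-2$ and the structure of the birational models in Theorem \ref{NSU} and Corollary \ref{J not JT} — are meant to be used.
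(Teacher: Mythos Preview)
Your reduction is correct: everything comes down to showing $(L+\Theta)^6=0$ in $\CH(J)$, and the identification of the two isotropic directions is fine. The gap is exactly the one you name yourself. Your proposed $\Gamma=[\Delta_N]+[\widetilde\Theta\times_Y\widetilde\Theta]$ (transported through $p$) is not a ring automorphism of $\CH^*(J)$, and neither of your two workarounds is close to complete: the first requires evaluating $\Gamma_*(L^k)$ for all $k\le 6$ in a subring that is not known to be well-behaved under $\Gamma_*$, and the second (d\'evissage along the Lagrangian fibration) is essentially asking for the vanishing of a Chow class from the vanishing of its restriction to fibres, which is not available in general.

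The paper avoids this entirely by quoting a theorem of Rie\ss\ \cite[Theorem~3.3]{riess}, itself based on Huybrechts \cite{Huybrechts-kahler-cone}: for \emph{any} isotropic class $K$ on a projective \hk manifold $M$ there exists a cycle $\Gamma\in\CH^{2n}(M\times M)$ such that $\Gamma^*$ is a \emph{ring automorphism} of $\CH^*(M)$, acts on $H^2$ as an isometry for $q$, and sends $K$ into the closure of the birational K\"ahler cone. Applying this with $K=L':=L+\Theta$, the class $\Gamma^*L'$ is isotropic in $\langle L,\Theta\rangle$ and lies in that boundary; but Corollary~\ref{J not JT} says precisely that $L'$ does \emph{not} lie there, so $\Gamma^*L'$ must be proportional to $L$. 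Since $\Gamma^*$ is multiplicative and $L^6=0$ in $\CH^6(J)$, one gets $(L')^6=0$ immediately. In other words, the ``robust'' correspondence you were trying to build by hand already exists by the Rie\ss--Huybrechts machinery; what is genuinely used from the present paper is only Corollary~\ref{J not JT} (equivalently $q(\Theta)=-2$), to rule out $\Gamma^*L'\propto L'$.
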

\begin{proof} There are (up to multiples) exactly  two classes $L$ and $L'$ in $\langle L,\,\Theta\rangle$ satisfying
$q(L)=0,\,q(L')=0$. Obviously $L^6=0$ in ${\rm CH}(J)$ since $L$ comes from the base which is of dimension $5$, so we only have to prove
that ${L'}^6=0$ in ${\rm CH}(J)$. We use Riess' argument in \cite{riess}, however in a different way.
As a consequence of  fundamental results of Huybrechts in \cite{Huybrechts-kahler-cone}, Riess proved the following:

\begin{thm} \cite[Theorem 3.3]{riess} Let $K$ be an isotropic class on a projective hyper-K\"ahler manifold $M$ of dimension $2n$. Then there exists
a cycle $\Gamma\in{\rm CH}^{2n}(M\times M)$ such that $\Gamma^*$ acts as an automorphism of ${\rm CH}(M)$ preserving
  the intersection product, the action of $\Gamma^*$ on $H^2(M)$ preserves the Beauville-Bogomolov form  $q_M$, and
$\Gamma^*K$ belongs to the boundary of the birational K\"ahler cone of $M$.
\end{thm}
Here the birational K\"ahler cone of $M$ is defined as the union of the  K\"ahler cones
of hyper-K\"ahler manifolds $M'$ bimeromorphic to $M$ (the bimeromorphic map $M'\dashrightarrow M$ inducing an isomorphism on $H^2$).
We apply this theorem to our class $L'$ on $J$ and thus get a correspondence  $\Gamma$ as above. The class $\Gamma^*L'$ is an isotropic class, hence it must be proportional to either $L'$ or $L$. Furthermore, it belongs to
the boundary of the birational K\"ahler cone. We now have
\begin{lemma} \label{le04}The class $L'$ does not belong to the boundary of the birational K\"ahler cone.
\end{lemma}
\begin{proof} This is proved in Corollary \ref{J not JT} of the present paper.
%%It is proved in the present paper (see \ref{}) that $q(\theta)=-2,\,q(\theta,l)=1$. Writing
%%$l'=l+\alpha\theta$, we thus have
%%$$0=q(l')=-2\alpha^2+2\alpha$$
%%and thus $\alpha=1$, $l'=l+\theta$. This implies in turn  that $q(l',\theta)=-1$. The later fact implies that $l'$ is not in the boundary of the birational K\"ahler cone (see \cite[Corollary 4.6]{huy}).
\end{proof}
By Lemma \ref{le04}, we conclude that $\Gamma^* L'$ is proportional to $L$. As $L^6=0$ in ${\rm CH}^6(J)$ and $\Gamma^*$ is an automorphism of ${\rm CH}(J)$ preserving the intersection product, we conclude that ${L'}^6=0$ in ${\rm CH}^6(J)$.
\end{proof}
If we consider  the case of Picard rank $3$, where the Picard lattice $N$ of $J$ is generated by three classes $L,\,\Theta,\,D$, with $q(L,D)=0,\,q(\Theta,D)=0$
there are now, according to \cite{bogover}, $13$ degree $6$ cohomological  relations between $L,\,\Theta$ and $D$, generated by
the classes $\alpha^6\in S^6N\subset S^6H^2(J,\mathbb{Q})$, where $\alpha$ belongs to the conic $q(\alpha)=0$.
Among these relations, two of them, namely those involving only $L$ and $\Theta$, are established in ${\rm CH}(J)$ by Theorem \ref{theoappendix}.
We also have the relations
\begin{eqnarray}\label{eqtangent} L^5D=0\,\,{\rm  and }\,\,{L'}^5D=0\,\,{\rm in}\,\, H^{12}(J,\mathbb{Q}),
\end{eqnarray}
 which
are obtained by differentiating
the relation (\ref{eqpourchlag}) at $ \alpha=L$ or $\alpha=L'$  in the direction given by $D$, (which is tangent to the conic at these points since $q(D,L)=0, \, q(D,L')=0$).
We prove the following:
\begin{thm} \label{theoapp2} The relations (\ref{eqtangent}) are satisfied in ${\rm CH}^6(J)$.
\end{thm}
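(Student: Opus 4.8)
The plan is to reduce both relations in (\ref{eqtangent}) to the single statement $L^5D=0$ in $\CH^6(J)$, and then to prove this by combining the Mordell--Weil description of $\NS(J)$ with the vanishing of the odd cohomology of $J$.

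\emph{Reduction.} For ${L'}^5D$ I would argue exactly as in the proof of Theorem \ref{theoappendix}. Riess's theorem, applied to the isotropic class $L'$ on the projective hyper-K\"ahler manifold $J$, produces a correspondence $\Gamma\in\CH^{10}(J\times J)$ such that $\Gamma^*$ is a ring automorphism of $\CH(J)$ preserving the intersection product, $\Gamma^*$ preserves $q$ on $H^2(J,\Q)$, and $\Gamma^*L'$ lies in the boundary of the birational K\"ahler cone of $J$; by Lemma \ref{le04} (i.e.\ Corollary \ref{J not JT}) together with the structure of the movable cone this forces $\Gamma^*L'=cL$ for some nonzero rational $c$. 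Then $\Gamma^*({L'}^5D)=c^5\,L^5\,\Gamma^*D$, and since $\Gamma^*$ is a Hodge isometry on $H^2$ we get $\Gamma^*D\in\NS(J)_\Q$ with $q(\Gamma^*D,L)=c^{-1}q(\Gamma^*D,\Gamma^*L')=c^{-1}q(D,L')=0$, so $\Gamma^*D\in L^\perp\cap\NS(J)$. Because $q(L,\Theta)=1$ and $q(L,D)=0$, a one-line computation in the lattice $\langle L,\Theta,D\rangle$ gives $L^\perp\cap\NS(J)_\Q=\Q L\oplus\Q D$; writing $\Gamma^*D=aL+bD$ and using $L^6=0$ in $\CH(J)$ (Theorem \ref{theoappendix}) yields $L^5\Gamma^*D=b\,L^5D$. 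Hence ${L'}^5D=0$ follows from $L^5D=0$, and the whole theorem is reduced to $L^5D=0$ in $\CH^6(J)$.

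\emph{Mordell--Weil reformulation.} By Theorem \ref{MW} the class $D$ corresponds to a rational section $\tau$ of $\pi$ under $MW(\pi)\cong H^{2,2}(X,\Z)_0$. Since the discriminant of $\pi$ is irreducible with irreducible general fibre, the Shioda--Tate sequence of Proposition \ref{shioda-tate} has no boundary terms $D_i$, and since $\Pic(J)=\NS(J)$ ($J$ being hyper-K\"ahler) we may write $D=t_\tau^*\Theta-\Theta+aL$ in $\Pic(J)$, where $t_\tau\colon J\to J$ is the translation automorphism, which is regular by Proposition \ref{auto}(2) and fixes $L=\pi^*\mc O(1)$. Using $L^6=0$ again gives $L^5D=t_\tau^*(L^5\Theta)-L^5\Theta$, so it suffices to show that the class $L^5\Theta\in\CH^6(J)$ is $t_\tau^*$-invariant. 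Writing $L^5=[\pi^{-1}(t)]$ for $t\in(\P^5)^\vee$ general, with $J_t:=\pi^{-1}(t)$ a principally polarized abelian fivefold, $\Theta_t$ its theta divisor and $j_t\colon J_t\hookrightarrow J$, we have $L^5\Theta=(j_t)_*[\Theta_t]$ and $t_\tau^*(L^5\Theta)=(j_t)_*[\Theta_t-\tau(t)]$, a translate of the same divisor. Thus the desired invariance is equivalent to the vanishing in $\CH^6(J)$ of $(j_t)_*\alpha$ for the class $\alpha=[\Theta_t-\tau(t)]-[\Theta_t]\in\Pic^0(J_t)$.

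\emph{The main point, and the obstacle.} What remains is to show that the pushforward $(j_t)_*\colon\Pic^0(J_t)\to\CH^6(J)$ annihilates the a priori non-torsion class $\alpha$. Here I would use that $J$ is of OG$10$-type: by the computation of the Betti numbers of this deformation class all odd Betti numbers of $J$ vanish, so $H^{11}(J,\Q)=0$ and the Griffiths intermediate Jacobian $J^6(J)$ is zero. The assignment $b\mapsto(j_t)_*\bigl([\Theta_t-b]-[\Theta_t]\bigr)$ is a regular homomorphism from the abelian variety $\Pic^0(J_t)$ (which is moreover simple for general $t$, $Y_t$ being a general cubic threefold) into $\CH^6(J)_{\mathrm{hom}}$, and its image is a quotient abelian variety of $\Pic^0(J_t)$; combining the vanishing of $J^6(J)$ with the compatibility of Abel--Jacobi maps, respectively with a spreading-out argument over the rationally connected base $(\P^5)^\vee$ in the spirit of Bloch--Srinivas, should force this homomorphism to vanish identically. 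This gives $(j_t)_*\alpha=0$, hence $t_\tau^*(L^5\Theta)=L^5\Theta$ and $L^5D=0$, which with the reduction above proves the theorem. I expect the delicate step to be precisely the rigorous passage from ``trivial Abel--Jacobi image'' to ``trivial in $\CH^6(J)$'' for this regular homomorphism.
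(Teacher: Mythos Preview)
Your reduction of ${L'}^5D=0$ to $L^5D=0$ via Riess's correspondence $\Gamma$ is correct and is essentially the paper's argument (the paper states that $\Gamma^*$ acts by $\pm1$ on $\langle L,\Theta\rangle^\perp$, which is a slightly sharper formulation of your computation $\Gamma^*D=aL+bD$).

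The Mordell--Weil reformulation is fine, but note that it is a detour: since $q(D,L)=0$ implies $D_{|J_t}\in\Pic^0(J_t)$ for smooth $t$, one has directly $L^5D=[J_t]\cdot D=(j_t)_*(D_{|J_t})$ without invoking translations.

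The real problem is the step you yourself flag as ``delicate''. The implication
\[
J^6(J)=0 \;\Longrightarrow\; \bigl(\Pic^0(J_t)\to\CH^6(J)_{\mathrm{alg}}\ \text{is zero}\bigr)
\]
is \emph{not} available. Vanishing of the intermediate Jacobian does not force a family of algebraically trivial cycles parametrized by an abelian variety to be constant in Chow; already for a K3 surface $S$ one has $J^2(S)=\mathrm{Alb}(S)=0$ while $\CH_0(S)_{\deg 0}$ is infinite-dimensional. A Bloch--Srinivas decomposition argument controls Chow groups from the \emph{bottom} via $\CH_0$, and here the fibers are abelian fivefolds with enormous $\CH_0$, so rational connectedness of the base $\P^5$ alone does not help. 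What you would need is a Bloch--Beilinson-type statement that is not known for hyper-K\"ahler tenfolds.

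The paper avoids this obstacle by a different idea. Rather than fixing $t$ and attempting to kill $(j_t)_*(D_{|J_t})$ in $\CH^6(J)$, one varies $t$ and shows that there exists a smooth fibre $J_{t_0}$ on which $D_{|J_{t_0}}\in\Pic^0(J_{t_0})$ is a \emph{torsion} point. Then $(j_{t_0})_*(D_{|J_{t_0}})$ is torsion in $\CH^6(J)$, hence zero with rational coefficients, and since all fibres are rationally equivalent (the base is $\P^5$) one concludes $L^5D=[J_{t_0}]\cdot D=0$. The existence of such a torsion fibre is the content of a result of Voisin (\cite{voisintorsion}) which holds under an infinitesimal variation of Hodge structure hypothesis: there must exist a class $\alpha\in H^{1,0}(J_t)$ with $\overline{\nabla}(\alpha):T_{B,t}\to H^{0,1}(J_t)$ surjective. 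For the intermediate Jacobian fibration, via Griffiths' description of the IVHS of cubic hypersurfaces, this translates into the existence of a smooth hyperplane section $X_H$ and a linear form $x$ such that multiplication by $xQ_{X,H}:R^1_f\to R^4_f$ is an isomorphism in the Jacobian ring; this is verified for $X$ generic on any Noether--Lefschetz divisor, and the relation then propagates by specialization. This ``find a torsion fibre'' mechanism is the missing ingredient in your approach.
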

\begin{proof}  The first relation is proved by applying the  following result from
\cite{voisintorsion}, which works in a more general context and needs  a mild assumption on the infinitesimal variation of Hodge structure of a family of abelian varieties at the   generic point of the base.   Let more generally $M\rightarrow B$ be a fibration into abelian varieties and let $A\in {\rm Pic}\,M$ be a line bundle
whose restriction to the general fiber $M_b$ is topologically trivial.
\begin{prop}  Assume that at the generic point $t\in B$,
there exists  a class $\alpha\in H^{1,0}(M_b)$ such that $\overline{\nabla}(\alpha): T_ {B,b}\rightarrow H^{0,1}(M_b)$
is surjective. Then there exists a point $b\in B$ such that $M_b$ is smooth and $A_{\mid M_b}$ is a torsion line bundle.

If all fibers $M_b$ have the same class  $F$  in ${\rm CH}(M)$, it thus follows that $F.A=0$ in ${\rm CH}(M)$.
\end{prop}
Coming back to our situation, we have to check that the assumption on the infinitesimal variation of Hodge structures
is satisfied in our situation. Let $J$ be the  LSV variety of a cubic fourfold $X$. The infinitesimal variation of Hodge structure for the fibers of the lagrangian fibration $J\rightarrow (\mathbb{P}^5)^\vee$ is thus canonically isomorphic to the variation of Hodge structure on the $H^3$ of the hyperplane section $X_H\subset X$.
If $Y $ is a smooth cubic threefold in $\mathbb{P}^4$ defined by an equation
$f=0$, Griffiths theory of IVHS of hypersurfaces says that there isomorphisms
$$H^{2,1}(Y)\cong R^1_f,\,\, H^{1,2}(Y)\cong R^4_f,$$
such that the infinitesimal variation of Hodge structure on $H^3(Y,\mathbb{C})$ is given (using the identification
$R^3_f\cong H^1(Y,T_{Y})$) by the multiplication map
$$R^3_f\rightarrow {\rm Hom}\,(R^1_f,R^4_f).$$
We now consider the case where $Y$ is a hyperplane section $X_H$, defined by a linear equation $H$, of the cubic fourfold  $X$.  It is immediate to see that the inclusions $X_H\subset X\subset \mathbb{P}^5$ determine a quadratic polynomial $Q_{X,H}\in R^2_f$ such that
the natural map $\rho: H^0(X_H,\mathcal{O}_{X_H}(1))\rightarrow R^3_f$, defined as the first order classifying map for the
deformations of $X_H$ in $X$, is given by multiplication by $Q_{X,H}$.
Combining these facts, we conclude that the
desired infinitesimal criterion for the fibration $J\rightarrow (\mathbb{P}^5)^\vee$ holds if
there exist a smooth hyperplane section $X_H\subset X$ and a linear form $x\in H^0(X_H,\mathcal{O}_{X_H}(1))=R^1_f$
 such that, with the above notation, the product map
  $$
xQ_{X,H}:R^1_f\rightarrow R^4_f
$$
by $ xQ_{X,H}$
is an isomorphism.
It is quite easy to show that the existence of such a hyperplane section is satisfied by $X$ in codimension $1$ in the moduli space of cubic fourfolds,
hence at the generic point of any Hodge locus in this moduli space, or equivalently any Noether-Lefschetz locus for the corresponding LSV variety $J$.
The relation $L^5D=0$ in ${\rm CH}^6(J)
$ is thus satisfied at the generic point of the deformation locus of  $J$ preserving the Hodge class $D$, hence everywhere by specialization.

To conclude the proof of Theorem \ref{theoapp2}, we have to prove the
relation $ {L'}^5D=0$ in ${\rm CH}^6(J)$. This follows however from the relation $L^5D=0$ in ${\rm CH}^6(J)
$ by the same argument as in the proof of Theorem \ref{theoappendix}, using the specialization of the cycle $\Gamma$
and observing that $\Gamma^*$ acts by $\pm1$ on $H^2(J,\mathbb{Q})^{\perp\langle L,\Theta\rangle}$, hence on $D$.
\end{proof}

%%%%%%%%%%%%%%%%%%%%%%%%%%%%%%%%%%%%%%
%%%%%%%%%%%%%%%%%%%%%%%%%%%%%%%%%%%%%%
%%%%%%%%%%%%%%%%%%%%%%%%%%%%%%%%%%%%%%
%%%%%%%%%%%%%%%%%%%%%%%%%%%%%%%%%%%%%%
%%%%%%%%%%%%%%%%%%%%%%%%%%%%%%%%%%%%%%
%%%%%%%%%%%%%%%%%%%%%%%%%%%%%%%%%%%%%%
%%%%%%%%%%%%%%%%%%%%%%%%%%%%%%%%%%%%%%
%%%%%%%%%%%%%%%%%%%%%%%%%%%%%%%%%%%%%%
%%%%%%%%%%%%%%%%%%%%%%%%%%%%%%%%%%%%%%
%%%%%%%%%%%%%%%%%%%%%%%%%%%%%%%%%%%%%%
%%%%%%%%%%%%%%%%%%%%%%%%%%%%%%%%%%%%%%
%%%%%%%%%%%%%%%%%%%%%%%%%%%%%%%%%%%%%%
%%%%%%%%%%%%%%%%%%%%%%%%%%%%%%%%%%%%%%
%%%%%%%%%%%%%%%%%%%%%%%%%%%%%%%%%%%%%%
%%%%%%%%%%%%%%%%%%%%%%%%%%%%%%%%%%%%%%
%%%%%%%%%%%%%%%%%%%%%%%%%%%%%%%%%%%%%%
%%%%%%%%%%%%%%%%%%%%%%%%%%%%%%%%%%%%%%
%%%%%%%%%%%%%%%%%%%%%%%%%%%%%%%%%%%%%%

\bibliographystyle{alpha}

\bibliography{bibliographyBGINTJAC.bib}

\end{document}